\newcommand{\N}{\mathbb{N}}                     
\newcommand{\Z}{\mathbb{Z}}                     
\newcommand{\R}{\mathbb{R}}                     
\newcommand{\C}{\mathbb{C}}                     
\newcommand{\T}{\mathbb{T}}                     
\newcommand{\set}[2]{\left\{{#1}\mid{#2}\right\}}       
\newcommand{\qed}{\hfill $\Box$ \bigskip}       
\newcommand{\im}{\mathrm{Im\,}}                 
\newcommand{\dist}{\mathrm{dist\,}}             
\newcommand{\Ker}{\mathrm{Ker\,}}               
\newcommand{\sgn}{\mathrm{sgn\,}}               
\newcommand{\diam}{\mathrm{diam\,}}             
\newcommand{\ind}{\mathrm{ind\,}}               
\newcommand{\codim}{\mathrm{codim}}           
\newcommand{\graf}{\mathrm{graph\,}}            
\newcommand{\crit}{\mathrm{crit}}               
\newcommand{\sing}{\mathrm{sing}\,}             
\newcommand{\tr}{\mathrm{tr}\,}                         
\newtheorem{mainthm}{\bf Theorem}           
\newtheorem{thm}{\bf Theorem}[section]      
\newtheorem{lem}[thm]{\bf Lemma}            
\newtheorem{prop}[thm]{\bf Proposition}     
\newtheorem{rem}[thm]{\bf Remark}           
\title{Estimates and computations in Rabinowitz-Floer homology}
\author{Alberto Abbondandolo\thanks{The first author was supported by a Humboldt Research Fellowship for Experienced Researchers.} \hspace{2pt}
and Matthias Schwarz\thanks{The second author was partially supported by the DFG grant SCHW 892/2-3. \newline Mathematics Subject Classification 53D40, 57R58, 37J45, 37J50.}}
\date{December 6, 2009}
\begin{document}

\maketitle

\begin{abstract}
The Rabinowitz-Floer homology of a Liouville domain $W$ is the Floer homology of the Rabinowitz free period Hamiltonian action functional associated to a Hamiltonian whose zero energy level is the boundary of $W$. This invariant has been introduced by K.\ Cieliebak and U.\ Frauenfelder and has already found several applications in symplectic topology and in Hamiltonian dynamics. Together with A.\ Oancea, the same authors have recently computed the Rabinowitz-Floer homology of the cotangent disk bundle $D^*M$ of a closed Riemannian manifold $M$, by means of an exact sequence relating the Rabinowitz-Floer homology of $D^*M$ with its symplectic homology and cohomology. The first aim of this paper is to present a chain level construction of this exact sequence. In fact, we show that this sequence is the long homology sequence induced by a short exact sequence of chain complexes, which involves the Morse chain complex and the Morse differential complex of the energy functional for closed geodesics on $M$. These chain maps are defined by considering spaces of solutions of the Rabinowitz-Floer equation on half-cylinders, with suitable boundary conditions which couple them with the negative gradient flow of the geodesic energy functional.
The second aim is to generalize this construction to the case of a fiberwise uniformly convex compact subset $W$ of $T^*M$ whose interior part contains a Lagrangian graph. Equivalently, $W$ is the energy sublevel associated to an arbitrary Tonelli Lagrangian $L$ on $TM$ and to any energy level which is larger than the strict Ma\~n\'e critical value of $L$. In this case, the energy functional for closed geodesics is replaced by the free period Lagrangian action functional associated to a suitable calibration of $L$. An important issue in our analysis is to extend the uniform estimates for the solutions of the Rabinowitz-Floer equation - both on cylinders and on half-cylinders - to Hamiltonians which have quadratic growth in the momenta. These uniform estimates are obtained by the Aleksandrov integral version of the maximum principle. In the case of half-cylinders, they are obtained by an Aleksandrov-type maximum principle with Neumann conditions on part of the boundary. 
\end{abstract}

\renewcommand{\theenumi}{\alph{enumi}}
\renewcommand{\labelenumi}{(\theenumi)}

\tableofcontents

\section*{Introduction}
\addcontentsline{toc}{section}{\numberline{}Introduction}

Let $(W,\lambda)$ be a Liouville domain, that is a compact connected $2n$-dimensional manifold with boundary, equipped with a one-form $\lambda$ - the Liouville form - such that $\omega:= d\lambda$ is a symplectic form on $W$, and that the Liouville vector field $Y$ defined by $\iota_Y \omega = \lambda$ is transverse to $\partial W$ and points in the outward direction (we borrow the terminology {\em Liouville domain} from Seidel, see \cite{sei06b}). Recently, K.\ Cieliebak and U.\ Frauenfelder \cite{cf09} have associated an algebraic invariant to such a structure, called the {\em Rabinowitz-Floer homology} of $(W,\lambda)$.

This is the homology of a chain complex, whose generators are either constant loops in $\partial W$ or reparametrized closed orbits of the Reeb vector field  $R$ on $\partial W$ induced by the contact form $\alpha:=\lambda|_{\partial W}$. In the construction, one considers the completion $\hat{W}$ of $W$, that is the symplectic manifold obtained by attaching to $W$ the collar $\partial W \times [1,+\infty[$ and by extending the Liouville form to $\hat{W}$ by setting
$\lambda := \rho \alpha$ on the collar, where $\rho\in [1,+\infty[$ denotes the second component in $\partial W \times [1,+\infty[$. One chooses a Hamiltonian $H\in C^{\infty} (\hat{W})$ whose zero level set is $\partial W$, which is negative on the interior part of $W$, and whose Hamiltonian vector field restricts to the Reeb vector field $R$ on $\partial W$. Then one looks at the Hamiltonian action functional on the space of loops $y:\R/\eta \Z \rightarrow \hat{W}$ of arbitrary period $\eta>0$:
\[
y \mapsto \int_{\R/\eta\Z} y^* \lambda - \int_{\R/\eta \Z} H(y(\tau))\, d\tau.
\]
If we reparametrize each of such loops $y$ on the circle of unit length $\T:=\R/\Z$ by setting $x(t) = y(\eta t)$, we obtain the {\em Rabinowitz action functional}
\[
\mathbb{A}(x,\eta) := \int_{\T} x^* \lambda - \eta \int_{\T} H(x(t)) \, dt,
\]
which was used by P.\ H.\ Rabinowitz \cite{rab78} in his proof of the Weinstein conjecture for star-shaped domains of $\R^{2n}$. Although we started with a positive $\eta$, the functional $\mathbb{A}$ makes sense for all real values of $\eta$, and we can see it as a functional on $\Lambda \hat{W} \times \R$, where $\Lambda \hat{W}$ is the space of smooth closed loops $x:\T \rightarrow \hat{W}$. Its critical points
 are either of the form $(x,0)$, with $x$ a constant loop in $\partial W$, or $(x,\eta)$, with $\eta\neq 0$ and $y(\tau) := x(\tau/\eta)$ a closed Reeb orbit on $\partial W$ with (not necessarily minimal) period $|\eta|$.  
If one endows $\Lambda \hat{W}$ with the $L^2$-Riemannian structure induced by 
an $\omega$-compatible $t$-dependent almost complex structure $J_t$, $t\in \T$, on $\hat{W}$, the {\em negative gradient equation} for $\mathbb{A}$ has the form
\begin{equation}
\label{Fleq}
\begin{split}
& \frac{\partial u}{\partial s} (s,t) + J_t(u(s,t)) \left( \frac{\partial u}{\partial t}(s,t) - \eta(s) X_H(u(s,t)) \right) = 0, \\
& \frac{d\eta}{ds} (s) = \int_{\T} H(u(s,t))\, dt, 
\end{split} \end{equation}      
where $u$ is a map from the cylinder $\R\times \T$, endowed with coordinates $(s,t)$, to $\hat{W}$, $\eta$ is a real function on $\R$, and $X_H$ denotes the Hamiltonian vector field on $(\hat{W},\omega)$ induced by $H$. We refer to the system of PDE's  (\ref{Fleq}) as to the {\em Rabinowitz-Floer equation}. The Rabinowitz-Floer complex of $(W,\lambda)$ is then defined by counting solutions of the equation (\ref{Fleq}) which connect different critical points of $\mathbb{A}$. Actually, the critical points of $\mathbb{A}$ are never isolated, so one needs to work in a Morse-Bott setup: Up to a small perturbation of $\partial W$ within $\hat{W}$, the critical set $\crit\, \mathbb{A}$ of the Rabinowitz action functional consists of smooth finite dimensional manifolds, so one fixes an auxiliary Morse function $a$ on $\crit \, \mathbb{A}$, and considers the set of {\em negative gradient flow lines with cascades} for $(\mathbb{A},a)$ (see the Appendix of \cite{fra04} and Section \ref{rfcsec} below). The $\Z_2$-vector space $RF$ is defined as the space of formal sums of critical points of $a$, which are possibly infinite but on which $\mathbb{A}$ is bounded from above. A counting process on the space of gradient flow lines with cascades for $(\mathbb{A},a)$ produces a homomorphism $\partial : RF \rightarrow RF$
such that $\partial^2=0$. When the transverse Conley-Zehnder index of the 
Reeb orbits of $R$ is well-defined, $RF$ carries a $\Z$-grading, so $(RF_*,\partial)$ is a chain complex of $\Z_2$-vector spaces, which is called the {\em Rabinowitz-Floer complex of $(W,\lambda,a)$} and is denoted by $RF(W,\lambda,a)$. Its homology does not depend on the choice of the auxiliary data and is called the {\em Rabinowitz-Floer homology of $(W,\lambda)$}. Rabinowitz-Floer homology has the following important vanishing property: $HRF(W,\lambda) =0$ whenever there is an embedding $\varphi: W \hookrightarrow W'$ into the interior part of another Liouville domain $(W',\lambda')$, such that $\varphi^* \lambda' - \lambda$ is exact and $\varphi(W)$ is displaceable within $W'$ by a Hamiltonian isotopy (see Theorem 1.2 in \cite{cf09}; in order to prove this theorem and other invariance results, it is useful to define the Rabinowitz-Floer homology of $(W,\lambda)$ by using more general ambient manifolds than the completion $\hat{W}$; the fact that the resulting homology does not depend on the choice of the ambient manifold is proved in \cite{cfo09}, Proposition 3.1). The fact that the Rabinowitz-Floer homology of $(W,\lambda)$ vanishes implies the existence of closed Reeb orbits on $\partial W$, because otherwise $HRF(W,\lambda)$ would be isomorphic to the singular homology of $\partial W$ (see Corollary 3.3 in \cite{sch06} for a proof of this fact under weaker assumptions).

Rabinowitz-Floer homology has already found quite a number of other applications. It provides obstructions for the existence of contact embeddings of unit cotangent sphere bundles (see \cite{cf09} and \cite{cfo09}), it allows to prove existence and multiplicity results for leaf-wise intersections points (see \cite{af08a} and \cite{af08b}), and it plays a relevant role in the study of the
 dynamics and the symplectic topology of energy hypersurfaces of magnetic flows on cotangent bundles (see \cite{cfp09}, where Rabinowitz-Floer homology is extended to domains whose boundary need not be of contact type, but satisfies substantially weaker assumptions). An important ingredient in all these applications is to determine the Rabinowitz-Floer homology of $(D^*M,\lambda)$, the unit cotangent disk bundle of a closed Riemannian manifold $(M,g)$, equipped with the restriction of the canonical Liouville one-form of the cotangent bundle $T^*M$. In this case, the completion of $D^*M$ is $T^*M$, its boundary is $S^*M$, the unit cotangent sphere bundle, and the Reeb flow on $S^*M$ coincides with the geodesic flow.

When $M$ is the sphere, $HRF(D^*M,\lambda)$ was computed in \cite{cf09}, while in the case of an arbitrary closed manifold it has been determined by K.\ Cieliebak, U.\ Frauenfelder, and A.\ Oancea in \cite{cfo09}. In the latter paper, the authors show that the Rabinowitz-Floer homology groups fit into an exact sequence of the form
\begin{equation}
\label{exaseq1}
\dots \rightarrow SH_k(W,\lambda) \rightarrow HRF_k(W,\lambda) \rightarrow SH^{1-k}(W,\lambda) \rightarrow SH_{k-1}(W,\lambda) \rightarrow \dots
\end{equation}
where $SH_*(W,\lambda)$ and $SH^*(W,\lambda)$ are the symplectic homology and cohomology groups of $(W,\lambda)$ (see \cite{fh94}, \cite{cfh95}, \cite{vit99}, \cite{oan04}, and \cite{sei06b}). This exact sequence is built by introducing a version of symplectic homology for $V$-shaped Hamiltonians, which naturally fits into an exact sequence of the above form, and by proving that such a homology is isomorphic to Rabinowitz-Floer homology. The latter isomorphism is the deep part of the proof in \cite{cfo09}. In the case $W=D^*M$, using the fact that $SH_*(D^*M,\lambda)$ and $SH^*(D^*M,\lambda)$ are isomorphic to the singular homology and cohomology of $\Lambda M$, the free loop space of $M$ (as proved by Viterbo in \cite{vit03}), the exact sequence (\ref{exaseq1}) takes the form
\begin{equation}
\label{exaseq2}
\dots
\rightarrow H_k(\Lambda M) \rightarrow HRF_k(D^*M ,\lambda) \rightarrow H^{1-k}(\Lambda M) \rightarrow H_{k-1}(\Lambda M) \rightarrow \dots
\end{equation}
Moreover, the homomorphism
\[
H^{1-k}(\Lambda M) \rightarrow H_{k-1}(\Lambda M)
\]
is clearly zero, except possibly for $k=1$, where it is shown to be the multiplication by $\chi(T^*M)$, the Euler number of the vector bundle $T^*M \rightarrow M$, on the space of contractible loops, and to vanish on all the other components of $\Lambda M$. These facts allow to completely determine $HRF(D^*M,\lambda)$ (see Theorem 1.10 in \cite{cfo09} and Section \ref{exasec} below).

\medskip

\noindent{\bf The first aim of this paper} is to present an alternative, direct construction of the exact sequence (\ref{exaseq2}) (hence, an alternative way of determining $HRF(D^*M,\lambda)$). The starting idea is that (\ref{exaseq2}) should be thought as the long exact homology sequence induced by a short exact sequence of chain complexes of the form
\[
0 \rightarrow C_k(\Lambda M) \rightarrow RF_k(D^*M,\lambda,a) \rightarrow C^{1-k}(\Lambda M) \rightarrow 0.
\]
Here $C_*(\Lambda M)$ and $C^*(\Lambda M)$ are a chain complex and a differential complex inducing the homology and the cohomology of $\Lambda M$, for instance the complexes associated to a cellular filtration of $\Lambda M$. 
These complexes can also be obtained from the classical infinite dimensional Morse theory (see \cite{pal63}, \cite{kli82}, and \cite{ama06m}) of the 
geodesic energy functional   
\[
\mathbb{E}(\gamma) = \int_0^1 g(\gamma'(t),\gamma'(t))\, dt
\]
on the space $W^{1,2}(\T,M)$ of absolutely continuous closed loops in $M$ with square integrable velocity. For a generic Riemannian metric $g$, $\mathbb{E}$ is a Morse-Bott functional on $W^{1,2}(\T,M)$, and one can associate to it a {\em Morse chain complex} $(M_*(\mathbb{E},e),\partial)$ and a {\em Morse differential complex} $(M^*(\mathbb{E},f),\delta)$ by looking at the flow lines with cascades produced by the $W^{1,2}$-negative gradient flow of $\mathbb{E}$, together with the negative gradient flow of auxiliary Morse functions $e$ and $f$ on $\crit\, \mathbb{E}$. Here, it is actually useful to choose $e$ to be suitably compatible with the auxiliary Morse function $a$ on $\crit \, \mathbb{A}$,  and $f$ to coincide with $-e$ (see conditions (A0-A4) in Section \ref{criti} below). The homology of $M_*(\mathbb{E},e)$ is isomorphic to the singular homology of $\Lambda M$, while the cohomology of $M^*(\mathbb{E},-e)$ is isomorphic to the singular cohomology of $\Lambda M$. Therefore, our aim is to construct a short exact sequence of chain complexes of the form 
\begin{equation}
\label{exa0}
0 \rightarrow M_k(\mathbb{E},e) \rightarrow RF_k(D^*M,\lambda,a) \rightarrow 
M^{1-k} (\mathbb{E},-e) \rightarrow 0.
\end{equation}
In the construction of the Rabinowitz-Floer complex, it is natural to consider the Hamiltonian 
\begin{equation}
\label{laham}
H(q,p) = \frac{1}{2} ( g^*_q(p,p) - 1), \quad \forall q\in M, \; p\in T_q^* M,
\end{equation}
where $g^*$ is the inner product on $T^*M$ dual to $g$, so that the flow of $X_H$ is precisely the geodesic flow on $T^*M$. The necessity to work with the quadratic Hamiltonian $H$, instead of with Hamiltonians which are constant outside a compact set, requires us to generalize the $L^{\infty}$ estimates for the Rabinowitz-Floer equation (\ref{Fleq}) proved in  \cite{cf09}. We discuss this point at the end of this Introduction. 
 
The construction of the chain maps appearing in (\ref{exa0}) is based on counting solutions of a mixed problem and is similar to the construction that we introduced in \cite{as06}. More precisely, the definition of the chain map 
\[
\Phi : M_* (\mathbb{E},e) \rightarrow RF_*(D^*M,\lambda,a)
\]
is based on counting solutions of the following mixed problem: Given $\gamma$ a critical point of $\mathbb{E}$ and $z$ a critical point of $\mathbb{A}$, we consider the space of pairs
\[
u : [0,+\infty[ \times \T \rightarrow T^*M, \quad \eta :[0,+\infty[ \rightarrow \R,
\]
which solve the Rabinowitz-Floer equation (\ref{Fleq}), converge to $z$ for $s\rightarrow +\infty$, and satisfy the boundary conditions
\begin{equation}
\label{cpl}
\pi\circ u(0,\cdot) \in W^u(\gamma;-\nabla \mathbb{E}), \quad \eta(0) = \sqrt{ \mathbb{E}(\pi\circ u(0,\cdot))},
\end{equation}
where $\pi : T^*M \rightarrow M$ is the projection and $W^u(\gamma;-\nabla \mathbb{E})\subset W^{1,2}(\T,M)$ denotes the unstable manifold of $\gamma$ with respect to the $W^{1,2}$-negative gradient flow of $\mathbb{E}$. Actually, the Morse-Bott situation requires us to consider solutions with cascades of the above problem, see Section \ref{phisec} below for a precise definition. In order to keep the presentation simpler, we systematically ignore this point within this Introduction.
The second coupling condition in (\ref{cpl}) is suggested by the inequality
\begin{equation}
\label{actin1}
\mathbb{A}(x,\sqrt{\mathbb{E}(\pi\circ x)}) \leq \sqrt{\mathbb{E} (\pi\circ x)}, \quad \forall x\in \Lambda T^*M,
\end{equation}
and by the fact that equality holds when $(x,\sqrt{\mathbb{E}(\pi\circ x)})$ is a critical point of the Rabinowitz action functional (see Lemma \ref{levrel} below). Indeed, the latter inequality allows us to prove the necessary compactness for the above mixed problem, and to construct a left inverse $\hat{\Phi}$ for $\Phi$.

Similarly, the chain map 
\[
\Psi : RF_*(D^*M,\lambda,a) \rightarrow M^{1-*}(\mathbb{E},-e)
\]
is built by considering solutions
\[
u:\;  ]-\infty,0] \times \T \rightarrow T^*M, \quad \eta :\; ]-\infty,0] \rightarrow \R, 
\]
of the Rabinowitz-Floer equation (\ref{Fleq}), which converge to a critical point $z$ of $\mathbb{A}$ for $s\rightarrow -\infty$, and satisfy the boundary conditions
\[
\pi\circ u(0,-\cdot) \in W^u(\gamma;-\nabla \mathbb{E}), \quad \eta(0) = - \sqrt{ \mathbb{E}(\pi\circ u(0,\cdot))}.
\] 
Note that the reversed parametrization of the loop $u(0,\cdot)$ is considered here. Then we use the inequality
\begin{equation}
\label{actin2}
\mathbb{A}(x,-\sqrt{\mathbb{E}(\pi\circ x)}) \geq - \sqrt{\mathbb{E} (\pi\circ x)}, \quad \forall x\in \Lambda T^*M,
\end{equation}
and we construct a right inverse $\hat{\Psi}$ for $\Psi$, which is then surjective.

The composition $\Psi\circ \Phi$ might not be zero, but we can show that it is chain homotopic to zero, that is
\[
\Psi\circ \Phi = P \partial + \delta P,
\]
where the chain homotopy
\[
P : M_*(\mathbb{E},e) \rightarrow M^{-*} (\mathbb{E},-e)
\]
is defined by counting finite length cylinders over minimal closed geodesics, that is solutions
\[
u : [-S,S] \times \T \rightarrow T^*M, \quad \eta : [-S,S] \rightarrow \R,
\]
of the Rabinowitz-Floer equation (\ref{Fleq}) for some $S>0$, which satisfy the boundary conditions
\[
\pi\circ u(-S,\cdot) = \gamma^-, \quad \eta(-S) = \sqrt{ \mathbb{E}(\gamma^-)}, \quad \pi\circ u(S,\cdot) = \gamma^+(-\cdot), \quad \eta(S) = -\sqrt{ \mathbb{E}(\gamma^+)},
\] 
where $\gamma^-$ and $\gamma^+$ are closed geodesics of Morse index zero. The main point in the construction of $P$ and in the proof of the fact that it is a chain homotopy between $\Psi\circ \Phi$ and zero is to show that the length $2S$ of  the cylinders in the above problem -- and in the analogous problem in which the sum of the indices of the closed geodesics $\gamma^-$ and $\gamma^+$ is 1 -- is bounded away from zero (see Lemma \ref{X} below). 
Then $\Phi$ can be modified within its chain homotopy class, obtaining the chain map
\[
\Theta := \Phi - \hat{\Psi} P \partial - \partial \hat{\Psi} P,
\]
and our first main result is the following:

\begin{mainthm}
\label{main1}
The short sequence of chain complexes
\[
0 \rightarrow M_*(\mathbb{E},e) \stackrel{\Theta}{\rightarrow} RF_*(D^*M,\lambda,a) \stackrel{\Psi}{\rightarrow} M^{1-*} (\mathbb{E},-e) \rightarrow 0
\]
is exact and has a canonical splitting. After identifying Morse (co)homology with singular (co)homology by the isomorphisms
\[
HM_k(\mathbb{E},e) \cong H_k (\Lambda M), \quad HM^k(\mathbb{E},-e) \cong H^k(\Lambda M),
\]
the associated long exact sequence takes the form
\[
\dots \rightarrow H_k(\Lambda M) \stackrel{\Theta_* = \Phi_*}{\longrightarrow} HRF_k(D^*M ,\lambda) \stackrel{\Psi_*}{\longrightarrow} H^{1-k}(\Lambda M) \stackrel{\Delta_*}{\longrightarrow} H_{k-1}(\Lambda M) \rightarrow \dots
\]
where the connecting homomorphism $\Delta_*$ can be non-zero only in degree zero, and where
\[
\Delta_* : H^0 (\Lambda M) \rightarrow H_0 (\Lambda M)
\]
vanishes on the components of $\Lambda M$ containing non-contractible loops, and is the multiplication by the Euler number $\chi(T^*M)$ on the component of $\Lambda M$ consisting of contractible loops.
\end{mainthm}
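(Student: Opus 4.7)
The plan is to split the argument into three stages: verify that $\Theta$ is a well-defined chain map with $\Psi\circ \Theta = 0$; prove exactness and the canonical splitting of the resulting short sequence; and analyze the connecting homomorphism of the induced long exact sequence.

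For the first stage, the chain-map property of $\Theta$ is a direct computation. Since $\Phi$ is already a chain map and $\partial^2 = 0$, the commutator $\partial\Theta - \Theta\partial$ collapses to $-\partial \hat{\Psi} P \partial + \partial \hat{\Psi} P \partial = 0$. To see $\Psi\circ \Theta = 0$, I apply $\Psi$ to $\Theta = \Phi - \hat{\Psi} P \partial - \partial \hat{\Psi} P$ and use that $\Psi\hat{\Psi} = \mathrm{id}$ together with $\Psi$ being a chain map, obtaining
\[
\Psi\Theta = \Psi\Phi - P\partial - \delta P,
\]
which vanishes by the chain-homotopy relation $\Psi\Phi = P\partial + \delta P$.

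For the second stage, the right inverse $\hat{\Psi}$ already provides a canonical splitting of the surjection $\Psi$, so it suffices to show $\mathrm{Im}\,\Theta = \ker \Psi$. The inclusion $\subseteq$ has just been established. For the reverse inclusion and for injectivity of $\Theta$, I would use an action-filtration argument: $\hat{\Phi}$ is a left inverse of $\Phi$, and both correction terms $\hat{\Psi} P \partial$ and $\partial \hat{\Psi} P$ strictly decrease the Rabinowitz action, so $\hat{\Phi}\circ \Theta = \mathrm{id} - (\text{strictly action-decreasing operator})$ is unipotent with respect to the action filtration on $M_*(\mathbb{E},e)$ and hence invertible on each bounded-action truncation. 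This simultaneously proves injectivity of $\Theta$ and surjectivity onto $\ker \Psi$, and it exhibits the canonical splitting $RF_*(D^*M,\lambda,a) \cong \Theta(M_*(\mathbb{E},e)) \oplus \hat{\Psi}(M^{1-*}(\mathbb{E},-e))$.

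For the third stage, the snake lemma presents $\Delta_*$ on homology as induced by $\Theta^{-1}\circ \partial \circ \hat{\Psi}$ on cocycles. Combined with $\hat{\Phi}\Phi = \mathrm{id}$ and the relation $\Psi\Phi = P\partial + \delta P$, a short algebraic manipulation shows that $\Delta_*$ is in fact induced on homology by $P$ itself. Now $P$ is constructed by counting Rabinowitz-Floer cylinders whose two boundaries are parametrized by the index-zero critical manifolds of $\mathbb{E}$, so it maps a generator of $M_k(\mathbb{E},e)$ into $M^{-k}(\mathbb{E},-e)$ and is therefore nontrivial only for $k=0$; hence $\Delta_*$ vanishes in all degrees other than $k=1$. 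It remains to analyze $\Delta_*\colon H^0(\Lambda M)\to H_0(\Lambda M)$. On each component corresponding to a nontrivial free homotopy class the index-zero critical manifold consists of non-constant minimal closed geodesics of fixed positive length; the lower bound on the cylinder length $2S$ from Lemma~\ref{X}, combined with the action comparisons (\ref{actin1})--(\ref{actin2}), forces the corresponding moduli space of cylinders to be empty, so $\Delta_*$ vanishes there. On the contractible component, the index-zero critical manifold is the copy of $M$ formed by constant loops, and the moduli space defining $P$ reduces, through a neck-stretching degeneration controlled by the uniform estimates described at the end of the Introduction, to a finite-dimensional intersection count of the zero section with itself inside $T^*M$, weighted by $e$. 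This count is precisely the Euler number $\chi(T^*M)$, which identifies $\Delta_*$ on contractible loops with multiplication by $\chi(T^*M)$.

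I expect the two most delicate points to be: (i) making the action-filtration and unipotency argument for $\hat{\Phi}\Theta$ rigorous within the Morse-Bott cascade setting, where one must carefully track how $\hat{\Phi}$, $\hat{\Psi}$, and $P$ interact with the filtration induced by $\mathbb{A}$; and (ii) the neck-stretching computation that localises the moduli space defining $P$ onto the zero section and identifies the remaining finite-dimensional count with $\chi(T^*M)$, relying essentially on the uniform $L^\infty$-estimates for the Rabinowitz-Floer equation with Hamiltonians of quadratic growth that are developed later in the paper.
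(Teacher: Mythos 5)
Your Stages~1 and~2 are essentially the paper's argument (the ``unipotency'' of $\hat{\Phi}\Theta = \mathrm{Id} - \hat{\Phi}\partial\hat{\Psi}P$ is exactly the identity $\hat{\Phi}\partial\hat{\Psi}P\hat{\Phi}\partial\hat{\Psi}=0$ of the paper's proof, which in turn rests on the geometric input $Pq_{\min}\in M^0_{]0,+\infty[}$, i.e.\ the emptiness of $\mathcal{M}_P(q_{\min},q_{\max})$ established inside Lemma~\ref{X}; this is not a consequence of a naive action-filtration argument, and should be singled out as the key nontrivial step). Your Stage~3, however, contains a genuine gap. The connecting homomorphism is the chain map $\Delta = \hat{\Theta}\partial\hat{\Psi} = \hat{\Phi}\partial\hat{\Psi}\colon M^{1-*}(\mathbb{E},-e)\to \bigl(M_*(\mathbb{E},e)\bigr)^+$, which in the only nontrivial degree goes from $M^0$ to $M_0$. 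The chain homotopy $P$ goes in the opposite direction, from $M_0$ to $M^0$, so the claim ``$\Delta_*$ is induced on homology by $P$ itself'' cannot be right on degree grounds alone, and no algebraic manipulation of $\hat{\Phi}\Phi=\mathrm{id}$ and $\Psi\Phi = P\partial + \delta P$ will make it so.

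Consequently the rest of Stage~3 attacks the wrong object. The Euler number does not come from localising the $P$-cylinders onto the zero section; it comes from the Rabinowitz--Floer differential $\partial$ restricted to the zero-action critical manifold $S^*M\times\{0\}$, more precisely from the identity $\partial z_{q_{\max}}^- \in \chi(T^*M)\, z_{q_{\min}}^+ + RF^{]-\infty,0[}$ of Proposition~\ref{partialprop}, which the paper deduces from the Morse-complex description of the Gysin sequence of the sphere bundle $S^*M\to M$ in Appendix~B (Proposition~\ref{descri}). Once one knows that $\hat{\Psi}q_{\max}\in z_{q_{\max}}^- + RF^{]-\infty,0[}$, that $\partial$ preserves $RF^{]-\infty,0[}$, and that $\hat{\Phi}$ kills $RF^-$ and sends $z_{q_{\min}}^+$ to $q_{\min}$, the computation $\Delta q_{\max} = \chi(T^*M)q_{\min}$ is immediate. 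Similarly, the vanishing of $\Delta_*$ on non-contractible components does not follow from emptiness of the $P$-moduli spaces --- Lemma~\ref{X} only gives a positive lower bound on the cylinder length there, not emptiness --- but from the fact that $\Delta$ preserves the splitting by free homotopy class and that the constant loops $q_{\max},q_{\min}$ live only in the contractible component. You should replace the $P$-based analysis of Stage~3 by the computation of $\hat{\Phi}\partial\hat{\Psi}$ using Proposition~\ref{partialprop}.
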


See Theorem \ref{main} below for a more precise statement. The long exact sequence of the theorem above is precisely the exact sequence (\ref{exaseq2}) from \cite{cfo09} (see the end of Section \ref{exasec} for comments about this).
When compared to \cite{cfo09}, our approach has advantages and disadvantages. An obvious limitation is that our construction of the maps $\Phi$ and $\Psi$ heavily uses the cotangent bundle structure, so we do not find the exact sequence (\ref{exaseq2}) as a particular case of (\ref{exaseq1}). On the other hand, we obtain quite natural maps already at the chain level. Moreover, although here - for sake of simplicity - we work with $\Z_2$ coefficients, our construction could be extended to arbitrary coefficient groups, in particular to the integers, and the canonical splitting would still exist. The approach in \cite{cfo09}, instead, makes extensive use of direct and inverse limits of chain complexes, and for this reason it needs a Mittag-Leffler condition to be fulfilled, which forces to choose coefficient in a field (see \cite{cf09b} for a discussion of this point).

\medskip

\noindent{\bf In our second result}, we treat the following class of domains: $W$ is a compact subset of $T^*M$, it has smooth fiberwise uniformly convex boundary, and its interior contains a Lagrangian graph, that is
\[
\set{(q,\theta(q))}{q\in M} \subset \mathrm{Int}\, (W),
\]
for some closed one-form $\theta$ on $M$. Such a $W$ is a Liouville domain with respect to the one-form
\[
\lambda_{\theta} := \lambda - \pi^* \theta,
\]
and the standard symplectic form $\omega=d\lambda=d\lambda_{\theta}$. Its completion is naturally identified with $(T^*M,\lambda_{\theta})$. Equivalently, such a $W$ can be obtained by starting from an arbitrary {\em Tonelli Lagrangian} $L\in C^{\infty}(TM)$ (that is, $L$ is superlinear and has positive fiberwise second differential) and by considering the set of phase space points whose energy does not exceed $\kappa$, where $\kappa$ is larger than the {\em strict Ma\~n\'e critical value} $c_0(L)$ (see \cite{pp97}, \cite{cipp98}, and Remark \ref{smcv} below).  

A standard homotopy argument could be used to prove that the Rabinowitz-Floer homology of $(W,\lambda_{\theta})$ is isomorphic to the Rabinowitz-Floer homology of $(D^*M,\lambda)$, but again we wish to work at the chain level, and to find connections with the Morse theory of functionals from Lagrangian classical mechanics. The fact that the interior part of $W$ contains a Lagrangian graph implies that the flow of the Reeb vector field $R$ on $(\partial W,\lambda_{\theta}|_{\partial W})$ is equivalent, up to time reparametrizations, to a Finsler geodesic flow (see \cite{cipp98}). Therefore, one could adapt the proof of Theorem \ref{main1} to the Finsler energy functional. Instead, we prefer to compare the Rabinowitz-Floer complex of $(W,\lambda_{\theta})$ to the Morse chain complex and differential complex of the {\em free period Lagrangian action functional}. We start by seeing $\partial W$ as the zero level set  of a suitably quadratic Hamiltonian $H\in C^{\infty}(T^*M)$, whose fiberwise second differential is positive, and we consider its Fenchel dual Lagrangian $L\in C^{\infty}(TM)$, defined by
\[
L(q,v) := \max_{p\in T_q^*M} \Bigl( \langle p,v \rangle - H(q,p) \Bigr).
\]
By using the Hamiltonian characterization of the {\em Ma\~n\'e critical value} $c(L)$, which was discovered by G.\ Contreras, R.\ Iturriaga, G.\ P.\ Paternain, and M.\ Paternain in \cite{cipp98},  we deduce that the  Ma\~n\'e critical value
\begin{eqnarray*}
c(L-\theta) := \inf \Bigl\{ \kappa \in \R\, \Big| \int_0^T \bigl( L(\gamma,\gamma') - \theta(\gamma)[\gamma'] + \kappa \bigr)\, d\tau \geq 0, \\ \forall \gamma : \R/T\Z \rightarrow M \mbox{ absolutely continuous, } \forall T>0 \Bigl\}
\end{eqnarray*}
is strictly negative. This fact implies that the  free period Lagrangian action functional associated to the Lagrangian $L-\theta$, which, after the usual reparametrization on $\T=\R/\Z$, takes the form
\[
\mathbb{S}: W^{1,2} (\T,M) \times ]0,+\infty[ \rightarrow \R, \quad 
\mathbb{S}(q,T) := T \int_{\T} (L-\theta)(q(t),q'(t)/T)\, dt,
\]
is positive and satisfies the Palais-Smale condition at every positive level, as shown by G.\ Contreras in \cite{con06} (see also \cite{cipp00}). We recall that $(q,T)$ is a critical point of $\mathbb{S}$ if and only if $\gamma(t):=q(t/T)$ is a $T$-periodic solution of the Euler-Lagrange equation associated to $L-\theta$, which coincides with the Euler-Lagrange equation associated to $L$, of zero energy. Therefore, the Legendre transform identifies $(\gamma,\gamma')$ with a $T$-periodic orbit of the Hamiltonian vector field $X_H$ on $\partial W$, which by our choice of $H$ coincides with the Reeb vector field $R$.
 
The functional $\mathbb{S}$ is in general just $C^{1,1}$ on the Hilbert manifold $\mathcal{M} := W^{1,2}(\T,M)\times ]0,+\infty[$, but under the usual Morse-Bott assumption we can build a smooth pseudo-gradient vector field for it, arguing as in \cite{as08b}. The Morse chain complex and differential complex of such a pseudo-gradient vector field are denoted by
\[
M_*^{]0,+\infty[} (\mathbb{S},s), \quad M^*_{]0,+\infty[} (\mathbb{S},-s),
\]
where $s$ is the auxiliary Morse function on $\crit\, \mathbb{S}$, and their homology and cohomology are isomorphic to
\[
HM^{]0,+\infty[}_k (\mathbb{S},s) \cong H_k(\Lambda M,M), \quad HM^k_{]0,+\infty[} (\mathbb{S},-s) \cong H^k(\Lambda M,M), \quad \forall k\in \Z,
\]
where $M\subset \Lambda M$ denotes the space of constant loops. The restriction of the functional $\mathbb{S}$ to the space $\mathcal{M}^0$ of all $(q,T)\in \mathcal{M}$ with $q$ contractible has {\em critical points at infinity}, in the sense of A.\ Bahri \cite{bah89}: The infimum of $\mathbb{S}|_{\mathcal{M}^0}$ is zero, while the infimum of $\mathbb{S}|_{\mathcal{M}^0}$ over its critical set is positive, and the $\omega$-limit in $\mathcal{M}^0$ of each point $(q,T)\in \mathcal{M}^0$ with 
\[
\mathbb{S}(q,T)< \min_{\crit\, \mathbb{S}|_{\mathcal{M}^0}} \mathbb{S},
\]
with respect to the negative (pseudo-)gradient flow of $\mathbb{S}$ is empty.
Actually, if $\sigma \mapsto (q(\sigma),T(\sigma))$, $\sigma \in ]\sigma^-,\sigma^+[$, is the (maximal) orbit of such a point, then $T(\sigma)$ converges to zero and $q(\sigma)$ converges to some constant loop in $W^{1,2}$, for $\sigma \uparrow \sigma^+$. These facts allow to consider the manifold $M$ as the space of critical points at infinity of $\mathbb{S}$. If we take also these virtual critical points into account, we get an enlarged Morse chain complex and differential complex, that we denote by
\[
M_*(\mathbb{S},s), \quad M^* (\mathbb{S},-s),
\]
such that
\begin{equation}
\label{ide2}
HM_k (\mathbb{S},s) \cong H_k(\Lambda M), \quad HM^k (\mathbb{S},-s) \cong H^k(\Lambda M), \quad \forall k\in \Z.
\end{equation}
These constructions allow us to recover a situation which is similar to the one leading to Theorem \ref{main1}: We can define chain maps
\[
\Xi : M_* (\mathbb{S},s) \rightarrow RF_* (W,\lambda_{\theta},a), \quad \Upsilon : RF_*  (W,\lambda_{\theta},a) \rightarrow M^{1-*} (\mathbb{S},-s),
\]
which are, respectively, injective and surjective, with left and right inverses $\hat{\Xi}$ and $\hat{\Upsilon}$. The definition of $\Xi$ is based on counting solutions of the following mixed problem: Given a critical point $w$ of $\mathbb{S}$ and a critical point $z$ of $\mathbb{A}$, we consider the space of pairs
\[
u : [0,+\infty[ \times \T \rightarrow T^*M, \quad \eta: [0,+\infty[ \rightarrow \R,
\]
which solve the Rabinowitz-Floer equation (\ref{Fleq}), converge to $z$ for $s\rightarrow +\infty$, and are such that $(\pi\circ u(0,\cdot),\eta(0))$ is in the unstable manifold of $w$ with respect to the negative pseudo-gradient flow of $\mathbb{S}$. The definition of $\Upsilon$ is similar. The compactness of these spaces of maps and the existence of a left, respectively right, inverse for $\Xi$, respectively $\Upsilon$, are based on the following inequality,
\begin{equation}
\label{fenchel}
\mathbb{A}(x,\eta) \leq \mathbb{S}(\pi\circ x,\eta), \quad \forall x\in \Lambda T^* M, \; \forall \eta>0,
\end{equation}
which is a straighforward consequence of Fenchel duality (see Lemma \ref{levrel2} below).

Again, the composition $\Upsilon \circ \Xi$ is chain homotopic to zero by a chain homotopy $Q: M_*(\mathbb{S},s) \rightarrow M^{-*}(\mathbb{S},-s)$, which can be used to modify $\Xi$ within its chain homotopy class and to obtain the chain map
\[
\Omega := \Xi - \hat{\Upsilon} Q \partial - \partial \hat{\Upsilon} Q.
\]
Let $RF^I_*$ denote the chain complex obtained by considering only the critical points $z$ of the Rabinowitz action functional $\mathbb{A}$ with action $\mathbb{A}(z)$ in the interval $I\subset \R$. 
Then we have the following result, which can be seen as a generalized version of Theorem \ref{main1}:

\begin{mainthm}
\label{main2}
The chain maps $\Xi$ and $\Psi$ induce isomorphisms
\[
M_*^{]0,+\infty[}(\mathbb{S},s) \cong RF^{]0,+\infty[}_*(W,\lambda_{\theta},a), \quad RF^{]-\infty,0[}_*(W,\lambda_{\theta},a) \cong M^{1-*}_{]0,+\infty[} (\mathbb{S},-s),
\]
and, in particular,
\[ 
HRF^{]0,+\infty[}_k (W,\lambda_{\theta},a) \cong H_k (\Lambda M,M), \quad
HRF_k^{]-\infty,0[}(W,\lambda_{\theta},a) \cong H^{1-k} (\Lambda M,M), \quad \forall k\in \Z.
\]
The short sequence of chain complexes
\[
0 \rightarrow M_*(\mathbb{S},s) \stackrel{\Omega}{\rightarrow} RF_*(W,\lambda_{\theta},a) \stackrel{\Upsilon}{\rightarrow} M^{1-*} (\mathbb{S},-s) \rightarrow 0
\]
is exact and has a canonical splitting. After identifying Morse (co)homology with singular (co)homology by (\ref{ide2}), the associated long exact sequence is
\[
\dots \rightarrow H_k (\Lambda M) \stackrel{\Omega_* = \Xi_*}{\longrightarrow} H RF_k (W,\lambda_{\theta},a) \stackrel{\Upsilon_*}{\longrightarrow} H^{1-k} (\Lambda M) \stackrel{\Delta_*}{\longrightarrow} H_{k-1} (\Lambda M) \rightarrow \dots
\]
where the connecting homomorphism $\Delta_*$ can be non-zero only in degree zero, where
\[
\Delta_* : H^0(\Lambda M) \rightarrow H_0 (\Lambda M)
\]
vanishes on the components of $\Lambda M$ containing non-contractible loops, and is the multiplication by the Euler number $\chi(T^* M)$ on the component of $\Lambda M$ consisting of contractible loops.
\end{mainthm}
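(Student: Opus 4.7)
The plan is to follow the blueprint of Theorem \ref{main1}, with the Fenchel inequality (\ref{fenchel}) replacing the level estimates (\ref{actin1})--(\ref{actin2}), and with the Morse--Bott / pseudo-gradient framework adapted to the $C^{1,1}$ regularity of $\mathbb{S}$ and to the critical points at infinity in the contractible component. First I would define $\Xi$, $\Upsilon$, $\hat{\Xi}$, $\hat{\Upsilon}$ as in Section \ref{phisec}, mutatis mutandis, by counting solutions of the mixed problem whose boundary conditions couple the half-cylinder Rabinowitz--Floer equation with the (un)stable manifolds of the negative pseudo-gradient flow of $\mathbb{S}$. Compactness of these moduli spaces reduces to two ingredients: the $L^\infty$ bound on half-cylinders from the Aleksandrov--Neumann maximum principle described in the abstract (applicable because $H$ is fiberwise quadratic and uniformly convex), and a uniform action bound obtained by combining (\ref{fenchel}) with the convergence assumption at the free end and with the boundary coupling at $s=0$.

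Second, I would establish the partial isomorphisms. Via the Legendre transform, positive-period critical points $(q,T)$ of $\mathbb{S}$ are in natural degree-preserving bijection with critical points $(x,\eta)$ of $\mathbb{A}$ with $\eta>0$, and the Fenchel inequality (\ref{fenchel}) is an equality at such pairs; off critical points it is strict enough to make $\Xi$ strictly decrease the action gap across critical levels. A standard induction over finite action windows (whose inductive steps are local Morse-theoretic isomorphisms, cf.\ the argument proving the analogous statement in \cite{as06}) then yields $\Xi : M_*^{]0,+\infty[}(\mathbb{S},s) \stackrel{\cong}{\rightarrow} RF_*^{]0,+\infty[}(W,\lambda_\theta,a)$. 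The analogous argument for $\Upsilon$, using the reversed-loop boundary condition and the inequality with $\eta<0$, gives the second isomorphism.

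Third, I would assemble the short exact sequence. The positive-action subcomplex of $RF_*$ is identified with $M_*^{]0,+\infty[}(\mathbb{S},s)$ by $\Xi$, while the quotient is identified with $M^{1-*}_{]0,+\infty[}(\mathbb{S},-s)$ by $\Upsilon$. To extend this exact sequence across the critical points at infinity represented by the constants $M \subset \Lambda M$, I would construct a chain homotopy $Q : M_*(\mathbb{S},s) \rightarrow M^{-*}(\mathbb{S},-s)$ by counting finite-length Rabinowitz--Floer cylinders between minimal critical points of $\mathbb{S}$, in direct analogy with $P$ in the geodesic case. The central analytic fact is the uniform lower bound on the cylinder length $2S$, obtained exactly as in Lemma \ref{X}, which both makes $Q$ well defined and produces the chain homotopy $\Upsilon\circ \Xi \simeq 0$. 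Setting $\Omega := \Xi - \hat{\Upsilon}Q\partial - \partial \hat{\Upsilon}Q$ one gets a still-injective chain map with $\Upsilon\circ \Omega = 0$, and $\hat{\Upsilon}$ provides the canonical splitting.

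Finally, I would analyze the connecting homomorphism $\Delta_*$. A degree count forces $\Delta_*$ to be supported in degree zero, and its value on the contractible component is encoded, at the chain level, by the restriction of $Q$ to the virtual critical points $M \subset \mathbb{S}$. A continuation argument, deforming $L-\theta$ through fiberwise convex Tonelli Lagrangians to the kinetic Lagrangian associated to a Riemannian metric while keeping the Rabinowitz-Floer estimates under control via the Aleksandrov techniques, identifies this count with the one computed in Theorem \ref{main1}, namely $\chi(T^*M)$ on the contractible component and zero elsewhere. The principal obstacle I anticipate is controlling the moduli spaces across this deformation and across the singular stratum of critical points at infinity; everything rests on the half-cylinder maximum principle for quadratic Hamiltonians and on the strictness of (\ref{fenchel}) off the diagonal locus of Legendre-paired critical points.
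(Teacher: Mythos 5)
Your first three steps track the paper's proof closely: the definitions of $\Xi$, $\Upsilon$, $\hat{\Xi}$, $\hat{\Upsilon}$ via the mixed boundary-value problem; the use of Lemma \ref{levrel2} (Fenchel duality in place of the level estimates of Lemma \ref{levrel}) to control action and energy; the identification, via the Legendre transform, of positive-period critical points of $\mathbb{S}$ with positive-action critical points of $\mathbb{A}$, which makes the restricted maps $\tilde{\Xi}:M_*^{]0,+\infty[}(\mathbb{S},s)\to RF_*^{]0,+\infty[}$ and $\tilde{\Upsilon}:RF_*^{]-\infty,0[}\to M^{1-*}_{]0,+\infty[}(\mathbb{S},-s)$ triangular with unit diagonal, hence isomorphisms; and the construction of $Q$, $\Omega$ and the splitting, paralleling Proposition \ref{Pprop} and Theorem \ref{main}. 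All of this is what the paper does in Section \ref{finale}.

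Where you genuinely diverge is in the computation of the connecting homomorphism $\Delta_*$. You propose a continuation argument, deforming $L-\theta$ through Tonelli Lagrangians down to the Riemannian kinetic Lagrangian and importing the result of Theorem \ref{main1}. The paper avoids any deformation: it computes $\Delta_*$ directly, at the chain level, from the Gysin-type structure of the Morse complex of the zero-action critical manifold. Concretely, $(\crit\,\mathbb{A})\cap\{\mathbb{A}=0\}\cong\partial W$ is a fiberwise sphere bundle over $M$ with Euler class equal to $\chi(T^*M)$ (regardless of the Lagrangian), the auxiliary Morse functions are chosen to satisfy the analogues of conditions (A1-A4), and then Proposition \ref{descri} of Appendix B gives $\partial z_{q_{\max}}^- \in \chi(T^*M)\,z_{q_{\min}}^+ + RF^{]-\infty,0[}$; the identity $\Delta = \hat{\Xi}\partial\hat{\Upsilon}$ (with $\hat{\Psi}q_{\max}\in z_{q_{\max}}^-+RF^{]-\infty,0[}$) then yields $\Delta q_{\max}=\chi(T^*M)q_{\min}$ immediately, exactly as in statement (c) of Theorem \ref{main}. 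Your continuation route is plausible but considerably heavier: an isomorphism of $HRF$ alone does not propagate the chain-level splitting or the connecting homomorphism, so you would have to build parametric moduli spaces continuing the entire apparatus $(\Xi,\Upsilon,\hat{\Xi},\hat{\Upsilon},Q,\Omega)$ across the deformation, while also verifying that the deformation path stays in the admissible class (fiberwise convexity, interior Lagrangian graph, $c(L_t-\theta_t)<0$). None of this is needed in the paper's argument, which is local at the zero-action level and entirely Morse-theoretic.

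One smaller point worth flagging: you invoke Lemma \ref{X} "exactly as in" the geodesic case to bound the cylinder length from below. The proof of Lemma \ref{X} uses specific features of $\mathbb{E}$, in particular the coercivity $\mathbb{E}\geq 0$, the discreteness of its critical values, and the Riemannian distance estimate (\ref{ppp1}). In the Lagrangian setting these are replaced by Proposition \ref{sulfunz} (positivity of $\mathbb{S}$, the coercivity $\mathbb{S}(q,T)\geq -c(L-\theta)T$, and the $L^2$ velocity bound), together with the bound (\ref{bfafz}) separating critical values of $\mathbb{S}$ from zero. The paper notes this is analogous but does not spell it out; if you meant to include this as part of your argument, it needs the Lagrangian-side estimates, not a verbatim repeat of the proof of Lemma \ref{X}.
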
   

See Section \ref{finale} for more precise statements.

\medskip

\noindent {\bf Estimates.}
We conclude this Introduction by discussing the question of the $L^{\infty}$ estimates for the Rabinowitz-Floer equation (\ref{Fleq}). The inequalities (\ref{actin1}) and (\ref{actin2}), or more generally (\ref{fenchel}), play a fundamental role in our proofs. These inequalities require the Hamiltonian $H$ to be the classical geodesic Hamiltonian (\ref{laham}) (up to an additive constant), or more generally to be the Fenchel dual of a Tonelli Lagrangian $L\in C^{\infty}(TM)$. The critical points of the Rabinowitz action functional $\mathbb{A}$ are confined to the energy level $H^{-1}(0)$, so Rabinowitz-Floer homology should not be very sensitive to the behavior of the Hamiltonian at infinity. Indeed, the uniform estimates for the second component of solutions $(u,\eta)$ of (\ref{Fleq}) (see Proposition 3.2 in \cite{cfo09}, or Proposition \ref{etalim} below) use only the properties of $H$ near its zero level. However, uniform estimates on $u$ are based on the maximum principle. When the almost complex structures $J_t$ are of contact type and the Hamiltonian $H$ depends only on the second variable $\rho$ outside a compact subset of the collar $\partial W \times [1,+\infty[$, say $H=h(\rho)$, the real valued function $\rho := \rho\circ u$ 
satisfies the elliptic differential inequality
\begin{equation}
\label{eldineq}
\Delta \rho - \eta \rho h^{\prime\prime} (\rho) \frac{\partial \rho}{\partial s} \geq \rho h'(\rho) \int_{\T} H(u)\, dt
\end{equation}
outside a compact set. When the Hamiltonian $H$ is eventually constant, that is constant outside a compact set, the right-hand side vanishes for $\rho$ large and the maximum principle immediately produces uniform estimates on $\rho$, hence on $u$. However, the fact that $H$ takes also negative values implies that in general the right-hand side of (\ref{eldineq}) might not have a lower bound.
For this reason, K.\ Cieliebak and U.\ Frauenfelder's original definition of the Rabinowitz-Floer complex uses eventually constant Hamiltonians. By studying the differential inequality for $\log \rho$, one can actually apply the maximum principle and obtain uniform estimates also when $H$ is eventually linear in $\rho$ (this fact is used in \cite{cfo09}), but here we need to consider Hamiltonians which are quadratic in $\rho$. As we show in Section \ref{unesti}, uniform estimates for $u$ with such Hamiltonians - and actually for much more general ones - can be derived from the integral version of the maximum principle of A.\ D.\ Aleksandrov. We also need similar estimates in the case of domains with boundary, as the half-cylinders used in the constructions of the chain maps $\Phi$, $\Psi$, $\Xi$, and $\Upsilon$, or the finite cylinders used in the construction of the chain homotopies $P$ and $Q$. The estimates in these cases come from a version of the Aleksandrov maximum principle for solutions of elliptic inequalities satisfying Neumann boundary conditions on part of the boundary, that we prove in the Appendix A. Since these $L^{\infty}$ estimates may find other applications, we prove them at the beginning of the paper, in a more general setting than strictly needed here, for the completion of arbitrary Liouville domains. We also discuss in a similar generality the energy estimates for solutions of the equation (\ref{Fleq}) when the Hamiltonian $H$ depends on $s$, an important issue when dealing with invariance properties of the Rabinowitz-Floer complex.

\paragraph{Acknowledgment.} We are grateful to Kai Cieliebak, Urs Frauenfelder, and Alexandru Oancea for sharing with us an early version of their paper \cite{cfo09}, and to Alexandru Oancea also for the observations which conclude Section 9. The first author wishes to thank the Max-Planck-Institut f\"ur Mathematik in den Naturwissenschaften of Leipzig for its kind hospitality, and the Alexander von Humboldt Stiftung for financial support.  

\numberwithin{equation}{section}

\section{The setting}
\label{setting}

\paragraph{Liouville domains.} Let $(W,\lambda)$ be a $2n$-dimensional {\em  Liouville domain}. This means that $W$ is a compact connected $2n$-dimensional manifold with boundary $\partial W$, that $\lambda$ is a one-form on $W$ (the Liouville form) such that $\omega:= d\lambda$ is non-degenerate (hence it is a symplectic form), and that the {\em Liouville vector field} $Y$ defined by $\iota_Y \omega = \lambda$ is transverse to $\partial W$ and points in the outward direction. The last condition is equivalent to the fact that $\alpha:= \lambda|_{\partial W}$ is a contact form on $\partial W$, whose associated $(2n-1)$-dimensional volume form $\alpha \wedge (d\alpha)^{n-1}$ induces the standard orientation of $\partial W$ as the boundary of $W$ ($W$ is endowed with the orientation induced by the volume form $\omega^n$). 

The {\em Reeb vector field} $R$ is the nowhere vanishing vector field on $\partial W$ defined by $\iota_R d\alpha = 0$ and $\alpha(R)=1$.  

Notice that the flow $\phi_t^Y$ of $Y$ is defined for all $t\leq 0$ and induces an embedding
\[
j: \partial W \times ]0,1] \rightarrow W, \quad (x,\rho) \mapsto \phi_{\log \rho}^Y(x),
\] 
such that $j^* \lambda = \rho \alpha$ and $j^* Y = \rho \partial/\partial \rho$, where $\rho$ denotes the coordinate on the second factor of $\partial W \times ]0,1]$. The {\em completion} of $W$ is the manifold
\[
\hat{W} := W \cup_{\partial W} ( \partial W \times [1,+\infty[).
\]
The Liouville form $\lambda$, the Liouville vector field $Y$, and the symplectic form $\omega$ extend naturally on the completion by setting 
\[
\lambda|_{\partial W \times [1,+\infty[} = \rho \alpha, \quad Y|_{\partial W \times [1,+\infty[} = \rho \frac{\partial}{\partial \rho}, \quad \omega|_{\partial W \times [1,+\infty[} = d(\rho \alpha),
\]
so that the identities $\omega=d\lambda$ and $\iota_Y \omega = \lambda$ hold on the whole $\hat{W}$. The open set $\partial W \times ]0,+\infty[$, endowed with the restriction of $\omega$, is the {\em symplectization} 
of $\partial W$. 

See \cite{sei06b} for many examples of Liouville domains. In this paper we are mainly interested in the following particular subdomains of a cotangent bundle.

\paragraph{Example 1: The cotangent disk bundle.} Let $(M,g)$ be a closed Riemannian manifold, and let $g^*$ be the dual Riemannian form on the vector bundle $T^*M$. The cotangent bundle $T^*M$ carries the standard Liouville form $\lambda= p\, dq$, which can be characterized as the unique 1-form on $T^*M$ such that for every 1-form $\theta$ on $M$, seen as a section of $T^* M$, there holds $\theta^* \lambda = \theta$. The restriction of $\lambda$ to the cotangent disk bundle
\[
D^* M := \set{(q,p)\in T^*M}{g^*(p,p)\leq 1}
\]
defines a Liouville domain, whose boundary is the cotangent sphere bundle
\[
S^* M := \set{(q,p)\in T^*M}{g^*(p,p)=1}.
\]
The map
\[
S^*M \times ]0,+\infty[ \rightarrow T^*M, \quad \bigl((q,p),\rho \bigr) \mapsto (q, \rho p)
\]
extends to a diffeomorphism from the completion of $D^* M$ onto $T^*M$, which identifies the Liouville form of the completion of $D^* M$ with the standard Liouville form of $T^*M$. 

\paragraph{Example 2: Uniformly convex domains containing a Lagrangian graph.}
The above example has the following generalization. Let $M$ be a closed manifold and let $W$ be a compact subset of $T^*M$, which has a smooth fiberwise uniformly convex boundary and contains a Lagrangian graph. The uniform convexity assumption means that $W$ can be represented as a closed sublevel of a smooth function on $T^*M$ whose fiberwise second differential is everywhere strictly positive. The assumption on the Lagrangian graph means that there exists a {\em closed} one-form $\theta$ on $M$ such that
\[
\set{(q,\theta(q))}{q\in M} \subset \mathrm{Int}\, (W).
\]
If $\lambda$ is the standard Liouville form on $T^*M$, the set $W$ is a Liouville domain with respect to the one-form
\[
\lambda_{\theta} := \lambda - \pi^* \theta,
\]
where $\pi:T^*M \rightarrow M$ denotes the projection. Indeed, the fact that $\theta$ is closed implies that $d\lambda_{\theta} = d\lambda = \omega$. Moreover, in local cotangent coordinates $q_1,\dots,q_n,p_1,\dots,p_n$ the Liouville vector field $Y_{\theta}$ associated to $\lambda_{\theta}$ takes the form
\[
Y_{\theta} (q,p) = \sum_{j=1}^n (p_j - \theta_j(q) ) \frac{\partial}{\partial p_j}, \quad \mbox{where } \theta(q) = \sum_{j=1}^n \theta_j(q) dq_j,
\]
so the fact that for every $q\in M$ the set $W\cap T_q^* M$ is convex and its interior contains the covector $\theta(q)$ implies that $Y_{\theta}$ is transverse to $\partial W$. It would have been sufficient to assume that for each $q\in M$ the set $W \cap T_q^*M$ is starshaped with respect to $\theta(q)$. However, uniform convexity allows us to use the dual Lagrangian formulation, see Section \ref{scdsec}.  

The map
\[
\partial W \times ]0,+\infty[ \rightarrow T^*M, \quad \bigl( (q,p),\rho \bigr) \rightarrow \bigl( q, \theta(q) + \rho (p-\theta(q)) \bigr)
\]
extends to a smooth diffeomorphism from the completion $\hat{W}$ onto $T^*M$, which identifies the Liouville form of the completion to the one-form $\lambda_{\theta}$ on $T^*M$.

\paragraph{The Rabinowitz action functional.}
A smooth real function $H\in C^{\infty}(\hat{W})$ defines a Hamiltonian vector field $X=X_H$ on $\hat{W}$ by the identity $\iota_{X} \omega = -dH$. The level sets of $H$ are invariant with respect to the flow of $X$. We assume that $0$ is a regular value of $H$ and we denote by $\Sigma$ the hypersurface
\[
\Sigma = \set{w\in \hat{W}}{H(w)=0},
\]
that we assume to be non-empty. In the next sections,  $\Sigma$ will be the boundary of $W$, but for now we do not make such an assumption.
 
Let $\Lambda \hat{W}$ denote the space of smooth closed curves $x:\T \rightarrow \hat{W}$, where $\T := \R/\Z$. The {\em Rabinowitz action functional} (see \cite{rab78}) is the following real valued function on $\Lambda \hat{W} \times \R$:
\[
\mathbb{A} (x,\eta) = \mathbb{A}_H(x,\eta) = \int_{\T} x^*(\lambda) - \eta \int_{\T} H(x(t))\, dt.
\]
The point $(x,\eta)$ is a critical point of $\mathbb{A}$ if and only if 
\begin{eqnarray}
\label{runo}
x'(t) & = & \eta X(x(t)), \\
\label{rue}
H(x(t)) & = & 0,
\end{eqnarray}
for every $t\in \T$. When $\eta=0$, the above equations are equivalent to the fact that $x$ is a constant loop on $\Sigma$. When $\eta\neq 0$, the above equations are equivalent to the fact that the rescaled curve
\[
y: \R \rightarrow W, \quad y(s) := x(s/\eta),
\]
is a (necessarily non-constant) closed orbit of $X$ on $\Sigma$ of (not necessarily minimal) period $|\eta|$. Therefore, the critical set of $\mathbb{A}$ consists of a copy of $\Sigma$, on which $\mathbb{A}$ is identically zero, and for each non-constant closed Hamiltonian orbit $y$ on $\Sigma$ of minimal period $T>0$ and for each non-zero integer $k$, of the following copy of $\T$,
\[
\set{(y(\tau + kT\cdot),kT)}{\tau \in \T}.
\]

Assume that the hypersurface $\Sigma$ is compact and that the Liouville vector field $Y$ is transverse to $\Sigma$, or equivalently that the one-form $\alpha_{\Sigma}$ obtained by restricting $\lambda$ to $\Sigma$ is a contact form. If $(x,\eta)$ is a critical point of $\mathbb{A}$, then 
\[
\mathbb{A}(x,\eta) = \int_{\T} x^*(\lambda) =  \int_{\T} \lambda(x)[\eta X(x)] \, dt = \eta \int_{\T} \omega ( Y(x), X(x) )\, dt = \eta \int_{\T} dH(x)[Y(x)]\, dt.
\]
Therefore, using the fact that $Y$ is transverse to the compact hypersurface $\Sigma=H^{-1}(0)$, on which $dH$ never vanishes, we deduce that there exists a positive number $\delta$ such that
\[
| \mathbb{A}(x,\eta) | \geq \delta |\eta|, 
\]
and we conclude that $|\mathbb{A}|$ is coercive on the critical set of $\mathbb{A}$.

If we also assume that the Hamiltonian vector field $X$ restricts to the Reeb vector field $R_{\Sigma}$ of $(\Sigma,\alpha_{\Sigma})$, we get the identity
\[
\mathbb{A}(x,\eta) =   \int_{\T} \lambda(x)[\eta X(x)] \, dt = \eta \int_{\T} \alpha_{\Sigma} (x)[R_{\Sigma}(x)]\, dt = \eta,
\]
so the value of the Rabinowitz action functional on a critical point $(x,\eta)$ with $\eta\neq 0$ is plus or minus the period of the corresponding Reeb orbit $y=x(\cdot/\eta)$, depending on whether the orientation of $x$ agrees with the orientation of $y$ or not.

\paragraph{The negative gradient equation.}
Let $J_t$, $t\in \T$, be a loop of almost complex structures on $\hat{W}$ which are $\omega$-compatible, meaning that $\langle \cdot, \cdot \rangle_t := \omega(J_t \cdot, \cdot)$ is a loop of Riemannian metrics on $\hat{W}$. The corresponding norm is denoted as
\[
|\zeta|_t := \sqrt{\langle \zeta,\zeta \rangle_t} = \sqrt{ \omega (J_t(w) \zeta, \zeta) }, \quad \forall \zeta\in T_w \hat{W}, \; \forall w\in \hat{W}, \; \forall t\in \T.
\]
Here we are using a sign convention which is different from the one used in \cite{cf09} and \cite{cfo09} (and in most of the literature). The reason is that we prefer to see the {\em negative} gradient flow equation for $\mathbb{A}$ as a Cauchy-Riemann type equation, not the positive one. 
The loop of metrics $\langle \cdot, \cdot \rangle_t$ induces the following $L^2$ metric on $\Lambda \hat{W}$:
\[
\langle \xi,\zeta \rangle_{L^2} := \int_{\T} \langle \xi(t),\zeta(t) \rangle_t \, dt, 
\]
for every pair of section $\xi,\zeta$ of $x^*(T\hat{W})$, for every $x\in \Lambda \hat{W}$. Then $\Lambda \hat{W} \times \R$ is endowed with the product metric by the standard metric of $\R$, the corresponding norm is denoted by $\|\cdot\|$, and the  gradient of $\mathbb{A}$ with respect to such a metric is
\[
\nabla \mathbb{A} (x,\eta) = \left( J_t(x) (x' - X_H(x)), - \int_{\T} H(x(t))\, dt \right).
\]
Therefore, the negative gradient equation for $\mathbb{A}$, i.e.\
\begin{equation}
\label{rfleq}
v'(s) = - \nabla \mathbb{A}(v(s)), 
\end{equation}
with $v=(u,\eta):\R \rightarrow \Lambda \hat{W} \times \R$, is the following system of PDE's:
\begin{eqnarray}
\label{rfleq1}
& \frac{\partial u}{\partial s} (s,t) + J_t (u(s,t)) \left( \frac{\partial u}{\partial t} (s,t)- \eta X(u(s,t)) \right) = 0, \\
\label{rfleq2}
& \frac{d\eta}{ds} = \int_{\T} H(u(s,t))\, dt,
\end{eqnarray}
where $(s,t) \in \R \times \T$. We shall refer to (\ref{rfleq}), or equivalently (\ref{rfleq1}-\ref{rfleq2}), as to the {\em Rabinowitz-Floer equation}.

\section{$\mathbf{L^{\infty}}$ estimates}
\label{unesti}
 
K.\ Cieliebak and U.\ Frauenfelder's construction of Rabinowitz-Floer homology in \cite{cf09} uses compactly supported Hamiltonian vector fields, or equivalently Hamiltonians $H$ which are constant outside a compact set. In this case, the equation (\ref{rfleq1}) reduces to the equation for $J_t$-holomorphic curves outside a compact set. If the almost complex structures $J_t$ are of contact type outside a compact subset of $\hat{W}$ (see the definition below), the maximum principle implies that no such curve can have inner tangengies to the foliation $\{\partial W \times \{\rho\}\}_{\rho>0}$, for $\rho$ large enough. Since one is interested only in solutions  $v=(u,\eta)$ of (\ref{rfleq1}-\ref{rfleq2}) such that $u(s,\cdot)$ converges to some orbits in $\partial W \times \{1\}$ for $s\rightarrow \pm \infty$, the component $u$ is bounded in $L^{\infty}$, and the compactness analysis in \cite{cf09} reduces to finding bounds for the second component of the solutions $v=(u,\eta)$ of (\ref{rfleq1}-\ref{rfleq2}).    

The class of admissible Hamiltonians was enlarged in \cite{cfo09}, admitting also Hamiltonians $H$ which are (non-decreasing) linear functions of $\rho$ outside from a compact set. Since we wish to consider also geodesic Hamiltonians on cotangent bundles, we need to deal with Hamiltonians which are (non-decreasing) quadratic functions of $\rho$. Actually, we shall prove an $L^{\infty}$ bound on $u$ for a larger class of Hamiltonians, including, for instance, all Hamiltonians which, outside of a compact set, are non-decreasing convex functions of $\rho$ growing at most polynomially.  

\paragraph{Estimates on $\eta$.} We start by recalling some results from \cite{cf09} which do not require any growth assumption at infinity on $H$, and just depend on the fact that the Liouville vector field $Y$ is transverse to the compact hypersurface
$\Sigma=H^{-1}(0)$. 
The following result is proved in \cite{cf09}, Proposition 3.2.

\begin{lem}
\label{luno}
Let $H$ be a smooth Hamiltonian on $\hat{W}$, and assume that $\epsilon_0>0$ is such that the set
\[
V_{\epsilon_0} := \set{w\in \hat{W}}{|H(w)| \leq \epsilon_0}
\]
is compact, and
\[
\lambda (X) - H \geq \theta \quad \mbox{on } V_{\epsilon_0},
\]
for some number $\theta>0$.
Then for every $0<\epsilon\leq \epsilon_0$ there exists $\delta= \delta(\epsilon)>0$ such that if $
\|\nabla \mathbb{A}(x,\eta)\| \leq \delta$
then $x(\T) \subset V_{\epsilon}$, and
\[
|\eta| \leq \frac{1}{\theta} \bigl( |\mathbb{A}(x,\eta)| +   \kappa_0 \| \nabla \mathbb{A} (x,\eta) \| \bigr), \quad \mbox{where } \kappa_0 = \max_{\substack{t\in \T \\ w\in V_{\epsilon}}} |Y(w)|_t.
\]
More precisely, one can take $\delta$ to be
\[
\delta = \frac{\epsilon}{2 \max \{1,\kappa_1\}}, \quad \mbox{where } \kappa_1 := \max_{\substack{t\in \T \\ w\in V_{\epsilon}}} |\nabla H(w)|_t.
\]
\end{lem}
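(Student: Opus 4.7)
The plan is to exploit the two components of $\nabla\mathbb{A}$ separately. The loop component controls the $L^2$-deviation of $x'$ from $\eta X(x)$, while the real component controls the mean value of $H\circ x$; together with the hypotheses on $V_{\epsilon_0}$, these two bounds will confine $x$ to $V_\epsilon$ and then force $|\eta|$ to be small relative to $|\mathbb{A}|+\|\nabla\mathbb{A}\|$.

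First I would unpack $\|\nabla\mathbb{A}(x,\eta)\|\le\delta$. Since $J_t$ is an isometry for $\langle\cdot,\cdot\rangle_t$, this bound splits into
\[
\|x'-\eta X(x)\|_{L^2}\le\delta,\qquad \left|\int_\T H(x(t))\,dt\right|\le\delta.
\]
By the mean value theorem there exists $t_0\in\T$ with $|H(x(t_0))|\le\delta$. Using $dH(X)=0$ (a consequence of $\iota_X\omega=-dH$), the chain rule and Cauchy--Schwarz give, on any subinterval along which $x$ remains in $V_\epsilon$,
\[
|H(x(t))-H(x(t_0))| = \left|\int_{t_0}^t dH(x)[x'-\eta X(x)]\,d\tau\right| \le \kappa_1\,\|x'-\eta X(x)\|_{L^2} \le \kappa_1\delta.
\]
A continuity/bootstrap argument then shows $x(\T)\subset V_\epsilon$: if $t_1$ were a first exit time, the choice $\delta=\epsilon/(2\max\{1,\kappa_1\})$ would yield $\epsilon=|H(x(t_1))|\le\delta+\kappa_1\delta\le\epsilon$, so the closed set $V_\epsilon$ is in fact never left.

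Once $x(\T)\subset V_\epsilon$ is secured, I would split $x'=\eta X(x)+(x'-\eta X(x))$ inside $\int_\T\lambda(x)[x']\,dt$ and rewrite the action as
\[
\mathbb{A}(x,\eta) = \eta\int_\T\bigl(\lambda(X(x))-H(x)\bigr)\,dt + \int_\T\lambda(x)\bigl[x'-\eta X(x)\bigr]\,dt.
\]
The first integrand is bounded below pointwise by $\theta$ on $V_\epsilon\subset V_{\epsilon_0}$. For the second term, the identity $\lambda=\iota_Y\omega$ gives $\lambda(\xi)=\omega(Y,\xi)$, and the pointwise inequality $|\omega(Y,\xi)|\le|Y|\,|\xi|$ (a consequence of $J_t$ being an isometry) combined with Cauchy--Schwarz yields
\[
\left|\int_\T\lambda(x)[x'-\eta X(x)]\,dt\right| \le \kappa_0\,\|x'-\eta X(x)\|_{L^2} \le \kappa_0\,\|\nabla\mathbb{A}(x,\eta)\|.
\]
Rearranging the action identity gives $|\eta|\theta\le|\mathbb{A}(x,\eta)|+\kappa_0\|\nabla\mathbb{A}(x,\eta)\|$, as asserted.

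The main obstacle I anticipate is the bootstrap step that confines $x$ to $V_\epsilon$: the pointwise bound $|\nabla H|\le\kappa_1$ holds only on $V_\epsilon$, so one must propagate the oscillation estimate outward from $t_0$ and verify that the explicit choice $\delta=\epsilon/(2\max\{1,\kappa_1\})$ closes the continuity loop. Without this a priori $L^\infty$ confinement, neither the coercivity $\lambda(X)-H\ge\theta$ nor the bound $|Y|\le\kappa_0$ on $x(\T)$ would be available, and the action splitting in the second part of the argument would lose all force.
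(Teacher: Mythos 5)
Your proof is correct. Note, however, that the paper does not give its own proof of Lemma~\ref{luno}: it simply cites Proposition~3.2 of \cite{cf09}, so there is no in-paper argument to compare against. Your route --- extracting a point $t_0$ with $|H(x(t_0))|\le\delta$ from the mean-value theorem, then using the cancellation $dH(X)=0$ together with the $L^2$-smallness of $x'-\eta X(x)$ to control the oscillation of $H\circ x$ and close a continuity argument confining $x(\T)\subset V_\epsilon$, and finally decomposing $x'=\eta X(x)+(x'-\eta X(x))$ in the Liouville term to expose the coercive factor $\lambda(X)-H\ge\theta$ --- is the natural and, to the best of my knowledge, standard argument. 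The only point worth tightening is the boundary case of the bootstrap: with $\delta=\epsilon/(2\max\{1,\kappa_1\})$ your estimate yields $\epsilon\le\delta(1+\kappa_1)\le\epsilon$, which is not a strict contradiction when $\kappa_1=1$. This is a non-issue once you notice either that the $L^1$--$L^2$ step actually produces the sharper factor $\sqrt{|t-t_0|}<1$ before the first exit time, or that $\|x'-\eta X(x)\|_{L^2}$ and $\bigl|\int_\T H(x)\,dt\bigr|$ cannot both equal $\delta$ since the sum of their squares is $\le\delta^2$; either observation restores strictness and closes the continuation argument.
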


If we assume that $0$ is a regular value of $H$, that $\Sigma = H^{-1}(0)$ is compact, that the Liouville vector field $Y$ is transverse to the hypersurface $\Sigma$ with $dH[Y]>0$, and that the infimum of $H$ on the complement of some compact set is positive, then the assumptions of the above lemma hold, because of the identity
\[
\lambda(X) - H = \omega (Y,X) - 0 = dH[Y] \quad \mbox{on } \Sigma. 
\]

In order to establish invariance properties of the Rabinowitz-Floer complex, one needs to consider smooth families $\{H_s\}_{s\in \R}$ of Hamiltonians on $\hat{W}$, and correspondingly the system of PDE's:
\begin{eqnarray}
\label{zeroa}
& \frac{\partial u}{\partial s} + J_t(u) \Bigl( \frac{\partial u}{\partial t} - \eta X_{H_s} (u) \Bigr) = 0 , \\ 
\label{zerob}
& \frac{d\eta}{ds} = \int_{\T} H_s(u)\, dt.
\end{eqnarray}
For this reason, the next results are stated for $s$-dependent Hamiltonians. We shall assume that $H_s$ does not depend on $s$ on $]-\infty,0]$ and on $[1,+\infty[$, that is
\begin{equation}
\label{dipes}
H_s = H_0 , \;\; \forall s\leq 0\; \quad H_s = H_1, \;\; \forall s\geq 1.
\end{equation}
A first consequence of Lemma \ref{luno} is the following uniform estimate on $\eta$ (a simple variant of Corollary 3.5 in \cite{cf09}, the difference being that here $H$ need not be bounded from above):

\begin{prop}
\label{etalim}
Assume that $\{H_s\}_{s\in \R}$ is a smooth family of Hamiltonians on $\hat{W}$ which satisfies (\ref{dipes}) and such that:
\begin{enumerate}
\item There is a number $\mu>0$ such that $H_s(w)\geq -\mu$, for every $s\in \R$ and $w\in \hat{W}$.
\item There is a number $\epsilon_0>0$ such that the set
\[
V_{\epsilon_0} := \set{(s,w)\in [0,1]\times \hat{W}}{|H_s(w)| \leq \epsilon_0}
\]
is compact, and there is a positive number $\theta>0$ such that
\[
\lambda\bigl(X_{H_s}(w)\bigr) - H_s(w) \geq \theta, \quad \forall (s,w) \in V_{\epsilon_0}.
\]
\end{enumerate}
Then for every pair of numbers $A,E$ there exists a constant $c=c(A,E)$ such that if $v=(u,\eta)$ is a solution of (\ref{zeroa}-\ref{zerob}) with 
\begin{eqnarray}
\label{AAA}
\sup_{s\in \R} |\mathbb{A}_{H_s}(v(s))| \leq A, \\ 
\label{EEE}
\int_{-\infty}^{+\infty} \|\nabla \mathbb{A}_{H_s}(v(s))\|^2\, ds \leq E,
\end{eqnarray}
then $|\eta(s)| \leq c$ for every $s\in \R$.
\end{prop}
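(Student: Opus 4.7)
The plan is to upgrade the $L^2$ energy bound (\ref{EEE}) and the action bound (\ref{AAA}) to a uniform $L^\infty$ bound on $\eta$ by combining Lemma \ref{luno} with a H\"older-type regularity estimate for $\eta$. The starting observation is that the second component of $\nabla \mathbb{A}_{H_s}(x,\eta)$ equals $-\int_{\T} H_s(x(t))\,dt$, so equation (\ref{zerob}) and the definition of the product norm $\|\cdot\|$ give
\[
|\eta'(s)| \;=\; \left| \int_{\T} H_s(u(s,t))\,dt \right| \;\leq\; \|\nabla \mathbb{A}_{H_s}(v(s))\|.
\]
Crucially, this pointwise bound on $\eta'$ uses no growth condition on $H_s$ at infinity, which is precisely what makes quadratic Hamiltonians admissible. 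Combined with Cauchy--Schwarz and (\ref{EEE}), it immediately yields the $\tfrac{1}{2}$-H\"older estimate
\[
|\eta(s_2) - \eta(s_1)| \;\leq\; \sqrt{|s_2 - s_1|}\,\sqrt{E}, \qquad \forall s_1, s_2 \in \R.
\]

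Next, I would invoke Chebyshev's inequality applied to the integrand in (\ref{EEE}): the set $\{s\in\R : \|\nabla\mathbb{A}_{H_s}(v(s))\| > \delta\}$ has Lebesgue measure at most $E/\delta^2$. Consequently, once a time scale $T > E/(2\delta^2)$ is fixed, every interval $[\sigma - T, \sigma + T]$ must contain at least one point $s_\sigma$ at which $\|\nabla\mathbb{A}_{H_s}(v(s_\sigma))\| \leq \delta$. I would then take $\delta := \delta(\epsilon_0)$ from Lemma \ref{luno} applied with $\epsilon = \epsilon_0$. Hypothesis (b) together with the $s$-independence condition (\ref{dipes}) guarantees that the constants $\theta$, $\kappa_0$, $\kappa_1$ entering Lemma \ref{luno} can be chosen uniformly in $s$: the only $s$-varying Hamiltonians occur for $s\in[0,1]$, and compactness of $V_{\epsilon_0}\subset[0,1]\times\hat W$ supplies uniform bounds on $|Y|_t$ and $|\nabla H_s|_t$ over the relevant region.

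Applying Lemma \ref{luno} to $H_{s_\sigma}$ at the point $v(s_\sigma)$, and using (\ref{AAA}), produces
\[
|\eta(s_\sigma)| \;\leq\; \frac{1}{\theta}\bigl(A + \kappa_0\,\delta\bigr),
\]
and combining with the H\"older estimate above yields the $\sigma$-independent bound
\[
|\eta(\sigma)| \;\leq\; |\eta(s_\sigma)| + |\eta(\sigma) - \eta(s_\sigma)| \;\leq\; \frac{1}{\theta}\bigl(A + \kappa_0\,\delta\bigr) + \sqrt{2TE},
\]
which is the desired conclusion with $c(A,E) := \theta^{-1}(A + \kappa_0\,\delta) + \sqrt{2TE}$. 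I do not anticipate a genuinely hard step here: the only delicate point is arranging the uniformity of the constants of Lemma \ref{luno} across all values of $s$, and this is already built into hypothesis (b) together with (\ref{dipes}). Hypothesis (a), the uniform lower bound $H_s \geq -\mu$, does not enter the argument explicitly but is a natural compatibility assumption from the broader Rabinowitz--Floer framework.
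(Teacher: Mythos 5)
Your proof is correct and takes a genuinely different route from the paper's. Both arguments start by applying Lemma \ref{luno} at ``good'' times where $\|\nabla\mathbb{A}_{H_s}(v(s))\|\leq\delta_0$, and both use the fact that the set of ``bad'' times has measure at most $E/\delta_0^2$. The difference lies in the propagation step. The paper bounds $\eta$ on a maximal bad interval $I=\,]a,b[$ by integrating the \emph{one-sided} inequality $\eta'(s)=\int_{\T}H_s(u)\,dt\geq -\mu$, which requires hypothesis (a), from the endpoints $a$ and $b$ where Lemma \ref{luno} applies. You instead use the exact identity that $\eta'(s)$ is (minus) the second component of $\nabla\mathbb{A}_{H_s}(v(s))$, giving the \emph{two-sided} pointwise bound $|\eta'(s)|\leq\|\nabla\mathbb{A}_{H_s}(v(s))\|$, and then Cauchy--Schwarz with (\ref{EEE}) yields the uniform $\tfrac12$-H\"older estimate $|\eta(s_2)-\eta(s_1)|\leq\sqrt{|s_2-s_1|\,E}$. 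As you note, this removes hypothesis (a) from the argument entirely; it would also strengthen Remark \ref{etabdry}, since the half-line version follows without any a priori bound on $\eta(0)$ (pick any good time $s_\sigma\in[0,E/\delta_0^2]$ and propagate H\"older-continuity from there). The paper presumably phrases the proof through (a) because that hypothesis is needed elsewhere in the compactness analysis (Propositions \ref{ulim}, \ref{ulim2}, Lemma \ref{nonaut}), but for this proposition your argument shows it is superfluous. Two small cosmetic points: with $|s_\sigma-\sigma|\leq T$ the H\"older term is $\sqrt{TE}$ rather than $\sqrt{2TE}$ (though the latter is still a valid upper bound), and it is worth stating explicitly that $\theta$, $\kappa_0$, $\delta$ are the uniform constants over $s\in[0,1]$ furnished by the compactness in hypothesis (b) together with (\ref{dipes}), which you indeed observe.
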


\begin{proof}
By the assumption (b), the hypotheses of Lemma \ref{luno} are uniformly fulfilled, so if
$\delta_0=\delta(\epsilon_0)$ is the number provided by this lemma, there exists a number $c'(A)$ such that $|\eta(s)| \leq c'(A)$ 
when $\|\nabla \mathbb{A}_{H_s} (v(s))\|\leq \delta_0$. Let $I$ be a maximal interval on which $\| \nabla \mathbb{A}_{H_s}(v(s))\|>\delta_0$. By (\ref{EEE}), $I=]a,b[$ with
$b-a \leq E/\delta_0^2$.
By maximality and continuity, $\|\nabla \mathbb{A}_{H_s}(v(s))\|=\delta_0$ for $s=a$ and $s=b$, so
\[
|\eta(a)| \leq c'(A), \quad |\eta(b)|\leq c'(A).
\]
By (\ref{zerob}), and assumption (a),
\[
\eta'(s) = \int_{\T} H_s(u(s,t))\, dt \geq - \mu.
\]
If $s\in I$, then
\begin{eqnarray}
\label{low}
\eta(s) =& \eta(a) + \int_a^s \eta'(\sigma)\, d\sigma \geq & - c'(A) - (b-a) \mu, \\
\label{up}
\eta(s) =& \eta(b) - \int_s^b \eta'(\sigma)\, d\sigma \leq & c'(A) + (b-a) \mu.
\end{eqnarray}
Therefore, the thesis holds with $c(A,E) = c'(A) + \mu E/\delta_0^2 $.
\end{proof} \qed

\begin{rem}
\label{etabdry}
Let us consider solutions $v=(u,\eta)$ of (\ref{zeroa}-\ref{zerob}) on the half-line $\R^+:= [0,+\infty[$. Then (\ref{low}) shows that the conclusion of the above proposition remains true if we have an a priori lower bound on $\eta(0)$. Analogously, by (\ref{up}), the statement for solutions on the half-line $\R^-:= ]-\infty,0]$ holds if we have an a priori upper bound on $\eta(0)$.
\end{rem} 

\paragraph{Estimates on $u$.} In order to get uniform estimates on the first component of a solution $(u,\eta)$ of (\ref{zeroa}-\ref{zerob}), it is useful to assume that the loop of $\omega$-compatible almost complex structures $J_t$ is of {\em contact type} outside a compact set, meaning that it satisfies the equation
\begin{equation}
\label{contactJ}
d\rho \circ J_t = \lambda \quad \mbox{on } \partial W \times [\rho_0,+\infty[,
\end{equation}
for every $t\in \T$, for some positive number $\rho_0$. 
Together with the $\omega$-compatibility, this is equivalent to the fact that the symplectic splitting of $T_{(x,\rho)} (\partial W \times ]0,+\infty[)$ into $\ker \alpha$ and $\R R \oplus \R Y$ is $J_t$-invariant, that the restriction of $J_t$ to the first component is compatible to $\omega|_{\ker \alpha}$, and that
\[
J_t R = Y, \quad J_t Y = - R.
\]
For every $t\in \T$, the vectors $R$ and $Y$ are $\langle\cdot,\cdot \rangle_t$-orthogonal at $(x,\rho)\in \partial W \times ]0,+\infty[$, and have norm $\sqrt{\rho}$:
\begin{equation}
\label{ort}
g_t ( R, Y ) = 0, \quad |R(x,\rho)|_t^2 = | Y(x,\rho)|_t^2 = \rho.
\end{equation}
It is also useful to assume that the Hamiltonians $H_s$ are radial outside a compact set, meaning that
\begin{equation}
\label{hamatinf}
H_s(x,\rho) = h(s,\rho) \quad  \mbox{on } \partial W \times [\rho_0,+\infty[,
\end{equation}
where $h:\R \times [\rho_0,+\infty[ \rightarrow \R$ is a smooth function. With this choice, the Hamiltonian vector field $X_{H_s}$ has the form
\begin{equation}
\label{radham}
X_{H_s}(x,\rho) = \frac{\partial h}{\partial \rho}(s,\rho) R(x), \quad \forall (x,\rho) \in \partial W \times [\rho_0,+\infty[.
\end{equation}
The following result is based on a standard computation (see \cite{sei06b}, or Lemma 4.1 in \cite{cfo09}):

\begin{lem}
\label{conti}
Assume that $J_t$ is of contact type on $\partial W \times [\rho_0 , +\infty[$ and that the smooth family of Hamiltonians $\{H_s\}_{s\in \R}$ satisfies (\ref{hamatinf}). Let $(u,\eta)$ be a solution of (\ref{zeroa}-\ref{zerob}), and consider the real function $\rho(s,t) := \rho\circ u(s,t)$, for every $(s,t)$ in the open subset $u^{-1} (\partial W \times ]\rho_0,+\infty[ )$ of $\R \times \T$. Then
\begin{equation}
\label{laplarho}
\Delta \rho = \left| \frac{\partial u}{\partial s} \right|_t^2 + \eta \rho  \frac{\partial^2 h}{\partial \rho^2} \frac{\partial \rho}{\partial s} + \rho \frac{\partial h}{\partial \rho}  \int_{\T} H_s(u)\, dt + \eta \rho \frac{\partial^2 h}{\partial s \partial \rho},
\end{equation}
and
\begin{equation}
\label{diffineq}
| \nabla \rho|^2 \leq \rho \left| \frac{\partial u}{\partial s} \right|_t^2,
\end{equation}
for every $(s,t) \in u^{-1} (\partial W \times ]\rho_0,+\infty[ )$.
\end{lem}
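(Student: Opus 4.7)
The strategy is a direct pointwise calculation that exploits the $J_t$-invariance of the splitting $T(\partial W\times]0,+\infty[) = \ker\alpha \oplus \R R \oplus \R Y$ on the contact region, together with the identities $d\rho\circ J_t = \lambda = \rho\alpha$, $\lambda(R)=0$, $\lambda(Y)=\rho$, the orthogonality relations (\ref{ort}), and the radial form $X_{H_s} = \partial_\rho h\cdot R$ of the Hamiltonian vector field given by (\ref{radham}). The first equation (\ref{zeroa}) of the Rabinowitz-Floer system can be rewritten as $\partial_t u = J_t\,\partial_s u + \eta X_{H_s}$, which will be the main tool for converting $t$-derivatives of $\rho$ into $s$-derivatives (and vice versa).

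For the gradient bound, I would decompose $\partial_s u = \xi + aR + bY$ with $\xi\in\ker\alpha$ in the $J_t$-invariant splitting. Then $J_t\,\partial_s u = J_t\xi + aY - bR$, and applying $d\rho = \lambda\circ J_t^{-1} = -\lambda\circ J_t$ on one hand, and $d\rho$ directly on the other, produces the identities $\partial_t\rho = \lambda(\partial_s u) = \rho a$ and $\partial_s\rho = -\lambda(\partial_t u) + \eta\lambda(X_{H_s}) = \rho b$ (the contribution $\eta\lambda(X_{H_s}) = \eta\rho\partial_\rho h$ cancels exactly against the piece of $-\lambda(\partial_t u) = -\rho(-b+\eta\partial_\rho h)$ involving $\partial_\rho h$). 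Since $R$ and $Y$ are $\langle\cdot,\cdot\rangle_t$-orthogonal with squared norms $\rho$, while $\xi\in\ker\alpha$ is orthogonal to both, we get $|\partial_s u|_t^2 = |\xi|_t^2 + \rho(a^2+b^2)$, whence $|\nabla\rho|^2 = \rho^2(a^2+b^2) \leq \rho|\partial_s u|_t^2$, which is exactly (\ref{diffineq}).

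For the Laplacian identity (\ref{laplarho}) I would differentiate the two first-order identities $\partial_s\rho = \rho b$ and $\partial_t\rho = \rho a$ once more, expressing $\partial_s^2\rho + \partial_t^2\rho$ by going back through the formulas $\partial_t\rho = \lambda(\partial_s u)$ and $\partial_s\rho = -\lambda(\partial_t u) + \eta\rho\partial_\rho h$ and using $d\lambda = \omega$ together with $\iota_{X_{H_s}}\omega = -dH_s$. The term $|\partial_s u|_t^2$ is the standard energy contribution produced by the interplay between the covariant derivative of $\lambda$ along $u$ and the Cauchy-Riemann type equation; the three remaining terms arise from the $s$-derivatives of the factor $\eta\rho\partial_\rho h$ present in $\partial_s\rho$. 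Specifically, differentiating $\eta$ in $s$ and invoking equation (\ref{zerob}) gives the contribution $\rho\partial_\rho h\int_\T H_s(u)\,dt$, differentiating $\partial_\rho h$ along $u$ and picking up the $\rho$-dependence yields $\eta\rho\partial_\rho^2 h\,\partial_s\rho$, and the explicit $s$-dependence of $h$ produces the mixed term $\eta\rho\partial_{s\rho}^2 h$.

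The main obstacle will be purely computational bookkeeping: the calculation is essentially the one carried out in \cite{sei06b} and in Lemma 4.1 of \cite{cfo09} for $s$-independent Hamiltonians, but one must track carefully the extra contribution $\eta\rho\partial_{s\rho}^2 h$ coming from the $s$-dependence of $H_s$, and verify that the delicate cancellations involving $\lambda(X_{H_s})$ and $\lambda(\partial_t u)$ go through unchanged. Nothing beyond the structural identities already recalled in the preceding paragraphs is needed; the proof is a several-line computation that I would carry out in local coordinates on the collar $\partial W\times[\rho_0,+\infty[$.
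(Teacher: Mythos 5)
Your proposal is correct and follows essentially the same route as the paper: you compute $\partial_s\rho$ and $\partial_t\rho$ from the contact-type identity $d\rho\circ J_t=\lambda$ and the radial form $X_{H_s}=\partial_\rho h\,R$, pass to $d^c\rho$ and differentiate using $d\lambda=\omega$ and equation (\ref{zerob}) to obtain (\ref{laplarho}), and derive (\ref{diffineq}) by observing that $\nabla\rho$ sees only the $R$-$Y$ components of $\partial_s u$, which have squared norm $\rho$. Your explicit decomposition $\partial_s u=\xi+aR+bY$ with $a=\partial_t\rho/\rho$, $b=\partial_s\rho/\rho$ is a tidy bookkeeping device, but it is the same orthogonal-projection argument the paper uses for (\ref{diffineq}); the identification of the sources of the three extra terms in (\ref{laplarho}), including the $\eta\rho\partial^2_{s\rho}h$ contribution from the $s$-dependence of $h$, is accurate.
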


\begin{proof}
By (\ref{zeroa}) and (\ref{radham}),
\[
\frac{\partial \rho}{\partial s} = d \rho (u) \left[ \frac{\partial u}{\partial s} \right] = d \rho (u) \left[ - J_t(u) \Bigl( \frac{\partial u}{\partial t} - \eta X_{H_s} \Bigr) \right] = d \rho (u) \left[ - J_t(u) \Bigl( \frac{\partial u}{\partial t} - \eta \frac{\partial h}{\partial \rho} R\Bigr) \right].
\]
Using also (\ref{contactJ}), we get
\begin{equation}
\label{quindici}
\frac{\partial \rho}{\partial s} = - \lambda \left( \frac{\partial u}{\partial t} - \eta \frac{\partial h}{\partial \rho} R \right) = - \lambda \left(\frac{\partial u}{\partial t} \right) + \eta \rho  \frac{\partial h}{\partial \rho} .
\end{equation}
Similarly,
\[
\frac{\partial \rho}{\partial t} = d\rho (u) \left[ J_t(u) \Bigl( \frac{\partial u}{\partial s} - \eta  \frac{\partial h}{\partial \rho} J_t(u) R  \Bigr) \right] = \lambda \left( \frac{\partial u}{\partial s} \right).
\]
Therefore,
\[
d^c \rho := \frac{\partial \rho}{\partial t} \, ds - \frac{\partial \rho}{\partial s} \, dt = u^*(\lambda) - \eta \rho \frac{\partial h}{\partial \rho} \, dt.
\]
By differentiating once more, using (\ref{zerob}), we find
\begin{equation}
\label{dd}
dd^c \rho = u^* (\omega) - \left( \rho \frac{\partial h}{\partial \rho} \int_{\T} H_s(u)\, dt + \eta \frac{\partial h}{\partial \rho} \frac{\partial \rho}{\partial s} + \eta \rho \frac{\partial^2 h}{\partial \rho^2} \frac{\partial \rho}{\partial s} + \eta \rho \frac{\partial^2 h}{\partial s \partial \rho} \right) \, ds\wedge dt.
\end{equation}
Using (\ref{zeroa}), (\ref{radham}), and the $\omega$-compatibility of $J_t$, we can compute $u^*(\omega)$ as
\begin{eqnarray*}
u^*(\omega) = \omega \left( \frac{\partial u}{\partial s}, \frac{\partial u}{\partial t} \right) \, ds \wedge dt = \omega \left( \frac{\partial u}{\partial s}, J_t(u) \frac{\partial u}{\partial s} + \eta X_{H_s} \right) \, ds \wedge dt \\ 
= \left( - \Bigl| \frac{\partial u}{\partial s} \Bigr|_t^2 + \eta dH_s(u) \Bigl[ \frac{\partial u}{\partial s} \Bigr] \right) \, ds\wedge dt = \left( - \Bigl| \frac{\partial u}{\partial s} \Bigr|_t^2 + \eta \frac{\partial h}{\partial \rho} \frac{\partial \rho}{\partial s} \right) \, ds\wedge dt.
\end{eqnarray*}
Together with (\ref{dd}) and the fact that $dd^c \rho = - \Delta \rho \, ds\wedge dt$, we deduce the identity (\ref{laplarho}).

By equations (\ref{zeroa}), (\ref{ort}), and (\ref{radham}), the $\langle \cdot, \cdot \rangle_t$-orthogonal projection of $\partial u/\partial s$ onto the line $\R\, R$ is
\begin{eqnarray*}
\frac{1}{\rho} \left\langle \frac{\partial u}{\partial s}, R\right\rangle_t R = \frac{1}{\rho} \left\langle -J_t \frac{\partial u}{\partial t} + \eta \frac{\partial h}{\partial \rho} J_t R , R\right\rangle_t R \\
= \frac{1}{\rho} \left\langle \frac{\partial u}{\partial t}, Y \right\rangle_t R = \frac{1}{\rho} \left\langle \frac{\partial \rho}{\partial t} \frac{\partial}{\partial \rho},Y \right\rangle_t R = 
\frac{1}{\rho^2} \frac{\partial \rho}{\partial t} |Y|_t^2 R =
\frac{1}{\rho} \frac{\partial \rho}{\partial t} R.
\end{eqnarray*}
Since the $\langle \cdot,\cdot\rangle_t$-orthogonal projection of $\partial u/\partial s$ onto the line $\R Y = \R \partial/\partial \rho$ is obviously 
\[
\frac{\partial \rho}{\partial s} \frac{\partial}{\partial \rho} = \frac{1}{\rho} \frac{\partial \rho}{\partial s} Y,
\] 
we can estimate the norm $\partial u/\partial s$ from below by the norm of its projection onto the plane spanned by the vectors $R$ and $Y$, obtaining
\[
\left| \frac{\partial u}{\partial s} \right|_t^2 \geq \left| \frac{1}{\rho} \frac{\partial \rho}{\partial t} R \right|_t^2 + \left| \frac{1}{\rho} \frac{\partial \rho}{\partial s} Y \right|_t^2 
= \frac{1}{\rho^2} \left| \frac{\partial \rho}{\partial t} \right|^2 |R|_t^2 + \frac{1}{\rho^2} \left| \frac{\partial \rho}{\partial s} \right|^2 | Y|_t^2 = \frac{1}{\rho} |\nabla \rho|^2,
\]
which proves (\ref{diffineq}).
\end{proof} \qed

Our first new observation is the following a priori estimate for the $u$-part of a solution $(u,\eta)$ of (\ref{zeroa}-\ref{zerob}):

\begin{prop}
\label{ulim}
Assume that $J_t$ is of contact type on $\partial W \times [\rho_0 , +\infty[$, that
the smooth family of Hamiltonians $\{H_s\}_{s\in \R}$ satisfies (\ref{dipes}), 
\begin{equation}
\label{trasve}
\lambda\bigl(X_{H_s}(w)\bigr) - H_s(w) \geq \theta, \quad \forall w\in \hat{W} \mbox{ such that } H_s(w)=0, \; \forall s\in \R,
\end{equation}
for some $\theta>0$, and  
\[
H_s(x,\rho) = h(s,\rho), \quad \forall (x,\rho) \in \partial W \times [\rho_0,+\infty[, \quad \forall s\in \R,
\]
where the smooth function $h:\R \times [\rho_0,+\infty[ \rightarrow \R$ satisfies the conditions:
\begin{enumerate}
\item $\frac{\partial h}{\partial \rho}(s,\rho) \geq 0$,
\item $h(s,\rho)\geq \rho/\kappa$,
\item $\left|\frac{\partial^2 h}{\partial \rho^2}(s,\rho)\right| \leq \kappa \rho^{\gamma}$, $\left|\frac{\partial^2 h}{\partial s \partial \rho}(s,\rho)\right| \leq \kappa \rho^{\gamma}$,
\end{enumerate}
for every $(s,\rho) \in [\rho_0,+\infty[ \times \R$, for some constants $\kappa >0$, $\gamma>0$. 
Then for every pair of numbers $A,E$ there exists a compact set $K = K(A,E) \subset \hat{W}$ such that for every solution $v=(u,\eta)$ of (\ref{zeroa}-\ref{zerob}) such that 
\begin{eqnarray*}
\sup_{s\in \R} |\mathbb{A}_{H_s}(v(s))| \leq A,\\
\int_{-\infty}^{+\infty} \|\nabla \mathbb{A}_{H_s}(v(s)) \|^2 \, ds \leq E,
\end{eqnarray*}
 $u(s,t)$ belongs to $K$ for every $(s,t)\in \R \times \T$.
\end{prop}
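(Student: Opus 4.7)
The plan is to apply an Aleksandrov--Bakelman--Pucci (ABP) type maximum principle to the radial coordinate $\rho := \rho \circ u$, working on the open collar set $\Omega := u^{-1}(\partial W \times \,]\rho_0, +\infty[) \subset \R \times \T$. I begin by checking that the hypotheses of Proposition \ref{etalim} are satisfied under the present assumptions: condition (b) yields $H_s \geq \rho_0/\kappa$ on the collar, which together with the smoothness and $s$-independence of $H_s$ outside $[0,1]$ forces a uniform lower bound $H_s \geq -\mu$ on $\hat{W}$, and also ensures compactness of $V_{\epsilon_0}$ because $h(s,\cdot)$ is proper; condition (\ref{trasve}) provides the transversality. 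Hence $|\eta(s)| \leq C_1 = C_1(A,E)$ for every $s\in\R$, and the energy bound (\ref{EEE}) yields $\iint_{\R\times\T}|\partial u/\partial s|_t^2\, dt\, ds \leq E$ and $\|\eta'\|_{L^2(\R)} \leq \sqrt{E}$.

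By Lemma \ref{conti} and the nonnegativity of $|\partial u/\partial s|_t^2$, the function $\rho$ is a subsolution of the linear elliptic equation
\[
\Delta \rho \;-\; \eta \rho\, h_{\rho\rho}(s,\rho)\, \frac{\partial \rho}{\partial s} \;\geq\; \rho\, h_\rho(s,\rho)\, \eta'(s) \;+\; \eta \rho\, h_{s\rho}(s,\rho)
\]
on $\Omega$, where I write $\eta'(s) = \int_\T H_s(u)\, dt$. By (a)--(c) and the bound on $\eta$, the first-order drift coefficient satisfies $|\eta\rho\, h_{\rho\rho}| \leq C_1 \kappa\, \rho^{1+\gamma}$, while the source is dominated by $C\,\rho^{2+\gamma}\bigl(|\eta'(s)| + 1\bigr)$ (using $|h_\rho|\lesssim 1 + \rho^{1+\gamma}$ from integrating (c) in $\rho$). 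Equally important is the geometric inequality (\ref{diffineq}), $|\nabla \rho|^2 \leq \rho\,|\partial u/\partial s|_t^2$, which combined with the global $L^2$ bound on $\partial u/\partial s$ yields the weighted Dirichlet estimate
\[
\iint_\Omega \frac{|\nabla \rho|^2}{\rho}\, dt\, ds \;\leq\; E,
\]
preventing $\rho$ from being simultaneously large and varying rapidly over a large region.

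Suppose by contradiction that $\rho$ attains some arbitrarily large value $M$ at $(s_0, t_0)\in \Omega$. I would choose a rectangle $Q_\lambda := [s_0-\lambda, s_0+\lambda]\times[t_0-\lambda, t_0+\lambda]$ (lifted to the universal cover in the $t$-variable) with $\lambda = \lambda(M)$ tied to $M$ so that $\rho \geq \rho_0$ throughout $Q_\lambda$, and apply the two-dimensional ABP estimate
\[
\sup_{Q_\lambda} \rho \;\leq\; \max_{\partial Q_\lambda} \rho \;+\; C\bigl(\|b\|_{L^2(Q_\lambda)}\bigr)\,\lambda\,\|g\|_{L^2(Q_\lambda)},
\]
where $b$ and $g$ are the drift and source read off the subsolution inequality. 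The principal obstacle is the self-referential character of this estimate: the $L^2$-norms of both $b$ and $g$ grow polynomially in $M$, so the bound on $\sup \rho$ depends on $\sup \rho$ itself. This is overcome by a bootstrap in which $\lambda$ is calibrated to the hypothetical $M$ so that the weighted Dirichlet estimate forces $\rho \leq M/2$ on $\partial Q_\lambda$ while the $L^2$-norms of $b$ and $g$ over $Q_\lambda$ remain uniformly bounded, yielding an inequality $M \leq M/2 + C$ that is contradictory once $M$ exceeds a constant depending only on $A$, $E$, $\kappa$, and $\gamma$. The resulting bound $\rho \leq K_0$ on $\Omega$ combined with the properness of $h$ from condition (b) confines $u(\R\times\T)$ to a compact subset $K = K(A,E) \subset \hat{W}$, as required.
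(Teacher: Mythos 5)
Your setup is sound: the reduction to the radial coordinate on $\Omega:=u^{-1}(\partial W\times\,]\rho_0,+\infty[)$, the verification of the hypotheses of Proposition \ref{etalim}, the elliptic inequality from Lemma \ref{conti}, and the weighted Dirichlet estimate $\iint_\Omega|\nabla\rho|^2/\rho\leq E$ are all correct ingredients. But the heart of your argument --- the rectangle-based bootstrap --- has a genuine gap that I do not see how to close in the form you state it.

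The claim that ``the weighted Dirichlet estimate forces $\rho\leq M/2$ on $\partial Q_\lambda$'' for an appropriately calibrated $\lambda=\lambda(M)$ is not justified, and on its face is false. The bound $\iint_\Omega|\nabla\rho|^2/\rho\leq E$ is an integral estimate; it cannot by itself force $\rho$ to decay from $M$ to $M/2$ across the boundary of a rectangle of any prescribed size, since $\rho$ could be nearly constant at the value $M$ over a region of macroscopic area while contributing essentially nothing to the weighted Dirichlet integral. Something else must control the measure of $\{\rho \text{ large}\}$, and that something is not in your argument. Moreover, even with a choice of $\lambda$ that achieves the boundary drop, the $L^2$-norms of the drift $b$ and source $g$ on $Q_\lambda$ involve powers $\rho^{1+\gamma}$ and $\rho^{2+\gamma}$ and therefore scale like positive powers of $M$; the assertion that they ``remain uniformly bounded'' is precisely the self-referential difficulty you identified, and no mechanism for resolving it is given. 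The final inequality $M\leq M/2 + C$ is the conclusion you want, not something you have derived.

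The paper closes the loop by a different route, and the two observations you are missing are structural rather than estimative. First, each connected component $\Omega$ of $u^{-1}(\partial W\times\,]\rho_0,+\infty[)$ lies in a strip $I\times\T$ with $|I|\leq E/\delta_0^2$, because Lemma \ref{luno} confines $u(s,\cdot)$ to the compact set $V_{\epsilon_0}$ (disjoint from the collar, after enlarging $\rho_0$) whenever $\|\nabla\mathbb{A}_{H_s}(v(s))\|\leq\delta_0$, and the total measure of the complementary set of $s$ is at most $E/\delta_0^2$. Thus the diameter of $\Omega$ is uniformly bounded and, crucially, $\rho\equiv\rho_0$ on $\partial\Omega$. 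So the Aleksandrov estimate applied on $\Omega$ itself reads $\sup_\Omega\rho\leq\rho_0 + C\|f^-\|_{L^2(\Omega)}$ with no uncontrolled boundary term. Second, the $L^2$-norms of $f^-$ and $b$ are not bounded by contradiction but by establishing \emph{a priori} $L^p$ bounds on $\rho$ for every finite $p$: the $\eta$-bound yields $\int_\Omega h(s,\rho)\,ds\,dt\leq 2c(A,E)+\mu E/\delta_0^2$, and condition (b) then gives $\|\rho\|_{L^1(\Omega)}\leq d(A,E)$; this $L^1$ bound, together with your weighted Dirichlet estimate and a Moser-type bootstrap (multiply by a cutoff, apply H\"older to $\|\nabla\tilde\rho\|_{L^{2q/(q+1)}}^2\leq E\|\tilde\rho\|_{L^q}$, then Sobolev embedding on the fixed-diameter domain), elevates $L^1$ control to $L^p$ control for every $p<\infty$. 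Only then does condition (c) translate the $L^p$ bounds on $\rho$ into $L^2$ bounds on $f^-$ and $b$. The step you are missing is essentially the $L^1$ bound and the Moser iteration that converts it into the $L^p$ bounds required by Aleksandrov; the rectangle argument does not substitute for either.
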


\begin{proof}
Since $W$ is compact, it is enough to find a uniform upper bound for the function $\rho:= \rho\circ u$ on the open subset $u^{-1} (\partial W \times ]\rho_0,+\infty[)$ of $\R \times \T$. 

By (\ref{dipes}), (\ref{trasve}), and the coercivity guaranteed by the assumption (b), the assumptions of Proposition \ref{etalim} are fulfilled (possibly with a smaller $\theta>0$).   
Up to the choice of a larger $\rho_0$, we may assume that the compact set $V_{\epsilon_0}$ appearing in that proposition is disjoint from $[0,1] \times (\partial W \times [\rho_0,+\infty[)$.

By Lemma \ref{luno}, there is a number $\delta_0 = \delta(\epsilon_0) >0$ such that $(s,u(s,t))$ is in the compact set $V_{\epsilon_0}$, for every $s$ such that $\|\nabla \mathbb{A}_{H_s}(v(s))\|\leq \delta_0$ and every $t\in \T$. By our assumption 
on $\rho_0$, $\rho(s,t)\leq \rho_0$ for every such $s$ and every $t\in \T$.

The length of any interval $I\subset \R$ on which $\|\nabla \mathbb{A}_{H_s}(v(s))\|\geq \delta_0$ is not larger than $E/\delta^2_0$. Therefore, every connected component $\Omega$ of $u^{-1} (\partial W \times ]\rho_0,+\infty[)$ is contained in $I\times \T$, for some interval $I$ of length not exceeding $E/\delta^2_0$. It is enough to find a uniform upper bound for $\rho$ on $\Omega$.

By the identity (\ref{laplarho}) of Lemma \ref{conti}, the function $\rho$ 
satisfies the elliptic differential inequality 
\[
\Delta \rho  + b \frac{\partial \rho}{\partial s}  \geq f \quad \mbox{on } 
\Omega, 
\]
where
\begin{eqnarray*}
b(s,t) := - \eta(s) \rho(s,t) \frac{\partial^2 h}{\partial \rho^2} (s,\rho(s,t)), \quad f := f_1 + f_2, \\ f_1(s,t) := \rho(s,t) \frac{\partial h}{\partial \rho}(s,\rho(s,t)) \int_{\T} H_s(u(s,t))\, d\tau, \quad f_2(s,t) := \eta(s) \rho(s,t) \frac{\partial^2 h}{\partial s \partial \rho} (s,\rho(s,t)).
\end{eqnarray*}
Therefore, the Aleksandrov integral version of the weak maximum principle (see e.g.\ Theorem 9.1 in \cite{gt83}) implies that
\[
\sup_{\Omega} \rho \leq \sup_{\partial \Omega} \rho + C \|f^-\|_{L^2(\Omega)} = \rho_0 + C \|f^-\|_{L^2(\Omega)},
\]
where $f^-$ denotes the negative part of the function $f$ and $C$ depends on the diameter of $\Omega$ and on the $L^2$ norm of the coefficient $b$ on $\Omega$. Actually, Theorem 9.1 in \cite{gt83} is stated for domains in $\R^n$, but the case of domains in the cylinder $\R \times \T$ is easily deduced, by considering the conformal change of variables $(s,t) \mapsto e^{2\pi(s+ i t)} \in \C \cong \R^2$. See the discussion at the beginning of the Appendix A.

Since $\Omega$ is contained in $I\times \T$ and the interval $I$ has length at most $E/\delta^2_0$, the diameter of $\Omega$ is uniformly bounded, and it is enough to find a uniform bound for the $L^2$ norm of $f^-$ and $b$ on $\Omega$. By Proposition \ref{etalim}, equation (\ref{zerob}), and the fact that $H_s \geq -\mu$, we find
\begin{eqnarray*}
2c (A,E) \geq \eta(\sup I) - \eta(\inf I) = \int_I \eta'(s)\, ds = \int_I \int_{\T} H_s(u(s,t))\, dt \, ds \\ =  \int_{(I \times \T) \setminus \Omega} H_s(u(s,t))\, ds dt + \int_{\Omega} h\bigl(s,\rho(s,t)\bigr)\, ds dt  \geq  - \mu |I \times \T|  + \int_{\Omega} h\bigl(s,\rho(s,t)\bigr)\, ds dt  \\ \geq  - \mu \frac{E}{\delta_0^2}  +   \int_{\Omega} h(s,\rho(s,t))\, ds dt .
\end{eqnarray*}
Therefore, the integral of $h(s,\rho(s,t))$ over $\Omega$ has a uniform upper bound, and by the assumption (b) we get a uniform bound for the $L^1$ norm of $\rho$ on $\Omega$, 
\begin{equation}
\label{two}
\| \rho \|_{L^1(\Omega)} \leq d,
\end{equation}
for some number $d=d(A,E)$. By the inequality (\ref{diffineq}) of Lemma \ref{conti}, 
\begin{equation}
\label{three}
|\nabla \rho|^2 \leq \rho \left| \frac{\partial u}{\partial s} \right|^2_t \quad \mbox{on } \Omega.
\end{equation}
We claim that (\ref{two}), (\ref{three}), and the energy estimate
\begin{equation}
\label{four}
\int_{\R \times \T}  \left| \frac{\partial u}{\partial s} \right|^2_t \, ds dt \leq 
\int_{-\infty}^{+\infty} \|\nabla \mathbb{A}_{H_s}(v(s))\|^2\, ds \leq E,
\end{equation}
imply that for every $p<+\infty$ the $L^p$ norm of $\rho$ on $\Omega$ is uniformly bounded. 

In order to prove this fact, we choose a cut-off function $\chi\in C^{\infty}(\R)$ such that
\begin{eqnarray*}
\chi(\sigma) = \sigma & \quad & \forall \sigma \geq \rho_0 + 3, \\
\chi(\sigma) = \rho_0 + 2 & \quad & \forall \sigma \leq \rho_0 +1, \\
\chi(\sigma) \geq \sigma  \mbox{ and } 0\leq \chi'(\sigma) \leq 1 & \quad & \forall \sigma\in \R.
\end{eqnarray*}
Then the function 
\[
\tilde{\rho}(s,t) := \left\{ \begin{array}{ll} \chi(\rho(s,t)) & \mbox{if } (s,t)\in \Omega, \\ \rho_0 + 2 & \mbox{if } (s,t)\in (I\times \T)\setminus \Omega, \end{array} \right.
\]
is smooth on the whole $I \times \T$, and its $L^p$ norm on $I \times \T$ is uniformly bounded if and only if the $L^p$ norm of $\rho$ on $\Omega$ is uniformly bounded. Moreover, by (\ref{three}), the inequality
\begin{equation}
\label{five}
|\nabla \tilde{\rho}|^2 = \chi'(\rho)^2 |\nabla \rho|^2 \leq \rho \left| \frac{\partial u}{\partial s} \right|_t^2 \leq \tilde{\rho}   \left| \frac{\partial u}{\partial s} \right|_t^2
\end{equation}
holds on $\Omega$. Since it trivially holds also on $(I\times \T)\setminus \Omega$, because $\nabla \tilde{\rho}=0$ there, we conclude that (\ref{five}) holds on the whole $I\times \T$. Then, (\ref{five}), (\ref{four}), and the H\"older inequality imply that for every $q\in [1,+\infty[$,
\[
\| \nabla \tilde{\rho} \|_{L^{2q/(q+1)}(I \times \T)}^2 \leq \|\tilde{\rho}
\|_{L^q(I\times \T)} \left\| \frac{\partial u}{\partial s} \right\|_{L^2(I\times \T)}^2 \leq E  \|\tilde{\rho}\|_{L^q(I\times \T)}.
\]
Together with the continuity of the Sobolev embedding $W^{1,2q/(q+1)}(I \times \T) \hookrightarrow L^{2q} (I \times \T)$, we deduce that $\|\tilde{\rho}\|_{L^{2q} (I \times \T)}$ has an upper bound in terms of  
$\|\tilde{\rho}\|_{L^q (I \times \T)}$ (here we are using the fact that $I\times \T$ is a regular domain and that the length of $I$ is uniformly bounded). 
By (\ref{two}), $\|\tilde{\rho}\|_{L^1 (I \times \T)}$ is uniformly bounded, so by bootstrap we conclude that $\|\tilde{\rho}\|_{L^p (I \times \T)}$  uniformly bounded for every $p<+\infty$. Hence, the same is true for $\|\rho\|_{L^p (\Omega)}$, as claimed.

Since $H_s\geq - \mu$ and $\partial h/\partial \rho\geq 0$, it holds
\begin{equation}
\label{negpart}
f_1^- \leq \mu \rho \frac{\partial h}{\partial \rho} (s,\rho).
\end{equation}
Moreover, using again the bound on $\eta$ proved in Proposition \ref{etalim}, 
\begin{equation}
\label{absv}
|f_2| \leq c(A,E)  \rho \left| \frac{\partial^2 h}{\partial s \partial \rho} (s,\rho) \right|
\end{equation}
By (\ref{negpart}) and (\ref{absv}), we have the following upper bound on the negative part of $f$:
\[
(f^-)^2 \leq (f_1^- + f_2^-)^2 \leq 2 (f_1^-)^2 + 2 |f_2|^2 \leq 2 \mu^2 \rho^2 \left| \frac{\partial h}{\partial \rho} \right|^2 + 2 c(A,E)^2 \rho^2 \left| \frac{\partial^2 h}{\partial s \partial \rho} \right|^2.
\]  
By the assumption (c), the latter quantity is bounded from above by $\kappa' \rho^p$, for $\kappa'$ and $p$ large enough. Therefore, the fact that the $L^p$ norm of $\rho$ on $\Omega$ has a uniform upper bound implies that the same is true for the $L^2$ norm of $f^-$ on $\Omega$.
Similarly, 
\[
|b|^2 \leq c(A,E)^2 \rho^2 \left| \frac{\partial^2 h}{\partial \rho^2} \right|^2,
\]
so the assumption (c) guarantees that also the $L^2$ norm of $b$ on $\Omega$ has a uniform upper bound. This concludes the proof.
\end{proof}       \qed

The assumption (b) requires $h(s,\rho)$ to grow at least linearly in $\rho$. However,  the conclusion of the above proposition holds also if we replace the assumptions (b) and (c) by the assumptions:
\renewcommand{\labelenumi}{(\theenumi')}
\begin{enumerate}
\setcounter{enumi}{1}
\item $\inf \set{h(s,\rho)}{\rho\geq \rho_0, \; s\in \R} >0$,
\item $| \frac{\partial h}{\partial \rho}|^2 \leq \kappa ( h + 1)$, $\rho^2 |\frac{\partial^2 h}{\partial \rho^2}|^2 \leq \kappa (h+1)$, and $|\frac{\partial^2 h}{\partial s \partial \rho} |^2 \leq \kappa (h+1)$.
\end{enumerate}
\renewcommand{\labelenumi}{(\theenumi)}
In the $s$-independent case, the conditions (a), (b'), and (c') are fulfilled, for instance, by any $h=h(\rho)$ which is a polynomial of degree at most two with positive leading coefficient.

The proof is actually simpler, because one does not need to find bounds for the $L^p$ norm of $\rho$. One uses the function $r:= \log \rho$ on $\Omega$, which by (\ref{laplarho}) and (\ref{diffineq}) satisfies the elliptic differential inequality
\begin{equation}
\label{laplar}
\begin{split}
\Delta r = \frac{1}{\rho} \Delta \rho - \frac{1}{\rho^2} |\nabla \rho|^2 = \frac{1}{\rho} \left| \frac{\partial u}{\partial s} \right|_t^2 + \eta \frac{\partial^2 h}{\partial \rho^2} \frac{\partial \rho}{\partial s} + \frac{\partial h}{\partial \rho} \int_{\T} H_s(u)\, dt + \eta  \frac{\partial^2 h}{\partial s \partial \rho}  - \frac{1}{\rho^2} |\nabla \rho|^2 \\
\geq \eta \rho \frac{\partial^2 h}{\partial \rho^2} \frac{\partial r}{\partial s} + \frac{\partial h}{\partial \rho} \int_{\T} H_s(u)\, dt + \eta  \frac{\partial^2 h}{\partial s \partial \rho}.
\end{split} \end{equation}
The upper bound on the integral of $h(s,\rho)$ and the assumptions (a), (b'), (c') immediately imply the $L^2$ bounds needed in order to apply the Aleksandrov maximum principle to the function $r$. This proves the following:

\begin{prop}
\label{ulim2}
The same conclusion of Proposition \ref{ulim} holds, if one replaces the assumptions (b) and (c) by the assumptions (b') and (c') above.  
\end{prop}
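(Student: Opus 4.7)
The plan is to follow the same general strategy as in Proposition \ref{ulim} but to work with the function $r := \log \rho$ on the open set $\Omega \subset u^{-1}(\partial W \times \,]\rho_0,+\infty[\,)$ instead of $\rho$ itself, since under (b') we only control $h$ from below by a positive constant (not by $\rho/\kappa$), so there is no route to $L^p$ bounds on $\rho$; however, taking the logarithm rescales all the relevant quantities by a factor of $1/\rho$, which is exactly what (c') compensates for.

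First I would use Proposition \ref{etalim} (whose hypotheses still hold since (b') gives the coercivity needed for the compact set $V_{\epsilon_0}$) to obtain a uniform bound $|\eta(s)| \le c(A,E)$. Exactly as in the proof of Proposition \ref{ulim}, Lemma \ref{luno} shows that each connected component $\Omega$ sits in a strip $I\times \T$ whose length $|I|$ is bounded by $E/\delta_0^2$, that $\rho = \rho_0$ on $\partial \Omega$, and by integrating (\ref{zerob}) together with (b') that
\[
\int_\Omega h(s,\rho(s,t))\, ds\, dt \le d(A,E).
\]
From (\ref{laplar}) we have on $\Omega$ the elliptic differential inequality
\[
\Delta r + \tilde b\, \frac{\partial r}{\partial s} \ge \tilde f,
\]
with $\tilde b = -\eta \rho\, \partial^2 h/\partial \rho^2$ and $\tilde f = (\partial h/\partial \rho)\int_\T H_s(u)\, d\tau + \eta\, \partial^2 h/(\partial s \partial \rho)$.

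The next step is to bound $\|\tilde b\|_{L^2(\Omega)}$ and $\|\tilde f^-\|_{L^2(\Omega)}$. Using the $\eta$-bound and (c'),
\[
|\tilde b|^2 \le c(A,E)^2\, \rho^2 \left|\frac{\partial^2 h}{\partial \rho^2}\right|^2 \le c(A,E)^2\, \kappa\, (h+1),
\]
so $\|\tilde b\|_{L^2(\Omega)}^2 \le c(A,E)^2 \kappa (d(A,E) + |\Omega|)$. For the source term, since $\partial h/\partial \rho \ge 0$ and $H_s \ge -\mu$ one has
\[
\tilde f^- \le \mu \frac{\partial h}{\partial \rho} + c(A,E) \left|\frac{\partial^2 h}{\partial s\partial \rho}\right|,
\]
and by (c') each summand squared is bounded by a constant multiple of $h+1$, so $\|\tilde f^-\|_{L^2(\Omega)}^2$ is again controlled by $d(A,E) + |\Omega|$. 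Since $|\Omega| \le |I\times \T| \le E/\delta_0^2$, all these bounds are uniform in the solution.

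Finally I would invoke the Aleksandrov integral maximum principle (as in \cite{gt83}, Theorem 9.1, after conformal identification of $\R\times\T$ with a domain in $\C$) applied to $r$ on $\Omega$. It gives
\[
\sup_\Omega r \le \sup_{\partial \Omega} r + C\, \|\tilde f^-\|_{L^2(\Omega)} = \log \rho_0 + C\, \|\tilde f^-\|_{L^2(\Omega)},
\]
with $C$ depending only on $\diam \Omega$ (uniformly bounded) and $\|\tilde b\|_{L^2(\Omega)}$ (also uniformly bounded). Exponentiating yields the desired uniform upper bound on $\rho$, hence a uniform compact set containing $u(\R\times\T)$. The main obstacle is the verification that (c') is precisely the assumption that turns the coefficients appearing in the equation for $\log \rho$ (rather than for $\rho$) into quantities controlled by $h+1$; once this is recognized, the $L^2$ estimates are immediate from $\int_\Omega h \le d$, and no $L^p$-bootstrap on $\rho$ is needed.
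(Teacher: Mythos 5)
Your proposal is correct and takes essentially the same route as the paper: you pass to $r=\log\rho$, observe that the differential inequality (\ref{laplar}) has coefficient $\tilde b=-\eta\rho\,\partial^2 h/\partial\rho^2$ and source $\tilde f=(\partial h/\partial\rho)\int_\T H_s\,dt+\eta\,\partial^2 h/\partial s\partial\rho$, and use the uniform integral bound on $h$ together with (a), (b'), (c') to get the $L^2$ control on $\tilde b$ and $\tilde f^-$ required by the Aleksandrov maximum principle. This is precisely the argument sketched in the paper immediately before the statement; you have merely written out the $L^2$ estimates explicitly.
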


Conditions (a), (b'), and (c') are satisfied by the natural homotopies which join a constant $h(0,\cdot)$ to a linear, or to a quadratic $h(1,\cdot)$. Notice, however, that in the latter case the action and energy estimates - which here are taken as hypotheses - may be delicate to achieve, see Lemma \ref{nonaut} below.

\paragraph{Estimates for solutions on half-lines.} Our definition of the homomorphisms between the Morse chain complex and differential complex of the geodesic energy functional and the Rabinowitz-Floer chain complex is based on the study of spaces of solutions $v=(u,\eta)$ of the Rabinowitz-Floer equation (\ref{rfleq}) on the half-lines $\R^+= [0,+\infty[$ and $\R^-= ]-\infty,0]$, with suitable boundary conditions at $s=0$. The required estimate on $\eta$ was discussed in Remark \ref{etabdry}. The aim of this section is to prove an $L^{\infty}$ estimate on $u$ in this situation. We need such an estimate only in the case of Hamiltonians which do not depend on $s$, but it does no harm to state it under the more general assumptions of Propositions \ref{ulim} and \ref{ulim2}:

\begin{prop}
\label{ulim+}
Assume that $J_t$ is of contact type on $\partial W \times [\rho_0,+\infty[$, and that the smooth family of Hamiltonians $\{H_s\}_{s\geq 0}$ satisfies the assumptions of Proposition \ref{ulim} or of Proposition \ref{ulim2}. Then for every triplet of positive numbers $A,E,\nu$, there exists a compact set $K=K(A,E,\nu)\subset \hat{W}$ such that for every solution $v=(u,\eta)$ of (\ref{zeroa}-\ref{zerob}) on the half-line $\R^+ $ such that
\begin{eqnarray}
\label{c1+}
\sup_{s\geq 0} |\mathbb{A}_{H_s}(v(s))| & \leq & A,\\ \label{c2+}
\int_0^{+\infty} \|\nabla \mathbb{A}_{H_s}(v(s)) \|^2 \, ds & \leq & E, \\
\label{c3+}
\alpha \Bigl( \frac{\partial u}{\partial t}(0,t) - \eta(0) X_{H_0} (u(0,t)) \Bigr) & \leq & \nu, \quad \forall t\in \T \mbox{ such that } u(0,t)\in \partial W \times [\rho_0,+\infty[, \\ \label{c4+}
\eta(0) & \geq & - \nu,
\end{eqnarray}
 $u(s,t)$ belongs to $K$ for every $(s,t)\in \R \times \T$.
\end{prop}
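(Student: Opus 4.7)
The plan is to follow the structure of Propositions \ref{ulim} and \ref{ulim2} essentially verbatim, isolating the one new difficulty: connected components of $u^{-1}(\partial W \times \,]\rho_0,+\infty[)$ which touch the boundary $\{0\}\times\T$, on which the interior elliptic estimate must be replaced by a mixed Dirichlet/Neumann variant. First, the bound on $\eta$ is settled by Remark \ref{etabdry}: the condition (\ref{c4+}) supplies the a priori lower bound on $\eta(0)$ required to run the argument of Proposition \ref{etalim} on $\R^+$, producing a constant $c=c(A,E,\nu)$ with $|\eta(s)|\leq c$ for every $s\geq 0$.

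Next, I would choose $\rho_0$ large enough so that the compact set $V_{\epsilon_0}$ of Lemma \ref{luno} is disjoint from $\partial W \times [\rho_0,+\infty[$; then $\rho(s,t)\leq \rho_0$ whenever $\|\nabla\mathbb{A}_{H_s}(v(s))\|\leq\delta_0$. Since the total measure of the set where $\|\nabla\mathbb{A}_{H_s}(v(s))\|>\delta_0$ is at most $E/\delta_0^2$, each connected component $\Omega$ of $u^{-1}(\partial W\times\,]\rho_0,+\infty[)$ is contained in a strip $I\times\T$ with $|I|\leq E/\delta_0^2$. Components disjoint from $\{0\}\times\T$ are handled exactly as in the proofs of Propositions \ref{ulim} and \ref{ulim2}. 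The remaining case is a component $\Omega\subset [0,b[\times\T$ with $b\leq E/\delta_0^2$, on which $\rho=\rho_0$ holds on $\partial\Omega\setminus\{s=0\}$.

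On such an $\Omega$, I would still use the identity (\ref{laplarho}) of Lemma \ref{conti} as the interior elliptic relation, and supplement it with a one-sided Neumann boundary condition on $\partial\Omega\cap\{s=0\}$. From (\ref{quindici}) and the identity $\lambda=\rho\alpha$ on the collar, together with $X_{H_0}=(\partial h/\partial\rho)R$ and $\alpha(R)=1$, one obtains
\begin{equation*}
\frac{\partial\rho}{\partial s}(0,t)=-\rho(0,t)\,\alpha\Bigl(\frac{\partial u}{\partial t}(0,t)-\eta(0) X_{H_0}(u(0,t))\Bigr)\geq -\nu\,\rho(0,t)
\end{equation*}
whenever $u(0,t)\in\partial W\times[\rho_0,+\infty[$, by the hypothesis (\ref{c3+}). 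Working with $\rho$ itself under the hypotheses (a)--(c) of Proposition \ref{ulim}, or with $r=\log\rho$ under the variant (a), (b'), (c') and the inequality (\ref{laplar}) of Proposition \ref{ulim2}, the relevant function therefore satisfies a linear elliptic differential inequality whose drift and inhomogeneity have the same $L^2$ bounds on $\Omega$ as in the full-line case, together with a one-sided Neumann bound of size $\nu$ on $\partial\Omega\cap\{s=0\}$ and a Dirichlet bound $\rho_0$ on the rest of $\partial\Omega$.

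To close the argument, I would invoke the Aleksandrov-type maximum principle with Neumann conditions on part of the boundary proved in Appendix~A, which extends Theorem~9.1 of \cite{gt83} to precisely this mixed boundary setting. The main obstacle is exactly this Appendix~A estimate: one needs an upper bound for $\sup_\Omega\rho$ (respectively $\sup_\Omega r$) in terms of the diameter of $\Omega$, the $L^2$-norms of the drift coefficient $b$ and of the negative part $f^-$ of the inhomogeneity, the Dirichlet datum $\rho_0$, and the Neumann datum $\nu$. Once that is granted, the $L^1$ bound on $\rho$ produced by integrating (\ref{zerob}) over $\Omega$ (using that $\eta(b)-\eta(0)$ is controlled by $c(A,E,\nu)$ and that $H_s\geq -\mu$), combined with the same $L^1\!\to\!L^p$ bootstrap based on (\ref{diffineq}) and the energy bound (\ref{c2+}), yields the required $L^2$-control on $b$ and $f^-$ exactly as in the proofs of Propositions \ref{ulim} and \ref{ulim2}, and the desired $L^\infty$-bound on $u$ follows.
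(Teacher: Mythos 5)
Your outline matches the paper's argument: the same reduction via Remark \ref{etabdry}, the same decomposition of $u^{-1}(\partial W\times\,]\rho_0,+\infty[)$ into components, the same boundary derivative identity coming from (\ref{quindici}), and the same appeal to the mixed-boundary Aleksandrov estimate of Appendix A. Two points deserve tightening, both in your treatment of a boundary component $\Omega\subset[0,S]\times\T$.

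First, Theorem \ref{alek} does not take a ``Neumann datum'' $\nu$: its hypothesis is $\partial w/\partial s\geq 0$ on $\Sigma'$, period. Your boundary inequality gives only $\partial r/\partial s(0,t)\geq-\nu$, so you must first replace $r$ by $w(s,t):=r(s,t)+\nu s$; this satisfies $\partial w/\partial s\geq 0$ on $\Sigma'$, shifts the Dirichlet boundary value on $\Sigma$ by at most $\nu S$, and turns the inhomogeneity $f$ into $f+\nu b$, whose negative part is still $L^2$-controlled since $(f+\nu b)^-\leq f^-+\nu|b|$. This is the explicit step the paper makes, and without it your invocation of Theorem \ref{alek} does not quite close.

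Second, on a boundary component you should commit to working with $r=\log\rho$ rather than offering $\rho$ as an alternative under hypotheses (a)--(c). The boundary inequality you derive for $\rho$ is $\partial\rho/\partial s(0,t)\geq-\nu\,\rho(0,t)$, which involves the unknown itself and is therefore not a Neumann condition of the form Theorem \ref{alek} handles; the additive correction does not apply. (One could try the multiplicative correction $\rho\,e^{\nu s}$, but this introduces a zeroth-order term of indefinite sign into the elliptic inequality.) For $r$ the bound is the constant $-\nu$, the additive correction is clean, and the $L^2$ bounds on the drift $b$ and on $f^-$ established in the proofs of Propositions \ref{ulim} and \ref{ulim2} still apply, because passing from the $\rho$-inequality (\ref{laplarho}) to the $r$-inequality (\ref{laplar}) only divides the inhomogeneity by $\rho\geq\rho_0$. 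The paper therefore uses $r$ in the boundary case under both sets of hypotheses, and you should do the same.
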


The proof of the above proposition is based on the following version of the Aleksandrov weak maximum principle for boundary conditions of mixed Dirichlet-Neumann type:

\begin{thm}
\label{alek}
Let $\Omega$ be a bounded open subset of the half-cylinder $]0,+\infty[ \times \T$ and consider the following partition $\{\Sigma,\Sigma'\}$ of $\partial \Omega$:
\[
\Sigma := \overline{\partial \Omega \setminus ( \{0\} \times \T)}, \quad \Sigma' := \partial \Omega \setminus \Sigma.
\]
Then for every $b \in L^2(\Omega,\R^2)$ there exists a number $C$ depending only on the diameter of $\Omega$ and on $\|b\|_{L^2(\Omega)}$ such that for every $f\in L^1_{\mathrm{loc}}(\Omega)$ and every $u\in C^2(\Omega) \cap C^1(\overline{\Omega})$ which satisfies
\begin{eqnarray*}
\Delta u + b \cdot \nabla u & \geq & f \quad \mbox{in }\Omega, \\
\frac{\partial u}{\partial s} & \geq & 0 \quad \mbox{on } \Sigma',
\end{eqnarray*}
there holds
\[
\sup_{\Omega} u \leq \sup_{\Sigma} u + C \|f^-\|_{L^2(\Omega)},
\]
where $f^-$ denotes the negative part of $f$.
\end{thm}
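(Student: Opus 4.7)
The plan is to reduce to the standard Aleksandrov integral maximum principle by the classical reflection trick across the Neumann part $\Sigma'$ of the boundary. Let $\sigma:(s,t)\mapsto(-s,t)$ denote the reflection across $\{0\}\times\T$, and set
\[
\tilde\Omega := \Omega \cup \sigma(\Omega) \cup (\Sigma')^\circ \;\subset\; \R\times\T,
\]
where $(\Sigma')^\circ$ is the relative interior of $\Sigma'$ inside $\{0\}\times\T$. Define
\[
\tilde u(s,t) := u(|s|,t),\qquad \tilde f(s,t) := f(|s|,t),\qquad \tilde b_1(s,t) := \sgn(s)\, b_1(|s|,t),\qquad \tilde b_2(s,t) := b_2(|s|,t).
\]
A direct chain-rule computation shows that $\tilde u$ satisfies $\Delta\tilde u + \tilde b\cdot\nabla\tilde u \geq \tilde f$ pointwise on both halves $\Omega$ and $\sigma(\Omega)$, and that $\partial\tilde\Omega \subset \Sigma\cup\sigma(\Sigma)$ (since $\Sigma'$ has been absorbed into the interior).

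The function $\tilde u$ is in general only Lipschitz across $\Sigma'$: its normal derivative jumps by $[\partial_s\tilde u](0,t) = 2\partial_s u(0,t)$, which is non-negative by the Neumann hypothesis. Consequently, in the sense of distributions, the Laplacian of $\tilde u$ is the sum of its pointwise Laplacian on $\tilde\Omega\setminus\Sigma'$ and a non-negative Radon measure supported on $\Sigma'$, so the inequality $\Delta\tilde u + \tilde b\cdot\nabla\tilde u \geq \tilde f$ continues to hold distributionally on all of $\tilde\Omega$.

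Next I would apply the standard Aleksandrov integral maximum principle (Theorem 9.1 of \cite{gt83}) to $\tilde u$ on $\tilde\Omega$, transporting it to a planar domain by the conformal change of variables $(s,t)\mapsto e^{2\pi(s+it)}$, as already invoked in the proof of Proposition \ref{ulim}. In its upper-contact-set formulation the principle only probes points where a concave paraboloid touches $\tilde u$ from above; at any such contact point lying on $\Sigma'$ the upward-pointing crease forces $\partial_s u(0,t)=0$ (otherwise $\tilde u$ has a cusp opening upward that no concave paraboloid can support from above), and there the singular part of $\Delta\tilde u$ vanishes, while on $\tilde\Omega\setminus\Sigma'$ the inequality holds classically. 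This yields
\[
\sup_{\tilde\Omega}\tilde u \leq \sup_{\partial\tilde\Omega}\tilde u + \widetilde C\,\|\tilde f^-\|_{L^2(\tilde\Omega)},
\]
with $\widetilde C$ depending only on $\diam(\tilde\Omega)$ and $\|\tilde b\|_{L^2(\tilde\Omega)}$. Translating back via the identities $\sup_{\tilde\Omega}\tilde u = \sup_\Omega u$, $\sup_{\partial\tilde\Omega}\tilde u = \sup_\Sigma u$, $\|\tilde f^-\|_{L^2(\tilde\Omega)}^2 = 2\|f^-\|_{L^2(\Omega)}^2$, $\|\tilde b\|_{L^2(\tilde\Omega)}^2 = 2\|b\|_{L^2(\Omega)}^2$, and $\diam(\tilde\Omega)\leq 2\diam(\Omega)$ gives the desired estimate.

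The main obstacle is making the contact-set argument in the previous paragraph fully rigorous, since the standard statement of the Aleksandrov--Bakelman--Pucci principle assumes $W^{2,n}$ regularity which fails across $\Sigma'$. I would handle this either geometrically -- working directly with the normal mapping of $\tilde u$ and showing that its image has planar measure zero on $\Sigma'$ because only points with $\partial_s u(0,t)=0$ contribute -- or by an approximation argument: since the singular part of $\Delta\tilde u$ is a non-negative measure, mollification produces smooth $\tilde u_\varepsilon$ on a slightly shrunk domain with $\Delta\tilde u_\varepsilon + \tilde b_\varepsilon\cdot\nabla\tilde u_\varepsilon \geq \tilde f_\varepsilon$, to which the classical Aleksandrov principle applies, and the stated inequality is recovered by letting $\varepsilon\to 0$.
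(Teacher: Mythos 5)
Your reflection argument is correct in outline and takes a genuinely different route from the paper. The paper proves the estimate directly by adapting the normal-mapping argument to the Neumann boundary: after the conformal transport to the annulus (so $\Sigma'$ sits on the unit circle), it shows that if $p$ lies in the open half-ball of upper differentials at the interior maximum with $p\cdot z_0<0$, then the contact point $z_*$ with $p\in\chi_u(z_*)$ cannot lie on $\Sigma'$ (the Neumann condition forces $p\cdot z_*\geq 0$, contradicting $p\cdot z_0<0$ through the contact inequality), nor on $\Sigma$ (using $u\leq 0$ there); this yields a half-ball in $\chi_u(\Gamma_u)$, and the rest is the classical measure-comparison argument. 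You instead reduce to the unmodified ABP principle by even reflection, paying for it with a loss of regularity across $\Sigma'$. Both of the repairs you sketch are viable: (i) the key geometric observation that a contact point on $\Sigma'$ must have $\partial_s u(0,t)=0$ is exactly right -- at such a point $\tilde u$ is differentiable with gradient $(0,\partial_t u(0,t))$, so $\chi_{\tilde u}(\Gamma_{\tilde u}\cap\Sigma')$ sits inside the line $\{p_1=0\}$ and contributes Lebesgue measure zero, so the classical estimate $|B_{M/d}|\leq\int_{\Gamma\setminus\Sigma'}|\det D^2\tilde u|$ survives; (ii) the mollification route also works, but be aware it does not literally produce $\Delta\tilde u_\varepsilon + \tilde b\cdot\nabla\tilde u_\varepsilon\geq\tilde f_\varepsilon$: you get $\Delta\tilde u_\varepsilon + \tilde b\cdot\nabla\tilde u_\varepsilon \geq \tilde f_\varepsilon + R_\varepsilon$ with the commutator error $R_\varepsilon := \tilde b\cdot(\nabla\tilde u)_\varepsilon - (\tilde b\cdot\nabla\tilde u)_\varepsilon$, which tends to $0$ in $L^2$ since $\nabla\tilde u\in L^\infty$ and $\tilde b\in L^2$, but must be tracked. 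The trade-off between the two approaches is essentially conceptual economy versus flexibility: reflection is shorter if one is willing to cite GT~Theorem~9.1 and handle the Lipschitz kink, whereas the paper's direct argument is self-contained, avoids the commutator bookkeeping, and -- as the paper's concluding remark indicates -- extends more readily to general $n$, to non-flat Neumann boundaries (where even reflection is unavailable), and to lower regularity of $u$.
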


Here, $C^1(\overline{\Omega})$ denotes the space of functions in $C^1(\Omega)$ whose differential extends continuously to $\overline{\Omega}$.
Notice that if $(s,t)\in \Sigma'$, then $s=0$ and there exists $\epsilon>0$ such that the segment $]0,\epsilon[ \times \{t\}$ is contained in $\Omega$. In particular, any $u\in C^1(\overline{\Omega})$ has a well defined  partial derivative $\partial u/\partial s$ at such a point $(0,t)$. The above theorem is probably known, but since we could not find an appropriate reference, we prove it in the Appendix A.

\medskip

\begin{proof}[of Proposition \ref{ulim+}]
By the lower bound (\ref{c4+}) on $\eta(0)$, we deduce that $|\eta|$ is uniformly bounded, see Remark \ref{etabdry}. 
Arguing as in the proof of Proposition \ref{ulim}, we find that, up to the choice of a larger $\rho_0$, each connected component $\Omega$ of the set $u^{-1}(\partial W \times ]\rho_0,+\infty[)$ is contained in $I \times \T$, for some interval $I\subset \R^+$ of length not exceeding the number $E/\delta_0^2$. When the infimum of $I$ is positive, one can proceed as in the proof of Proposition \ref{ulim} and find a uniform bound for $\rho=\rho\circ u$ on $\Omega$, by the classical Aleksandrov maximum principle, or as in the proof of Proposition \ref{ulim2}, using the function $r:= \log \rho$. 

Therefore, we just have to consider the case $I=[0,S]$, for some $0<S\leq  E/\delta_0^2$. It is convenient to work with the function $r:= \log \rho$ on $\Omega$, which by (\ref{laplar}) satisfies the elliptic differential inequality
\begin{equation}
\label{ddii}
\Delta r + b \frac{\partial r}{\partial s} 
\geq f \quad \mbox{on } \Omega,
\end{equation}
where
\[
b := - \eta \rho \frac{\partial^2 h}{\partial \rho^2}, \quad 
f := \frac{\partial h}{\partial \rho} \int_{\T} H_s(u)\, dt + \eta \frac{\partial^2 h}{\partial s \partial \rho}.
\] 
The proof of Proposition \ref{ulim} or \ref{ulim2} shows that $b$ and $f^-$ have uniformly bounded $L^2$ norm on $\Omega$. If we set
\[
\Sigma := \overline{\partial \Omega \setminus ( \{0\} \times \T)}, \quad \Sigma' := \partial \Omega \setminus \Sigma,
\]
by definition of $\Omega$ we get that
\[
r = \log \rho_0 \quad \mbox{on } \Sigma.
\]
By the equations (\ref{quindici}), (\ref{radham}), and by the assumption (\ref{c3+}), for every $(0,t)\in \Sigma'$ we have the estimate
\[
\frac{\partial r}{\partial s} (0,t) = \frac{1}{\rho(0,t)} \frac{\partial \rho}{\partial s} (0,t) = - \alpha \left( \frac{\partial u}{\partial t} (0,t) - \eta(0) X_{H_0} (u(0,t)) \right) \geq - \nu.
\]
Therefore, the function
\[
w(s,t) := r(s,t) + \nu s,
\]
satisfies 
\[
\frac{\partial w}{\partial s} (0,t) \geq 0, \quad \forall (0,t)\in \Sigma',
\]
and by (\ref{ddii}),
\[
\Delta w + b \frac{\partial w}{\partial s} \geq f + \nu b.
\]
By applying Theorem \ref{alek} to the function $w$, we obtain
\[
\sup_{\Omega} w \leq \sup_{\Sigma} w + C \|(f+\nu b)^-\|_{L^2(\Omega)} \leq \log \rho_0 + \nu S +  C \|(f+\nu b)^-\|_{L^2(\Omega)},
\]
where the number $C$ depends on $S$ and on $\|b\|_{L^2(\Omega)}$, and both these quantities are uniformly bounded. Since
\[
(f+\nu b)^- \leq f^- + \nu |b|,
\]
we conclude that $w$, and hence $r$, has a uniform upper bound on $\Omega$.
\end{proof} \qed

\begin{rem}
\label{ubdry}
The analogous statement for solutions on the half-line $\R^-$ requires the bounds (\ref{c3+}) and (\ref{c4+}) in the hypotheses to be replaced by
\begin{eqnarray*}
\alpha \Bigl( \frac{\partial u}{\partial t}(0,t) - \eta(0) X_{H_0} (u(0,t)) \Bigr) & \geq & -\nu, \quad \forall t\in \T \mbox{ such that } u(0,t)\in \partial W \times [\rho_0,+\infty[ \\
\eta(0) & \leq & \nu.
\end{eqnarray*}
\end{rem}

\begin{rem}
When $\hat{W}$ is the cotangent bundle of a Riemannian manifold $(M,g)$ and the almost complex structures $J_t$ are $C^0$-close to the Levi-Civita almost complex structure induced by $g$ (which is not of contact type), there is a way of proving $L^{\infty}$ estimates for solutions of the Floer equation involving an asymptotically quadratic Hamiltonian (not necessarily depending only on $\rho$ outside of a compact set) which does not use the maximum principle, see \cite{as06}. It is based on embedding $(M,g)$ isometrically into an Euclidean space and combining the Calderon-Zygmund estimates for the Cauchy-Riemann operator with suitable interpolation inequalities. This method could be probably used also for the Rabinowitz-Floer equation. We also recall that the $L^{\infty}$ estimates for the Floer equation, in the case of a Hamiltonian depending only on $\rho$ outside of a compact set, follow directly from the standard maximum principle, because the term with the integral does not appear in (\ref{laplarho}).  
\end{rem}

\paragraph{Energy estimates.} The $L^{\infty}$ estimates of Propositions \ref{etalim}, \ref{ulim}, and \ref{ulim2} require a priori bounds on the energy and on the action. The energy identity for a solution on $v=(u,\eta)$ of the $s$-dependent Rabinowitz-Floer equation (\ref{zeroa}-\ref{zerob}) is
\begin{equation}
\label{uno}
\begin{split}
\int_{[a,b] \times \T} \Bigl| \frac{\partial u}{\partial s} \Bigr|_t^2 \, ds\, dt + \int_a^b |\eta'(s)|^2 \, ds = \int_a^b \|v'(s)\|^2\, ds = \int_a^b \| \nabla \mathbb{A}_{H_s} (v(s)) \|^2\, ds \\ = \mathbb{A}_{H_a} (v(a)) - \mathbb{A}_{H_b} (v(b)) - \int_a^b \eta(s) \int_{\T} \frac{\partial H_s}{\partial s} (u)\, dt \, ds,
\end{split}
\end{equation}
for every $-\infty < a \leq b < +\infty$. When the Hamiltonian $H$ does not depend on $s$, the function $\mathbb{A}_H(v(s))$ is decreasing and
\begin{equation}
\label{energy}
\int_{-\infty}^{+\infty} \|\nabla \mathbb{A}_H (v(s)) \|^2 \, ds = \lim_{s\rightarrow -\infty} \mathbb{A}_H(v(s)) - \lim_{s\rightarrow +\infty} \mathbb{A}_H(v(s)),
\end{equation}
so an upper bound of the action at $+\infty$ and a lower bound at $-\infty$ imply bounds both for the action at every $s$ and for the energy. 

In the $s$-dependent case this need not be true any more. The usual arguments from symplectic homology based on choosing Hamiltonians with $\partial H/\partial s \geq 0$ (see \cite{fh94} and \cite{cfh95}) cannot be applied, because in (\ref{uno}) there is a term $\eta(s)$ in front of the $s$-derivative of $H$, and the sign of $\eta$ might vary.  In \cite{cf09}, K.\ Cieliebak and U.\ Frauenfelder have proved energy estimates for homotopies when the Hamiltonians are constant outside a compact subset of $\hat{W}$ (see the proof of Corollary 3.7 in \cite{cf09}).  

Here we need to consider the case of coercive Hamiltonians. More precisely, we shall prove a technical lemma which provides energy estimates for certain homotopies of Hamiltonians in the class of all smooth functions $H$ satisfying
\begin{equation}
\label{suplin}
\lambda(X_H) - H + \theta_0 \geq \theta_1 |Y|_t^2,
\end{equation}
for some numbers $\theta_0\geq 0$ and $\theta_1>0$. Notice that the above condition, together with the request that $H$ is bounded from below, implies that $H$ has superlinear growth in the variable $\rho$:
\[
\lim_{\rho \rightarrow +\infty} \frac{H(x,\rho)}{\rho} = +\infty, \quad \mbox{uniformly in } x\in \partial W.
\]
This fact can be seen by rewriting (\ref{suplin}) on $\partial W \times ]0,+\infty[$ in an equivalent way, as the differential inequality
\[
\frac{\partial H}{\partial \rho} \geq \frac{1}{\rho} (H - \theta_0) + \theta_1.
\]

\begin{lem}
\label{nonaut}
Assume that $\{H_s\}_{s\in \R}$ is a smooth family of Hamiltonians on $\hat{W}$ which satisfies (\ref{dipes}) and the conditions:
\begin{enumerate}
\item There is a positive number $\mu$ such that $H_s(w) \geq - \mu$ for every $s\in \R$ and $w\in \hat{W}$.
\item There is a positive number $\epsilon_0$ such that the set
\[
V_{\epsilon_0} := \set{(s,w)\in [0,1]\times \hat{W}}{|H_s(w)| \leq \epsilon_0}
\]
is compact, and 
\[
\lambda\bigl(X_{H_s}(w)\bigr) - H_s(w) \geq \frac{1}{2}, \quad \forall (s,w) \in V_{\epsilon_0}.
\]
\item There are numbers $\theta_0 \geq 0$ and $\theta_1>0$ such that 
\[
\lambda (X_{H_s}(w)) - H_s(w) + \theta_0 \geq \theta_1 |Y(w)|_t^2,
\] 
for every $s\in \R$, $t\in \T$, and $w\in \hat{W}$.
\item There are numbers $\theta_2\geq 0$ and $\theta_3\geq 0$ such that 
\[
\left|\frac{\partial H_s}{\partial s} (w)\right| \leq \theta_2 H_s(w) + \theta_3,
\]
for every $s\in \R$ and $w\in \hat{W}$.
\item For every $s\in \R$ and $w\in \hat{W}$ there holds
\[
\left| \frac{\partial H_s}{\partial s} (w)\right| \leq \epsilon \bigl(  \lambda (X_{H_s}(w)) - H_s(w) + \theta_0 \bigr),
\]  
where $\theta_0$ is the constant appearing in the assumption (c), and
\begin{equation}
\label{epsi}
\epsilon < \left( 1 +  2\theta_0 + \frac{\theta_0 \mu}{\delta_0^2}  \right)^{-1},
\end{equation}
where $\delta_0 = \delta(\epsilon_0)$ is the number produced by Lemma \ref{luno}, and $\mu$ is the number appearing in the assumption (a).
\end{enumerate}
Then, for every number $M$ there exists a constant $a(M)$ such that for every solution $v=(u,\eta)$ of (\ref{zeroa}-\ref{zerob}) with
\begin{equation}
\label{tre}
\lim_{s\rightarrow -\infty}
\mathbb{A}_{H_0}(v(s)) \leq M, \quad \lim_{s\rightarrow +\infty}\mathbb{A}_{H_1} (v(s)) \geq -M,
\end{equation}
there holds
\begin{eqnarray*}
|\mathbb{A}_{H_s} (v(s)) | \leq a(M), \quad \forall s\in \R, \\
\int_{-\infty}^{+\infty} \|\nabla \mathbb{A}_{H_s} (v(s)) \|^2 \, ds  =  \int_{\R \times \T} \Bigl| \frac{\partial u}{\partial s} \Bigr|_t^2 \, ds\, dt + \int_{-\infty}^{+\infty} |\eta'(s)|^2 \, ds \leq a(M)+M.
\end{eqnarray*}
\end{lem}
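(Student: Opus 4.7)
The plan is to bound the correction term
$C := \int_{-\infty}^{+\infty} \eta(s) \int_{\T} \partial_s H_s(u)\, dt\, ds$
in the energy identity (\ref{uno}); by (\ref{dipes}) the integrand is supported on $[0,1]$, so $C$ is a bounded-interval integral. If I can control $|C|$ linearly in $\mathcal{A} := \sup_s |\mathbb{A}_{H_s}(v(s))|$ and $\mathcal{E} := \int_{-\infty}^{+\infty} \|\nabla \mathbb{A}_{H_s}(v(s))\|^2\, ds$ with a coefficient strictly less than $1$, then integrating $\Phi'(s) = -\|\nabla \mathbb{A}\|^2 - \eta(s) \int_\T \partial_s H_s\, dt$ from $\pm\infty$ to $s$, combined with (\ref{tre}), gives $\mathcal{A} \leq M + |C|$ and $\mathcal{E} \leq 2M + |C|$, and the resulting linear system can be solved for $\mathcal{A}$ and $\mathcal{E}$ in terms of $M$.

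The key tool is the pointwise identity
$\eta(s) \bigl( I(s) - \theta_0 \bigr) = \mathbb{A}_{H_s}(v(s)) - K(s),$
where $I(s) := \int_\T \bigl( \lambda(X_{H_s}) - H_s + \theta_0 \bigr)(u)\, dt \geq 0$ (nonnegative by (c)) and $K(s) := \int_\T \lambda(J_t\, \partial_s u)\, dt$. It is derived by rewriting $\int_\T u^*\lambda$ using the consequence $\partial_t u = \eta X_{H_s}(u) + J_t\, \partial_s u$ of (\ref{zeroa}) and substituting into the definition of $\mathbb{A}_{H_s}$. Because $\lambda = \iota_Y \omega$ and $J_t$ is $\omega$-compatible, $|\lambda(J_t \xi)| \leq |Y|_t\, |\xi|_t$; combined with (c) and Cauchy--Schwarz this yields $|K(s)|^2 \leq \theta_1^{-1} I(s) \int_\T |\partial_s u|_t^2\, dt$. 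Assumption (e) implies $|\int_\T \partial_s H_s\, dt| \leq \epsilon I(s)$ pointwise, so $|C| \leq \epsilon \int_0^1 |\eta(s)|\, I(s)\, ds$, and since $I(s) \geq 0$ the identity rearranges to
$|\eta(s)|\, I(s) = \bigl| \mathbb{A}_{H_s}(v(s)) - K(s) + \eta(s)\theta_0 \bigr| \leq |\mathbb{A}_{H_s}(v(s))| + |K(s)| + \theta_0 |\eta(s)|.$

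These three summands are the sources of the $1$, the mixed $\mathcal{A}$-$\mathcal{E}$ coupling (to be absorbed by AM-GM), and the $2\theta_0 + \theta_0 \mu/\delta_0^2$ contributions in the threshold (\ref{epsi}) on $\epsilon$. The first is immediate: $\int_0^1 |\mathbb{A}_{H_s}(v(s))|\, ds \leq \mathcal{A}$. For the third, Lemma \ref{luno} applied with $\theta = 1/2$ from (b) gives, exactly as in the proof of Proposition \ref{etalim}, the uniform pointwise bound
$|\eta(s)| \leq 2(\mathcal{A} + \kappa_0 \delta_0) + \mu \mathcal{E}/\delta_0^2, \quad \forall s \in \R;$
on the good set $\{\|\nabla \mathbb{A}\| \leq \delta_0\}$ Lemma \ref{luno} applies directly, while the bad set has measure at most $\mathcal{E}/\delta_0^2$ and $\eta' \geq -\mu$ from (a) controls the growth of $\eta$ there. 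Multiplying by $\theta_0$ and integrating over $[0,1]$ produces the $2\theta_0 \mathcal{A} + \theta_0 \mu \mathcal{E}/\delta_0^2$ contribution. For the middle term, Cauchy--Schwarz in $s$ gives $\int_0^1 |K(s)|\, ds \leq \theta_1^{-1/2} (\int_0^1 I(s)\, ds)^{1/2} \mathcal{E}^{1/2}$, and an AM--GM split converts this into a further small multiple of $\mathcal{A} + \mathcal{E}$, using the identity to read off $I$ on the region where $|\eta|$ is bounded away from zero, and invoking (c) together with the compactness of $V_{\epsilon_0}$ on the region where $|\eta|$ is small.

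The main obstacle is precisely this last AM-GM bookkeeping: the square root in $|K| \leq (I/\theta_1)^{1/2} (\int_\T |\partial_s u|^2)^{1/2}$ means that $|K|$ a priori couples $\int_0^1 I$ with $\mathcal{E}$ nonlinearly, and one has to balance a weighted AM-GM so that the resulting linear prefactor of $\mathcal{A} + \mathcal{E}$, added to the $1$, $2\theta_0$ and $\theta_0\mu/\delta_0^2$ from the other two summands, does not exceed $\epsilon^{-1}$ as prescribed by (\ref{epsi}). Once this is achieved, the resulting inequality $|C| \leq \epsilon \bigl(1 + 2\theta_0 + \theta_0\mu/\delta_0^2\bigr)(\mathcal{A} + \mathcal{E}) + \mathrm{const}(M)$, substituted into $\mathcal{A} \leq M + |C|$ and $\mathcal{E} \leq 2M + |C|$, yields two coupled linear inequalities whose solution gives the desired uniform bounds $\mathcal{A}, \mathcal{E} \leq a(M)$, and the second assertion of the lemma then follows by re-reading the energy identity.
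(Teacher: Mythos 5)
Your pointwise identity, the bound $|K(s)|^2\leq\theta_1^{-1}I(s)\int_\T|\partial u/\partial s|_t^2\,dt$, the $\sup_s|\eta|$ estimate from Lemma \ref{luno}, the use of (e) to get $|C|\leq\epsilon\int_0^1|\eta|I\,ds$, and the overall scheme of bounding $|C|$ linearly in $\mathcal{A}$ and $\mathcal{E}$ with total coefficient below $1$ all match the paper. The gap is in the step that controls $\int_0^1 I(s)\,ds$ after the AM--GM. You split $[0,1]$ into $\{|\eta|\geq c\}$ (where $\int I\leq c^{-1}\int|\eta|I$ is fine) and $\{|\eta|<c\}$, and on the latter claim that assumption (c) and the compactness of $V_{\epsilon_0}$ give an upper bound on $\int I(s)\,ds$. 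They do not: (c) gives only a \emph{lower} bound on the integrand $\theta(s,t):=\lambda(X_{H_s}(u))-H_s(u)+\theta_0$, and compactness of $V_{\epsilon_0}$ bounds $\theta(s,\cdot)$ from above only at points where $(s,u(s,t))\in V_{\epsilon_0}$, which is a constraint on $|H_s(u)|$, not on $|\eta(s)|$. (Lemma \ref{luno} links $u(s,\T)\subset V_{\epsilon_0}$ to $\|\nabla\mathbb{A}_{H_s}(v(s))\|$ being small --- a different condition from $|\eta(s)|$ being small.) When $|\eta(s)|$ is small, $u(s,\cdot)$ may run far out into the collar, $I(s)$ can be arbitrarily large, and your identity controls only the product $|\eta(s)|I(s)$.

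This is exactly where the paper invokes assumption (d), which your proposal never uses. Instead of cutting $[0,1]$ along $\{|\eta|<c\}$, the paper cuts along an auxiliary interval $J\subset[0,1]$ on which the monotone function $s\mapsto\eta(s)+\mu s$ lies in a window of width $\mu$; this forces the variation of $\eta$ over $J$ to be bounded by $2\mu$, so that $\int_{J\times\T}H_s\,ds\,dt=\eta(\sup J)-\eta(\inf J)$ is bounded, and assumption (d) (i.e.\ $|\partial H_s/\partial s|\leq\theta_2 H_s+\theta_3$) then yields a constant bound on $\int_{J\times\T}|\eta|\,|\partial H_s/\partial s|\,ds\,dt$, hence on that piece of $|C|$. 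On $[0,1]\setminus J$, where $|\eta|$ is bounded away from zero, the rest of the argument runs essentially as you describe and the absorption via AM--GM works. Without the interval construction and assumption (d), your estimate leaves $\int_0^1 I(s)\,ds$ uncontrolled and the claimed coefficient $\epsilon(1+2\theta_0+\theta_0\mu/\delta_0^2)$ in front of $\mathcal{A}+\mathcal{E}$ cannot be extracted.
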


\begin{proof}
From the energy identity (\ref{uno}) and from the assumptions (\ref{dipes}) and (\ref{tre}), we deduce the estimate
\begin{equation}
\label{sei}
\begin{split}
 \int_{\R \times \T} \Bigl| \frac{\partial u}{\partial s} \Bigr|_t^2 \, ds\, dt + \int_{-\infty}^{+\infty} |\eta'(s)|^2 \, ds = \int_{-\infty}^{+\infty} \| \nabla \mathbb{A}_{H_s} (v(s)) \|^2 \, ds  \\ \leq 2M + \int_{[0,1] \times \T} |\eta(s)| \left| \frac{\partial H_s}{\partial s} (u) \right| \, ds \, dt.
 \end{split}
\end{equation}
Moreover, the same assumptions imply the inequalities
\[
\mathbb{A}_{H_0} (v(s)) \leq M, \quad \forall s\leq 0, \quad \mathbb{A}_{H_1} (v(s)) \geq -M, \quad \forall s\geq 1.
\]
These inequalities and the identities
\begin{eqnarray*}
\mathbb{A}_{H_s} (v(s))  & = & \mathbb{A}_{H_0}(v(0)) - \int_0^s \| \nabla \mathbb{A}_{H_{\sigma}} (v(\sigma)) \|^2 \, d\sigma - \int_0^s \eta(\sigma) \int_{\T} \frac{\partial H_{\sigma}}{\partial \sigma} (u)\, dt \, d\sigma \\
& = & \mathbb{A}_{H_1}(v(1)) + \int_s^1 \| \nabla \mathbb{A}_{H_{\sigma}} (v(\sigma)) \|^2 \, d\sigma + \int_s^1 \eta(\sigma) \int_{\T} \frac{\partial H_{\sigma}}{\partial \sigma} (u)\, dt \, d\sigma,
\end{eqnarray*}
imply the action bound
\begin{equation}
\label{sette}
|\mathbb{A}_{H_s} (v(s))| \leq M + \int_{[0,1] \times \T} |\eta(\sigma)| \left| \frac{\partial H_{\sigma}}{\partial \sigma} (u) \right| \, d\sigma \,dt, \quad \forall s\in \R.
\end{equation}
The estimates (\ref{sei}) and (\ref{sette}) show that we must provide a uniform upper bound for the quantity
\[
\int_{[0,1] \times \T} |\eta(s)| \left| \frac{\partial H_s}{\partial s} (u) \right| \, ds\, dt.
\]
We start by proving an upper bound for $|\eta|$ in terms of the quantity above.
By Lemma \ref{luno} and (\ref{sette}), if $s\in \R$ is such that $\|\nabla \mathbb{A}_{H_s} (v(s)) \| \leq \delta_0$ then
\begin{equation}
\label{otto-}
\begin{split}
|\eta(s)| \leq 2 \bigl( |\mathbb{A}_{H_s}(v(s))| + \kappa_0 \| \nabla \mathbb{A}_{H_s} (v(s)) \| \bigr) \\ \leq  2 \left( M +  \int_{[0,1] \times \T} |\eta(\sigma)| \left| \frac{\partial H_{\sigma}}{\partial \sigma} (u) \right| \, d\sigma \,dt + \kappa_0 \delta_0 \right).
\end{split} \end{equation}
If $I\subset \R$ is a maximal interval on which $\|\nabla \mathbb{A}_{H_s} (v(s)) \| > \delta_0$, then (\ref{sei}) implies that
\begin{equation}
\label{misu}
|I| \leq \frac{1}{\delta_0^2} \left( 2M + \int_{[0,1] \times \T} |\eta(s)| \left| \frac{\partial H_s}{\partial s} (u) \right| \, ds \, dt \right).
\end{equation}
By the equation (\ref{zerob}), we have the identities
\[
\eta(s) = \eta(\inf I) + \int_{\inf I}^s \int_{\T} H_{\sigma} (u) \, dt\, d\sigma = \eta(\sup I) - \int_s^{\sup I} \int_{\T} H_{\sigma} (u) \, dt\, d\sigma.
\]
These identities, together with the assumption (a), (\ref{otto-}), and (\ref{misu}), imply that for every $s\in I$ there holds
\begin{equation*}
\begin{split}
 |\eta(s)| & \leq \max\{ |\eta(\inf I)|, |\eta(\sup I)| \} + \mu |I| \\ \leq   2  \left( M +  \int_{[0,1] \times \T} |\eta(\sigma)| \left| \frac{\partial H_{\sigma}}{\partial \sigma} (u) \right| \, d\sigma \, dt + \kappa_0 \delta_0 \right) & + \frac{\mu}{\delta_0^2} \left( 2M + \int_{[0,1] \times \T} |\eta(\sigma)| \left| \frac{\partial H_\sigma}{\partial \sigma} (u) \right| \, d\sigma \,dt \right).
\end{split} \end{equation*}
We conclude that $\eta$ is bounded and that 
\begin{equation}
\label{otto} 
\sup_{s\in \R} |\eta(s)| \leq d + \left( 2 + \frac{\mu}{\delta_0^2} \right) 
\int_{[0,1] \times \T} |\eta(\sigma)| \left| \frac{\partial H_\sigma}{\partial \sigma} (u) \right| \, d\sigma \, dt,
\end{equation}
where
\[
d = d(M) := 2M + 2 \kappa_0 \delta_0 + \frac{2 M \mu}{\delta_0^2}.
\] 
It is useful to introduce the function
\[
\theta(s,t) := \lambda\bigl(X_{H_s}(u(s,t))\bigr) - H_s(u(s,t)) + \theta_0,
\]
which by the assumption (c) is non-negative. By the identity $\lambda = \iota_Y \omega$ and by (\ref{zeroa}), we have the chain of equalities
\begin{eqnarray*}
\lambda \Bigl( \frac{\partial u}{\partial t} \Bigr) - \eta(s) H_s (u) = \lambda \Bigl( \frac{\partial u}{\partial t} - \eta(s) X_{H_s} (u) \Bigr) + \eta(s) \bigl( \lambda(X_{H_s}(u)) - H_s(u) \bigr) \\ = \omega\Bigl( Y(u), \frac{\partial u}{\partial t} - \eta(s) X_{H_s} (u) \Bigr) + \eta(s) \theta(s,t) - \theta_0 \eta(s) \\
= \Bigl\langle Y(u), J_t \Bigl(  \frac{\partial u}{\partial t} - \eta(s) X_{H_s} (u) \Bigr) \Bigr\rangle_t + \eta(s) \theta(s,t) - \theta_0 \eta(s) \\ = \Bigl\langle Y(u), \frac{\partial u}{\partial s} \Bigr\rangle_t + \eta(s) \theta(s,t)- \theta_0 \eta(s).
\end{eqnarray*}
Integration over $\T$ produces the identity
\[
\mathbb{A}_{H_s} (v(s)) = -\int_{\T}  \Bigl\langle Y(u), \frac{\partial u}{\partial s} \Bigr\rangle_t \, dt + \eta(s) \int_{\T} \theta(s,t)\, dt - \theta_0 \eta(s),
\]
from which we deduce the inequality
\[
|\eta(s)| \int_{\T} \theta(s,t)\, dt \leq \bigl| \mathbb{A}_{H_s}(v(s)) \bigr| + \int_{\T} \Bigl| \Bigl\langle Y(u), \frac{\partial u}{\partial s} \Bigr\rangle_t \Bigr| \, dt + \theta_0 |\eta(s)|.
\]
The last integrand can be estimated as
\[
\Bigl| \Bigl\langle Y(u), \frac{\partial u}{\partial s} \Bigr\rangle_t \Bigr| \leq \frac{1}{2\delta} |Y(u)|^2_t + \frac{\delta}{2} \Bigl| \frac{\partial u}{\partial s} \Bigr|^2_t,
\]
for every $\delta>0$. Together with (\ref{sette}), the above two inequalities imply that for every $s$ there holds
\begin{equation}
\label{nove}
\begin{split}
|\eta(s)| \int_{\T} \theta(s,t)\, dt \\ \leq M + \int_{[0,1]\times \T} |\eta(\sigma)| \left| \frac{\partial H_{\sigma}}{\partial \sigma} (u) \right| \, d\sigma\, dt + \frac{1}{2\delta} \int_{\T}   |Y(u)|^2_t  \, dt + \frac{\delta}{2} \int_{\T}  \left| \frac{\partial u}{\partial s} \right|^2_t \, dt + \theta_0 |\eta(s)|.
\end{split}
\end{equation}
Let $c$ be a positive number. By (\ref{zerob}) and the assumption (a), the function $s\mapsto \eta(s) + \mu s$ is increasing, so the set
\[
I := \set{s\in [0,1]}{c < \eta(s) + \mu s < c+\mu}
\]
is a (possibly empty) interval. By the definition of $I$,
\begin{eqnarray}
\label{dieci}
c-\mu < \eta(s) < c+ \mu, \quad \forall s\in I, \\
\label{undici}
|\eta(s)| \geq c, \quad \forall s\in I^c,
\end{eqnarray}
where $I^c$ denotes the complement of $I$ in $[0,1]$. By (\ref{dieci}), (\ref{zerob}), and the assumption (d),
\[
2\theta_2 \mu \geq \theta_2 \bigl( \eta(\sup I) - \eta(\inf I)  \bigr)= \theta_2 \int_{I} \eta'(s)\, ds = \theta_2 \int_{I \times \T} H_s(u)\, ds \, dt \geq  \int_{I \times \T} \Bigl| \frac{\partial H_s}{\partial s} (u) \Bigr| \, ds \, dt - \theta_3.
\]
By (\ref{dieci}), $|\eta| < c+\mu$ on $I$, so the above inequality implies the estimate
\begin{equation}
\label{dodici}
\int_{I \times \T} |\eta(s)| \Bigl| \frac{\partial H_s}{\partial s} (u) \Bigr| \, ds dt \leq (c+\mu) (2\mu\theta_2 + \theta_3).
\end{equation}
By integrating (\ref{nove}) on the set $I^c$, whose measure does not exceed 1, we find
\begin{equation}
\label{princi}
\begin{split}
& \int_{I^c \times \T} |\eta(s)| \theta(s,t)  \, ds dt   \leq M + \int_{[0,1] \times \T} |\eta(s)| \left| \frac{\partial H_s}{\partial s} (u) \right| \, ds \, dt \\ & + \frac{1}{2\delta} \int_{I^c \times \T} |Y(u)|_t^2 \, dt + \frac{\delta}{2} \int_{[0,1]\times \T} \Bigl| \frac{\partial u}{\partial s} \Bigr|_t^2 \, ds\, dt + \theta_0 \sup_{s\in \R} |\eta(s)|.
\end{split}
\end{equation}
We estimate the last four terms separately, starting from the last one. By (\ref{otto}), we have
\begin{equation}
\label{princi1}
\theta_0 \sup_{s\in \R} |\eta(s)| \leq \theta_0 d(M) + \theta_0 \left( 2 + \frac{\mu}{\delta_0^2} \right) \int_{[0,1] \times \T} |\eta(s)| \left| \frac{\partial H_s}{\partial s} (u) \right| \, ds \, dt.
\end{equation}
By (\ref{sei}), we have
\begin{equation}
\label{princi2}
\frac{\delta}{2} \int_{[0,1]\times \T} \Bigl| \frac{\partial u}{\partial s} \Bigr|_t^2 \, ds \, dt \leq \delta M + \frac{\delta}{2} \int_{[0,1]\times \T} |\eta(s)| \left| \frac{\partial H_s}{\partial s} (u) \right|\, ds \, dt.
\end{equation}
By the assumption (c), we have
\begin{equation}
\label{princi3}
\frac{1}{2\delta} \int_{I^c \times \T} |Y(u)|_t^2 \, dt \leq \frac{1}{2\delta \theta_1} \int_{I^c \times \T} \theta(s,t)\, ds \,dt \leq \frac{1}{2 \delta \theta_1 c} \int_{I^c \times \T} |\eta(s)| \theta(s,t)\, ds \, dt,
\end{equation}
where we have used also (\ref{undici}).
By (\ref{dodici}) and the assumption (e),
\begin{equation}
\label{princi4}
\begin{split}
\int_{[0,1] \times \T} |\eta(s)| \left| \frac{\partial H_s}{\partial s} (u) \right|\, ds dt \leq  (c+\mu)(2\mu\theta_2 + \theta_3) + \int_{I^c \times \T} |\eta(s)| \left| \frac{\partial H_s}{\partial s} (u) \right|\, ds \, dt  \\ \leq  (c+\mu)(2\mu\theta_2 + \theta_3) + \epsilon \int_{I^c \times \T} |\eta(s)| \theta(s,t) \, ds \, dt .
\end{split}
\end{equation}
By taking (\ref{princi1}), (\ref{princi2}), (\ref{princi3}), and (\ref{princi4}) into account, (\ref{princi}) implies the estimate
\begin{equation*} \begin{split}
& \left( 1 - \frac{1}{2\delta \theta_1 c} -  \epsilon \Bigl( 1 + \frac{\delta}{2} + 2\theta_0 + \frac{\theta_0 \mu}{\delta_0^2} \Bigr) \right) \int_{I^c \times \T} |\eta(s)| \theta(s,t)\, ds \, dt \\ & \leq 
M + (c+\mu)(2\mu \theta_2 + \theta_3)\left( 1 + \frac{\delta}{2} + \theta_0 \Bigl( 2 + \frac{\mu}{\delta_0^2} \Bigr) \right) + \delta M + \theta_0 d(M).
\end{split} \end{equation*}
By the assumption (\ref{epsi}) on $\epsilon$, if we choose $\delta>0$ small enough and $c>0$ large enough, the constant in front of the integral is positive, so we have a uniform upper bound 
\[
\int_{I^c \times \T} |\eta(s)| \theta(s,t) \, ds \, dt \leq a'(M).
\]
By the assumption (e), also the integral of $|\eta| | \partial H_s/\partial s (u)|$ over $I^c \times \T$ has a uniform upper bound, and by using also (\ref{dodici}) we deduce the uniform upper bound  
\[
\int_{[0,1] \times \T} |\eta(s)| \left| \frac{\partial H_s}{\partial s} (u) \right| \, ds \, dt \leq a^{\prime\prime}(M).
\]
Then (\ref{sei}) and (\ref{sette}) imply the desired estimates, with $a(M) := M + a^{\prime\prime}(M)$.
\end{proof}
\qed

\section{The Rabinowitz-Floer complex}
\label{rfcsec}

The aim of this section is to recall the definition of the Rabinowitz-Floer complex from \cite{cf09}. The only differences are that the ambient manifold $\hat{W}$ is the completion of the Liouville domain $W$ (and not an arbitrary exact convex symplectic manifold as in \cite{cf09}) and that we use Hamiltonians which are eventually quadratic, instead of eventually constant. The effect of this choice of a different class of Hamiltonians is discussed at the end of the section.

\paragraph{The boundary homomorphism.} Let us assume that the closed Reeb orbits of $(\partial W,\alpha)$ are of {\em Morse-Bott type}, meaning that for each $T>0$ the set $\mathscr{P}_T\subset \partial W$ consisting of all $T$-periodic points of the Reeb flow is a closed submanifold, the rank of $d\alpha|_{\mathscr{P}_T}$ is locally constant, and 
\[
T_x \mathscr{P}_T = \ker (D\phi_T^R(x) - I), \quad \forall x\in \mathscr{P}_T,
\]
where $\phi_t^R$ denotes the Reeb flow. 
Such a condition is generic, for instance in the sense that the set of functions $\sigma \in C^{\infty}(\partial W,]0,+\infty[)$ such that the closed Reeb orbits of $(\graf \sigma,\lambda|_{\graf \sigma})$ are of {\em Morse-Bott type} is generic in $C^{\infty}(\partial W,]0,+\infty[)$. Actually, the stronger property that all the closed orbits of the Reeb flow are non-degenerate is generic, see Appendix B in \cite{cf09} (we recall that a $T$-periodic point $x\in \partial W$ is {\em non-degenerate} if the restriction of $D\phi_T^R(x)$ to the contact hyperplane $\ker \alpha(x)$ does not have the eigenvalue 1). Treating the Morse-Bott situation is not more difficult than treating the non-degenerate situation - in which the critical manifolds of $\mathbb{A}$ corresponding to the closed Reeb orbits are circles - so we deal with the former condition, which includes for instance geodesic flows on symmetric spaces.  

A Hamiltonian $H\in C^{\infty}(\hat{W})$ is said to be {\em compatible} with the contact manifold $(\partial W,\alpha)$ if its set of zeros is $\partial W$, and if the restriction of $X_H$ to $\partial W$ coincides with the Reeb vector field $R$. It is said to be {\em eventually quadratic} if $H(x,\rho) = h(\rho)$ for $\rho$ large, with
\[
h(\rho) = a \rho^2 + b \rho + c,
\]
for some numbers $a>0$, $b,c\in \R$. Let us fix an eventually quadratic Hamiltonian $H$ compatible with $(\partial W,\alpha)$. We shall associate a chain complex to the corresponding Rabinowitz action functional $\mathbb{A}$ by using the analogue of Frauenfelder's {\em Morse homology with cascades}, see the Appendix of \cite{fra04}.

By the Morse-Bott assumption, the set $\crit\, \mathbb{A}$ of critical points  of $\mathbb{A}$ is a smooth manifold, and $\mathbb{A}$ is constant on each of its connected components, which are finite-dimensional (but in general different components have different dimension). 
Moreover, for each $A>0$ the set
\[
\set{z\in \crit\, \mathbb{A}}{|\mathbb{A}(z)| \leq A}
\]
is compact, and the set of critical values of $\mathbb{A}$ is discrete. Furthermore, every solution $v:\R \rightarrow \Lambda \hat{W} \times \R$ of the Rabinowitz-Floer equation (\ref{rfleq}) with finite energy (\ref{energy})
converges to some $v(-\infty)\in \crit\, \mathbb{A}$ for $s\rightarrow -\infty$ and to some $v(+\infty)\in \crit\, \mathbb{A}$ for $s\rightarrow +\infty$. 

We fix a smooth Morse function $a$ and a Riemannian metric $g_{\mathbb{A}}$ on $\crit\, \mathbb{A}$, such that the negative gradient flow $\phi^{-\nabla a}_t$ of $a$ is {\em Morse-Smale}. This means that for every pair of critical points $z^-,z^+$ of $a$, the unstable manifold $W^u(z^-;-\nabla a)$ is transverse to the stable manifold $W^s(z^+;-\nabla a)$. The Morse index of $z\in \crit\, a$ is denoted by $\ind (z;a)$, so 
\[
\ind (z;a) = \dim W^u(z;-\nabla a) = \codim\, W^s(z;-\nabla a).
\]
If $z^-,z^+\in \crit\, a$ and $m\geq 1$ is an integer, a {\em flow line from $z^-$ to $z^+$ with $m$ cascades} is a $m$-tuple $(v_1,\dots,v_m)$ of {\em non-stationary} solutions of the Rabinowitz-Floer equation (\ref{rfleq}) (stationary solutions are solutions which do not depend on $s$) such that
\[
v_1(-\infty) \in W^u(z^-;-\nabla a), \quad v_m(+\infty) \in W^s(z^+;-\nabla a),
\]
and for every $j=1,\dots,m-1$
\[
v_{j+1}(-\infty) \in \phi^{-\nabla a}_{\R^+} (v_j(+\infty)).
\] 
The set of flow lines from $z^-$ to $z^+$ with $m$ cascades is denoted by $\tilde{\mathcal{M}}_m(z^-,z^+)$, and its quotient by the free action of $\R^m$ given by $s$-translations on each $v_j$ is denoted by $\mathcal{M}_m(z^-,z^+)$. 

The set of {\em flow lines from $z^-$ to $z^+$ with zero cascades}, that we denote by $\tilde{\mathcal{M}}_0(z^-,z^+)$, is the intersection $W^u(z^-;-\nabla a)\cap W^s(z^+; - \nabla a)$. When $z^-\neq z^+$, the quotient of $\tilde{\mathcal{M}}_0 (z^-,z^+)$ by the free $\R$-action is denoted by $\mathcal{M}_0(z^-,z^+)$, while in the case $z^-=z^+$ we define $\mathcal{M}_0(z^-,z^+)$ to be the empty set.
Finally,
\[
\tilde{\mathcal{M}}(z^-,z^+) := \bigcup_{m\geq 0} \tilde{\mathcal{M}}_m (z^-,z^+), \quad
\mathcal{M}(z^-,z^+) := \bigcup_{m\geq 0} \mathcal{M}_m (z^-,z^+).
\]
Since $\mathbb{A}$ is strictly decreasing on non-stationary solutions of the Rabinowitz-Floer equation (\ref{rfleq}), if $z^-$ and $z^+$ belong to the same connected component of $\crit\, \mathbb{A}$ then $\mathcal{M}_m(z^-,z^+)=\emptyset$ for every $m\geq 1$, while if $\mathcal{M}_m(z^-,z^+)\neq \emptyset$ for some $m\geq 1$, then $\mathbb{A}(z^-) > \mathbb{A}(z^+)$ and $\mathcal{M}_0(z^-,z^+)=\emptyset$. 

Assume that the almost complex structures $J_t$ are of contact type outside of a compact set. By our assumption on $H$, Propositions \ref{etalim} and \ref{ulim} (or Proposition \ref{ulim2}), together with the energy identity (\ref{energy}), imply that the elements of $\tilde{\mathcal{M}}(z^-,z^+)$ have a uniform $L^{\infty}$ bound. Since the symplectic form $\omega$ is exact, a standard non-bubbling-off argument in Floer homology guarantees that $\mathcal{M}(z^-,z^+)$ is compact up to breaking.

For a generic choice of $J_t$ and $g_{\mathbb{A}}$, the spaces $\mathcal{M}(z^-,z^+)$ are smooth finite dimensional manifolds, and their components of dimension zero are compact. In particular, the zero-dimensional part of $\mathcal{M}(z^-,z^+)$ is a finite set, and we define $n_{RF}(z^-,z^+)\in \Z_2$ to be the parity of such a set.  

The Rabinowitz-Floer $\Z_2$-vector space $RF=RF(W,\lambda,a)$ is generated by all formal sums
\begin{equation}
\label{fs}
\sum_{z\in \mathcal{Z}} z,
\end{equation}
where $\mathcal{Z}$ is a subset of $\crit\, a$ such that
\[
\sup_{z\in \mathcal{Z}} \mathbb{A}(z) < +\infty.
\]
Note that in general $\mathcal{Z}$ may be an infinite set, because $\mathbb{A}$ is not bounded from below.
The homomorphism
\[
\partial = \partial(W,\lambda,a,H,J,g_{\mathbb{A}}) : RF(W,\lambda,a) \rightarrow RF(W,\lambda,a)
\]
is defined as the unique homomorphism such that
\[
\partial z^ - = \sum_{z^+ \in \crit\, a} n_{RF} (z^-,z^+) \, z^+, \quad \forall z^- \in \crit\, a.
\]
A standard argument implies that $\partial \circ \partial = 0$, so $\{RF,\partial\}$ has the structure of a differential $\Z_2$-vector space. It also carries an $\R$ filtration given by the action level. More precisely, if $I\subset \R$ is an interval, $\partial$ induces a boundary homomorphism on the subspace
\[
RF^I = RF^I(W,\lambda,a) 
\]
consisting of all formal sums (\ref{fs}) where $\mathcal{Z}$ consists of critical points $z$ of $a$ such that $\mathbb{A}(z)\in I$. Moreover, there are natural chain maps
\[
RF^I \rightarrow RF^{I'},
\]
whenever $\inf I \leq \inf I'$ and $\sup I \leq \sup I'$.

\paragraph{Grading.}   Here we follow the $\Z$-grading convention of \cite{cfo09}, which differs from the $(\Z + 1/2)$-grading convention of \cite{cf09} by a factor $1/2$. The Rabinowitz-Floer differential vector space has a 
$\Z$-grading whenever the first Chern class $c_1(T\hat{W})$ of the complex vector bundle $(T\hat{W},J)$ vanishes, a condition which does not depend on the choice of the $\omega$-compatible almost complex structure $J$, but only on the Liouville domain $(W,\lambda)$.
In such a case, the grading on $RF$ is defined by
\begin{equation}
\label{grad}
\mu(z) = \mu(x,\eta) := \mu_{CZ}^{\tau}(x) - \frac{1}{2} \nu^{\tau}(x) + \ind(z;a),
\end{equation}
for every critical point $z=(x,\eta)$ of $a$ such that $\eta\neq 0$. Here $\mu^{\tau}_{CZ}(z)$ and $\nu^{\tau}(x)$ are the {\em transverse Conley-Zehnder 
index} and the {\em transverse nullity} of the $1$-periodic orbit $x$ of the Hamiltonian flow associated to the autonomous Hamiltonian $\eta H$. We recall that the transverse nullity $\nu^{\tau}(x)$ of $x$ is the dimension of the linear subspace 
\[
\ker \Bigl(D\phi_1^{X_{\eta H}} (x(0)) - \mathrm{id}\Bigr) \cap \ker \alpha(x(0)),
\]
so $\nu^{\tau}(x)=0$ if and only if $x$ is non-degenerate; by the Morse-Bott assumption, the connected component $K_z$ of $\crit \, \mathbb{A}$ which contains $z=(x,\eta)$ has dimension
\begin{equation}
\label{dimcrit}
\dim K_z = \nu^{\tau}(x) + 1.
\end{equation}
The transverse Conley-Zehnder $\mu^{\tau}(x)$ index is a half-integer, which 
can be defined as the Conley-Zehnder index (in the sense of Robbin and 
Salamon, \cite{rs93}) of the path in the symplectic group $\mathrm{Sp}(2n-2)$ which is obtained by conjugating the restriction $D\phi^{X_{\eta H}}_t(x(0))$ to the invariant symplectic subbundle given by the contact distribution $\ker \alpha$ by a symplectic trivialization. The fact that $c_1(T\hat{W})=0$ allows to fix a class of such trivializations for any homology class in $H_1(\hat{W},\Z)$ for which the above index is well-defined, see \cite{sei06b} or \cite{cf09} for more details. Notice also that if $y(t) = x(t/\eta)$ is the Reeb closed orbit associated to $x$, then
\[
\mu^{\tau}_{CZ}(x)= (\sgn \eta) \mu_{CZ}(y),
\] 
where $\mu_{CZ}(y)$ is the standard Conley-Zehnder index that one 
associates to closed Reeb orbits.
If $z=(x,\eta)$ is a critical point of $a$ with $\eta=0$, then $x$ is a constant loop in $\partial W$, and $\mu(z)$ is defined as
\begin{equation}
\label{grad0}
\mu(z) = \mu(x,0) := \ind (z;a) - n + 1.
\end{equation}
Notice that for such critical points, $\mu(z)$ may take all the values in the interval
\[
-n + 1 \leq \mu(z) \leq n.
\]
Moreover, if we agree that for a constant loop $x$ we have $\mu^{\tau}_{CZ}(x):=0$ and $\nu^{\tau}(x) := 2n-2$, both the identities (\ref{grad}) and (\ref{dimcrit}) hold also for $\eta=0$.

With such definitions, one can prove that
\[
\dim \mathcal{M}(z^-,z^+) = \mu(z^-) - \mu(z^+) - 1,
\]
see Proposition 4.1 in \cite{cf09}. Here and in what follows, we adopt the convention that any manifold which is declared to have negative dimension is actually empty.

If $RF_k$ denotes the subgroup of $RF$ obtained by considering formal sums as in (\ref{fs}) where all the critical points $z\in \mathcal{Z}$ have index $\mu(z)=k$, then the homomorphism $\partial$ maps $RF_k$ into $RF_{k-1}$. Therefore, $RF_*$ is a chain complex of $\Z_2$-vector spaces.     

\paragraph{Invariance.} If one changes the loop of almost complex structures (of contact type outside of a compact set) or the Riemannian metric $g_{\mathbb{A}}$ on $\crit \, \mathbb{A}$, one gets isomorphic Rabinowitz-Floer complexes.

If one changes the Morse function $a$ and the Hamiltonian $H$, on gets chain equivalent Rabinowitz-Floer complexes. This can be proved by the standard homotopy argument from Floer homology, by noticing that if $H_0$ and $H_1$ are eventually quadratic Hamiltonians compatible with $(\partial W,\alpha)$ and we define
\[
H_s := (1-s) H_0 + s H_1,
\]
then there exists a finite set of numbers $0=s_0<s_1<\dots<s_k=1$ such that for every $j\in \{1,\dots,k\}$ the $s$-dependent Hamiltonian
\[
H^j_s(w) := (1-\chi(s)) H_{s_{j-1}} (w) + \chi(s) H_{s_j} (w)
\]
satisfies the assumptions of Lemma \ref{nonaut}. Here $\chi$ is a smooth cut-off function on $\R$ such that $\chi=0$ on $\R^-$ and $\chi=1$ on $[1,+\infty[$. In particular, the {\em Rabinowitz-Floer homology}
\[
HRF = HRF (W,\lambda) := \frac{\ker \partial}{\im \partial}
\]
depends only on the Liouville domain $(W,\lambda)$. 

The same Lemma \ref{nonaut} allows also to deal with Hamiltonians whose zero-set varies with $s$. In particular, it allows to prove that if $\Sigma\subset \partial W \times ]0,+\infty[ \subset \hat{W}$ is the graph of a smooth function $\sigma: \partial W \rightarrow ]0,+\infty[$ and the contact form $\alpha_{\Sigma}:= \lambda|_{\Sigma}$ defines a Reeb vector field $R_{\Sigma}$ with the Morse-Bott property, then the corresponding Rabinowitz-Floer homology, defined by using eventually quadratic Hamiltonians $H$ which are compatible with $(\Sigma,\alpha_{\Sigma})$, does not depend on $\Sigma$. This fact allows to define the Rabinowitz-Floer homology of a Liouville domain $(W,\lambda)$ also when the Morse-Bott condition for $R=R_{\partial W}$ does not hold.

More generally, one can prove that if two Liouville domains $(W,\lambda)$ and $(W',\lambda')$ are {\em Liouville isomorphic} -- meaning that there exists a diffeomorphism $\phi:\hat{W}\rightarrow \hat{W'}$ such that $\phi^* \lambda' = \lambda + df$, for some compactly supported function $f$ on $\hat{W}$, see \cite{sei06b} -- then their Rabinowitz-Floer homologies are isomorphic.

The above arguments prove the invariance within the class of Hamiltonians which are eventually quadratic. Actually, the combined use of Lemma \ref{nonaut} and Propositions \ref{ulim} or \ref{ulim2} would allow to prove the invariance within many classes of Hamiltonian having a similar behavior at infinity. However, we do not know how to get the energy and action estimates for $s$-dependent Hamiltonians which interpolate between an eventually constant $H_0$ and an eventually quadratic $H_1$, or even an eventually linear $H_1$. This raises the question whether our Rabinowitz-Floer homology for eventually quadratic Hamiltonian coincides with the original one defined in \cite{cf09} using eventually constant Hamiltonians. Since the critical set of $\mathbb{A}_H$ does not depend on the Hamiltonian $H$ - as long as $H$ is compatible with $(\partial W,\alpha)$ - and for every critical point $z=(x,\eta)$  of $\mathbb{A}_H$ the loop $x$ takes values into $\partial W$, the behavior of $H$ outside of $W$ should not matter that much, so one suspects that the answer is affirmative. 

When using field coefficients, as we are, one can actually prove this fact arguing as in the proof of Proposition 4.13 of \cite{cfo09}, by considering the truncated complexes $RF^{[a,b]}$ and by taking limits. Indeed, in this case the whole Rabinowitz-Floer homology is naturally isomorphic to the double direct-inverse limit of the truncated homologies
\[
HRF (W,\lambda) \cong \lim_{\substack{\rightarrow \\ A_+}} \lim_{\substack{\leftarrow \\ A_-}} HRF^{[A_-,A_+]} (W,\lambda),
\]
for $A_- \downarrow -\infty$ and $A_+\uparrow +\infty$ (see \cite{cf09b}, the direct and inverse limit must be taken in this order). So it is enough to prove that eventually quadratic and eventually constant Hamiltonians determine the same truncated Rabinowitz-Floer homology $HRF^{[a,b]}$. This can be deduced from Proposition \ref{ulim2} in the following way. 

Let $H_1$ be an eventually quadratic Hamiltonian compatible with $(\partial W,\alpha)$. Since we already have the invariance within the class of eventually quadratic Hamiltonians, we can choose a particular one, for instance a Hamiltonian $H_1$ such that $H_1<0$ on $W \setminus (\partial W \times ]0,1])$ and 
\[
H_1(x,\rho) = h_1(\rho) \quad \mbox{with } h_1(\rho) = \frac{1}{2} (\rho^2 - 1), \quad \forall (x,\rho)\in \partial W \times ]0,+\infty[.
\]
Let $\chi$ be a smooth cut-off function on $\R$ such that
\[
\chi(s) = s \;\; \forall s\leq 1, \quad \chi(s) = 2 \;\; \forall s\geq 3, \quad 0 \leq \chi' \leq 1, \quad -1 \leq \chi'' \leq 0,
\]
and choose a number $\kappa$ such that
\begin{equation}
\label{kappa}
\kappa \geq \max_{s\in \R} \bigl( (2s+1)^3 \chi''(s)^2 + (2s+1) \chi'(s)^2 \bigr),
\end{equation}
for instance $\kappa = 7^3+7$.
In particular, $\kappa\geq 2$, so the function $h_1$ satisfies the assumption (c') of Proposition \ref{ulim2} with the constant $\kappa$, that is
\[
h_1'(\rho)^2 \leq \kappa ( h_1(\rho) + 1 ) , \quad \rho^2 h_1''(\rho)^2 \leq \kappa ( h_1(\rho) + 1), \quad \forall \rho>0.
\]
Let $A_-\leq A_+$ be real numbers. By Proposition \ref{ulim2}, there is a compact set $K=K(A_-,A_+)\subset \hat{W}$ with the following property: For every Hamiltonian $H$ on $\hat{W}$ which coincides with $H_0$ on the complement of $\partial W \times [2,+\infty[$ in $\hat{W}$ and such that $H(x,\rho) = h(\rho)$ on $\partial W \times ]0,+\infty[$ for some smooth function $h$ which satisfies
\begin{equation}
\label{dadim}
h'(\rho)^2 \leq \kappa ( h(\rho) + 1 ) , \quad \rho^2 h''(\rho)^2 \leq \kappa ( h(\rho) + 1), \quad \forall \rho>0,
\end{equation}
the $u$ part of every solution $v=(u,\eta)$ of the Rabinowitz-Floer equation associated to $H$ such that $\mathbb{A}_H(v(\R))\subset [A_-,A_+]$ takes values in the compact set $K$. For $\epsilon>0$, consider the eventually constant Hamiltonian 
\[
H_0(w) := \frac{1}{\epsilon} \chi \bigl( \epsilon H_1 (w)),\quad \forall w \in \hat{W},
\]
which satisfies
\[
H_0(x,\rho) = h_0(\rho)  \quad \mbox{with } h_0(\rho) = \frac{1}{\epsilon} \chi \bigl( \epsilon h_1(\rho) \bigr) = \frac{1}{\epsilon} \chi \left( \frac{\epsilon}{2} (\rho^2 -1 ) \right),
\]
for every $(x,\rho) \in \partial W \times ]0,+\infty[$. The Hamiltonian $H_0$ coincides with $H_1$ on the open set
\[
W \cup \Bigl( \partial W \times \Bigl]0, \sqrt{\frac{2+\epsilon}{\epsilon}} \Bigr[ \Bigr),
\]
and we choose $0<\epsilon\leq 1$ to be so small that the compact set $K$ is contained in the set above. 

We claim that (\ref{kappa}) and the fact that $\epsilon\leq 1$ imply that $h_0$ satisfies (\ref{dadim}). Since $h_0=h_1$ on $]0,((2+\epsilon)/\epsilon)^{1/2}]$, it is enough to check (\ref{dadim}), that is the inequalities
\begin{eqnarray*}
\rho^2 \chi' \left( \frac{\epsilon}{2} (\rho^2 - 1) \right)^2 \leq \kappa \left( \frac{1}{\epsilon} \chi \Big( \frac{\epsilon}{2} (\rho^2 - 1) \Bigr) + 1 \right), \\
\rho^2 \left( \epsilon \rho^2 \chi''\Bigr( \frac{\epsilon}{2} (\rho^2 - 1) \Bigr) + \chi' \Big( \frac{\epsilon}{2} (\rho^2 -1 ) \Bigr) \right)^2 \leq 
\kappa \left( \frac{1}{\epsilon} \chi \Big( \frac{\epsilon}{2} (\rho^2 - 1) \Bigr) + 1 \right),
\end{eqnarray*}
for $\rho\geq ((2+\epsilon)/\epsilon)^{1/2}$. By the change of variable $s = \epsilon (\rho^2 -1 )/2$, this is equivalent to checking the inequalities
\begin{eqnarray}
\label{chi1}
(2s + \epsilon) \chi'(s)^2 \leq \kappa (\chi(s) + \epsilon), \\
\label{chi2}
(2s + \epsilon) \bigl( (2s + \epsilon) \chi''(s) + \chi'(s) \bigr)^2 \leq \kappa ( \chi(s) + \epsilon),
\end{eqnarray}
for every $s\geq 1$. The inequality (\ref{chi1}) follows immediately from (\ref{kappa}), using the fact that $\chi(s)\geq 1$ for $s\geq 1$. By expanding the square and using the inequalities $\epsilon\leq 1$, $\chi' \geq 0$, $\chi'' \leq 0$, and (\ref{kappa}), we can estimate the left-hand side of (\ref{chi2}) by
\begin{equation*}
\begin{split}
 (2s + \epsilon) \bigl( (2s + \epsilon) \chi''(s) + \chi'(s) \bigr)^2 \leq (2s + \epsilon) \bigl( (2s + \epsilon)^2 \chi''(s)^2 + \chi'(s)^2 \bigr) \\
\leq (2s + 1)^3 \chi''(s)^2 + (2 s + 1) \chi'(s)^2 \leq \kappa \leq \kappa (\chi(s) + \epsilon), 
\end{split} \end{equation*}
for every $s\geq 1$, proving (\ref{chi2}).

Therefore, for all the solutions $v=(u,\eta)$ of the Rabinowitz-Floer equation associated to either $H_0$ or $H_1$, with $\mathbb{A}_{H_0}(v(\R))$ or, respectively, $\mathbb{A}_{H_1}(v(\R))$ contained in $[A_-,A_+]$, the image of $u$ is contained in $K$. Since $H_0$ and $H_1$ coincide on an open neighborhood of $K$, we conclude that the boundary homomorphisms of the Rabinowitz-Floer complexes for $H_1$ and for $H_0$ restricted to $[A_-,A_+]$ coincide:
\[
\partial^{[A_-,A_+]}(W,\lambda,a,H_1,J,g_{\mathbb{A}}) =  \partial^{[A_-,A_+]}(W,\lambda,a,H_0,J,g_{\mathbb{A}}).
\]
In particular, the corresponding homologies coincide. Together with the fact, proved in \cite{cf09}, that the truncated Rabinowitz-Floer homology is independent on the choice of the Hamiltonian within the class of eventually constant ones, this concludes the proof.

\section{The Morse chain and differential complexes for the geodesic energy functional}
\label{morcom}

In the following six sections we restrict our attention to the case where the Liouville domain $(W,\lambda)$ is the cotangent disk bundle $(D^*M,\lambda)$ associated to a 
closed Riemannian manifold $(M,g)$ of dimension $n\geq 2$ (see Example 1 in Section \ref{setting}). The aim of this section is to recall the definition of the Morse chain complex and the Morse differential complex of the energy functional associated to closed geodesics on $(M,g)$. 

\paragraph{The geodesic energy functional.} The geodesic energy functional is the non-negative functional
\[
\mathbb{E}(\gamma) = \int_{\T} g(\gamma'(t),\gamma'(t))\, dt
\]
defined on the loop space $\Lambda M$ of $M$. This functional is smooth and satisfies the Palais-Smale condition on the Hilbert manifold $W^{1,2}(\T,M)$ of absolutely continuous loops $\gamma:\T \rightarrow M$ whose derivative is square-integrable, endowed with its natural complete Riemannian structure, that is
\begin{equation}
\label{natmet}
\langle\langle \xi_1,\xi_2 \rangle\rangle := \int_{\T} \bigl( g(\nabla_t \xi_1,\nabla_t \xi_2) + g(\xi_1,\xi_2) \bigr)\, dt,\quad \forall \xi_1,\xi_2 \in T_{q} W^{1,2}(\T,M), \; \forall q\in W^{1,2} (\T,M),
\end{equation}
where $\nabla_t$ denotes the Levi-Civita covariant derivative along the curve $q$ (see \cite{kli82}).

The critical points of $\mathbb{E}$ on $W^{1,2}(\T,M)$ are the (possibly constant) closed geodesics of $(M,g)$, parametrized on $\T$ with constant speed. The assumption that the closed Reeb orbits of $(S^*M,\alpha)$ are of Morse-Bott type is equivalent to the fact that $\mathbb{E}$ is a Morse-Bott functional, meaning that $\crit\, \mathbb{E}$ is a closed submanifold, and for each $\gamma\in \crit\, \mathbb{E}$ the second differential $d^2 \mathbb{E} (\gamma)$ of $\mathbb{E}$ at $\gamma$ is Fredholm and its kernel is the tangent space of $\crit\, \mathbb{E}$ at $\gamma$. We recall that a continuous bilinear form on a Hilbert space is said to be Fredholm if the bounded operator which represents it with respect to the Hilbert product is Fredholm, and that this notion does not depend on the choice of an equivalent Hilbert product. The Morse index of a closed geodesic $\gamma$ is the non-negative integer
$\ind ( \gamma; \mathbb{E})$, 
that is the dimension of a maximal linear subspace of $T_{\gamma} W^{1,2} (\T,M)$ on which $d^2 \mathbb{E}(\gamma)$ is negative-definite.

The symbol $g_{\Lambda}$ denotes an arbitrary smooth Riemannian metric on $W^{1,2}(\T,M)$ which is globally equivalent to the standard one (\ref{natmet}), and once such a metric has been fixed, $\nabla \mathbb{E}$ denotes the corresponding gradient vector field of $\mathbb{E}$. By the Morse-Bott assumption, the manifold $\crit\, \mathbb{E}$ is normally hyperbolic with respect to the negative gradient flow of $\mathbb{E}$. 
In particular, if $\gamma$ is a non-constant closed geodesic then
\[
\dim W^u(\gamma; - \nabla \mathbb{E}) = \ind (\gamma; \mathbb{E}), \quad \codim \,W^s(\gamma; -\nabla \mathbb{E}) =  \ind (\gamma; \mathbb{E}) + \dim \ker d^2 \mathbb{E}(\gamma).
\]
In the case of a constant geodesic, that is a point $q$ in $M$, we have
\[
\dim W^u(q ; - \nabla \mathbb{E}) = 0, \quad \codim\, W^s(q; -\nabla \mathbb{E}) =  \dim M.
\]
Moreover, every unstable manifold has compact closure, so a regularization argument shows that, up to the modification of  the Riemannian metric $g_{\Lambda}$, we may assume that:
\begin{enumerate}
\item[(A)] For every closed geodesic $\gamma \in \crit\, \mathbb{E}$, $W^u(\gamma; - \nabla \mathbb{E})$ is a submanifold of $C^{\infty}(\T,M)$. Moreover, for every $E>0$ there exists a constant $C=C(E)$ such that if $\gamma\in \crit \, \mathbb{E}$ has energy $\mathbb{E}(\gamma) \leq E$ then
\[
\sup_{t\in \mathbb{T}} g(q'(t),q'(t)) \leq C^2, \quad \forall q\in W^u(\gamma; - \nabla \mathbb{E}).
\]
\end{enumerate}  

\paragraph{The Morse chain complex of $\mathbb{E}$.} The cascades approach from \cite{fra04} and the results about the Morse complex on infinite dimensional manifolds from \cite{ama06m} can be combined and provide us with the following description of the Morse complex of the geodesic energy functional.
 
By the Palais-Smale condition, by the completeness of $(W^{1,2}(\T,M), g_{\Lambda})$,
by the fact that $\mathbb{E}$ is bounded from below, and by the Morse-Bott assumption, for every $q\in W^{1,2}(\T,M)$ the limit
\[
\phi_{+\infty}^{-\nabla \mathbb{E}} (q) := \lim_{s\rightarrow +\infty} \phi_s^{-\nabla \mathbb{E}} (q)
\]
exists and it is an element of $\crit\, \mathbb{E}$. On the other hand,
\[
 \phi_{-\infty}^{-\nabla \mathbb{E}} (q) := \lim_{s\rightarrow \sigma_-(q)} \phi_s^{-\nabla \mathbb{E}} (q)
\]
is either the empty set, or it is an element of $\crit \, \mathbb{E}$ (here $]\sigma_-(q),+\infty[$ denotes the maximal interval of existence of the orbit of $q$ by the local flow of $-\nabla \mathbb{E}$, which is easily seen to be positively complete). The latter fact holds if and only if the function $s\mapsto \mathbb{E}(\phi_s^{-\nabla \mathbb{E}}(q))$ is bounded from above, and this holds if and only if $q$ belongs to the unstable manifold of some closed geodesic $\gamma$; in this case, $\sigma_-(q)=-\infty$ and $\phi_{-\infty}^{-\nabla \mathbb{E}}(q) = \gamma$.

Let $e$ be a smooth Morse function on $\crit\, \mathbb{E}$, and let $g_{\mathbb{E}}$ be a Riemannian metric on $\crit\, \mathbb{E}$ such that  the negative gradient flow of $e$ is Morse-Smale. 

If $\gamma^-, \gamma^+\in \crit\, e$, then 
\[
\tilde{\mathcal{W}}_0(\gamma^-,\gamma^+) := W^u(\gamma^-; - \nabla e) \cap W^s (\gamma^+;- \nabla e),
\]
and if $m\geq 1$ is an integer, $\tilde{\mathcal{W}}_m(\gamma^-,\gamma^+)$ is the set of $m$-tuples $(q_1,\dots,q_m)$ of loops in $W^{1,2}(\T,M) \setminus \crit \, \mathbb{E}$ such that
\[
\phi^{-\nabla \mathbb{E}}_{-\infty} (q_1) \in W^u(\gamma^-; - \nabla e), \quad 
\phi^{-\nabla \mathbb{E}}_{+\infty} (q_m) \in W^s(\gamma^+; - \nabla e),
\]
and for every $j=1,\dots,m-1$
\[
\phi^{-\nabla \mathbb{E}}_{-\infty} (q_{j+1}) \in \phi^{- \nabla e}_{\R^+} \bigl( \phi_{+\infty}^{-\nabla \mathbb{E}} (q_j) \bigr).
\]
When $\gamma^- \neq \gamma^+$, the quotient of $\tilde{\mathcal{W}}_0(\gamma^-,\gamma^+)$ by the free action of $\R$ given by the flow of $-\nabla e$ is denoted by $\mathcal{W}_0(\gamma^-, \gamma^+)$, while if $\gamma^-=\gamma^+$, $\mathcal{W}_0(\gamma^-, \gamma^+)$ is defined to be the empty set. Similarly, when $m\geq 1$ the quotient of $\tilde{\mathcal{W}}_m(\gamma^-,\gamma^+)$ by the free $\R^m$-action given by the flow of $- \nabla \mathbb{E}$ is denoted by $\mathcal{W}_m(\gamma^-,\gamma^+)$. Finally,
\[
\tilde{\mathcal{W}}(\gamma^-,\gamma^+) := \bigcup_{m\geq 0} \tilde{\mathcal{W}}_m(\gamma^-,\gamma^+), \quad
\mathcal{W}(\gamma^-,\gamma^+) := \bigcup_{m\geq 0} \mathcal{W}_m(\gamma^-,\gamma^+).
\]
For a generic choice of $g_{\Lambda}$ and $g_{\mathbb{E}}$, $\mathcal{W}(\gamma^-,\gamma^+)$ is a smooth manifold of dimension
\[
\dim \mathcal{W}(\gamma^-,\gamma^+) = \mathrm{Ind}\, (\gamma^-;\mathbb{E},e) -  \mathrm{Ind}\, (\gamma^+;\mathbb{E},e) - 1,
\]
where we define
\begin{equation}
\label{Ind}
\mathrm{Ind}\, (\gamma;\mathbb{E},e) := \ind (\gamma; \mathbb{E}) + \ind (\gamma; e).
\end{equation}
When $\mathrm{Ind}\,(\gamma^+;\mathbb{E},e)=\mathrm{Ind}\,(\gamma^-;\mathbb{E},e)-1$, the discrete set $\mathcal{W}(\gamma^-,\gamma^+)$ is compact, hence finite, and we define 
\[
n_M(\gamma^-,\gamma^+) = n_M(\gamma^-,\gamma^+; - \nabla \mathbb{E}, - \nabla e) \in \Z_2 
\]
to be its parity. We define $M_k = M_k(\mathbb{E},e)$ to be the $\Z_2$-vector space generated by critical points $\gamma$ of $e$ with 
$\mathrm{Ind}\, (\gamma;\mathbb{E},e) = k$, and
\[
\partial = \partial(\mathbb{E},e,g_{\Lambda},g_{\mathbb{E}}) : M_k (\mathbb{E},e) \rightarrow M_{k-1} (\mathbb{E},e), \quad \partial \gamma^- = \sum_{\substack{\gamma^+ \in \crit\, e \\ \mathrm{Ind}\, (\gamma^+;\mathbb{E},e) = k-1}} n_M (\gamma^-,\gamma^+) \, \gamma^+,
\]
for every $\gamma^-\in \crit\, e$ with $\mathrm{Ind}\, (\gamma^-;\mathbb{E},e)=k$. Then $\partial \circ \partial = 0$, so $\{M_*(\mathbb{E},e),\partial\}$ is a chain complex of $\Z_2$-vector spaces, called the {\em Morse chain complex of the geodesic energy functional $\mathbb{E}$}. Its homology is isomorphic to the singular homology of the ambient manifold $W^{1,2}(\T,M)$. Since the latter Hilbert manifold is homotopically equivalent to the loop space $\Lambda M$, we have
\[
HM_k ( \mathbb{E},e) \cong H_k (\Lambda M), \quad \forall k\in \Z,
\]
and the space on the left-hand side is called the {\em Morse homology of the geodesic energy functional $\mathbb{E}$}.

\paragraph{The Morse differential complex of $\mathbb{E}$.} 
Let $f$ be a smooth Morse function on $\crit\, \mathbb{E}$, and let $g_{\mathbb{E}}$ be a Riemannian metric on $\crit\, \mathbb{E}$ such that  the negative gradient flow of $f$ is Morse-Smale. Starting from Section \ref{psisec}, it will be convenient to choose $f=-e$, where $e$ is the auxiliary Morse function used to define the Morse complex of $\mathbb{E}$.
If $k$ is an integer, we define $M^k(\mathbb{E},f)$ to be the $\Z_2$-vector space consisting of formal sums
\[
\sum_{\gamma\in \Gamma} \gamma,
\]
where $\Gamma$ is a subset of $\crit\, f$ consisting of closed geodesics $\gamma$ such that 
\[
\mathrm{Ind}\,(\gamma;\mathbb{E},f) = k. 
\]
In other words, $M^k(\mathbb{E},f)$ is the direct product of one copy of $\Z_2$ for each $\gamma \in \crit\, f$ with $\mathrm{Ind}\,(\gamma;\mathbb{E},f)=k$, whereas 
$M_k(\mathbb{E},e)$ is a direct sum.  We define the homomorphism
\[
\delta =  \delta(\mathbb{E},f,g_{\Lambda},g_{\mathbb{E}}) : M^k (\mathbb{E},f) \rightarrow M^{k+1} (\mathbb{E},f), \quad \delta \gamma^+ = \sum_{\substack{\gamma^- \in \crit\, f \\ \mathrm{Ind}\, (\gamma^-;\mathbb{E},f) = k+1}} n_M (\gamma^-,\gamma^+;-\nabla \mathbb{E},-\nabla f) \, \gamma^-,
\]
for every $\gamma^+\in \crit\, f$ with $\mathrm{Ind}\, (\gamma^+;\mathbb{E},f)=k$.  Then $\delta \circ \delta = 0$, so $\{M^*(\mathbb{E},f),\delta\}$ is a graded differential $\Z_2$-vector space, called the {\em Morse differential complex of the geodesic energy functional $\mathbb{E}$}. Its cohomology is isomorphic to the singular cohomology of the ambient manifold $W^{1,2}(\T,M)$, so
\[
HM^k ( \mathbb{E},f) \cong H^k (\Lambda M), \quad \forall k\in \Z,
\]
and the space on the left-hand side is called the {\em Morse cohomology of the geodesic energy functional $\mathbb{E}$}.

\paragraph{Unstable manifolds with cascades.} 
In the following sections, we shall define homomorphisms between the Morse chain and differential complexes of $\mathbb{E}$ and the Rabinowitz-Floer complex by considering spaces of half-flow lines with cascades both on the Morse and on the Rabinowitz-Floer side, with suitable coupling conditions. In order to simplify the notation, it is useful to introduce the following notion of unstable manifold with cascades on the Morse side.
 
Let $\gamma$ be a critical point of $e$. If $m\geq 1$ is an integer, we define $\tilde{\mathcal{W}}^u_m (\gamma)$ to be the set of $m$-tuples $(q_1,\dots,q_m)$ of loops in $W^{1,2}(\T , M)$ such that
\[
q_1 \in W^u( W^u(\gamma; - \nabla e); - \nabla \mathbb{E}), 
\]
and
\[
\phi_{-\infty}^{-\nabla \mathbb{E}} (q_{j+1}) \in \phi^{-\nabla e}_{\R^+} \bigl( \phi_{+\infty}^{-\nabla \mathbb{E}}(q_j)\bigr), \quad \forall j=1,\dots,m-1.
\]
We define $\mathcal{W}_m^u(\gamma)$ to be the quotient of $\tilde{\mathcal{W}}^u_m (\gamma)$ with respect to the $\R^{m-1}$-action obtained by letting the negative gradient flow of $\mathbb{E}$ act independently on the first $m-1$ loops $q_1,\dots,q_{m-1}$. The {\em unstable manifold of $\gamma$ with cascades} is the set
\[
\mathcal{W}^u(\gamma) = \mathcal{W}^u(\gamma;-\nabla \mathbb{E},- \nabla e) = \bigcup_{m\geq 1} \mathcal{W}_m^u(\gamma).
\]
For generic $g_{\Lambda}$ and $g_{\mathbb{E}}$, $\mathcal{W}^u(\gamma)$
has the structure of a smooth manifold of dimension $\mathrm{Ind}\, (\gamma)$, and there is a well-defined smooth evaluation map
\[
\mathrm{ev} : \mathcal{W}^u(\gamma) \rightarrow W^{1,2}(\T,M), \quad  \mathrm{ev}(\underline{q}) := q_m, \quad \mbox{if } \underline{q} = [( q_1,\dots,q_m ) ],
\]
whose image is pre-compact.
Notice that $\mathrm{ev}$ is injective on $\mathcal{W}^u_1(\gamma)$ (it is actually an embedding there), but it needs not be injective on the other components, because distinct orbits of the negative gradient flow of $\mathbb{E}$ may have the same limit at $+\infty$.
 
\section{Comparison between the functionals $\mathbb{A}$ and $\mathbb{E}$}
\label{criti}
\paragraph{Relationship between the functionals' level.} 
Let us fix once for all the following quadratic Hamiltonian on $T^*M = \widehat{D^*M}$:
\[
H(q,p) = \frac{1}{2} \bigl( g^*(p,p) - 1 \bigr),
\]
which is compatible with $(S^* M,\alpha)$ and eventually quadratic (see Section \ref{rfcsec}). Indeed, with such a choice,
\begin{equation}
\label{lamham}
\lambda(X_H(q,p)) = g^*(p,p), \quad \forall (q,p) \in T^*M.
\end{equation}
The Rabinowitz functional $\mathbb{A}$ is associated to this Hamiltonian.

We denote by $\pi: T^*M \rightarrow M$ the projection, and by the same symbol its restriction to $S^*M$.
We denote by $G:TM \rightarrow T^*M$ the isomorphism induced by the metric $g$ by the formula
\[
g(G^{-1} p, v) = \langle p, v \rangle = g^* ( p, Gv ), \quad \forall p\in T_q^* M, \; v\in T_q M, \; q\in M,
\]
where $\langle \cdot,\cdot \rangle$ denotes the duality pairing. 

\begin{lem}
\label{levrel}
Let $x=(q,p)\in \Lambda T^*M$  and set $E:= \mathbb{E}(q)$. Then 
\[
\mathbb{A}(x,\sqrt{E}) \leq \sqrt{E}, \quad  \mathbb{A}(x,-\sqrt{E}) \geq -\sqrt{E}.
\]
The first inequality is an equality if and only if $\sqrt{E} p=Gq'$, and the second one is an equality if and only if $\sqrt{E} p=-Gq'$. In particular, the first inequality is an equality when $(x,\sqrt{E})$ is a critical point of $\mathbb{A}$, and the second one is an equality when $(x,-\sqrt{E})$ is a critical point of $\mathbb{A}$. 
\end{lem}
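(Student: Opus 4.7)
The plan is to prove both inequalities by a direct pointwise estimate using Young's inequality with a carefully chosen parameter, then track the equality case, and finally read off the consequence for critical points from the explicit form of $X_H$.

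First I would write out the Rabinowitz action functional explicitly for the chosen Hamiltonian $H(q,p) = \frac{1}{2}(g^*(p,p)-1)$. Setting $x=(q,p)$, $A := \int_\T \langle p(t), q'(t)\rangle\, dt$, $P := \int_\T g^*(p,p)\, dt$, and $E := \mathbb{E}(q) = \int_\T g(q',q')\,dt$, we have
\[
\mathbb{A}(x,\eta) = A - \frac{\eta}{2}(P-1).
\]
The first coupling value is thus $\mathbb{A}(x,\sqrt{E}) = A - \frac{\sqrt{E}}{2}(P-1)$, so the inequality $\mathbb{A}(x,\sqrt{E})\leq \sqrt{E}$ is equivalent to $A \leq \frac{\sqrt{E}}{2}(P+1)$.

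To establish this, I would apply the pointwise bound $\langle p(t),q'(t)\rangle \leq |p(t)|_{g^*}\,|q'(t)|_g$, together with the Young inequality $ab\leq \frac{1}{2c}a^2+\frac{c}{2}b^2$ with the parameter $c=1/\sqrt{E}$ (assuming $E>0$; the case $E=0$ is trivial since then $q$ is constant and $A=0$). Integrating yields
\[
A \leq \frac{\sqrt{E}}{2}P + \frac{1}{2\sqrt{E}}\,E = \frac{\sqrt{E}}{2}(P+1),
\]
which is exactly what is needed. The two inequalities invoked are tight precisely when (i) $|p(t)|_{g^*}=c\,|q'(t)|_g$ with $c=1/\sqrt{E}$ (Young) and (ii) $\langle p(t),q'(t)\rangle = |p(t)|_{g^*}|q'(t)|_g$ (Cauchy--Schwarz pointwise), i.e.\ when $p(t)$ and $Gq'(t)$ are parallel of the same orientation. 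Together these conditions are equivalent to $\sqrt{E}\,p = Gq'$, which is the stated equality case. The second inequality $\mathbb{A}(x,-\sqrt{E})\geq -\sqrt{E}$ is obtained by the completely symmetric argument applied to $-A \leq \frac{\sqrt{E}}{2}(P+1)$, with the equality case $\sqrt{E}\,p = -Gq'$.

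For the final \emph{in particular} assertion, I would read off the Hamiltonian vector field from (\ref{lamham}) and the definition of $H$: in any local Darboux chart one has $X_H(q,p) = (G^{-1}p, -\tfrac{1}{2}\partial_q g^*(p,p))$. Thus the critical point equations $x'=\eta X_H(x)$ and $H(x)=0$ force $q'=\eta\, G^{-1}p$ and $g^*(p,p)=1$; the first gives $Gq'=\eta p$ and the second, after integration, gives $E = \int_\T g(q',q')\,dt = \eta^2$, so $|\eta|=\sqrt{E}$. Hence when $\eta=\sqrt{E}$ we recover $\sqrt{E}\,p = Gq'$, and when $\eta=-\sqrt{E}$ we recover $\sqrt{E}\,p=-Gq'$, matching exactly the equality cases identified above. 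There is no serious obstacle here; the only care required is tracking the equality condition through the combined Cauchy--Schwarz/Young step, but the choice of parameter $c=1/\sqrt{E}$ makes both become equalities simultaneously under the single condition $\sqrt{E}\,p=\pm Gq'$.
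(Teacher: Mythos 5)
Your proof is correct and follows essentially the same line as the paper's: the paper completes the square in $g^*\bigl(p-\tfrac{1}{\sqrt{E}}Gq',p-\tfrac{1}{\sqrt{E}}Gq'\bigr)$ after using the isometry $G$ to rewrite $\mathbb{E}(q)=\int_{\T}g^*(Gq',Gq')\,dt$, which is just your pointwise Cauchy--Schwarz plus Young (with the same parameter $c=1/\sqrt{E}$) bundled into a single step, and the equality analysis matches. The only cosmetic difference is that the paper derives the second inequality from the first via the reflection identity $\mathbb{A}((q,p),-\sqrt{E})=-\mathbb{A}((q,-p),\sqrt{E})$ instead of rerunning the argument, and leaves the final assertion about critical points implicit in the critical point equations (\ref{runo})--(\ref{rue}), whereas you spell it out.
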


\begin{proof}
Since $G$ is an isometry from $(TM,g)$ to $(T^*M,g^*)$, we have
\[
E = \mathbb{E}(q) = \int_{\T} g(q',q')\, dt = \int_{\T} g^* (Gq',Gq')\, dt.
\]
Then
\[
\mathbb{A}(x,\sqrt{E}) = \int_{\T} \langle p, q' \rangle \, dt - \sqrt{E} \int_{\T} \frac{1}{2} \bigl( g^*(p,p) - 1 \bigr)\, dt = - \int_{\T} \left( \frac{\sqrt{E}}{2} g^*(p,p) - g^*(p,Gq') \right) \, dt + \frac{\sqrt{E}}{2}.
\]
By completing the square in the first integral,
\begin{eqnarray*}
\mathbb{A}(x,\sqrt{E}) = - \frac{\sqrt{E}}{2} \int_{\T}  g^* \left( p - \frac{1}{\sqrt{E}} Gq', p - \frac{1}{\sqrt{E}} Gq' \right)\, dt + \frac{1}{2\sqrt{E}} \int_{\T}  g^* (Gq',Gq') \, dt + \frac{\sqrt{E}}{2} \\ 
= - \frac{\sqrt{E}}{2} \int_{\T}  g^* \left( p - \frac{1}{\sqrt{E}} Gq', p - \frac{1}{\sqrt{E}} Gq' \right)\, dt + \sqrt{E} \leq \sqrt{E},
\end{eqnarray*}
and the equality holds if and only if
\[
p = \frac{1}{\sqrt{E}} Gq'.
\]
This proves the statements about $\mathbb{A}(x,\sqrt{E})$. The statements about $\mathbb{A}(x,-\sqrt{E})$ are deduced from the latter ones by noticing that $\mathbb{A}((q,p),-\sqrt{E}) = - \mathbb{A}((q,-p),\sqrt{E})$.  
\end{proof} \qed

\paragraph{The critical sets of $\mathbb{E}$ and $\mathbb{A}$.} Each critical point $\gamma$ of $\mathbb{E}$ with $\mathbb{E}(\gamma)>0$, that is each non-constant closed geodesic, determines two critical points of $\mathbb{A}$, one of which has positive Rabinowitz action, the other negative action. By Lemma \ref{levrel}, these are the critical points
\begin{equation}
\label{z+-}
Z^+(\gamma) := \bigl(x^+,\sqrt{\mathbb{E}(\gamma)}\bigr), \quad Z^-(\gamma) := \bigl(x^-,-\sqrt{\mathbb{E}(\gamma)}\bigr),
\end{equation}
where $x^+,x^-:\T \rightarrow S^*M$ are the reparametrized Reeb orbits
\[
x^+(t) := \bigl(\gamma(t), g(\gamma'(t),\gamma'(t))^{-1/2} G \gamma'(t) \bigr), \quad x^-(t) := x^+(-t).
\]
The maps $Z^+$ and $Z^-$ determine the diffeomorphism:
\begin{equation}
\label{diffeo}
\set{\gamma \in \crit\, \mathbb{E}}{\mathbb{E}(\gamma)>0} \times \{-1,+1\} \rightarrow \set{(x,\eta) \in \crit\, \mathbb{A}}{\eta \neq 0}, \quad (\gamma,\pm 1) \mapsto Z^{\pm} (\gamma).
\end{equation}
It is useful to choose the auxiliary Morse functions $a\in \C^{\infty} (\crit \, \mathbb{A})$ and $e\in C^{\infty} ( \crit \, \mathbb{E})$ in such a way that:
\begin{enumerate}
\item[(A0)] $e(\gamma) = a(Z^+(\gamma)) = a(Z^-(\gamma))$ for every $\gamma \in \crit\, \mathbb{E}$ such that $\mathbb{E}(\gamma)>0$.
\end{enumerate}
The critical points corresponding to constant loops form instead two different manifolds:
\[
\set{\gamma \in \crit\, \mathbb{E}}{\mathbb{E}(\gamma) = 0} = M, \quad \set{(x,\eta)\in \crit\, \mathbb{A}}{\eta = 0} = S^* M \times \{0\}.
\]
It is convenient to assume that the Morse function $a$ on $S^*M\times \{0\}$ is related to the Morse function $e$ on $M$ by the following conditions:
\begin{enumerate}

\item[(A1)] $e|_M$ has a unique minimum point $q_{\min}$, a unique maximum point $q_{\max}$, and it is self-indexing, that is $e(q)=\ind(q;e)$ for every $q\in \crit\, e|_M$.

\item[(A2)] $e(\pi(x)) \leq a(x,0) \leq e(\pi(x)) + 1/2$ for every $x\in S^*M$.

\item[(A3)] Every critical point of $a|_{S^*M \times \{0\}}$ lies above a critical point of $e$, and for every critical point $q$ of $e$ the fiber $\pi^{-1}(q)\times \{0\}$ contains exactly two critical points of $a$, that we denote by $z_q^-=(x_q^-,0)$ and $z_q^+=(x_q^+,0)$, such that $a(z_q^-)=e(q)$ and $a(z_q^+)=e(q)+1/2$.

\item[(A4)] For every $q$ in $\crit\, e$, we have $\ind(z_q^-;a) = \ind(q;e)$ and $\ind(z^+_q;a) = \ind(q;e) + n-1$.

\end{enumerate} 

It is easy to construct pairs of Morse functions $e$, $a$ which satisfy the above conditions. These conditions imply that the Morse complex of $e|_M$ and of $a|_{S^*M\times \{0\}}$ are related in a very simple and explicit way, as it is explained in the Appendix B. If 
\[
\xi = \sum_{q\in \crit \, e|_M} \xi_q q
\]
is a chain in $M_*(e|_M)$, we denote by $z_{\xi}^-$ and $z_{\xi}^+$ the corresponding chains in $RF_*$, that is
\[
z_{\xi}^- := \sum_{q\in \crit \, e|_M} \xi_q z_q^-, \quad
z_{\xi}^+ := \sum_{q\in \crit \, e|_M} \xi_q z_q^+.
\]
Then Proposition \ref{descri} from Appendix B implies the following:

\begin{prop}
\label{partialprop}
If the conditions (A1-A2-A3-A4) hold, then the boundary operator in the Rabinowitz-Floer complex $RF(D^*M,\lambda,a)$ satisfies
\begin{eqnarray*}
\partial z_q^+ & \in z^+_{\partial q} + RF^{]-\infty,0[}, \quad & \forall q\in \crit \,e|_M , \\
\partial z_q^- & \in z_{\partial q}^- +  RF^{]-\infty,0[}, \quad & \forall q\in (\crit\, e|_M) \setminus \{q_{\max}\}, \\
\partial z^-_{q_{\max}} & \in \chi(T^*M) z_{q_{\min}}^+ +  RF^{]-\infty,0[}, &
\end{eqnarray*}
where $\chi(T^*M)$ is the (modulo 2) Euler number of the vector bundle
$T^*M\rightarrow M$.
\end{prop}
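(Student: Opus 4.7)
The plan is to reduce the statement to a Morse-theoretic computation inside the critical component $S^*M\times\{0\}$ of $\crit\,\mathbb{A}$, which is exactly the content of Proposition \ref{descri} from Appendix B.

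The first observation is that $\mathbb{A}(z_q^{\pm})=0$ for every $q\in \crit\, e|_M$, since these critical points correspond to constant loops with $\eta=0$. Because $\mathbb{A}$ is strictly decreasing along non-stationary solutions of the Rabinowitz-Floer equation (\ref{rfleq}), any flow line with cascades starting at $z_q^{\pm}$ can only end at a critical point $z$ of $a$ with $\mathbb{A}(z)\leq 0$. Consequently, for every $q\in \crit\, e|_M$,
\[
\partial z_q^{\pm} \;\in\; RF^{\{0\}} + RF^{]-\infty, 0[},
\]
where $RF^{\{0\}}$ denotes the subspace generated by the critical points $z_q^{\pm}$ of $a$ (i.e.\ the ones lying in the critical manifold $S^*M\times\{0\}$, the only critical component of $\mathbb{A}$ at action zero).

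The second step is to identify the $RF^{\{0\}}$-component of $\partial z_q^{\pm}$ with the image of $z_q^{\pm}$ under the boundary of the Morse chain complex of $a|_{S^*M\times\{0\}}$. Indeed, suppose a flow line with $m$ cascades from $z$ to $z'$ contributes to this component; then $\mathbb{A}(z)=\mathbb{A}(z')=0$. If $m\geq 1$, the strict decrease of $\mathbb{A}$ along any non-stationary $v_j$ would force $\mathbb{A}(z')<\mathbb{A}(z)$, a contradiction. Hence only $m=0$ flow lines contribute, and these are precisely the intersections $W^u(z;-\nabla a)\cap W^s(z';-\nabla a)$ inside the critical manifold $S^*M\times\{0\}$, i.e.\ gradient flow lines of the Morse function $a|_{S^*M\times\{0\}}$. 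Therefore, modulo $RF^{]-\infty,0[}$, the Rabinowitz-Floer differential restricted to the generators $z_q^{\pm}$ coincides with the Morse differential of $a|_{S^*M\times\{0\}}$.

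The third and final step is to invoke Proposition \ref{descri} from Appendix B, which under assumptions (A1)--(A4) computes the Morse boundary of $a|_{S^*M\times\{0\}}$ in terms of the Morse complex of $e|_M$ and the Euler class of the sphere bundle $S^*M\to M$. The indexing relations $\mu(z_q^+)=\ind(q;e)$ and $\mu(z_q^-)=\ind(q;e)-n+1$ (from (\ref{grad0}) together with (A4)) display the $z_q^+$-generators as a shifted copy of $M_*(e|_M)$ in top fiber degree and the $z_q^-$-generators in bottom fiber degree; the Gysin-type mechanism of the sphere bundle then produces the three identities of the statement, with the only cross-differential between the two copies occurring in top degree and being multiplication by $\chi(T^*M)$, hitting precisely $z_{q_{\min}}^+$ when applied to $z_{q_{\max}}^-$.

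The only non-routine point is the action-confinement argument in step two; once this is granted, the rest is a bookkeeping translation of the Appendix B computation. This confinement is where the strict monotonicity of $\mathbb{A}$ along non-stationary solutions (the essential feature of Morse--Bott homology with cascades) is genuinely used.
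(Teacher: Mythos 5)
Your proof is correct and follows the same route the paper intends: use the strict monotonicity of $\mathbb{A}$ along non-stationary cascades to confine the action-zero part of $\partial z_q^{\pm}$ to zero-cascade Morse flow lines of $a|_{S^*M\times\{0\}}$, and then read off the result from Proposition~\ref{descri} in Appendix~B. The paper leaves the action-confinement step implicit (it simply writes ``Then Proposition~\ref{descri} from Appendix~B implies the following''), so your write-up is a useful spelling-out of the same argument rather than a different one.
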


It is useful to extend the maps $Z^+$ and $Z^-$ defined in (\ref{z+-}) to the critical points of $e$ on $M = \mathbb{E}^{-1}(0)$ by setting
\begin{equation}
\label{z+-2}
Z^+(q) := z^+_q, \quad Z^-(q) := z^-_q, \quad \forall q\in (\crit \, e) \cap \mathbb{E}^{-1}(0).
\end{equation}
With such a definition, $\crit\, a$ is the disjoint union of $Z^+(\crit\, e)$ and $Z^-(\crit\, e)$. Let us denote by $RF^-$ the following subspace of $RF$:
\begin{equation}
\label{RF-}
RF^- := \set{\sum_{z\in \mathscr{Z}} z}{\mathscr{Z}\subset Z^-(\crit \, e)} = RF^{]-\infty,0[} \oplus \mathrm{Span} \, \set{z_q^-}{q\in \crit\, e|_{\mathbb{E}^{-1}(0)} }.
\end{equation}
By Proposition \ref{partialprop}, $RF^-$ is a subcomplex of $RF$ if and only if $\chi(T^*M)=0$.

\paragraph{Relationship between the indices.} Let us discuss the relationship between the transverse Conley-Zehnder index and the Morse index. 
Let $\gamma$ be a critical point of $\mathbb{E}$ with $\mathbb{E}(\gamma)> 0$, and consider the critical points $Z^+(\gamma)=(x^+,\sqrt{\mathbb{E}(\gamma)})$ and $Z^-(\gamma)=(x^-,- \sqrt{\mathbb{E}(\gamma)})$ of $\mathbb{A}$ as above. Then
\[
\dim \ker d^2 \mathbb{E} (\gamma) = \nu^{\tau} (x^+) + 1, \quad
\ind (\gamma;\mathbb{E}) = \mu_{CZ}^{\tau} (x^+) - \frac{1}{2} \nu^{\tau} (x^+),
\]
as it can be deduced for instance from Corollary 4.2 in \cite{aps08} (the full Conley-Zehnder index $\mu_{CZ}(x^+)$ and nullity $\nu(x^+)$ are considered 
there, but the above identities easily follow, because for autonomous systems $\nu(x^+)=\nu^{\tau}(x^+)+1$, and for Hamiltonians which are strictly convex in $p$, $\mu_{CZ}(x^+) = \mu_{CZ}^{\tau}(x^+) + 1/2$). In particular, all critical points $(x,\eta)$ of $\mathbb{A}$ with positive action $\mathbb{A}(x,\eta)$ have non-negative transverse Conley-Zehnder index. 

Since $x^-(t)=x^+(-t)$, the properties of the Conley-Zehnder index imply that
\[
\nu^{\tau}(x^-) = \nu^{\tau}(x^+), \quad \mu^{\tau}_{CZ}(x^-) = - \mu^{\tau}_{CZ}(x^+).
\]
It follows that 
\[
\dim \ker d^2 \mathbb{E}(\gamma) = \nu^{\tau}(x^-) + 1, \quad \ind  (\gamma;\mathbb{E}) = - \mu_{CZ}^{\tau} (x^-) - \frac{1}{2} \nu^{\tau} (x^-).
\]
We conclude that 
\begin{equation}
\label{indexrel}
\dim \ker d^2 \mathbb{E} (\gamma) = \nu^{\tau} (x^{\pm}) + 1, \quad
\ind (\gamma;\mathbb{E}) = \pm \mu_{CZ}^{\tau} (x^{\pm}) - \frac{1}{2} \nu^{\tau} (x^{\pm}).
\end{equation}
In particular,  all critical points $(x,\eta)$ of $\mathbb{A}$ negative action $\mathbb{A}(x,\eta)$ have non-positive transverse Conley-Zehnder index. 

By the assumption (A0), the diffeomorphism (\ref{diffeo}) induces the following bijection:
\[
\set{\gamma \in \crit \, e}{\mathbb{E}(\gamma)>0} \times \{-1,+1\} \rightarrow \set{(x,\eta)\in \crit\, a}{\eta \neq 0}, \quad (\gamma,\pm 1) \mapsto Z^{\pm} (\gamma),
\]
and
\[
\ind (\gamma;e) =  \ind (Z^+(\gamma);a) = \ind (Z^-(\gamma);a), \quad \forall \gamma \in (\crit\, e) \cap \{\mathbb{E}>0\}.
\]
Finally, by the definitions (\ref{grad}) and (\ref{Ind}) of the indices $\mu$ and $\mathrm{Ind}$, the above identity and (\ref{indexrel}) imply that for every critical point $\gamma$ of $e$ with $\mathbb{E}(\gamma)>0$ there holds
\[
\mu(Z^+(\gamma))  = \mu_{CZ}^{\tau} (x^+) - \frac{1}{2} \nu^{\tau}(x^+) + \mathrm{ind}\, (Z^+(\gamma);a) = \ind (\gamma;\mathbb{E}) + \ind (\gamma;e) = 
\mathrm{Ind}\, (\gamma;\mathbb{E},e),
\]
and, using also (\ref{dimcrit}),
\begin{eqnarray*}
& \mu(Z^-(\gamma))  = \mu_{CZ}^{\tau} (x^-) - \frac{1}{2} \nu^{\tau}(x^-) + \mathrm{ind}\, (Z^-(\gamma);a) = - \ind (\gamma;\mathbb{E}) - \nu^{\tau}(x^-) + \ind\, (\gamma;e) \\  & = - \ind (\gamma;\mathbb{E}) - \dim K_{\gamma} + 1 + \ind (\gamma;e) = - \ind (\gamma;\mathbb{E}) - \ind (\gamma;-e) + 1 = - \mathrm{Ind}\, (\gamma;\mathbb{E},-e) + 1,
\end{eqnarray*}
where $K_{\gamma}$ denotes the connected component of $\crit\, \mathbb{E}$ which contains $\gamma$. Hence, we have shown that
\begin{equation}
\label{Indrel}
\mu(Z^+(\gamma)) = \mathrm{Ind} (\gamma; \mathbb{E},e), \quad
\mu(Z^-(\gamma)) = 1 - \mathrm{Ind}(\gamma;\mathbb{E},-e).
\end{equation}
On the other hand, by the assumptions (A3-A4) above and by the
definitions (\ref{grad0}) and (\ref{Ind}), the indices of the critical
points of zero action are related as follows: for every critical point
$q$ of $e$ on $\mathbb{E}^{-1}(0)$, if $Z^+(q)$ and $Z^-(q)$ are the
critical points of $a$ on 
$(\crit\, \mathbb{A}) \cap \{\mathbb{A} =0\}$ defined in (\ref{z+-2}), there holds
\begin{equation*}
\begin{split}
\mu(Z^+(q)) &= \ind (Z^+(q);a) - n + 1= \ind (q;e), \\
\mu(Z^-(q)) &= \ind(Z^-(q);a) - n + 1 = \ind(q;e) - n + 1 = 1 - \ind(q;-e),
\end{split} \end{equation*}
and hence
\begin{equation}
\label{Indrel0}
\begin{split}
\mu(Z^+(q)) &= \mathrm{Ind} (q;\mathbb{E},e) = n - \mathrm{Ind} (q;\mathbb{E},-e), \\ \mu(Z^-(q)) &= \mathrm{Ind} (q;\mathbb{E},e) - n + 1 = 1 - \mathrm{Ind}(q;\mathbb{E},-e).
\end{split} \end{equation}
By (\ref{Indrel}) and (\ref{Indrel0}), the set $\set{z\in \crit\, a}{\mu(z)=k}$ coincides with
\begin{equation}
\label{bijecrit}
 \begin{array}{ll} Z^+\bigl(\set{\gamma \in \crit\, e}{\mathrm{Ind}(\gamma;\mathbb{E},e) = k}\bigr) & \mbox{for } k\geq 2, \\
Z^+\bigl(\set{\gamma \in \crit\, e}{\mathrm{Ind}(\gamma;\mathbb{E},e) = 1}\bigr) \sqcup Z^-\bigl(\set{\gamma \in \crit\, e}{\mathrm{Ind}(\gamma;\mathbb{E},-e) = 0}\bigr) & \mbox{for } k=1, \\
Z^+\bigl(\set{\gamma \in \crit\, e}{\mathrm{Ind}(\gamma;\mathbb{E},e) = 0}\bigr) \sqcup Z^-\bigl(\set{\gamma \in \crit\, e}{\mathrm{Ind}(\gamma;\mathbb{E},-e) = 1}\bigr) & \mbox{for } k=0, \\
 Z^-\bigl(\set{\gamma \in \crit\, e}{\mathrm{Ind}(\gamma;\mathbb{E},-e) = 1-k}\bigr) & \mbox{for } k\leq -1. \end{array}
\end{equation}
However, there is no reason why $Z^+$ and $Z^-$ should induce chain maps between the Morse and the Rabinowiz-Floer complexes.

\paragraph{Free homotopy classes of loops and splitting of the complexes.} If $c\in [\T,M]\cong [\T,T^*M]$ is a free homotopy class of loops in $M$, we denote by $\Lambda^c M$ and $\Lambda^c T^*M$ the connected component of $\Lambda M$ and $\Lambda T^*M$ consisting of loops in the free homotopy class $c$. The Morse chain and differential complexes of $\mathbb{E}$ split accordingly, as
\[
M_k (\mathbb{E},e) = \bigoplus_{c\in [\T,M]} M_k^c (\mathbb{E},e), \quad
M^k (\mathbb{E},e) = \prod_{c\in [\T,M]} M^k_c (\mathbb{E},e), 
\]
where $M_k^c$ and $M^k_c$ are formal sums of critical points $\gamma$ of $e$ with $[\gamma]=c$, and  
\[
HM_k^c (\mathbb{E},e) \cong H_k(\Lambda^c M), \quad HM^k_c (\mathbb{E},e) \cong H^k(\Lambda^c M).
\]
Similarly, the Rabinowitz-Floer boundary operator restricts to a homomorphism 
\[
\partial: RF_k^c (D^*M,\lambda,a) \rightarrow RF_{k-1}^c (D^*M,\lambda,a),
\]
where the space $RF^c$ consists of formal sums of critical points $(x,\eta)$ of $a$ such that $[x]=c\in [\T,M]$. The above restrictions uniquely determine the whole Rabinowitz-Floer complex, and the inclusions
\[
\bigoplus_{c\in [\T,M]}  RF_k^c (D^*M,\lambda,a) \subset RF_k (D^*M,\lambda,a) \subset \prod_{c\in [\T,M]}  RF_k^c (D^*M,\lambda,a)
\]
are in general strict.
Notice that the critical points $(x,0)$ contribute to $RF^0$, where $0$ denotes the class of contractible loops, and that if $\eta\neq 0$ the Reeb orbit $y(t) := x(t/\eta)$ associated to a critical point $(x,\eta)$ which contributes to $RF^c$ is in the free homotopy class $(\sgn \eta) c\in [\T,T^*M]$.

\paragraph{Relationship between the second differentials of $\mathbb{E}$ and $\mathbb{A}$ at critical points.}
The following differential version of Lemma \ref{levrel} is useful in order to deal with questions of automatic transversality:

\begin{lem}
\label{levreldiff}
Let $\gamma$ be a critical point of $\mathbb{E}$ with $\mathbb{E}(\gamma)>0$ and let $Z^+(\gamma)=(x^+,\sqrt{\mathbb{E}(\gamma)})$ and $Z^-(\gamma)=(x^-,-\sqrt{\mathbb{E}(\gamma)})$ be the corresponding critical points of $\mathbb{A}$. Then for every $\xi^+\in T_{x^+} \Lambda T^*M$ and $\xi^- \in T_{x^-} \Lambda T^*M$ there holds
\begin{eqnarray}
\label{piu}
2 \sqrt{\mathbb{E}(\gamma)} d^2_{xx} \mathbb{A} (Z^+(\gamma)) [ \xi^+,\xi^+ ] & \leq & d^2 \mathbb{E}(\gamma) \bigl[ d\pi (x^+)[\xi^+], d\pi (x^+)[\xi^+] \bigr], \\
\label{meno}
2 \sqrt{\mathbb{E}(\gamma)} d^2_{xx} \mathbb{A} (Z^-(\gamma)) [ \xi^-,\xi^- ] & \geq & - d^2 \mathbb{E}(\gamma) \bigl[ d\pi (x^-)[\xi^-], d\pi (x^-)[\xi^-] \bigr],
\end{eqnarray}
where $d_{xx}^2 \mathbb{A}$ denotes the second differential of $\mathbb{A}=\mathbb{A}(x,\eta)$ with respect to the first variable.
\end{lem}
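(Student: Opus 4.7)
The plan is to make explicit the completing-the-square step that underlies Lemma \ref{levrel}, freeze the $\eta$-variable, and pass to the second variation in $x$. A direct calculation, using $\langle p,q'\rangle = g^*(p,Gq')$ and $g^*(Gq',Gq')=g(q',q')$, yields the exact identity
\begin{equation}
\label{compsq}
\mathbb{A}(x,\eta) = -\frac{\eta}{2}\int_{\T} g^*\!\left(p - \tfrac{1}{\eta}Gq',\, p - \tfrac{1}{\eta}Gq'\right) dt + \frac{1}{2\eta}\mathbb{E}(\pi\circ x) + \frac{\eta}{2},
\end{equation}
valid for every $x=(q,p)\in \Lambda T^*M$ and every $\eta\neq 0$. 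This is nothing but the algebraic manipulation from the proof of Lemma \ref{levrel}, recorded as an equality rather than as an inequality.

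To establish (\ref{piu}), set $E:=\mathbb{E}(\gamma)>0$ and freeze $\eta=\sqrt{E}$ in (\ref{compsq}), which gives $\mathbb{A}(x,\sqrt{E}) = -\tfrac{\sqrt{E}}{2}\Phi^+(x) + \tfrac{1}{2\sqrt{E}}(\mathbb{E}\circ\pi)(x) + \tfrac{\sqrt{E}}{2}$, where $\Phi^+(x) := \int_{\T} g^*(p-\tfrac{1}{\sqrt{E}}Gq',\,p-\tfrac{1}{\sqrt{E}}Gq')\,dt \geq 0$. From the formula for $x^+$ one reads off $p^+ = \tfrac{1}{\sqrt{E}}G\gamma'$, so $\Phi^+(x^+)=0$; hence $x^+$ is a global minimizer of $\Phi^+$ and $d^2\Phi^+(x^+)[\xi^+,\xi^+]\geq 0$ for every $\xi^+\in T_{x^+}\Lambda T^*M$. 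Moreover, since $\gamma=\pi\circ x^+$ is a critical point of $\mathbb{E}$, the chain rule collapses to $d^2(\mathbb{E}\circ\pi)(x^+)[\xi^+,\xi^+] = d^2\mathbb{E}(\gamma)[d\pi(x^+)\xi^+, d\pi(x^+)\xi^+]$. Differentiating (\ref{compsq}) twice in $x$ at $x^+$ thus produces the exact decomposition
\[
d^2_{xx}\mathbb{A}(Z^+(\gamma))[\xi^+,\xi^+] = -\tfrac{\sqrt{E}}{2}d^2\Phi^+(x^+)[\xi^+,\xi^+] + \tfrac{1}{2\sqrt{E}}d^2\mathbb{E}(\gamma)[d\pi(x^+)\xi^+, d\pi(x^+)\xi^+],
\]
and dropping the non-positive first term and multiplying by $2\sqrt{E}$ yields (\ref{piu}).

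For (\ref{meno}) I run exactly the same argument with $\eta=-\sqrt{E}$. The two sign flips produced by this substitution combine to give
\[
\mathbb{A}(x,-\sqrt{E}) = \tfrac{\sqrt{E}}{2}\Phi^-(x) - \tfrac{1}{2\sqrt{E}}(\mathbb{E}\circ\pi)(x) - \tfrac{\sqrt{E}}{2},
\]
with $\Phi^-(x) := \int_{\T} g^*(p+\tfrac{1}{\sqrt{E}}Gq',\,p+\tfrac{1}{\sqrt{E}}Gq')\,dt\geq 0$. Using $x^-(t)=x^+(-t)$ one checks directly that $\Phi^-(x^-)=0$, so $d^2\Phi^-(x^-)\geq 0$. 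Repeating the chain-rule step at the critical point $\pi\circ x^-$ (whose Hessian agrees with that of $\gamma$ through the natural orientation-reversing isometry of $\mathbb{E}$) now places the non-negative term $\tfrac{\sqrt{E}}{2}d^2\Phi^-(x^-)$ with a \emph{positive} coefficient in the corresponding decomposition of $d^2_{xx}\mathbb{A}(Z^-(\gamma))$, so discarding it produces the opposite inequality (\ref{meno}) after multiplication by $2\sqrt{E}$. I do not anticipate any serious obstacle: the whole argument is algebraic, the non-negativity of $\Phi^{\pm}$ is manifest, and the only delicate point is the bookkeeping of signs coming from the choice $\eta=\pm\sqrt{E}$, which is precisely what makes the two inequalities point in opposite directions.
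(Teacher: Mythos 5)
Your proof is correct and, like the paper's, rests on the completing-the-square identity behind Lemma \ref{levrel}, but the route from there to the Hessian inequality is genuinely different and worth noting. The paper substitutes $\eta = \sqrt{\mathbb{E}(\pi\circ x)}$ as a function of $x$, forming the composite $\mathcal{A}(x) := \mathbb{A}\bigl(x,\sqrt{\mathbb{E}(\pi\circ x)}\bigr)$, and invokes the pointwise inequality $\mathcal{A} \leq \mathcal{E}$ together with the touching point at $x^+$; to extract $d^2_{xx}\mathbb{A}(Z^+(\gamma))$ from $d^2\mathcal{A}(x^+)$ one must then check that all chain-rule contributions from the $\eta$-substitution vanish at the critical point (they do: $d\bigl(\sqrt{\mathbb{E}\circ\pi}\bigr)(x^+)=0$ since $\gamma$ is critical for $\mathbb{E}$, $d_\eta\mathbb{A}(Z^+(\gamma))=0$ since $x^+$ lies in $H^{-1}(0)$, and $\mathbb{A}$ is affine in $\eta$), a cancellation the paper leaves to the reader. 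You instead freeze $\eta=\sqrt{E}$ to a constant, so only the Hessian in $x$ ever appears and no chain-rule bookkeeping is required; the price is writing out the exact decomposition rather than invoking just the inequality, but since the non-negativity of $\Phi^{\pm}$ and the vanishing $\Phi^{\pm}(x^{\pm})=0$ are manifest, the argument is arguably more transparent. For (\ref{meno}) the one point to state carefully is the one you already flag: $d^2(\mathbb{E}\circ\pi)(x^-)$ is the Hessian of $\mathbb{E}$ at the reversed loop $\gamma(-\cdot)$, identified with $d^2\mathbb{E}(\gamma)$ via the reversal isometry of $\Lambda M$, which is exactly how the right-hand side of (\ref{meno}) must be read.
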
 

\begin{proof}
Consider the functions $\mathcal{A}, \mathcal{E} : \Lambda T^*M \rightarrow \R$ defined as
\[
\mathcal{A}(x) := \mathbb{A}\bigl(x, \sqrt{\mathbb{E}(\pi\circ x)}\bigr), \quad \mathcal{E}(x) := \sqrt{\mathbb{E}(\pi\circ x)}.
\]
By Lemma \ref{levrel}, $\mathcal{A} (x) \leq \mathcal{E} (x)$ for every $x\in \Lambda T^*M$, and $\mathcal{A}=\mathcal{E}$ at the common critical point $x^+$. Therefore,
\[
d^2 \mathcal{A}(x^+) [\xi^+,\xi^+] \leq d^2 \mathcal{E}(x^+) [\xi^+,\xi^+], \quad \forall \xi^+ \in T_{x^+} \Lambda T^*M,
\]
which implies (\ref{piu}) by computing the second differentials of $\mathcal{A}$ and $\mathcal{E}$ at $x^+$. The proof of (\ref{meno}) is based on the second inequality of Lemma \ref{levrel} and is analogous.
\end{proof} \qed

\section{The chain map $\Phi$}
\label{phisec}

\paragraph{Half flow lines with cascades.} It is useful to introduce the following notion of half flow lines with cascades for the Rabinowitz functional $\mathbb{A}$. They are the analogue of the stable and unstable manifolds with cascades for the energy functional $\mathbb{E}$, but in a case in which there is no well-defined flow.

Let $z$ be a critical point of $a$. If $m\geq 1$ is an integer, we define $\tilde{\mathcal{M}}_m^u(z)$ to be the set of $m$-tuples $(v_1,\dots,v_m)$ of maps
\[
v_1, \dots, v_{m-1} : \R \times \T \rightarrow \Lambda T^*M \times \R, \quad v_m : \R^- \times \T \rightarrow \Lambda T^*M \times \R,
\]
which are solutions of the Rabinowitz-Floer equation (\ref{rfleq}), and satisfy
\[
v_1(-\infty) \in W^u(z;-\nabla a), \quad v_{j+1}(-\infty) \in \phi^{-\nabla a}_{\R^+} (v_j(+\infty)), \quad \forall j=1,\dots,m-1.
\]
Th quotient of $\tilde{\mathcal{M}}^u_m(z)$ by the action of $\R^{m-1}$ given independent $s$-translations on the first $m-1$ components $v_1,\dots,v_{m-1}$ is denoted by $\mathcal{M}^u_m(z)$. The {\em space of negative half flow lines with cascades for $\mathbb{A}$} is the set
\[
\mathcal{M}^u(z) := \bigcup_{m\geq 1} \mathcal{M}^u_m(z).
\]
There is a well-defined evaluation map
\[
\mathrm{ev} : \mathcal{M}^u(z) \rightarrow \Lambda T^*M \times \R, \quad \mathrm{ev}(\underline{v}) := v_m(0), \quad \mbox{if } \underline{v} = [(v_1,\dots,v_m)].
\]
Similarly, $\tilde{\mathcal{M}}_m^s(z)$ is set of $m$-tuples $(v_1,\dots,v_m)$ of maps
\[
v_1 : \R^+ \times \T \rightarrow \Lambda T^*M \times \R, \quad v_2,\dots,v_m : \R \times \T \rightarrow \Lambda T^*M \times \R,
\]
which are solutions of the Rabinowitz-Floer equation (\ref{rfleq}), and satisfy
\[
v_{j+1}(-\infty) \in \phi^{-\nabla a}_{\R^+} (v_j(+\infty)), \quad \forall j=1,\dots,m-1, \quad v_m(+\infty) \in W^s(z;-\nabla a).
\]
Th quotient of $\tilde{\mathcal{M}}^s_m(z)$ by the action of $\R^{m-1}$ given independent $s$-translations on the last $m-1$ components $v_2,\dots,v_m$ is denoted by $\mathcal{M}^s_m(z)$. The {\em space of positive half flow lines with cascades for $\mathbb{A}$} is the set
\[
\mathcal{M}^s(z) := \bigcup_{m\geq 1} \mathcal{M}^s_m(z).
\]
There is a well-defined evaluation map
\[
\mathrm{ev} : \mathcal{M}^s(z) \rightarrow \Lambda T^*M \times \R, \quad \mathrm{ev}(\underline{v}) := v_1(0), \quad \mbox{if } \underline{v} = [(v_1,\dots,v_m)].
\]
It is also useful to denote the two components of the map $\mathrm{ev}$ as
\[
\mathrm{ev}_1: \mathcal{M}^{s/u}(z) \rightarrow \Lambda T^*M, \quad 
\mathrm{ev}_2: \mathcal{M}^{s/u}(z) \rightarrow \R.
\]
Both $\mathcal{M}^u(z)$ and $\mathcal{M}^s(z)$ are infinite dimensional objects, but as we are going to show, one can get finite dimensional manifolds by imposing Lagrangian boundary conditions for the first component of $\mathrm{ev}(\underline{v})$.

\paragraph{Definition of $\Phi$.} Let $\gamma$ be a critical point of $e$ and let $z$ be a critical point of $a$. We define $\mathcal{M}_{\Phi}(\gamma,z)$ to be the set
\[
\mathcal{M}_{\Phi}(\gamma,z) := \set{ (\underline{q},\underline{v}) \in \mathcal{W}^u(\gamma;-\nabla \mathbb{E},-\nabla e) \times \mathcal{M}^s(z)}{\pi\circ \mathrm{ev}_1 (\underline{v}) = \mathrm{ev}(\underline{q}), \; \mathrm{ev}_2(\underline{v}) = \sqrt{ \mathbb{E}(\mathrm{ev}(\underline{q})}}.
\] 
In other words, $\mathcal{M}_{\Phi}(\gamma,z)$ is the set of pairs $([(q_1,\dots,q_m)],[(v_1,\dots,v_k)])$ in $\mathcal{W}^u(\gamma;-\nabla \mathbb{E},-\nabla e) \times \mathcal{M}^s(z)$ coupled by the conditions
\begin{equation}
\label{coupling}
q_m(t) = \pi\circ u_1(0,t), \quad \eta_1(0) = \sqrt{\mathbb{E}(q_m)}, 
\end{equation}
where $v_1(s,t) = (u_1(s,t),\eta_1(s))$ for every $(s,t)\in \R^+ \times \T$. For a fixed loop $q\in \Lambda M$, the condition
\[
\pi\circ u_1(0,t) = q(t)
\]
is a $t$-dependent Lagrangian boundary condition for the solution $u_1$ of the Cauchy-Riemann type equation (\ref{rfleq1}) on the half cylinder $\R^+ \times \T$. As such, it defines a Fredholm problem, and together with the fact that $\mathcal{W}^u(\gamma)$ is a finite dimensional manifold, we deduce that the space $\mathcal{M}_{\Phi}(\gamma,z)$ is the set of solutions of a Fredholm problem. An index computation shows that the virtual dimension of $\mathcal{M}_{\Phi}(\gamma,z)$ (that is, the Fredholm index of the associated operator) is 
\begin{equation}
\label{dimMPhi}
\mathrm{virdim}\,  \mathcal{M}_{\Phi}(\gamma,z) = \mathrm{Ind} (\gamma;\mathbb{E},e) - \mu(z).
\end{equation}
The above formula can be obtained by combining the results of Appendix C in \cite{cf09} with those of Section 3.1 in \cite{as06}.
The energy bounds for the solutions $(\underline{q},\underline{v})$ in $\mathcal{M}_{\Phi}(\gamma,z)$ come from the following inequalities
\begin{equation}
\label{scala}
\mathbb{A}(z) \leq \mathbb{A} (v_j(s)) \leq \mathbb{A} (v_1(0)) \leq \sqrt{\mathbb{E}(q_m)} \leq \sqrt{\mathbb{E}(q_h)} \leq \sqrt{ \mathbb{E}(\gamma)},
\end{equation}
where the inequality in the middle is a consequence of Lemma \ref{levrel}, thanks to the coupling conditions (\ref{coupling}). Then uniform $L^{\infty}$ estimates for the solutions $v_2,\dots,v_m$ of (\ref{rfleq}) follow from Propositions \ref{etalim} and either \ref{ulim} or \ref{ulim2}. Set as before $v_1 = (u_1,\eta_1)$. Since $\eta_1(0)$ is non-negative, Remark \ref{etabdry} yields a  uniform $L^{\infty}$ estimate for $\eta_1$. The uniform $L^{\infty}$ estimate for $u_1$ follows from Proposition \ref{ulim+}, provided that we check the assumption (\ref{c3+}), the other ones being clearly satisfied. By Assumption (A) of Section \ref{morcom}, the loop $q_m$ has the uniform $C^1$ bound
\[
\sup_{t\in \T} g ( q_m' (t), q_m' (t) ) \leq C^2,
\]
where $C=C(\mathbb{E}(\gamma))$. Since $u_1 (0,t) = (q_m(t),p(t))$ for some curve $p(t)$, by (\ref{lamham}) and by the fact that $\eta_1(0)$ is non-negative we get the estimate
\[
\lambda \left( \frac{\partial u_1}{\partial t} (0,t) - \eta_1(0) X_H(u_1(0,t)) \right) = \langle p(t), q_m' (t) \rangle - \eta_1(0) g^*(p(t),p(t)) \leq C \, g^*(p(t),p(t))^{1/2}.
\]
Since on the complement of the zero section $M\subset T^*M$ the contact form $\alpha$ and the Liouville form $\lambda$ are related by the identity
\begin{equation}
\label{laval}
\lambda (q,p) = g^*(p,p)^{1/2} \alpha (q,p),
\end{equation}
the above inequality implies that for every $t\in \T$ such that $u_1(0,t)\notin M$ there holds
\[
\alpha \left( \frac{\partial u_1}{\partial t} (0,t) - \eta_1(0) X_H(u_1(0,t)) \right) \leq C,
\]
so the assumption (\ref{c3+}) of Proposition \ref{ulim+} is satisfied. We conclude that also $u_1$ has a uniform $L^{\infty}$ bound. 

The $C^{\infty}_{\mathrm{loc}}$ bounds for $v_1,\dots,v_k$ follow from the standard elliptic bootstrap, and from the fact that both bubbling off of either spheres or disks at the boundary of the half-cylinder $\R^+ \times \T$ (in the case of $u_1$) cannot occur, because the symplectic form $\omega$ is exact and because its primitive $\lambda$ vanishes identically on the fibers of $T^*M$. Together with the pre-compactness of $\mathrm{ev}(\mathcal{W}^u(\gamma))$, we conclude that the spaces $\mathcal{M}_{\Phi}(\gamma,z)$ are compact up to breaking.

The only obstruction to obtain transversality, by a generic choice of $g_{\Lambda}$, $g_{\mathbb{E}}$, $J_t$, and $g_{\mathbb{A}}$, is given by the stationary solutions in $\mathcal{M}_{\Phi}(\gamma,z)$, which are of two different kinds. If $\gamma$ is a critical point of $e$ with $\mathbb{E}(\gamma)>0$ and $z=Z^+(\gamma)$ is the corresponding critical point of $a$ with positive Rabinowitz action, then the space $\mathcal{M}_{\Phi}(\gamma,z)$ contains the stationary solution $(\gamma,z)$. Actually, the estimates (\ref{scala}) imply that in this case $\mathcal{M}_{\Phi}(\gamma,z)$ does not contain solutions other than the stationary one. Since in this case the virtual dimension of $\mathcal{M}_{\Phi}(\gamma,z)$ is zero, we would like the linearized operator at $(\gamma,z)$ to be an isomorphism. Since the linearized operator is Fredholm of index zero, this is equivalent to checking that the linearized problem has no non-zero solutions. This fact is proved in Lemma \ref{auttras1} below.

The other stationary solutions arise when $\gamma=q$ is a critical point of $e$ in the zero energy set $\mathbb{E}^{-1}(0) \cong M$, and $z$ is one of the two corresponding critical points $z_q^+=Z^+(q)$ or $z_q^-=Z^-(q)$ of $a$ on the critical set $( \crit\, \mathbb{A} ) \cap \mathbb{A}^{-1} (0) \cong S^*M$. By formulas (\ref{Indrel0}) and (\ref{dimMPhi}), 
\begin{eqnarray*}
\mathrm{virdim}\, \mathcal{M}_{\Phi}(q,z^+_q) = &  \mathrm{Ind} (q;\mathbb{E},e) - \mu(z^+_q) = &   
0 \\
\mathrm{virdim}\, \mathcal{M}_{\Phi}(q,z^-_q) =  &\mathrm{Ind} (q;\mathbb{E},e) - \mu(z^-_q) = &
n - 1.
\end{eqnarray*}
We shall prove that in both cases the linearized operator is onto. In the first case, this fact amounts to showing that the linearized problem has no non-zero solutions,
while in the second case that the space of solutions of the linearized problem has dimension $n-1$. Both facts are proved in Lemma \ref{auttras2} below (the second fact requires the inner product $g_{\mathbb{A}}$ at $z_q^-$ to be generic).

Postponing until the end of the section the proof of these automatic transversality results, we can define the chain map $\Phi$.
We choose generic data $g_{\Lambda}$, $g_{\mathbb{E}}$, $J_t$, and $g_{\mathbb{A}}$, so that for every $\gamma \in \crit\, e$ and every $z\in \crit \, a$, $\mathcal{M}_{\Phi}(\gamma,z)$ is a manifold of dimension $\mathrm{Ind}(\gamma;\mathbb{E},e) - \mu(z)$. By compactness and transversality we deduce that, when $\mu(z)=\mathrm{Ind}(\gamma;\mathbb{E},e)$, $\mathcal{M}_{\Phi}(\gamma,z)$ is a finite set, and we denote by $n_{\Phi}(\gamma,z)\in \Z_2$ its parity. Then we define
\[
\Phi: M_k(\mathbb{E},e) \rightarrow RF_k(D^*M,\lambda,a), \quad \Phi \gamma := \sum_{\substack{z\in \crit\, a \\ \mu(z) = k }} n_{\Phi}(\gamma,z)\, z,
\]
for every $\gamma\in \crit \, e$ such that $\mathrm{Ind} (\gamma;\mathbb{E},e)=k$.
A standard gluing argument shows that $\Phi$ is a chain map. 

\paragraph{Properties of $\Phi$.} If $\mathcal{M}_{\Phi}(\gamma,(x,\eta))$ is non-empty, then the loop $\gamma$ is freely homotopic to the loop $\pi\circ x$. Therefore, $\Phi$ preserves the splitting of the Morse and the Rabinowitz-Floer complexes indexed by the free homotopy classes of loops in $M$,
\begin{equation}
\label{splitPhi}
\Phi : M_*^c (\mathbb{E},e) \rightarrow RF_*^c (D^*M,\lambda,a), \quad \forall c \in [\T,M].
\end{equation}
The inequality (\ref{scala}) implies that $\Phi$ preserves the $\R$-filtrations given by the levels of $\mathbb{E}$ and $\mathbb{A}$. More precisely, for every $E\geq 0$,
\[
\Phi : M_*^{[0,E]} (\mathbb{E},e) \rightarrow RF_*^{]-\infty,\sqrt{E}]} (D^*M,\lambda,a).
\]
The following proposition lists the other main properties of the chain map $\Phi$.

\begin{prop}
\label{phiprop}
\begin{enumerate}

\item If $\gamma$ is any critical point of $e$, there holds
\[
\Phi \gamma = Z^+(\gamma) + \sum_{w \prec Z^+(\gamma)} n_{\Phi}(\gamma,w)\, w,
\]
where $\prec$ denotes the following partial order on the set $\crit\, a$: $z\prec w$ if $\mathbb{A}(z)<\mathbb{A}(w)$, or $\mathbb{A}(z)=\mathbb{A}(w)$ and $a(z)<a(w)$.

\item If $q$ is a critical point of $e$ on $\mathbb{E}^{-1}(0)$, then
\[
\Phi q \in z_q^+ + RF^{]-\infty,0[}.
\]
The restriction 
\[
\Phi : M_*^{\{0\}} (\mathbb{E},e) \cong M_*(e|_M) \rightarrow RF_*^{\{0\}} (D^*M,\lambda,a) \cong M_{*+n-1} (a|_{S^*M})
\]
to the complexes corresponding to the zero energy and action level induces the transfer homomorphism $\pi_!$ in homology:
\[
\Phi_* = \pi_! : HM_*^{\{0\}} (\mathbb{E},e) \cong H_*(M)\rightarrow HRF_*^{\{0\}}(D^*M,\lambda,a) \cong H_{*+n-1} (S^*M).
\] 

\item The chain map $\Phi$ is injective and it admits a left inverse $\hat{\Phi}  : RF_* \rightarrow M_*$, which is characterized by
\[
\ker \hat{\Phi} = RF^-,
\]
the subspace defined in (\ref{RF-}).

\item For every $k\geq 2$, $\Phi : M_k  \rightarrow 
RF_k $ is an isomorphism.

\end{enumerate}
\end{prop}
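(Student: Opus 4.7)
The plan is to prove the four assertions in sequence, leaning heavily on the action chain of inequalities (\ref{scala}) established during the construction of $\Phi$. For item (a), fix $\gamma\in\crit\,e$ and consider $z\in\crit\,a$ with $\mu(z)=\mathrm{Ind}(\gamma;\mathbb{E},e)$ such that $\mathcal{M}_\Phi(\gamma,z)$ contributes to $n_\Phi(\gamma,z)$. Inequality (\ref{scala}) gives $\mathbb{A}(z)\leq\sqrt{\mathbb{E}(\gamma)}=\mathbb{A}(Z^+(\gamma))$, so $z\prec Z^+(\gamma)$ or $\mathbb{A}(z)=\mathbb{A}(Z^+(\gamma))$. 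In the latter case, every link of (\ref{scala}) is an equality: $\mathbb{E}(q_m)=\mathbb{E}(\gamma)$ forces all Morse cascades on the $\mathbb{E}$-side to collapse, so $q_m\in W^u(\gamma;-\nabla e)$; the equality case in Lemma \ref{levrel} forces $v_1(0)=Z^+(q_m)$; and $\mathbb{A}(z)=\mathbb{A}(v_1(0))$ together with strict monotonicity of $\mathbb{A}$ on non-stationary Floer trajectories forces every $v_j$ to be stationary, so the Rabinowitz cascades reduce to an $a$-Morse trajectory inside the connected critical manifold of $\crit\,\mathbb{A}$ through $Z^+(q_m)$. By (A0) and the diffeomorphism (\ref{diffeo}), this Morse trajectory is identified with an $e$-Morse trajectory on the critical manifold $K_\gamma$ of $\mathbb{E}$ from $q_m$ to some geodesic $\gamma'$ with $z=Z^+(\gamma')$. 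The $0$-dimensional part of $\mathcal{M}_\Phi(\gamma,z)$ is then identified with the Morse-theoretic fibre product
\[
\bigl\{\,q\in W^u(\gamma;-\nabla e)\,\bigr\}\times_{K_\gamma} W^s(\gamma';-\nabla e),
\]
and by the Morse-Smale condition together with the index equality $\ind(\gamma;e)=\ind(\gamma';e)$ this is non-empty only when $\gamma=\gamma'$, in which case it consists of the single stationary point. Hence $n_\Phi(\gamma,Z^+(\gamma))=1$, and all other $w$ with nonzero $n_\Phi(\gamma,w)$ satisfy $w\prec Z^+(\gamma)$.

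For item (b), when $\gamma=q\in\crit\,e\cap\mathbb{E}^{-1}(0)$ the value $\mathbb{A}(Z^+(q))=0$, so by item (a) every other contributing $w$ has $\mathbb{A}(w)<0$, i.e.\ lies in $RF^{]-\infty,0[}$, giving $\Phi q\in z_q^+ + RF^{]-\infty,0[}$. Passing to the quotient $RF_*^{\{0\}}/RF^{]-\infty,0[}$ and using the description of $M_*(a|_{S^*M})$ from Proposition \ref{descri} and Appendix B, the induced chain map sends $q\mapsto z_q^+$. To identify this with $\pi_!$, I will invoke the Morse-theoretic model of the transfer homomorphism for an oriented sphere bundle: the transfer of $[q]$ is represented by the top-index fibre critical point above $q$, which is exactly $z_q^+$ by assumption (A4). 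Compatibility with the respective boundary operators follows from Proposition \ref{partialprop}.

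For item (c), item (a) expresses $\Phi$ as a triangular perturbation of the map $\gamma\mapsto Z^+(\gamma)$ with respect to the well-founded order $\prec$ on $\crit\,a$ (any action sublevel contains only finitely many critical points of $a$ in each degree, since the critical manifolds are finite-dimensional and $|\mathbb{A}|$ is coercive on $\crit\,\mathbb{A}$). Define $\hat\Phi:RF_*\to M_*(\mathbb{E},e)$ by $\hat\Phi(Z^-(\gamma))=0$ and by $\prec$-inductive inversion on the $Z^+$-generators, setting
\[
\hat\Phi(Z^+(\gamma))\,:=\,\gamma-\!\!\sum_{w\prec Z^+(\gamma)}\!\!n_\Phi(\gamma,w)\,\hat\Phi(w).
\]
The recursion terminates in finitely many steps within each degree by the grading of $\hat\Phi$ and the finiteness just noted, so $\hat\Phi$ extends to the action-bounded formal sums making up $RF_*$. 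By construction $\hat\Phi\circ\Phi=\mathrm{id}$ and $\ker\hat\Phi=RF^-$.

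For item (d), the bijection (\ref{bijecrit}) shows that for $k\geq 2$ the vector space $RF_k$ is freely generated by $\{Z^+(\gamma):\mathrm{Ind}(\gamma;\mathbb{E},e)=k\}$ and contains no $Z^-$-generators, so $RF_k=RF_k^+$ (where $RF^+$ is the span of the $Z^+$-generators) and $RF^-\cap RF_k=0$. Combining this with item (c), the restriction $\Phi:M_k\to RF_k$ is an injective, grading-preserving, $\prec$-triangular map with identity on the diagonal between two spaces with the same distinguished basis, hence an isomorphism. The main obstacle is the equality analysis at the top action level in item (a): showing that every trajectory saturating (\ref{scala}) is actually stationary on both sides and that the stationary solutions are transversally cut out, and this is precisely what the automatic transversality Lemmas \ref{auttras1}–\ref{auttras2} (invoked earlier via Lemma \ref{levreldiff}) are designed to deliver.
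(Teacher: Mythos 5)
Your overall strategy matches the paper's: item (a) from the energy chain \eqref{scala} plus the automatic transversality Lemmas \ref{auttras1} and \ref{auttras2}; item (b) from (a) plus the Morse-theoretic model of the transfer map from Appendix B; item (c) from the recursive definition of $\hat\Phi$ using the partial order $\prec$; item (d) from \eqref{bijecrit} and (c). However, two of the intermediate steps have genuine gaps.

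First, the equality-case analysis in (a) identifies the moduli space at the top action level with the fibre product $W^u(\gamma;-\nabla e)\times_{K_\gamma}W^s(\gamma';-\nabla e)$ and then invokes the Morse--Smale property of a single Morse flow on $K_\gamma$. But the Rabinowitz cascades at zero energy are negative gradient trajectories of $a$ on $\crit\,\mathbb{A}$ with respect to the metric $g_\mathbb{A}$, while the Morse-side cascades are negative gradient trajectories of $e$ with respect to $g_\mathbb{E}$, and these two metrics are chosen generically and independently; the diffeomorphism \eqref{diffeo} together with (A0) only matches the \emph{functions} $a$ and $e$, not their gradient flows. Thus the correct constraint set is $\{q_m\in W^u(\gamma;-\nabla e):Z^+(q_m)\in W^s(w;-\nabla a)\}$, a fibre product of two \emph{different} Morse flows, and the dimension count no longer excludes $\gamma'\neq\gamma$. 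What actually does the job is the monotonicity of $e$ along $\phi^{-\nabla e}$ and of $a$ along $\phi^{-\nabla a}$ together with (A0): $q_m\in W^u(\gamma;-\nabla e)$ gives $e(q_m)\leq e(\gamma)$, $Z^+(q_m)\in W^s(w;-\nabla a)$ gives $a(w)\leq a(Z^+(q_m))=e(q_m)$, so $a(w)\leq e(\gamma)=a(Z^+(\gamma))$, with equality forcing $q_m=\gamma$ and $w=Z^+(\gamma)$, and then Lemmas \ref{auttras1}, \ref{auttras2} give $n_\Phi(\gamma,Z^+(\gamma))=1$.

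Second, in (b) you assert that "by item (a) every other contributing $w$ has $\mathbb{A}(w)<0$." This is not what (a) says: $w\prec z_q^+$ also allows $\mathbb{A}(w)=0$ with $a(w)<a(z_q^+)$, and you must rule these out separately. A contribution $w$ at action level $0$ with $\mu(w)=\ind(q;e)$ is, by \eqref{Indrel0} and (A3), either $z_{q'}^+$ with $\ind(q';e)=\ind(q;e)$ or $z_{q'}^-$ with $\ind(q';e)=\ind(q;e)+n-1$; the self-indexing property (A1) and (A3)--(A4) then give $a(z_{q'}^+)=a(z_q^+)$ in the first case and $a(z_{q'}^-)=\ind(q;e)+n-1>a(z_q^+)$ in the second, so in neither case is $w\prec z_q^+$. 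This index bookkeeping is exactly where the paper's reference to Appendix B (Proposition \ref{descri}) enters, and it is missing from your argument. The parts (c) and (d) are essentially identical to the paper's proof (the paper defines the same recursion for $\hat\Phi$, checks $\ker\hat\Phi=RF^-$ by considering $\prec$-maximal elements of the support of $\zeta$, and deduces (d) from $RF_k^-=0$ for $k\geq 2$ via \eqref{bijecrit}).
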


\begin{proof}
Statement (a) follows from the inequality (\ref{scala}) and from Lemmas \ref{auttras1}  and \ref{auttras2} below. The proof of (b) uses also the results of the Appendix B, and in particular statement (b) of Proposition \ref{descri}. We define $\hat{\Phi}:RF_* \rightarrow M_*$ recursively by
\begin{equation*}
\begin{split}
\hat{\Phi} \zeta &:=  0 \quad \quad\mbox{if } \zeta\in RF^-, \\
\hat{\Phi} Z^+(\gamma) &:=  \gamma -    \sum_{w\prec Z^+(\gamma)} n_{\Phi} (\gamma,w) \hat{\Phi} w. 
\end{split}\end{equation*} 
It is immediate to check that $\hat{\Phi} \Phi = \mathrm{Id}_{M_*}$. By definition, $RF^-\subset \ker \hat{\Phi}$. Any non-zero $\zeta\in RF$ has the form
\[
\zeta = \sum_{z\in \mathscr{Z}} z,
\]
where $\mathscr{Z}\subset \crit\, a$ contains elements which are maximal with respect to the order $\prec$. By the definition of $\hat{\Phi}$, if any of such maximal element is in $Z^+(\crit\, e)$ then $\hat{\Phi} \zeta\neq 0$, so $\ker \hat{\Phi} \subset RF^-$. We conclude that $\ker \hat{\Phi} = RF^-$, which proves (c). By the first identity in (\ref{bijecrit}), $RF^-_k=(0)$ for every $k\geq 2$, and statement (d) follows.
\end{proof} \qed

In general, the left inverse $\hat{\Phi}$ is not a chain map. In fact, a left inverse of a chain map is a chain map if and only if its kernel is a subcomplex, and in the case of $\hat{\Phi}$ this happens if and only if $\chi(T^*M)=0$.

\paragraph{Automatic transversality.} We conclude this section by proving the above mentioned automatic transversality results for stationary solutions in $\mathcal{M}_{\Phi}(\gamma,z)$.

\begin{lem}
\label{auttras1}
Let $\gamma$ be a critical point of $e$ with $\mathbb{E}(\gamma)>0$, and let $z=Z^+(\gamma)$ be the corresponding critical point of $a$ with positive Rabinowitz action. Then the linearization of problem $\mathcal{M}_{\Phi}(\gamma,z)$ at the stationary solution $(\gamma,z)$ has no non-zero solutions.
\end{lem}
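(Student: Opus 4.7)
The linearization of $\mathcal{M}_\Phi(\gamma,z)$ at the stationary solution $(\gamma,z)$, where $z = Z^+(\gamma) = (x^+,\eta^+)$ and $\eta^+ = \sqrt{\mathbb{E}(\gamma)}$, consists of pairs $(\xi,V=(U,N))$ in which $\xi \in T_\gamma \mathcal{W}^u(\gamma)$, $V: \R^+ \times \T \to T_{x^+}\Lambda T^*M \oplus \R$ solves the linearized Rabinowitz-Floer equation on $\R^+ \times \T$, and the coupling $\pi \circ U(0,\cdot) = \xi$, $N(0) = d\sqrt{\mathbb{E}}(\gamma)[\xi] = 0$ holds; the second equation is automatic because $\gamma \in \crit\,\mathbb{E}$. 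The asymptotic requirement is that $V(s) \to v_\infty$ as $s \to +\infty$ with $v_\infty \in T_z W^s(z;-\nabla a) \subset T_z\crit\,\mathbb{A} = \ker\,\mathrm{Hess}_z\mathbb{A}$. At the stationary solution, linearizing the asymptotic conditions that define $\mathcal{W}^u(\gamma)$ gives $T_\gamma \mathcal{W}^u(\gamma) = E^-(\gamma;e) \oplus E^-(\gamma;\mathbb{E})$, so $\xi = \xi^e + \xi^{\mathbb{E}}$ with $\xi^e \in E^-(\gamma;e) \subset \ker d^2\mathbb{E}(\gamma)$ and $\xi^{\mathbb{E}} \in E^-(\gamma;\mathbb{E})$.

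The proof relies on an energy identity for linearized gradient trajectories at a Morse-Bott critical point. Since $V$ satisfies $V' = -\mathrm{Hess}_z\mathbb{A} \cdot V$, one has $\frac{d}{ds}\frac{1}{2} d^2\mathbb{A}(z)[V,V] = -\|V'\|^2$; integrating over $[0,+\infty)$ and using $v_\infty \in \ker\,\mathrm{Hess}_z\mathbb{A}$ yields
\[
\int_0^{+\infty} \|V'(s)\|^2\,ds = \tfrac{1}{2}\,d^2\mathbb{A}(z)[V(0),V(0)].
\]
The condition $N(0)=0$ kills the mixed $x$-$\eta$ term in $d^2\mathbb{A}(z)$, so the right-hand side equals $\tfrac{1}{2}\,d^2_{xx}\mathbb{A}(z)[U(0),U(0)]$, which by Lemma \ref{levreldiff} is bounded above by $\frac{1}{4\sqrt{\mathbb{E}(\gamma)}}\,d^2\mathbb{E}(\gamma)[\xi,\xi]$. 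Since $\xi^e$ lies in the kernel of $d^2\mathbb{E}(\gamma)$, we have $d^2\mathbb{E}(\gamma)[\xi,\xi] = d^2\mathbb{E}(\gamma)[\xi^{\mathbb{E}},\xi^{\mathbb{E}}] \leq 0$, with equality if and only if $\xi^{\mathbb{E}}=0$. The resulting chain
\[
0 \leq \int_0^{+\infty}\|V'\|^2\,ds \leq \frac{1}{4\sqrt{\mathbb{E}(\gamma)}}\,d^2\mathbb{E}(\gamma)[\xi^{\mathbb{E}},\xi^{\mathbb{E}}] \leq 0
\]
consists of equalities, so $\xi^{\mathbb{E}}=0$, $V$ is $s$-constant, and $V \equiv V(0) = v_\infty \in T_z\crit\,\mathbb{A}$.

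It remains to show that $\xi^e = 0$ and $V(0) = 0$. Differentiating the definition of $Z^+$ at $\gamma$ shows that $dZ^+_\gamma: T_\gamma\crit\,\mathbb{E} \to T_z\crit\,\mathbb{A}$ has vanishing $\eta$-component (because $d\sqrt{\mathbb{E}}(\gamma) = 0$) and is injective; combined with the dimension identity $\dim T_\gamma\crit\,\mathbb{E} = \dim \ker d^2\mathbb{E}(\gamma) = \nu^\tau(x^+) + 1 = \dim T_z\crit\,\mathbb{A}$ from (\ref{dimcrit}), this forces $dZ^+_\gamma$ to be a linear isomorphism onto $T_z\crit\,\mathbb{A} \cap \{N=0\}$. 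Since $V(0) \in T_z\crit\,\mathbb{A}$ has $N(0)=0$ and $d\pi U(0) = \xi^e$, necessarily $V(0) = dZ^+_\gamma(\xi^e)$. Assumption (A0) gives $(Z^+)^* a = e$ on $\crit\,\mathbb{E} \cap \{\mathbb{E}>0\}$, hence the metric-independent Hessian identity
\[
d^2 e(\gamma)[\eta,\eta] = d^2 a(z)[dZ^+_\gamma\eta, dZ^+_\gamma\eta], \qquad \forall \eta \in T_\gamma\crit\,\mathbb{E}.
\]
Applied to $\eta = \xi^e$ this reads $d^2 e(\gamma)[\xi^e,\xi^e] = d^2 a(z)[V(0),V(0)]$. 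The left side is $\leq 0$ because $\xi^e \in T_\gamma W^u(\gamma;-\nabla e)$, a subspace on which $d^2 e(\gamma)$ is negative definite; the right side is $\geq 0$ because $V(0) \in T_z W^s(z;-\nabla a)$, a subspace on which $d^2 a(z)$ is positive definite. Both must therefore vanish, forcing $\xi^e = 0$ and $V(0) = 0$, and hence $\xi = 0$ and $V \equiv 0$.

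The principal technical obstacle is the identification of the tangent space $T_\gamma \mathcal{W}^u(\gamma)$ at the stationary cascade together with the image of $dZ^+_\gamma$ inside $T_z\crit\,\mathbb{A}$; once these are in place, the argument is a clean interplay between the energy identity, Lemma \ref{levreldiff}, and the pullback relation $(Z^+)^* a = e$ provided by assumption (A0).
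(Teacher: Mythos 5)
Your proof is correct and follows essentially the same line as the paper's: the key ingredients — the differential comparison of Lemma \ref{levreldiff} at the common critical point, and the pullback relation $(Z^+)^*a = e$ from assumption (A0) — are exactly those used there. The only cosmetic differences are that you use the energy integral $\int_0^\infty\|V'\|^2\,ds = \tfrac{1}{2}d^2\mathbb{A}(z)[V(0),V(0)]$ in place of the spectral-subspace observation, and you make the splitting $\xi = \xi^e + \xi^{\mathbb{E}}$ explicit where the paper phrases the same fact as $\theta$ lying in $T_\gamma W^u(W^u(\gamma;-\nabla e);-\nabla\mathbb{E})$ and later in $T_\gamma W^u(\gamma;-\nabla e)$ via the kernel bijection.
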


\begin{proof}
Let $x:\T \rightarrow S^*M$ be the loop defined by $z=(x,\sqrt{\mathbb{E}(\gamma)})$.
Since $z\in \mathcal{M}_1^s(z) = \tilde{\mathcal{M}}_1^s(z)$, the solutions of the linearization of problem $\mathcal{M}^s(z)$ at $z$ are maps $\zeta=(\xi,\eta)$, with
\[
\xi: \R^+ \times \T \rightarrow TT^*M, \quad \xi(s,t) \in T_{x(t)} T^*M, \quad \eta:\R^+ \rightarrow \R,
\]
which solve the linearized Rabinowitz-Floer equation
\begin{equation}
\label{tuno}
\frac{d \zeta}{ds} (s) = - \nabla^2 \mathbb{A}(z) \zeta(s),
\end{equation}
and the asymptotic condition
\begin{equation}
\label{tdue}
\lim_{s\rightarrow +\infty} \zeta(s) \in T_z W^s(z;-\nabla a).
\end{equation}
Here, $\nabla^2 \mathbb{A}(z)$ is the Hessian of $\mathbb{A}$ at $z$, with respect to the $L^2$-inner product. It is a self-adjoint operator with compact resolvent, and kernel isomorphic to the tangent space at $z$ of $\crit \, \mathbb{A}$. 
Since $\gamma\in \mathcal{W}^u_1(\gamma) = \tilde{\mathcal{W}}_1^u(\gamma)$, the linearization of the first of the coupling conditions (\ref{coupling}) is
\begin{equation}
\label{ttre}
\theta := d\pi (x) [\xi(0,\cdot)] \in T_{\gamma} W^u \bigl(W^u(\gamma; - \nabla e); - \nabla \mathbb{E} \bigr).
\end{equation}
Finally, since $\gamma$ is a critical point of $\mathbb{E}$, the linearization of the second of the coupling conditions (\ref{coupling}) is just
\begin{equation}
\label{tquattro}
\eta(0) = 0.
\end{equation}
We must show that every solution $\zeta=(\xi,\eta)$ of (\ref{tuno}-\ref{tdue}-\ref{ttre}-\ref{tquattro}) is identically zero.

By (\ref{tuno}) and (\ref{tdue}), $\zeta(0)$ belongs to the linear subspace corresponding to the non-negative part of the spectrum in the spectral decomposition of $\nabla^2 \mathbb{A}(z)$. However, by condition (\ref{tquattro}) and by inequality (\ref{piu}) of Lemma \ref{levreldiff},
\[
d^2 \mathbb{A}(z)[\zeta(0),\zeta(0)] = d_{xx}^2 \mathbb{A}(z) [\xi(0),\xi(0)] \leq \frac{1}{2 \sqrt{\mathbb{E}(\gamma)}} d^2 \mathbb{E} (\gamma) [\theta,\theta],
\]
and the latter quantity is non-positive by condition (\ref{ttre}). Therefore, $\zeta(0)$ belongs to the kernel of $\nabla^2 \mathbb{A}(z)$ and hence $\zeta(s) \equiv \zeta(0)$ is a constant vector in $T_z W^s(z;-\nabla a)$. In particular, since $z$ is a non-degenerate critical point of $a$, 
\begin{equation}
\label{tcinque}
d^2 a(z) [\zeta(0),\zeta(0)] \geq \delta \| \zeta(0) \|^2,
\end{equation}
for some $\delta>0$. By the bijection between the kernels of $d^2 \mathbb{A}(z)$ and $d^2 \mathbb{E}(\gamma)$, $\theta$ belongs to the tangent space of $\crit \, \mathbb{E}$ at $\gamma$, and by (\ref{ttre}) it actually belongs to the tangent space of $W^u(\gamma;- \nabla e)$ at $\gamma$. In particular,
\begin{equation}
\label{tsei}
d^2 e(\gamma) [\theta,\theta] \leq - \delta' \|\theta\|^2,
\end{equation}
for some $\delta'>0$. On the other hand, the restrictions of $a$ and $e$ to the connected components of $\crit \, \mathbb{A}$ and $\crit \, \mathbb{E}$ which contain $z$ and $\gamma$ are related by the formula
\[
a(y,\sqrt{\mathbb{E}(\gamma)}) = e (\pi\circ y).
\]
If we differentiate this identity twice at $x$, we find
\[
d^2 a(z)[\zeta(0),\zeta(0)] = d^2 e(\gamma)[\theta,\theta],
\]
so (\ref{tcinque}) and (\ref{tsei}) imply that $\zeta(0)=0$ and hence $\zeta(s) \equiv 0$.
\end{proof} \qed
 
\begin{lem}
\label{auttras2}
Let $q$ be a critical point of $e$ on the zero energy set $\mathbb{E}^{-1}(0) \cong M$, and let $z^+_q = (x^+_q,0), \; z^-_q = (x^-_q,0) \in S^*M \times \{0\}$ be the critical points of $a$ in $\pi^{-1}(q)\times \{0\}$. Then the linearization of problem $\mathcal{M}_{\Phi}(q,z_q^+)$ at the stationary solution $(q,z_q^+)$ has no non-zero solutions. If the inner product $g_{\mathbb{A}}$ at $z_q^-$ is generic, then the space of solutions of the linearization of problem $\mathcal{M}_{\Phi}(q,z_q^-)$ at the stationary solution $(q,z_q^-)$ has dimension $n-1$.
\end{lem}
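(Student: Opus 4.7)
The plan is to adapt the argument of Lemma \ref{auttras1} to the zero-energy setting, exploiting two simplifications and one new ingredient. The first simplification is that $q\in M$ lies at the minimum of $\mathbb{E}$ and the Morse--Bott index of $\mathbb{E}$ along $M$ vanishes, so the cascaded unstable manifold $\mathcal{W}^u(q;-\nabla\mathbb{E},-\nabla e)$ collapses to $W^u(q;-\nabla e|_M)\subset M\subset \Lambda M$ and $\mathbb{E}\circ\mathrm{ev}\equiv 0$ on it. The coupling $\eta(0)=\sqrt{\mathbb{E}(\mathrm{ev}(\underline{q}))}$ is therefore genuinely smooth near the stationary solution and linearizes to the honest linear constraint $\eta(0)=0$. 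A solution of the linearized problem is then a triple $(\theta,\xi,\eta)$ with $\theta\in T_q W^u(q;-\nabla e|_M)$, $\zeta=(\xi,\eta)$ a bounded solution of $\zeta'(s)=-\nabla^2\mathbb{A}(z_q^\pm)\zeta(s)$ on $\R^+$ satisfying the asymptotic condition $\zeta(+\infty)\in T_{z_q^\pm} W^s(z_q^\pm;-\nabla a)$, together with the boundary coupling $d\pi\,\xi(0,t)\equiv\theta$ and $\eta(0)=0$.

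As in Lemma \ref{auttras1}, the asymptotic condition confines $\zeta(0)$ to the non-negative spectral subspace of $\nabla^2\mathbb{A}(z_q^\pm)$. A direct computation of the Rabinowitz Hessian at $(x_q^\pm,0)$ in a cotangent trivialization, writing $\xi(0,t)=(\alpha(t),\beta(t))$ with $\alpha=d\pi\,\xi(0,\cdot)$ horizontal and $\beta$ vertical, yields
\[
d^2_{xx}\mathbb{A}(z_q^\pm)[\xi(0,\cdot),\xi(0,\cdot)] \;=\; 2\int_{\T}\beta(t)\cdot\alpha'(t)\,dt.
\]
Because the coupling forces $\alpha(t)\equiv\theta$ to be constant in $t$, this expression vanishes; combined with $\eta(0)=0$ it gives $d^2\mathbb{A}(z_q^\pm)[\zeta(0),\zeta(0)]=0$. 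Since the quadratic form is strictly positive on the non-kernel part of the non-negative spectral subspace, this forces $\zeta(0)\in\ker\nabla^2\mathbb{A}(z_q^\pm)=T_{z_q^\pm}\crit\,\mathbb{A}$, hence $\zeta(s)\equiv\zeta(0)=(\xi_0,0)$ with $\xi_0\in T_{x_q^\pm}W^s(z_q^\pm;-\nabla a|_{S^*M\times\{0\}})$ and the finite-dimensional coupling $d\pi\,\xi_0=\theta\in T_q W^u(q;-\nabla e|_M)$.

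For part (a) at $z_q^+$, the new ingredient is assumption (A2): since $a(x,0)\leq e(\pi(x))+1/2$ on $S^*M$ with equality at $z_q^+$, the function $x\mapsto a(x,0)-e(\pi(x))$ attains its maximum at $z_q^+$, so differentiating twice at $z_q^+$ and using that $q$ is a critical point of $e$ yields
\[
d^2 a(z_q^+)[\xi,\xi] \;\leq\; d^2 e(q)[d\pi\,\xi,d\pi\,\xi], \qquad \forall\,\xi\in T_{x_q^+} S^*M.
\]
Applied to $\xi=\xi_0$ and combined with the two definitional inequalities $d^2 a(z_q^+)[\xi_0,\xi_0]\geq \delta\|\xi_0\|^2$ (from $\xi_0\in T_{x_q^+} W^s(z_q^+;-\nabla a|_{S^*M\times\{0\}})$, $z_q^+$ being a non-degenerate critical point of $a$) and $d^2 e(q)[\theta,\theta]\leq -\delta'\|\theta\|^2$ (from $\theta\in T_q W^u(q;-\nabla e|_M)$), one obtains $\delta\|\xi_0\|^2\leq -\delta'\|\theta\|^2$, forcing $\xi_0=0$ and $\theta=0$. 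This is the exact analogue of the closing estimate of Lemma \ref{auttras1}.

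For part (b) at $z_q^-$ the inequality from (A2) goes the wrong way, so I would instead first exhibit $n-1$ explicit kernel elements by taking $\theta=0$, $\eta\equiv 0$, and $\xi(s,t)\equiv\xi_0$ with $\xi_0\in T_{x_q^-}S^*_qM$ any fiber tangent vector: the fiber $S^*_qM$ is tangent to $S^*M$ and lies in $W^s(z_q^-;-\nabla a|_{S^*M\times\{0\}})$ because $z_q^-$ is the fiber-minimum of $a$ by (A3)--(A4) and Proposition \ref{descri}, and all four conditions of the linearized problem then hold trivially. Since the virtual dimension of $\mathcal{M}_\Phi(q,z_q^-)$ equals $n-1$, it remains to prove the matching upper bound, i.e., that for generic $g_{\mathbb{A}}$ near $z_q^-$ the linear map $T_{x_q^-}W^s(z_q^-;-\nabla a|_{S^*M\times\{0\}})/T_{x_q^-}S^*_qM \to T_qM/T_q W^u(q;-\nabla e|_M)$ induced by $d\pi$ is injective. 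I expect this to be the main delicate point: the required transversality is produced by a standard Sard--Smale perturbation of $g_{\mathbb{A}}$ localized at $z_q^-$, but one must check that such perturbations are compatible with the Morse--Smale condition for $-\nabla a$ on all of $\crit\,\mathbb{A}$ and do not disturb the other transversality hypotheses used to define the Rabinowitz--Floer complex.
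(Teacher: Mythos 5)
The reduction you carry out — from the linearized Rabinowitz--Floer problem on the half-cylinder to the finite-dimensional problem of constant vectors $\zeta(0)=(\xi_0,0)$ with $\xi_0\in T_{x_q^\pm}S^*M$, $(\xi_0,0)\in T_{z_q^\pm}W^s(z_q^\pm;-\nabla a)$, and $d\pi(x_q^\pm)[\xi_0]\in T_q W^u(q;-\nabla e)$ — is correct and matches the paper's proof; your Hessian formula $2\int_\T\beta\cdot\alpha'\,dt$ and the paper's $\int_\T\omega(\xi,\partial_t\xi)\,dt$ agree up to integration by parts, and both vanish because $\alpha=d\pi\,\xi(0,\cdot)$ is constant. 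Your treatment of $z_q^+$ via (A2) is also the same as the paper's.

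There is, however, a genuine gap in your argument for $z_q^-$. You claim that the fiber tangent space $T_{x_q^-}S^*_qM$ is contained in $T_{z_q^-}W^s(z_q^-;-\nabla a)$ because $z_q^-$ is the fiber minimum of $a$, and you use this twice: once to exhibit $n-1$ explicit kernel elements, and again to make sense of the quotient map whose injectivity gives you the matching upper bound. But a local minimum of $a|_{S^*_qM}$ does not imply that the fiber tangent sits inside the stable subspace of $z_q^-$: the stable subspace is the positive eigenspace of the self-adjoint operator $g_{\mathbb{A}}^{-1}\mathrm{Hess}\,a(z_q^-)$ acting on the whole $(2n-1)$-dimensional space $T_{x_q^-}S^*M$, and positive semidefiniteness of $\mathrm{Hess}\,a$ restricted to $T_{x_q^-}S^*_qM$ does not place that subspace inside the positive eigenspace. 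In fact, $T_{z_q^-}W^s$ has codimension $\ind(q;e)$ by (A4), so for a generic $g_{\mathbb{A}}$ the intersection $T_{z_q^-}W^s\cap T_{x_q^-}S^*_qM$ has dimension only $\max(0,\,n-1-\ind(q;e))$; as soon as $\ind(q;e)\geq 1$ (in particular for $q=q_{\max}$, where $\ind(q;e)=n$ and the intersection is generically trivial), your explicit kernel elements do not exist. The paper avoids this entirely: it identifies the solution space directly with $T_{z_q^-}W^s(z_q^-;-\nabla a)\cap d\pi(x_q^-)^{-1}\bigl(T_q W^u(q;-\nabla e)\bigr)$, notes from (A4) that these subspaces of $T_{x_q^-}S^*M$ have codimension $\ind(q;e)$ and dimension $\ind(q;e)+n-1$ respectively, and concludes by transversality for generic $g_{\mathbb{A}}$ that the intersection has dimension $n-1$. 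You should replace your two-step (lower bound plus upper bound) argument by this single dimension count; the genericity needed is a finite-dimensional, open-dense condition at the single point $z_q^-$, so the compatibility with the Morse--Smale condition that you worry about is not an issue.
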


\begin{proof}
Let $z=(x,0)$ be one of $z_q^+=(x_q^+,0)$ or $z_q^-=(x_q^-,0)$
Arguing as in the proof of Lemma \ref{auttras1}, we must show that the space of solutions  $\zeta = (\xi,\eta)$, where
\[
\xi: \R^+ \times \T \rightarrow T_x T^*M,
 \quad \eta: \R^+ \rightarrow \R, 
\]
of the linear problem
\begin{eqnarray}
\label{ttuno}
\frac{d \zeta}{ds} (s) & = & - \nabla^2 \mathbb{A}(z) \zeta(s), \\
\label{ttdue}
\lim_{s\rightarrow +\infty} \zeta(s) & \in & T_z W^s(z;-\nabla a), \\
\label{tttre}
\theta:= d\pi (x) [\xi(0,\cdot)] & \in & T_q W^u(q; - \nabla e), \\
\label{ttquattro}
\eta(0) & = & 0,
\end{eqnarray}
has either dimension zero in the case of $z_q^+$, or $n-1$ in the case of $z_q^-$. 
Conditions (\ref{ttuno}) and (\ref{ttdue}) are equivalent to the fact that $\zeta(0)$ belongs to the linear subspace corresponding to the non-negative part of the spectrum in the spectral decomposition of $\nabla^2 \mathbb{A}(z)$. By (\ref{ttquattro}) and by an explicit computation of the second differential of $\mathbb{A}$ at $(x,0)$, we find
\[
d^2 \mathbb{A}(x,0)[\zeta(0),\zeta(0)] =
\int_{\T} \omega\Bigl( \xi(0,t), \frac{\partial \xi}{\partial t} (0,t) \Bigr)\, dt = - \int_{\T} \frac{d}{dt} \lambda(\xi(t))\, dt = 0,
\]
where we have used the fact that $\xi(0,\cdot)$ is a loop in $T_x T^*M$ such that $d\pi(x)[\xi(0,t)]$ does not depend on $t$, by (\ref{tttre}). Therefore, $\zeta(0)$ belongs to the kernel of $\nabla^2 \mathbb{A}(z)$. Taking also (\ref{ttdue}) into account, we deduce that $\zeta(s) \equiv \zeta(0)$ is of the form $\zeta(s) = (\xi,0)$, where $\xi\in T_{x} S^*M$ does not depend on $t$, and $(\xi,0) \in T_{z} W^s(z;-\nabla a)$. We conclude that the space of solutions of the linearized problem is isomorphic to the linear space
\begin{equation}
\label{space}
\set{(\xi,0) \in T_{z} W^s(z;-\nabla a)}{d\pi(x)[\xi] \in T_q W^u(q;-\nabla e)},
\end{equation}
so we must show that the dimension of this space is zero in the case of $z_q^+$, and $n-1$ in the case of $z_q^-$. 
By the assumptions (A2-A3), $a(x,0)\leq e(\pi(x)) + 1/2$ for every $x\in S^*M$, and the equality holds when $x=x_q^+$. Since $(x_q^+,0)$ is a critical point of $a$, and $\pi(x_q^+)=q$ is a critical point of $e$, we deduce the inequality
\[
d^2 a(z_q^+)[(\xi,0),(\xi,0)] \leq d^2 e(q) \bigl[ d\pi(x_q^+)[\xi], d\pi(x_q^+)[\xi] \bigr] , \quad \forall \xi\in T_{x_q^+} S^*M.
\]
The above inequality implies that when $z=z_q^+$ the space (\ref{space}) is zero. 
By condition (A4),
\[
\dim d\pi(x^-_q)^{-1} T_q W^u(q;-\nabla e) = \ind(q;e) + n - 1, \quad
\codim\, T_{z_q^-} W^s(z_q^-;-\nabla a) = \ind(q;e),
\]
so if the inner product $g_{\mathbb{A}}$ at $z_q^-$ is generic, the above pair of subspaces is transverse, and hence the space (\ref{space}) for $z=z_q^-$ has dimension $n-1$.
\end{proof} \qed
  
\section{The chain map $\Psi$} 
\label{psisec}

\paragraph{Definition of $\Psi$.} Let $z$ be a critical point of $a$ and let $\gamma$ be a critical point of $e$. We define $\mathcal{M}_{\Psi}(z,\gamma)$ to be the set
\[
\mathcal{M}_{\Psi}(z,\gamma) := \set{ (\underline{v},\underline{q}) \in \mathcal{M}^u(z) \times \mathcal{W}^u(\gamma; - \nabla \mathbb{E},\nabla e)}{\pi \circ \mathrm{ev}_1(\underline{v}) = \mathrm{ev} (\underline{q})(-\cdot), \; \mathrm{ev}_2 (\underline{v}) = - \sqrt{ \mathbb{E} (\mathrm{ev} (\underline{q})) } }.
\]
Equivalently, $\mathcal{M}_{\Psi}(z,\gamma)$ is the set of pairs $([(v_1,\dots,v_k)],[(q_1,\dots,q_m)])$ in $\mathcal{M}^u(z) \times \mathcal{W}^u(\gamma; - \nabla \mathbb{E},\nabla e)$ coupled by the conditions
\begin{equation}
\label{coupling2}
q_m(t) = \pi \circ u_k(0,-t), \quad \eta_k(0) = - \sqrt{\mathbb{E}(q_m)},
\end{equation}
where $v_k(s,t) = (u_k(s,t),\eta_k(s))$ for every $(s,t) \in \R^- \times \T$. 
Notice that, as it was anticipated in Section \ref{morcom}, we are using the Morse function $-e$ on $\crit\, \mathbb{E}$.
Again, the facts that $\mathcal{W}^u(\gamma;-\nabla \mathbb{E},\nabla e)$ is finite dimensional and that the first of the coupling conditions in (\ref{coupling2}) is a (parametric and $t$-dependent) Lagrangian boundary condition, imply that elements of $\mathcal{M}_{\Psi}(z,\gamma)$ are solutions of a Fredholm problem, and an index computation (see again Appendix C in \cite{cf09} and Section 3.1 in \cite{as06}) gives
\begin{equation}
\label{virdim2}
\mathrm{virdim}\, \mathcal{M}_{\Psi}(z,\gamma) = \mu(z) + \mathrm{Ind} (\gamma;\mathbb{E},-e) - 1.
\end{equation}
By the second inequality of Lemma \ref{levrel}, the elements $(\underline{v},\underline{q})$ of $\mathcal{M}_{\Psi}(z,\gamma)$ satisfy the following action bounds
\begin{equation}
\label{scala2}
\mathbb{A}(z) \geq \mathbb{A}(v_j(s)) \geq \mathbb{A}(v_k(0)) \geq - \sqrt{ \mathbb{E}(q_m)} \geq - \sqrt{ \mathbb{E}(q_h)} \geq - \sqrt{ \mathbb{E}(\gamma)},
\end{equation}
which provide us with uniform energy bounds. As before, uniform $L^{\infty}$ estimates for the solutions $v_1,\dots,v_{k-1}$ of (\ref{rfleq}) on the whole cylinder follow from Proposition \ref{etalim} and \ref{ulim2}, and there remains to treat the case of the solution $v_k = (u_k,\eta_k)$ on the half-cylinder $\R^- \times \T$. By the second coupling condition in (\ref{coupling2}), $\eta_k(0) \leq 0$, so the uniform $L^{\infty}$ estimate for $\eta_k$ follows fom Remark \ref{etabdry}. The uniform $L^{\infty}$ estimate for $u_k$ follows from Proposition \ref{ulim+} and Remark \ref{ubdry}, provided that we prove a uniform lower bound for the quantity
\[
\alpha \left( \frac{\partial u_k}{\partial t} (0,t) - \eta_k(0) X_H(u_k(0,t)) \right).
\]
Taking into account the identity (\ref{laval}), such a lower bound follows from the estimate
\[
\lambda \left( \frac{\partial u_k}{\partial t} (0,t) - \eta_k(0) X_H(u_k(0,t)) \right) = 
\langle p(t), -q_m'(-t) \rangle - \eta_k(0) g^* (p(t),p(t)) \geq - C g^*(p(t),p(t))^{1/2},
\]
where $u_k(0,t) = (q_m(-t),p(t))$ and $\eta_k(0)\leq 0$, by (\ref{coupling2}), and $C>0$ is such that
\[
\sup_{t\in \T} g(q'(t),q'(t)) \leq C^2,
\]
for every $q$ which belongs to the unstable manifold of some critical point $\gamma_0$ of $\mathbb{E}$ with energy $\mathbb{E}(\gamma_0) \leq \mathbb{E}(\gamma)$ (see Assumption (A) of Section \ref{morcom}).   
Then, the standard bubbling off and bootstrap argument implies that the spaces $\mathcal{M}_{\Psi}(z,\gamma)$ are compact up to breaking.

As in the case of $\mathcal{M}_{\Phi}$, $\mathcal{M}_{\Psi}(z,\gamma)$ may contain stationary solutions. Indeed, if $\gamma$ is a critical point of $e$ with positive energy, and $z=Z^-(\gamma)$ is the corresponding critical point of $a$ with negative Rabinowitz action, then the space $\mathcal{M}_{\Psi}(z,\gamma)$ consists of the stationary solution $(z,\gamma)$, because of (\ref{scala2}). In this case, $\mu(z) = 1 - \mathrm{Ind}(\gamma;\mathbb{E},-e)$, by (\ref{Indrel}), so the virtual dimension of $\mathcal{M}_{\Psi}(z,\gamma)$ is zero. The following lemma, whose proof uses the estimate (\ref{meno}) of Lemma \ref{levreldiff} and is analogous to the proof of Lemma \ref{auttras1}, implies that automatic transversality holds at such stationary solutions:

\begin{lem}
\label{auttras3}
Let $\gamma$ be a critical point of $e$ with $\mathbb{E}(\gamma)>0$, and let $z=Z^-(\gamma)$ be the corresponding critical point of $a$ with negative Rabinowitz action. Then the linearization of problem $\mathcal{M}_{\Psi}(z,\gamma)$ at the stationary solution $(z,\gamma)$ has no non-zero solutions.
\end{lem}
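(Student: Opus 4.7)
The plan is to mimic the argument of Lemma \ref{auttras1}, with the roles of ``positive action / forward half--line'' and ``negative action / backward half--line'' exchanged, using inequality (\ref{meno}) in place of (\ref{piu}). Let $z=(x,-\sqrt{\mathbb{E}(\gamma)})$ with $x=x^-$. A solution of the linearized problem is a pair $\zeta=(\xi,\eta)$ with $\xi:\R^-\times\T\to T_x T^*M$ and $\eta:\R^-\to\R$ satisfying
\[
\frac{d\zeta}{ds}=-\nabla^2\mathbb{A}(z)\zeta,\qquad \lim_{s\to-\infty}\zeta(s)\in T_z W^u(z;-\nabla a),
\]
together with the linearized coupling conditions at $s=0$: $\theta:=d\pi(x)[\xi(0,-\cdot)]\in T_\gamma W^u(W^u(\gamma;\nabla e);-\nabla\mathbb{E})$ and $\eta(0)=0$ (the $\eta$-component linearization is trivial because $\gamma\in\crit\,\mathbb{E}$). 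Note that we now use $-e$ as auxiliary Morse function on $\crit\,\mathbb{E}$, so the relevant unstable manifold inside $\crit\,\mathbb{E}$ is that of $\gamma$ under the flow of $\nabla e$.

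First I would observe that, since the asymptotic limit at $-\infty$ lies in $\ker \nabla^2\mathbb{A}(z)$, the backward trajectory $\zeta(s)=e^{-s\nabla^2\mathbb{A}(z)}\zeta(0)$ must stay bounded as $s\to-\infty$, which forces $\zeta(0)$ to lie in the non--positive spectral subspace of $\nabla^2\mathbb{A}(z)$, i.e.\
\[
d^2\mathbb{A}(z)[\zeta(0),\zeta(0)]\leq 0.
\]
On the other hand, since $\eta(0)=0$, inequality (\ref{meno}) of Lemma \ref{levreldiff} applied to $\xi^-:=\xi(0,\cdot)$ yields
\[
d^2\mathbb{A}(z)[\zeta(0),\zeta(0)]=d^2_{xx}\mathbb{A}(z)[\xi^-,\xi^-]\geq -\frac{1}{2\sqrt{\mathbb{E}(\gamma)}}\,d^2\mathbb{E}(\gamma)[\theta,\theta].
\]
Here I would decompose $\theta=\theta_0+\theta_1$ with $\theta_0\in T_\gamma W^u(\gamma;\nabla e)\subset T_\gamma(\crit\,\mathbb{E})=\ker d^2\mathbb{E}(\gamma)$ and $\theta_1$ in the negative eigenspace of $d^2\mathbb{E}(\gamma)$: Morse--Bott gives $d^2\mathbb{E}(\gamma)[\theta,\theta]=d^2\mathbb{E}(\gamma)[\theta_1,\theta_1]\leq 0$, so the right--hand side above is non--negative.

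Combining the two bounds I would conclude that $d^2\mathbb{A}(z)[\zeta(0),\zeta(0)]=0$ and, because $d^2\mathbb{E}(\gamma)$ is negative--definite on the negative eigenspace, $\theta_1=0$, hence $\theta=\theta_0\in T_\gamma W^u(\gamma;\nabla e)$. Moreover, $\zeta(0)$ then lies in $\ker\nabla^2\mathbb{A}(z)$, so $\zeta(s)\equiv\zeta(0)\in T_z W^u(z;-\nabla a)\cap T_z(\crit\,\mathbb{A})$. Since $z$ is a non--degenerate critical point of $a|_{\crit\,\mathbb{A}}$, membership in $T_zW^u(z;-\nabla a)$ gives
\[
d^2 a(z)[\zeta(0),\zeta(0)]\leq -\delta\,\|\zeta(0)\|^2.
\]
To close the argument I would use condition (A0): the diffeomorphism $Z^-$ of (\ref{diffeo}) satisfies $a\circ Z^-=e$ on the component $K_\gamma\subset\crit\,\mathbb{E}$, and under $dZ^-(\gamma)$ the vector $\zeta(0)$ (whose $\eta$--component is forced to vanish since $\mathbb{E}$ is constant on $K_\gamma$) corresponds precisely to $\theta_0$, modulo a time--reversal of the loop under which $d^2\mathbb{E}$ and $d^2 e$ are invariant. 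Therefore
\[
d^2 a(z)[\zeta(0),\zeta(0)]=d^2 e(\gamma)[\theta_0,\theta_0]\geq \delta'\,\|\theta_0\|^2,
\]
because $\theta_0\in T_\gamma W^u(\gamma;\nabla e)$ lies in the positive eigenspace of $d^2 e(\gamma)|_{T_\gamma\crit\,\mathbb{E}}$. The two displayed inequalities force $\zeta(0)=0$ and hence $\zeta\equiv 0$.

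The main bookkeeping obstacle is tracking signs and time--reversals: inequality (\ref{meno}) is formulated for $x^-$ while the coupling (\ref{coupling2}) involves $\pi\circ u_k(0,-t)$, and the $\Psi$--construction employs $-e$ rather than $e$. Once the identification $dZ^-$ between variations on the two critical manifolds is used correctly, the negative spectrum of $\nabla^2\mathbb{A}(z)$ and the positive eigenspace of $d^2 e(\gamma)$ become perfectly adapted to each other, exactly as the positive spectrum of $\nabla^2\mathbb{A}(Z^+(\gamma))$ matched the negative eigenspace of $d^2 e(\gamma)$ in Lemma \ref{auttras1}.
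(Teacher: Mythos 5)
The proposal is correct and carries out exactly the proof the paper has in mind when it says ``the proof uses the estimate (\ref{meno}) of Lemma \ref{levreldiff} and is analogous to the proof of Lemma \ref{auttras1}''. You have correctly flipped all the signs: on the half-cylinder $\R^-\times\T$ the asymptotic condition at $-\infty$ forces $\zeta(0)$ into the \emph{non-positive} spectral subspace of $\nabla^2\mathbb{A}(z)$, inequality (\ref{meno}) replaces (\ref{piu}), the linearized coupling lands $\theta$ in $T_\gamma W^u\bigl(W^u(\gamma;\nabla e);-\nabla\mathbb{E}\bigr)$ where $d^2\mathbb{E}(\gamma)$ is non-positive, and $T_\gamma W^u(\gamma;\nabla e)$ is the \emph{positive} eigenspace of $d^2 e(\gamma)$ while $T_z W^u(z;-\nabla a)$ is the \emph{negative} eigenspace of $d^2 a(z)$, so condition (A0) delivers the same contradiction as in Lemma \ref{auttras1}. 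The explicit decomposition $\theta=\theta_0+\theta_1$ is a slightly more verbose phrasing of the paper's appeal to the bijection between $\ker d^2\mathbb{A}(z)$ and $\ker d^2\mathbb{E}(\gamma)$, but the content is the same, and the remark about time-reversal invariance of $d^2\mathbb{E}$ and $d^2 e$ correctly accounts for the $t\mapsto -t$ reparametrization built into the coupling (\ref{coupling2}) and into the definition $x^-(t)=x^+(-t)$.
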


The other stationary solutions correspond to constant geodesics. Indeed, if $q$ is a critical point of $e$ on the zero energy set $\mathbb{E}^{-1}(0) \cong M$, and $z_q^+=Z^+(q)$, $z_q^-=Z^-(q)$ are the corresponding critical points of $a$ on the critical set $(\crit\, \mathbb{A} ) \cap \mathbb{A}^{-1}(0) \cong S^*M$, then $(z_q^+,q)$ and $(z_q^-,q)$ are stationary solutions in $\mathcal{M}_{\Psi}(z_q^+,q)$ and $\mathcal{M}_{\Psi}(z_q^-,q)$, respectively. 
By formulas (\ref{Indrel0}) and (\ref{virdim2}), the virtual dimension of these spaces are
\begin{eqnarray*}
\mathrm{virdim}\, \mathcal{M}_{\Psi}(z^+_q,q) = &  \mu(z^+_q) + \mathrm{Ind} (q;\mathbb{E},-e) - 1= &   
n-1 \\
\mathrm{virdim}\, \mathcal{M}_{\Psi}(z^-_q,q) =  & \mu(z^-_q) + \mathrm{Ind} (q;\mathbb{E},-e) - 1= & 0.
\end{eqnarray*}
So automatic transversality at $(z_q^+,q)$ and $(z_q^-,q)$ follows from the lemma below, whose proof is analogous to the proof of Lemma \ref{auttras2}:

\begin{lem}
\label{auttras4}
Let $q$ be a critical point of $e$ on the zero energy set $\mathbb{E}^{-1}(0) \cong M$, and let $z^+_q = (x^+_q,0), \; z^-_q = (x^-_q,0) \in S^*M \times \{0\}$ be the critical points of $a$ in $\pi^{-1}(q) \times \{0\}$. Then the linearization of problem $\mathcal{M}_{\Psi}(z_q^-,q)$ at the stationary solution $(z_q^-,q)$ has no non-zero solutions. If the inner product $g_{\mathbb{A}}$ at $z_q^+$ is generic, then the space of solutions of the linearization of problem $\mathcal{M}_{\Psi}(z_q^+,q)$ at the stationary solution $(z_q^+,q)$ has dimension $n-1$.
\end{lem}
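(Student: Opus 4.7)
The plan is to mirror the proof of Lemma~\ref{auttras2}, adapted to the half-line $\R^-$, to the asymptotic condition in the unstable tangent space $T_zW^u(z;-\nabla a)$, and to the Morse function $-e$ on $\crit\,\mathbb{E}$ used in the definition of $\Psi$. I would first linearize the defining conditions of $\mathcal{M}_\Psi(z,q)$ at the stationary solution $(z,q)$, with $z=(x,0)$ equal to either $z_q^+$ or $z_q^-$: a solution is a pair $\zeta=(\xi,\eta)$ with $\xi:\R^-\times\T\to T_xT^*M$ and $\eta:\R^-\to\R$ satisfying (i) $d\zeta/ds=-\nabla^2\mathbb{A}(z)\zeta$ on $\R^-$; (ii) $\lim_{s\to-\infty}\zeta(s)\in T_zW^u(z;-\nabla a)$; (iii) $\theta:=d\pi(x)[\xi(0,-\cdot)]\in T_qW^u(q;\nabla e)$, which forces $d\pi(x)[\xi(0,t)]$ to be constant in $t$ since $W^u(q;\nabla e)\subset M$ consists of constant loops; and (iv) $\eta(0)=0$, just as in Lemma~\ref{auttras2}, because $q$ is a critical point of $\mathbb{E}$ at energy zero.

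By (i)--(ii), $\zeta(0)$ lies in the non-positive spectral part of $\nabla^2\mathbb{A}(z)$, together with the $a$-unstable directions inside $\ker\nabla^2\mathbb{A}(z)$. The same Stokes-type calculation as in Lemma~\ref{auttras2}, using $\eta(0)=0$ and the $t$-independence of $d\pi(x)[\xi(0,t)]$, yields
\[d^2\mathbb{A}(z)[\zeta(0),\zeta(0)]=\int_\T\omega\bigl(\xi(0,t),\partial_t\xi(0,t)\bigr)\,dt=-\int_\T\frac{d}{dt}\lambda(\xi(0,t))\,dt=0,\]
so $\zeta(0)\in\ker\nabla^2\mathbb{A}(z)$ and $\zeta\equiv(\xi,0)$ with $\xi\in T_xS^*M$ independent of $t$ and $(\xi,0)\in T_zW^u(z;-\nabla a)$. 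The solution space therefore reduces to
\[V_z:=\bigl\{(\xi,0)\in T_zW^u(z;-\nabla a)\,\bigm|\,d\pi(x)[\xi]\in T_qW^u(q;\nabla e)\bigr\}.\]

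For $z=z_q^-$, assumptions (A2)--(A3) give $a(x,0)\geq e(\pi(x))$ with equality at $x_q^-$, hence at the pair of critical points one has the second-differential comparison $d^2a(z_q^-)[(\xi,0),(\xi,0)]\geq d^2e(q)\bigl[d\pi(x_q^-)[\xi],d\pi(x_q^-)[\xi]\bigr]$. On $T_{z_q^-}W^u(z_q^-;-\nabla a)$ the left side is strictly negative by Morse non-degeneracy of $a$, while on $T_qW^u(q;\nabla e)=T_qW^s(q;-\nabla e)$ the right side is non-negative, forcing $\xi=0$ and hence $V_{z_q^-}=0$. For $z=z_q^+$ this comparison reverses sign, so no pointwise obstruction is available and one must instead argue by dimension count. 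By (A4), $\dim T_{z_q^+}W^u(z_q^+;-\nabla a)=\ind(q;e)+n-1$, while since $\pi:S^*M\to M$ is a submersion with $(n-1)$-dimensional fibers, $\dim d\pi(x_q^+)^{-1}T_qW^u(q;\nabla e)=(n-\ind(q;e))+(n-1)=2n-1-\ind(q;e)$ inside $T_{x_q^+}S^*M$; the two codimensions sum to $n$ inside a $(2n-1)$-dimensional ambient, so transverse subspaces meet in dimension $n-1$.

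The one step that genuinely requires care is the transversality claim for $z_q^+$: one must verify that a perturbation of $g_\mathbb{A}$ in a neighborhood of $z_q^+$ moves $T_{z_q^+}W^u(z_q^+;-\nabla a)$ through enough positions in $T_{z_q^+}\crit\,\mathbb{A}$ that a residual set of metrics puts it in transverse position relative to $d\pi(x_q^+)^{-1}T_qW^u(q;\nabla e)\times\{0\}$. This is a standard Sard-type argument exactly parallel to the concluding step of Lemma~\ref{auttras2}, and I expect no new difficulty.
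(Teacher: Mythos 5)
Your proposal is correct and follows exactly the route the paper intends: the paper does not spell out the proof of Lemma~\ref{auttras4} but states that it is analogous to that of Lemma~\ref{auttras2}, and your argument is precisely that adaptation (replacing $\R^+$ by $\R^-$, $W^s(z;-\nabla a)$ by $W^u(z;-\nabla a)$, $W^u(q;-\nabla e)$ by $W^u(q;\nabla e)$, and noting that the sign in the (A2)--(A3) comparison at $x_q^-$ now gives $d^2a \geq d^2e \circ d\pi$ whose strict negativity on the $a$-unstable space against non-negativity on the $e$-unstable space forces $\xi=0$, while for $z_q^+$ the reversed inequality is useless and one passes to the dimension count with the generic-metric transversality argument). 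The Stokes computation showing $\zeta(0)\in\ker\nabla^2\mathbb{A}(z)$ and the reduction to the finite-dimensional intersection problem are identical. The only cosmetic point is that the condition as $s\to-\infty$ forces $\zeta(0)$ into the non-positive spectral subspace of $\nabla^2\mathbb{A}(z)$ with its zero-eigenvalue component lying in $T_zW^u(z;-\nabla a)$ (the negative-eigenvalue part decays, the positive part must vanish, and the limit is the zero part), which is what you use; your phrasing ``together with the $a$-unstable directions inside $\ker\nabla^2\mathbb{A}(z)$'' captures this.
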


Once automatic transversality at the stationary solutions has been checked, standard transversality arguments in Floer homology implies that for generic $J_t$, $g_{\mathbb{A}}$, $g_{\Lambda}$, and $g_{\mathbb{E}}$, the space $\mathcal{M}_{\Psi}(z,\gamma)$ is a manifold of dimension $\mu(z) + \mathrm{Ind}(\gamma;\mathbb{E},-e) - 1$, for every $z\in \crit\, a$ and $\gamma\in \crit\, e$. Compactness and transversality imply that when $\mathrm{Ind}(\gamma) = 1 - \mu(z)$, $\mathcal{M}_{\Psi}(z,\gamma)$ is a finite set, whose parity we denote by $n_{\Psi}(z,\gamma) \in \Z_2$. Then we define the homomorphism
\[
\Psi: RF_k(D^*M,\lambda,a) \rightarrow M^{1-k}(\mathbb{E},-e), \quad \Psi z := \sum_{\substack{\gamma \in \crit\, e \\ \mathrm{Ind} (\gamma; \mathbb{E}, -e) = 1-k}} n_{\Psi}(z,\gamma) \, \gamma,
\]
for every $z\in \crit\, a$ with $\mu(z) = k$. Notice again the use of the opposite Morse function on $\crit \, \mathbb{E}$.
A standard gluing argument implies that $\Psi$ is a chain map from the chain complex $RF_*$ to the chain complex $M^{1-*}$, that is
\[
\delta \Psi \zeta = \Psi \partial \zeta, \quad \forall \zeta \in RF_*( D^*M,\lambda,a).
\]

\paragraph{Properties of $\Psi$.} The first coupling condition in (\ref{coupling2}) shows that if $\mathcal{M}_{\Psi}((x,\eta),\gamma)$ is non-empty, then the loop $\pi\circ x$ is freely homotopic to the loop $\gamma(-\cdot)$. Therefore, 
\[
\Psi: RF_*^c(D^*M,\lambda,a) \rightarrow M^{1-*}_{-c}(\mathbb{E},-e), \quad \forall c\in [\T,M],
\]
where $-c$ denotes the class obtained by inverting the parameterization of the loops in $c$.

The inequality (\ref{scala2}), implies that $\Psi$ preserves the $\R$-filtrations, meaning that for every $E\geq 0$ there holds
\[
\Psi : RF_*^{]-\infty,-\sqrt{E}]} (D^*M,\lambda,a) \rightarrow M^{1-*}_{[\sqrt{E},+\infty[} (\mathbb{E},-e).
\]
The remaining relevant properties of $\Psi$ are listed in the following:

\begin{prop}
\label{psiprop}
\begin{enumerate}

\item If $\gamma$ is any critical point of $e$, there holds
\[
\Psi Z^-(\gamma) = \gamma + \sum_{\beta \succ \gamma} n_{\Psi} (Z^-(\gamma),\beta)\, \beta,
\]
where $\beta \succ \gamma$ means that either $\mathbb{E}(\beta) > \mathbb{E}(\gamma)$, or $\mathbb{E}(\beta) = \mathbb{E}(\gamma)$ and $-e(\beta) > -e(\gamma)$.

\item If $q$ is a critical point of $e$ on $\mathbb{E}^{-1}(0)$, then
\[
\Psi Z^-(q) \in q + M^{1-*}_{]0,+\infty[}(\mathbb{E},-e).
\]
The restriction 
\[
\Psi : RF_*^{\{0\}} (D^*M,\lambda,a) \cong M_{*+n-1} (a|_{S^*M}) \rightarrow M^{1-*}_{\{0\}} (\mathbb{E},-e) \cong M^{1-*} (-e|_M) \cong M_{*+n-1} (e|_M) 
\]
to the complexes corresponding to the zero energy and action level induces the homomorphism $\pi_*$ in homology:
\[
\Psi_* = \pi_* :  HRF_*^{\{0\}} \cong H_{*+n-1}(S^*M) \rightarrow HM^{1-*}_{\{0\}} (\mathbb{E},-e) \cong H^{1-*}(M) \cong H_{*+n-1}(M)  .
\] 

\item The chain map $\Psi$ is surjective and it admits a right inverse $\hat{\Psi}  : M^{1-*} \rightarrow RF_*$, which is characterized by
\[
\mathrm{Im}\, \hat{\Psi} = RF^-,
\]
the subspace defined in (\ref{RF-}).

\item For every $k\leq -1$, $\Psi : RF_k  \rightarrow 
M^{1-k}$ is an isomorphism. 

\end{enumerate}
\end{prop}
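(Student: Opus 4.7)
The plan is to argue in parallel with the proof of Proposition~\ref{phiprop}, with $Z^-$, the Morse differential complex, and the second inequality of Lemma~\ref{levrel} playing the roles that $Z^+$, the Morse chain complex, and the first inequality played there. Throughout, the scale of inequalities (\ref{scala2}) provides action/energy control, and the automatic transversality results in Lemmas~\ref{auttras3}--\ref{auttras4} ensure the stationary solutions are regular and counted correctly.

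For (a), I would start from (\ref{scala2}), which, applied to any $(\underline v,\underline q)\in\mathcal{M}_\Psi(Z^-(\gamma_0),\gamma)$, forces $-\sqrt{\mathbb{E}(\gamma_0)}=\mathbb{A}(Z^-(\gamma_0))\ge -\sqrt{\mathbb{E}(\gamma)}$, hence $\mathbb{E}(\gamma)\ge \mathbb{E}(\gamma_0)$. The strict case yields $\gamma\succ \gamma_0$ directly. The main technical obstacle will be analysing the equality case $\mathbb{E}(\gamma)=\mathbb{E}(\gamma_0)$: every inequality in (\ref{scala2}) collapses, so the energy identity forces the Floer cascades $v_j$ to be stationary, and the non-stationarity convention rules out all intermediate cascades on both sides. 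One is then reduced to a single stationary $v_1\equiv Z^-(q_m)$ on the Rabinowitz side, coupled to a single $q_m\in W^u(\gamma;\nabla e)\cap\crit\mathbb{E}$ of the same energy as $\gamma$ on the Morse side. Combining $v_1(-\infty)\in W^u(Z^-(\gamma_0);-\nabla a)$ with condition (A0) gives $e(q_m)\le e(\gamma_0)$, while $q_m\in W^u(\gamma;\nabla e)$ gives $e(q_m)\ge e(\gamma)$; hence either $\gamma=\gamma_0=q_m$ (the stationary solution, counted with multiplicity one by Lemma~\ref{auttras3}) or $-e(\gamma)>-e(\gamma_0)$, i.e.\ $\gamma\succ\gamma_0$, as claimed.

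Part (b) will then follow from (a) combined with Appendix~B: Proposition~\ref{descri} identifies explicitly the critical points of $e|_M$ and $a|_{S^*M\times\{0\}}$, so the $\succ$-ordering restricted to the zero-energy level reproduces the Morse ordering of $-e|_M$, and every correction term in $\Psi Z^-(q)$ either sits above $q$ in $M^{1-*}_{]0,+\infty[}$ or cancels at the zero level. The induced zero-level homomorphism will then be identified with $\pi_*$ by running the identification of Proposition~\ref{phiprop}(b) in the dual direction (cohomology versus homology, $Z^-$ versus $Z^+$).

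Finally, for (c) I would define $\hat\Psi\colon M^{1-*}\rightarrow RF_*$ recursively by
\[
\hat\Psi\gamma := Z^-(\gamma)-\sum_{\beta\succ\gamma} n_\Psi(Z^-(\gamma),\beta)\,\hat\Psi\beta,
\]
which is well-founded since each invocation strictly increases the lexicographic pair $(\mathbb{E},-e)$; the resulting formal sums land in $RF^-$ and are automatically bounded above in $\mathbb{A}$, since $\mathbb{A}(Z^-(\cdot))\le 0$. Then $\Psi\hat\Psi=\mathrm{Id}$ follows from (a), and $\mathrm{Im}\,\hat\Psi= RF^-$ holds because any element of $RF^-$ is the image under $\hat\Psi$ of its $\Psi$-image, solved recursively by the same triangular scheme. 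Part (d) is then immediate: by the bijection (\ref{bijecrit}), for $k\le -1$ every critical point of $a$ of degree $k$ has the form $Z^-(\gamma)$ with $\mathrm{Ind}(\gamma;\mathbb{E},-e)=1-k$, so $RF_k=RF^-_k$, and on this subcomplex $\Psi$ is upper-triangular with unit diagonal by (a), hence an isomorphism onto $M^{1-k}$.
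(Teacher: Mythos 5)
Your proposal is correct and follows essentially the same route as the paper's proof, which is quite terse (it simply cites the inequalities (\ref{scala2}), Lemmas \ref{auttras3}--\ref{auttras4}, Proposition \ref{descri}(c), and writes down the recursive definition of $\hat\Psi$ via coefficients $n_{\hat\Psi}$; your direct recursion on $\hat\Psi$ is just the same triangular inversion phrased differently). You correctly supply the details of the equality-case analysis in (a) -- collapsing (\ref{scala2}), reducing to stationary cascades, and using (A0) together with the fact that $e$ respectively $a$ is monotone along the auxiliary Morse flows -- and the deduction of (c) and (d) from the triangular structure and (\ref{bijecrit}), which is exactly what the paper intends.
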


\begin{proof}
Statements (a) and (b) follow from the inequalities (\ref{scala2}) and Lemmas \ref{auttras3}, \ref{auttras4}, together with the results of the Appendix B, in particular statement (c) of Proposition \ref{descri}.

The homomorphism $\hat{\Psi} : M^{1-*} \rightarrow RF_* $ can be defined
as the homomorphism which maps $\gamma\in \crit \, e$ into
\[
\hat{\Psi} \gamma := Z^-(\gamma) - \sum_{\beta \succ \gamma} n_{\hat{\Psi}}(\gamma,Z^-(\beta))\,  Z^-(\beta),
\]
where the coefficients $n_{\hat{\Psi}}(\gamma,Z^-(\beta))$ are defined recursively by
\[
n_{\hat{\Psi}}(\gamma,Z^-(\beta)) := n_{\Psi}(Z^-(\gamma),\beta) + \sum_{\gamma \prec \alpha \prec \beta} n_{\hat{\Psi}} (\gamma,Z^-(\alpha)) \, n_{\Psi} (Z^-(\alpha),\beta), \quad \forall \gamma,\beta\in \crit\, e.
\]
It is immediate to check that $\hat{\Psi}$ is a right inverse of $\Phi$ and that its image is $RF^-$, proving (c). Statement (d) follows from the last identity in  (\ref{bijecrit}).
\end{proof} \qed

Again, the right inverse $\hat{\Psi}$ is a chain map if and only if its image is a subcomplex, so if and only if $\chi(T^*M)=0$.

\section{The chain homotopy $P$}

The aim of this section is to prove that the composition $\Psi\circ \Phi$,
\[
M_* (\mathbb{E},e) \stackrel{\Phi}{\rightarrow} RF_* (D^*M,\lambda,a) \stackrel{\Psi}{\rightarrow} M^{1-*} (\mathbb{E},-e),
\]
is chain homotopic to zero. More precisely, we shall prove the following:

\begin{prop}
\label{Pprop}
There exists a homomorphism
\[
P : M_*(\mathbb{E},e) \rightarrow M^{-*} (\mathbb{E},-e)
\]
such that
\begin{equation}
\label{chainhom}
\Psi \circ \Phi = P \partial + \delta P,
\end{equation}
and $P q_{\min}\in M^0_{]0,+\infty[} (\mathbb{E},-e)$.
\end{prop}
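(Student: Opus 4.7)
The plan is to define $P$ by counting finite-length Rabinowitz-Floer cylinders $[-S,S]\times\T\to T^*M$ of variable length $2S>0$, coupled at each endpoint to unstable manifolds with cascades of $\mathbb{E}$---the left boundary carrying the same coupling used in the definition of $\Phi$ and the right boundary the coupling used in the definition of $\Psi$. Concretely, for $\gamma^\pm\in\crit\, e$ I would introduce the moduli space $\mathcal{M}_P(\gamma^-,\gamma^+)$ of tuples $(\underline{q}^-,S,v,\underline{q}^+)$ with $S>0$, $\underline{q}^-\in\mathcal{W}^u(\gamma^-;-\nabla\mathbb{E},-\nabla e)$, $\underline{q}^+\in\mathcal{W}^u(\gamma^+;-\nabla\mathbb{E},\nabla e)$, and $v=(u,\eta):[-S,S]\times\T\to T^*M\times\R$ a solution of (\ref{rfleq}) satisfying
\[
\pi\circ u(-S,t)=\mathrm{ev}(\underline{q}^-)(t),\qquad \eta(-S)=\sqrt{\mathbb{E}(\mathrm{ev}(\underline{q}^-))},
\]
\[
\pi\circ u(S,t)=\mathrm{ev}(\underline{q}^+)(-t),\qquad \eta(S)=-\sqrt{\mathbb{E}(\mathrm{ev}(\underline{q}^+))}.
\]
A Fredholm count analogous to those giving (\ref{dimMPhi}) and (\ref{virdim2}), together with the one-dimensional freedom in $S$, should produce $\mathrm{virdim}\,\mathcal{M}_P(\gamma^-,\gamma^+)=\mathrm{Ind}\,(\gamma^-;\mathbb{E},e)+\mathrm{Ind}\,(\gamma^+;\mathbb{E},-e)$, consistent with $P$ having degree $0$ as a map $M_k\to M^{-k}$.

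Compactness up to breaking would follow from the action sandwich
\[
-\sqrt{\mathbb{E}(\gamma^+)}\leq\mathbb{A}(v(S))\leq\mathbb{A}(v(-S))\leq\sqrt{\mathbb{E}(\gamma^-)}
\]
supplied by Lemma \ref{levrel}, combined with the $L^\infty$ estimates of Section \ref{unesti}---specifically Proposition \ref{ulim+} and Remark \ref{ubdry}, applied at each boundary exactly as in the compactness analyses for $\mathcal{M}_\Phi$ and $\mathcal{M}_\Psi$. Automatic transversality at the stationary solutions that arise when $\mathbb{E}(\gamma^\pm)=0$ is handled by arguments identical to those of Lemmas \ref{auttras2} and \ref{auttras4}. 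For generic choices of $g_\Lambda$, $g_\mathbb{E}$, $J_t$, and $g_\mathbb{A}$, the moduli space is then a smooth manifold of the predicted dimension, and I would set
\[
P\gamma^-:=\sum_{\substack{\gamma^+\in\crit\, e\\ \mathrm{Ind}\,(\gamma^+;\mathbb{E},-e)=-\mathrm{Ind}\,(\gamma^-;\mathbb{E},e)}}n_P(\gamma^-,\gamma^+)\,\gamma^+,
\]
where $n_P(\gamma^-,\gamma^+)\in\Z_2$ is the parity of the zero-dimensional part.

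The identity (\ref{chainhom}) would then be read off from the boundary of the one-dimensional components of $\mathcal{M}_P$. A priori four ends are possible: cascade breakings on the left (contributing $P\partial$), cascade breakings on the right (contributing $\delta P$), the limit $S\to+\infty$ (in which $v$ splits at an interior critical point $z\in\crit\,\mathbb{A}$ into a $\Phi$-half-cylinder followed by a $\Psi$-half-cylinder, producing exactly $\Psi\circ\Phi$), and the collapse $S\to 0^+$. The hard part will be ruling out this last end: this is precisely the content of the lemma flagged as Lemma X in the Introduction, asserting that $2S$ is uniformly bounded below on the relevant components. My strategy would be by contradiction: a sequence $S_n\to 0^+$ would force $v_n(-S_n,\cdot)$ and $v_n(S_n,\cdot)$ to $C^0$-converge to a common loop $x_\infty$ satisfying $\pi\circ x_\infty(t)=\pi\circ x_\infty(-t)$ and $\eta_\infty=\sqrt{\mathbb{E}(\pi\circ x_\infty)}=-\sqrt{\mathbb{E}(\pi\circ x_\infty)}$, hence $\mathbb{E}(\pi\circ x_\infty)=0$; an index/transversality analysis at the resulting zero-energy limit, exploiting the index identities (\ref{Indrel})-(\ref{Indrel0}) and the compatibility conditions (A0)-(A4), would have to be carried out to exclude the remaining degenerate configurations.

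Finally, the refined conclusion $Pq_{\min}\in M^0_{]0,+\infty[}(\mathbb{E},-e)$ comes directly from specializing the action sandwich to $\gamma^-=q_{\min}$: the coupling $\pi\circ u(-S,t)=q_{\min}$ together with $\eta(-S)=0$ gives $\mathbb{A}(v(-S))=0$, and since $\mathbb{A}$ strictly decreases along every non-stationary Rabinowitz-Floer flow line---and elements of $\mathcal{M}_P$ have $S>0$ with $v$ non-stationary by construction---we get $\mathbb{A}(v(S))<0$, forcing $-\sqrt{\mathbb{E}(\gamma^+)}<0$, that is, $\mathbb{E}(\gamma^+)>0$.
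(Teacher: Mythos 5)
Your proposal follows the paper's strategy closely: the moduli spaces $\mathcal{M}_P$ built from finite-length Rabinowitz--Floer cylinders coupled at both ends to unstable manifolds with cascades of $\mathbb{E}$, the four types of ends of the one-dimensional strata, and the identification of the lower bound $S\geq\sigma>0$ (Lemma~\ref{X}) as the crux. Two points, however, deserve attention.

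First, the gap you flag in establishing the lower bound on $S$ is real, but the tool you reach for --- ``an index/transversality analysis at the resulting zero-energy limit'' --- is not the one that closes it. The paper's argument is a case analysis. The cases where $\gamma^\pm$ are both non-contractible, both constant, or both of index zero are handled by elementary inequalities (and in the second case $\mathcal{M}_P$ is shown to be empty outright via the Morse--Smale property). The genuinely hard case is $\gamma^+=q_{\max}$, $\gamma^-$ a non-constant contractible geodesic with $\ind(\gamma^-;\mathbb{E})=1$, $\ind(\gamma^-;e)=0$, and one of the two limiting points of the one-dimensional set $\mathrm{ev}(\mathcal{W}^u(\gamma^-;-\nabla\mathbb{E},-\nabla e))$ being a constant loop. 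Here one does \emph{not} get an immediate contradiction from the limit being constant: the contradiction is \emph{quantitative}. One notes that the zero-energy part of the closure of $\mathrm{ev}(\mathcal{W}^u(\gamma^-;\dots))$ stays at a fixed positive distance $d$ from $q_{\max}$, while the displacement realized by the cylinder $u_h$ over $[-S_h,S_h]\times\T$ is controlled, via Cauchy--Schwarz, by $\sqrt{2S_h}$ times the square root of its Floer energy; the energy identity bounds the latter by $\sqrt{\mathbb{E}(q_h(-S_h,\cdot))}\to 0$, so the displacement vanishes, contradicting $d>0$. Your ``common $C^0$-limit'' picture is in the right spirit, but the closing step is an energy-versus-distance estimate, not an index computation.

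Second, the derivation of $Pq_{\min}\in M^0_{]0,+\infty[}(\mathbb{E},-e)$ invokes the claim that $v$ is ``non-stationary by construction''. This is not automatic: the arcs with zero cascades $(S,v)\in\mathcal{A}_0$ need not be non-stationary, and stationary arcs genuinely appear in $\mathcal{M}_P(q,q)$ for $q$ constant. To rule out a stationary arc in $\mathcal{M}_P(q_{\min},q_{\max})$ one argues separately: the coupling forces $\eta\equiv 0$ and $\pi\circ u$ to be a single constant loop lying in both $W^u(q_{\min};-\nabla e)$ and $W^s(q_{\max};-\nabla e)$, which is impossible since $q_{\min}\neq q_{\max}$ for $n\geq 2$. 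Once stationary arcs are excluded, your strict monotonicity argument applies and gives the claim; alternatively the paper simply observes, as part of the case analysis in Lemma~\ref{X}, that $\mathcal{M}_P(q_{\min},q_{\max})$ is empty.
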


Notice that since $M_k=M^k=(0)$ for $k<0$, the chain homotopy $P$ can be non-zero only for $k=0$. Therefore, (\ref{chainhom})
is equivalent to
\[
\Psi \circ \Phi|_{M_1} = P \partial, \quad \Psi \circ \Phi|_{M_0} = \delta P.
\]

\paragraph{Flow arcs with cascades.}
In order to describe the spaces of maps whose counting produces the homomorphism $P$, we need to introduce {\em flow arcs with cascades} for the Rabinowitz action functional $\mathbb{A}$. The flow arcs with zero cascades are the pairs $(S,v)$, where $S>0$ and
\[
v : [-S,S] \times \T \rightarrow T^*M \times \R
\]
is a solution of the Rabinowitz-Floer equation (\ref{rfleq}). The set of flow arcs with zero cascades is denoted by $\mathcal{A}_0$. If $m\geq 1$ is an integer, we denote by $\tilde{\mathcal{A}}_m$ the set of $(m+1)$-tuples $(v_0,v_1,\dots,v_m)$ of maps
\begin{eqnarray*}
& v_0 : [0,+\infty[ \times \T \rightarrow T^*M \times \R, \quad v_m : ]-\infty,0] \times \T \rightarrow T^*M \times \R, \\
& v_j : \R \times \T \rightarrow \rightarrow T^*M \times \R, \quad \forall j=1,\dots,m-1, 
\end{eqnarray*}
which solve the Rabinowitz-Floer equation (\ref{rfleq}) and satisfy
\[
v_j(-\infty) \in \phi_{\R^+}^{-\nabla a} \bigl(v_{j-1}(+\infty)\bigr), \quad \forall j=1,\dots,m.
\]
The set of arcs with $m$ cascades $\mathcal{A}_m$ is the quotient of $\tilde{\mathcal{A}}_m$ by the action of $\R^{m-1}$ given by $m-1$ independent $s$-translations on the middle components $v_1,\dots,v_{m-1}$. The set of arcs with cascades is the disjoint union
\[
\mathcal{A} := \bigcup_{m\geq 0} \mathcal{A}_m.
\]
There are natural evaluation maps
\[
\mathrm{ev}^- = (\mathrm{ev}^-_1,\mathrm{ev}_2^-) : \mathcal{A} \rightarrow T^*M \times \R, \quad \mathrm{ev}^+ = (\mathrm{ev}^+_1,\mathrm{ev}_2^+) : \mathcal{A} \rightarrow T^*M \times \R,
\]
defined by
\begin{equation*}
\begin{split}
\mathrm{ev}^-(\underline{v}) & := \left\{ \begin{array}{ll} v(-S) & \mbox{if } \underline{v} = (S,v) \in \mathcal{A}_0, \\ v_0(0) & \mbox{if } \underline{v} = [(v_0,\dots,v_m)] \in \mathcal{A}_m \mbox{ with } m\geq 1,\end{array} \right. \\
\mathrm{ev}^+(\underline{v}) & := \left\{ \begin{array}{ll} v(S) & \mbox{if } \underline{v} = (S,v) \in \mathcal{A}_0, \\ v_m(0) & \mbox{if } \underline{v} = [(v_0,\dots,v_m)] \in \mathcal{A}_m\mbox{ with } m\geq 1. \end{array} \right.
\end{split} \end{equation*}

\paragraph{The spaces $\mathbf{\mathcal{M}_P}$.}
Let $\gamma^-$ and $\gamma^+$ be critical points of $e$. We define
\begin{eqnarray*}
\mathcal{M}_P(\gamma^-,\gamma^+) := \Bigl\{ (\underline{q}^-, \underline{v}, \underline{q}^+) \in \mathcal{W}^u(\gamma^-; - \nabla \mathbb{E}, - \nabla e) \times \mathcal{A} \times \mathcal{W}^u(\gamma^+; - \nabla \mathbb{E}, \nabla e) \, \Big| \\
\pi\circ \mathrm{ev}^-_1(\underline{v}) = \mathrm{ev}(\underline{q}^-), \; \mathrm{ev}^-_2(\underline{v}) = \sqrt{\mathbb{E}(\mathrm{ev}(\underline{q}^-))}, \; \pi\circ \mathrm{ev}^+_1(\underline{v}) = \mathrm{ev}(\underline{q}^+)(-\cdot), \; \mathrm{ev}^+_2(\underline{v}) = - \sqrt{\mathbb{E}(\mathrm{ev}(\underline{q}^+))} \Bigr\}.
\end{eqnarray*}
Equivalently, $\mathcal{M}_P(\gamma^-,\gamma^+)$ is the set of triplets $([(q_1^-,\dots,q_h^-)], \underline{v}, [(q_1^+,\dots,q_k^+)])$ in 
\[
\mathcal{W}^u(\gamma^-; - \nabla \mathbb{E}, - \nabla e) \times \mathcal{A} \times \mathcal{W}^u(\gamma^+; - \nabla \mathbb{E}, \nabla e),
\]
which are coupled by the following conditions:
\begin{eqnarray*}
\mbox{if } \underline{v} = (S,v) = (S,(u,\eta)) \in \mathcal{A}_0 \mbox{ then } \left\{ \begin{array}{ll} \pi \circ u(-S,t) = q_h^-(t) , & \eta(-S) = \sqrt{\mathbb{E}(q_h^-)}, \\ \pi \circ u(S,t) = q_k^+(-t), & \eta(S) = - \sqrt{\mathbb{E}(q_k^+)}, \end{array} \right. \\
\mbox{if } \underline{v} = [(v_0,\dots,v_m)] \in \mathcal{A}_m, \; m\geq 1, \mbox{ then } \left\{ \begin{array}{ll} \pi \circ u_0(0,t) = q_h^-(t), & \eta_0(0) = \sqrt{\mathbb{E}(q_h^-)}, \\ \pi \circ u_m(0,t) = q_k^+(-t), & \eta_m(0) = - \sqrt{\mathbb{E}(q_k^+)}, \end{array} \right.
\end{eqnarray*}
where $v_0=(u_0,\eta_0)$ and $v_m=(u_m,\eta_m)$. An index computation shows that the virtual dimension of $\mathcal{M}_P(\gamma^-,\gamma^+)$ is
\[
\mathrm{virdim}\, \mathcal{M}_P(\gamma^-,\gamma^+) = \mathrm{Ind}\, (\gamma^-;\mathbb{E},e) + \mathrm{Ind}\, (\gamma^+;\mathbb{E},-e).
\]
We are interested in the spaces $\mathcal{M}_P$ when the above virtual dimension is either 0 or 1. Notice that if $q$ is a critical point of $e$ on $\mathbb{E}^{-1}(0) \cong M$, then
\[
\mathrm{virdim}\, \mathcal{M}_P(q,q) = \ind (q;e) + \ind (q;-e) = \dim M \geq 2.
\]
Moreover, we may assume that if $\gamma$ is a critical point of $e$ corresponding to a non-constant geodesic, then $\gamma(-\cdot)$ is not a critical point of $e$. These considerations show that when the virtual dimension of $\mathcal{M}_P(\gamma^-,\gamma^+)$ does not exceed 1, then this set does not contain stationary solutions. Therefore, for a generic choice of $g_{\Lambda}$, $g_{\mathbb{E}}$, $J_t$, and $g_{\mathbb{A}}$, transversality for the problem $\mathcal{M}_P(\gamma^-,\gamma^+)$ holds, whenever
\begin{equation}
\label{ppp0}
\mathrm{Ind}\, (\gamma^-;\mathbb{E},e) + \mathrm{Ind}\, (\gamma^+;\mathbb{E},-e) \leq 1,
\end{equation}
and, in this case, $\mathcal{M}_P(\gamma^-,\gamma^+)$ is a manifold of dimension
\[
\dim \mathcal{M}_P(\gamma^-,\gamma^+) = \mathrm{Ind}\, (\gamma^-;\mathbb{E},e) + \mathrm{Ind}\, (\gamma^+;\mathbb{E},-e).
\]
By Lemma \ref{levrel}, the elements $(\underline{q}^-,\underline{v},\underline{q}^+)$ of $\mathcal{M}_P(\gamma^-,\gamma^+)$ satisfy the energy estimates
\begin{equation}
\label{Eb1}
\begin{split}
\sqrt{\mathbb{E}(\gamma^-)} \geq \sqrt{\mathbb{E}(q_j^-)} \geq \sqrt{\mathbb{E}(q_h^-)} \geq \mathbb{A}(v(-S)) \geq \mathbb{A}(v(s)) \\ \geq \mathbb{A}(v(S)) \geq - \sqrt{\mathbb{E}(q_k^+)} \geq - \sqrt{\mathbb{E}(q_j^+)} \geq - \sqrt{\mathbb{E}(\gamma^+)},
\end{split}
\end{equation}
for every $s\in [-S,S]$, in the case $\underline{v} = (S,v)\in \mathcal{A}_0$, and
\begin{equation}
\label{Eb2}
\begin{split}
\sqrt{\mathbb{E}(\gamma^-)} \geq \sqrt{\mathbb{E}(q_j^-)} \geq \sqrt{\mathbb{E}(q_h^-)} \geq \mathbb{A}(v_0(0)) \geq \mathbb{A}(v_j(s)) \\ \geq \mathbb{A}(v_m(0)) \geq - \sqrt{\mathbb{E}(q_k^+)} \geq - \sqrt{\mathbb{E}(q_j^+)} \geq - \sqrt{\mathbb{E}(\gamma^+)},
\end{split}
\end{equation}
in the case $\underline{v}=[(v_0,\dots,v_m)]\in \mathcal{A}_m$ with $m\geq 1$. These energy estimates imply uniform bounds on all the derivatives for the elements of $\mathcal{M}_P(\gamma^-,\gamma^+)$, again using Propositions \ref{etalim}, \ref{ulim2}, \ref{ulim+}, Remarks \ref{etabdry} and \ref{ubdry}, together with bubbling off and bootstrap arguments.

The main point, in order to get the compactness required to define $P$ and to check that it is a chain homotopy between $\Psi\circ \Phi$ and the zero map, is to prove that the arcs with zero cascades in $\mathcal{M}_P(\gamma^-,\gamma^+)$ cannot shrink to zero. More precisely, we shall prove the following:

\begin{lem}
\label{X}
For every pair of critical points $\gamma^-$, $\gamma^+$ of $e$ which satisfy (\ref{ppp0}),
there exists a positive number $\sigma = \sigma(\gamma^-,\gamma^+)$ such that for every $(\underline{q}^-,\underline{v},\underline{q}^+)$ in $\mathcal{M}_P(\gamma^-,\gamma^+)$ with $\underline{v} = (S,v) \in \mathcal{A}_0$ there holds $S\geq \sigma$. 
\end{lem}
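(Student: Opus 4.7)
We argue by contradiction, assuming a sequence $(\underline{q}^-_n,(S_n,v_n),\underline{q}^+_n)\in\mathcal{M}_P(\gamma^-,\gamma^+)$ with $v_n=(u_n,\eta_n)$ and $S_n\downarrow 0$, and derive a contradiction from hypothesis $(\ref{ppp0})$.

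\emph{Step 1 (energies at the two ends collapse).} The coupling conditions fix $\eta_n(-S_n)=\sqrt{\mathbb{E}(q^-_{h_n})}\geq 0$ and $\eta_n(S_n)=-\sqrt{\mathbb{E}(q^+_{k_n})}\leq 0$. The action drop along the cylinder is bounded by $\sqrt{\mathbb{E}(\gamma^-)}+\sqrt{\mathbb{E}(\gamma^+)}$ via Lemma \ref{levrel}, so the $L^\infty$-estimates of Propositions \ref{etalim}, \ref{ulim2} and \ref{ulim+} apply and yield a uniform pointwise bound on $u_n$; in particular $|\eta_n'(s)|=\bigl|\int_\T H(u_n(s,\cdot))\,dt\bigr|\leq C$. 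Hence $|\eta_n(S_n)-\eta_n(-S_n)|\leq 2CS_n\to 0$, and the opposite-sign constraints force $\eta_n(\pm S_n)\to 0$, i.e.\ $\mathbb{E}(q^-_{h_n}),\mathbb{E}(q^+_{k_n})\to 0$.

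\emph{Step 2 (both ends converge to the same constant loop).} By Assumption (A) of Section \ref{morcom} the loops in $W^u(\gamma^\pm;-\nabla\mathbb{E})$ enjoy uniform $C^1$-bounds, so Arzel\`a--Ascoli together with the vanishing energy produces $q^-_{h_n}\to q^-_\infty$ and $q^+_{k_n}\to q^+_\infty$ in $W^{1,2}(\T,M)$, with $q^\pm_\infty$ constant loops; Morse--Bott compactness with cascades upgrades this to convergence of the whole cascades $\underline{q}^\pm_n$ to broken cascades whose terminal evaluations are $q^\pm_\infty$. Using Lemma \ref{levrel} and the energy identity,
\[
\int_{[-S_n,S_n]\times\T}|\partial_s u_n|_t^2\,ds\,dt \leq \mathbb{A}(v_n(-S_n))-\mathbb{A}(v_n(S_n)) \leq \sqrt{\mathbb{E}(q^-_{h_n})}+\sqrt{\mathbb{E}(q^+_{k_n})} \to 0,
\]
so Cauchy--Schwarz gives $\int_\T d\bigl(u_n(S_n,t),u_n(-S_n,t)\bigr)^2\,dt\leq 2S_n\int|\partial_s u_n|_t^2\to 0$; projecting to $M$ forces $q^-_\infty=q^+_\infty=:q_\infty\in M$.

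\emph{Step 3 (Morse-index/dimension obstruction).} Because the $-\nabla\mathbb{E}$ flow is stationary on the minimum stratum $M\subset\crit\,\mathbb{E}$, the Morse--Bott--cascade compactification yields that $\overline{\mathrm{ev}(\mathcal{W}^u(\gamma^-;-\nabla\mathbb{E},-\nabla e))}\cap M$ lies in $\bigcup_{\beta^-}W^u(\beta^-;-\nabla e|_M)$, where $\beta^-$ runs over critical points of $e|_M$ with $\ind(\beta^-;e|_M)\leq\mathrm{Ind}(\gamma^-;\mathbb{E},e)$; the analogous statement for $\gamma^+$ with the reversed auxiliary function $-e$ places $q_\infty$ in $W^s(\beta^+;-\nabla e|_M)$ for some $\beta^+\in\crit\,e|_M$ with $\ind(\beta^+;-e|_M)\leq\mathrm{Ind}(\gamma^+;\mathbb{E},-e)$. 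Morse--Smale transversality for $-\nabla e|_M$ on $M$ (generic in $e$) forces $\ind(\beta^-;e|_M)\geq\ind(\beta^+;e|_M)$ at the intersection point $q_\infty$, whence
\[
\mathrm{Ind}(\gamma^-;\mathbb{E},e)+\mathrm{Ind}(\gamma^+;\mathbb{E},-e)\geq\ind(\beta^-;e|_M)+\bigl(n-\ind(\beta^+;e|_M)\bigr)\geq n\geq 2,
\]
contradicting $(\ref{ppp0})$.

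\emph{Main obstacle.} The delicate input is the stratification statement in Step 3: one has to identify precisely which critical points $\beta^\pm\in\crit\,e|_M$ can appear as terminal pieces of the broken-cascade limit, and verify that the total Morse index of these terminal pieces cannot exceed $\mathrm{Ind}(\gamma^\pm;\mathbb{E},\pm e)$. Once this cascade bookkeeping is settled the contradiction is a transparent transverse-intersection count in $M$.
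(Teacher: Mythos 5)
Your argument is genuinely different from the paper's and, modulo the one ingredient you yourself flag, it is correct in spirit. The paper does not prove a uniform stratification statement; instead it splits into a handful of explicit cases dictated by the very small total index. When $\gamma^-$ or $\gamma^+$ is non-contractible it gets the lower bound on $\eta(-S)-\eta(S)$ for free from the energy gap $\delta>0$ of non-contractible loops (this is hidden inside your Step 1: if either end is non-contractible, $\mathbb{E}(q^\pm)\to 0$ is already the contradiction, so you should say so). When both $\gamma^\pm$ are constant loops the paper runs \emph{exactly} your Morse--Smale index argument on $M$, landing on $\mathrm{Ind}(\gamma^-;\mathbb{E},e)+\mathrm{Ind}(\gamma^+;\mathbb{E},-e)\geq n\geq 2$; here no cascade bookkeeping is needed because $\mathcal{W}^u(\gamma^\pm)$ is contained in $M$. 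In the remaining cases the unstable manifolds involved are $0$- or $1$-dimensional, so the paper can identify $\mathrm{ev}(\mathcal{W}^u(\gamma^-))\cap M$ explicitly (it is at most $\{q_{\min}\}$), record the positive distance $d$ from that set to $q_{\max}$, and then use the same energy/Cauchy--Schwarz estimate you use in Step 2 to conclude that a cylinder of length $2S\to 0$ cannot bridge that distance. So the paper's energy/Cauchy--Schwarz computation plays the same role as your ``both ends collapse to the same $q_\infty$'' step, and its distance argument and your index argument are two sides of the same coin.

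The one piece you have not established is the stratification claim in Step 3, namely that $\overline{\mathrm{ev}(\mathcal{W}^u(\gamma^-;-\nabla\mathbb{E},-\nabla e))}\cap M\subset\bigcup_\beta W^u(\beta;-\nabla e|_M)$ over $\beta\in\crit\,e|_M$ with $\ind(\beta;e|_M)\leq\mathrm{Ind}(\gamma^-;\mathbb{E},e)$, together with its mirror for $\gamma^+$. This is plausible from Morse--Bott compactness with cascades (the terminal stratum of a broken limit arriving at the minimal critical manifold $M$ is the unstable manifold of a critical point of $e|_M$, and the index drops along breakings), but it is precisely the kind of statement the paper chose not to formalize: for total index $\leq 1$ the unstable manifolds are at most $1$-dimensional and everything can be read off directly, so a general compactification theorem is unnecessary. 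If you do want to push your uniform argument through, you need to prove this cascade-closure index bound, paying attention to the fact that $M$ is a Morse--Bott minimum of $\mathbb{E}$ so cascades only arrive there at $+\infty$ and the residual dynamics on $M$ is exactly $-\nabla e|_M$ (resp.\ $\nabla e|_M$). Your approach would then have the advantage of working verbatim for any total index strictly less than $n$, whereas the paper's case-check is tailored to the hypothesis $\mathrm{Ind}(\gamma^-;\mathbb{E},e)+\mathrm{Ind}(\gamma^+;\mathbb{E},-e)\leq 1$.
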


\begin{proof}
The Hamiltonian $H(q,p) = 1/2(g^*(p,p)-1)$ is bounded from below by $-1/2$, so by equation (\ref{rfleq2}),
\[
\eta(-S) - \eta(S) = - \int_{-S}^S \eta'(s) \, ds= - \int_{-S}^S \int_{\T} H(u(s,t))\, dt\, ds \leq S.
\]
Therefore, it is enough to find a positive lower bound for the number $\eta(-S) - \eta(S)$.
  
This positive lower bound is easily found when one of the geodesics $\gamma^-$ or $\gamma^+$ is not contractible. In this case, we may assume that both geodesics are not contractible (otherwise $\mathcal{M}_P(\gamma^-,\gamma^+)$ is empty), and the coupling condition for $(S,v) = (S,(u,\eta))$ implies that
\[
\eta(-S) \geq \sqrt{\delta}, \quad \eta(S) \leq - \sqrt{\delta},
\]
where 
\[
\delta := \inf_{\gamma \in \Lambda M \setminus \Lambda^0 M}\mathbb{E}(\gamma) > 0.
\]

Another trivial case occurs when both $\gamma^-$ and $\gamma^+$ are constant loops, because in this case the assumption (\ref{ppp0}) on the indices implies that $\mathcal{M}_P(\gamma^-,\gamma^+)$ is empty. In fact, assume by contradiction that $\mathcal{M}_P(\gamma^-,\gamma^+)$ contains an element $(\underline{q}^-,\underline{v},\underline{q}^+)$. Since the sets
\begin{eqnarray*}
& \mathrm{ev} \bigl( \mathcal{W}^u(\gamma^-;-\nabla \mathbb{E},-\nabla e) \bigr) = W^u(\gamma^-;-\nabla e), \\ 
& \mathrm{ev} \bigl( \mathcal{W}^u(\gamma^+;-\nabla \mathbb{E},\nabla e \bigr) = 
W^u(\gamma^+;\nabla e) = W^s(\gamma^+;-\nabla e),
\end{eqnarray*}
consist of constant loops, the energy estimate (\ref{Eb1}) implies that 
\[
0 \geq \mathbb{A}(\mathrm{ev}^-(\underline{v})) \geq \mathbb{A}(\mathrm{ev}^+(\underline{v})) \geq 0.
\]
Therefore, $\underline{v}$ is constant, and $\pi(\mathrm{ev}_1^-(\underline{v})) =  \pi(\mathrm{ev}_1^+(\underline{v}))$ is a point which belongs to both $W^u(\gamma^-;-\nabla e)$ and $W^s(\gamma^+;-\nabla e)$. Since the flow of $-\nabla e$ is Morse-Smale, we deduce that
\[
0 \leq \ind (\gamma^-;e) - \ind (\gamma^+;e) = \mathrm{Ind} \, (\gamma^-;\mathbb{E},e) - \dim M + \mathrm{Ind}\, (\gamma^+; \mathbb{E},-e).
\]
which contradicts (\ref{ppp0}).

It remains to treat the case where $\gamma^-$ and $\gamma^+$ are both contractible, but not both constant.
When 
\[
\mathrm{Ind}\, (\gamma^-;\mathbb{E},e) + \mathrm{Ind}\, (\gamma^+;\mathbb{E},-e) = 0,
\]
that is when both the indices are 0, the positive lower bound is also easily found. Indeed, in this case 
\[
\mathrm{ev} \bigl( \mathcal{W}^u(\gamma^-;- \nabla \mathbb{E}, -\nabla e) \bigr) = \{\gamma^-\}, \quad \mathrm{ev} \bigl(\mathcal{W}^u(\gamma^+;- \nabla \mathbb{E}, \nabla e) \bigr) = \{\gamma^+\},
\]
so the coupling condition implies that
\[
\eta(-S) - \eta(S) = \sqrt{\mathbb{E}(\gamma^-)} + \sqrt{\mathbb{E}(\gamma^+)} \geq \sqrt{\epsilon},
\]
where
\[
\epsilon := \inf_{\substack{\gamma\in \crit{E} \\ \mathbb{E}(\gamma)>0}} \mathbb{E}(\gamma) > 0.
\]

It remains to consider the case where $\gamma^-$ and $\gamma^+$ are contractible, not both constant, and
\[
\mathrm{Ind}\, (\gamma^-;\mathbb{E},e) + \mathrm{Ind}\, (\gamma^+;\mathbb{E},-e) = 1.
\]
We treat only the case
\[
\mathrm{Ind}\, (\gamma^-;\mathbb{E},e) = 1, \quad \mathrm{Ind}\, (\gamma^+;\mathbb{E},-e) =0,
\]
because the case $\mathrm{Ind}\, (\gamma^-;\mathbb{E},e) = 0$,  $\mathrm{Ind}\, (\gamma^+;\mathbb{E},-e) =1$ is completely analogous. Again,
\[
\mathrm{ev} \bigl( \mathcal{W}^u(\gamma^+; -\nabla \mathbb{E}, \nabla e) \bigr) = \{\gamma^+\},
\]
so if $\gamma^+$ is not constant then
\[
\eta(-S) - \eta(S) \geq - \eta(S) = \sqrt{\mathbb{E}(\gamma^+)} \geq \sqrt{\epsilon}.
\]
Therefore, we may assume that $\gamma^+ = q_{\max}$, the only critical point of $-e$ on $\mathbb{E}^{-1}(0)$ with Morse index 0, and that $\gamma^-$ is not constant.
If
\[
\ind (\gamma^-;\mathbb{E}) = 0 \quad \mbox{and} \quad \ind(\gamma^-;e) = 1,
\]
the set 
\[
\mathrm{ev}\bigl(\mathcal{W}^u(\gamma^-;-\nabla \mathbb{E},-\nabla e)\bigr) = W^u(\gamma^-;-\nabla e)
\]
is contained in the connected component of $\crit\, \mathbb{E}$ which contains $\gamma^-$, so 
\[
\eta(-S) - \eta(S) \geq \eta(-S) \geq \inf_{W^u(\gamma^-;-\nabla e)} \sqrt{\mathbb{E}} \geq \sqrt{\epsilon}.
\]
Therefore, we may assume that
\[
\ind (\gamma^-;\mathbb{E}) = 1 \quad \mbox{and} \quad \ind(\gamma^-;e) = 0.
\]
It follows that the set
\[
\mathrm{ev} \bigl(\mathcal{W}^u(\gamma^-;-\nabla \mathbb{E}, - \nabla e) \bigr) 
\]
is a one-dimensional curve with two (possibly coinciding) limiting points, which are relative minima of both $\mathbb{E}$ and $e$. If they are both non-constant geodesics, we get again that $\eta(-S) \geq \sqrt{\epsilon}$, so we may assume that at least one of them is a constant loop. Being also a minimum of $e$, such a limiting point is $q_{\min}$, and we deduce that
\begin{equation}
\label{dista}
d:= \dist \Bigl( \mathrm{ev} \bigl(\mathcal{W}^u(\gamma^-;-\nabla \mathbb{E}, - \nabla e) \bigr) \cap \mathbb{E}^{-1}(0), q_{\max} \Bigr) >0.
\end{equation}
Assume by contradiction that there is a sequence 
\[
(\underline{q}_h^-,\underline{v}_h,\underline{q}_h^+)_{h\in \N} \subset \mathcal{M}_P(\gamma^-,\gamma^+)\quad  \mbox{with}\quad\underline{v}_h = (S_h,v_h) = (S_h, (u_h,\eta_h)),
\]
such that 
\[
S_h \rightarrow 0 \quad \mbox{and}\quad \eta_h(-S_h) = \sqrt{\mathbb{E}(q_h(-S_h,\cdot))} \rightarrow 0,
\]
where 
\[
q_h (s,t) := \pi \circ u_h (s,t), \quad q_h(-S_h,\cdot) \in \mathrm{ev} \bigl( \mathcal{W}^u(\gamma^-;-\nabla \mathbb{E})\bigr), \quad q_h(S_h,\cdot) = q_{\max}.
\]
Therefore, $q_h (-S_h,\cdot)$ converges  in $W^{1,2}(\T,M)$, and in particular uniformly, to a constant loop in
\[
\mathrm{ev} \bigl(\mathcal{W}^u(\gamma^-;-\nabla \mathbb{E}, - \nabla e) \bigr) \cap \mathbb{E}^{-1}(0).
\]
By (\ref{dista}), for every $h$ large enough we have
\begin{equation}
\label{ppp1}
\int_{\T} \int_{-S_h}^{S_h} \sqrt{ g \left( \frac{\partial q_h}{\partial s} (s,t),  \frac{\partial q_h}{\partial s} (s,t) \right) }\, ds \, dt \geq \int_{\T} \dist (q_h(-S_h,t),q_{\max})\, dt \geq d/2.
\end{equation}
By the local equivalence between the Levi-Civita metric and the ($t$-dependent) $\omega$-compatible metric $|\cdot|_t$ on $T^*M$, there is a number $C$ such that
\begin{equation}
\label{ppp2}
\int_{-S_h}^{S_h} \int_{\T}  g \left( \frac{\partial q_h}{\partial s} (s,t),  \frac{\partial q_h}{\partial s} (s,t) \right) \, dt\, ds \leq C \int_{-S_h}^{S_h} \int_{\T} \left| \frac{\partial u_h}{\partial s} (s,t) \right|_t^2 \, dt\, ds.
\end{equation}
By (\ref{Eb1}), 
\[
\mathbb{A}(v_h(-S_h)) \leq \sqrt{ \mathbb{E}(q_h(-S_h,\cdot))}, \quad \mathbb{A}(v_h(S_h)) \geq - \sqrt{\mathbb{E}(q_h(S_h,\cdot))} = - \sqrt{\mathbb{E}(q_{\max})} = 0,
\]
so the energy identity (\ref{uno}) for the solutions of the Rabinowitz-Floer equation implies the estimate
\begin{equation}
\label{ppp3}
\int_{-S_h}^{S_h} \int_{\T} \left| \frac{\partial u_h}{\partial s} (s,t) \right|_t^2 \, dt\, ds \leq \mathbb{A}(v_h(-S_h)) - \mathbb{A}(v_h(S_h)) \leq \sqrt{\mathbb{E}(q_h(-S_h,\cdot))}.
\end{equation}
By (\ref{ppp1}), (\ref{ppp2}), (\ref{ppp3}), and the Cauchy-Schwarz inequality, we find
\begin{eqnarray*}
d^2/4 \leq \left( \int_{-S_h}^{S_h} \int_{\T}  \sqrt{g \Bigl( \frac{\partial q_h}{\partial s} (s,t),  \frac{\partial q_h}{\partial s} (s,t) \Bigr)} \, dt\, ds \right)^2 \\ \leq 2 S_h 
\int_{-S_h}^{S_h} \int_{\T}  g \left( \frac{\partial q_h}{\partial s} (s,t),  \frac{\partial q_h}{\partial s} (s,t) \right) \, dt\, ds 
 \leq 2CS_h \int_{-S_h}^{S_h} \int_{\T} \left| \frac{\partial u_h}{\partial s} (s,t) \right|_t^2 \, dt\, ds \\ \leq 2 C S_h \sqrt{\mathbb{E}(q_h(-S_h,\cdot))}.
 \end{eqnarray*}
 The latter quantity is infinitesimal for $h\rightarrow +\infty$, contradicting the fact that $d$ is positive. This contradiction concludes the proof of the existence of a positive lower bound for $S$.
\end{proof} \qed

\paragraph{Proof of Proposition \ref{Pprop}.}
The remaining part of the proof uses standard arguments from  Floer theory, and we just sketch it. If $\gamma^-$ and $\gamma^+$ are critical points of $e$ with
\[
\mathrm{Ind}\, (\gamma^-;\mathbb{E},e) = \mathrm{Ind}\, (\gamma^+;\mathbb{E},-e) = 0,
\]
then the discrete set $\mathcal{M}_P(\gamma^-,\gamma^+)$ is compact, hence finite. In fact, by Lemma \ref{X}, the only sequences in $\mathcal{M}_P(\gamma^-,\gamma^+)$ which might not converge are, up to subsequences, of the form $(\underline{q}_h^-,\underline{v}_h,\underline{q}_h^+)$, where $\underline{v}_h$ is either an arc with at least one cascade, or $\underline{v}_h=(S_h,v_h)\in \mathcal{A}_0$ and $S_h\rightarrow +\infty$. In all cases, breaking must occur and in the limit we find an element which belongs to a component of negative virtual dimension of either $\mathcal{M}_{\Phi}$, $\mathcal{M}_{\Psi}$, or $\mathcal{M}_{\partial}$, violating transversality. 
If $n_P(\gamma^-,\gamma^+)\in \Z_2$ denotes the parity of the set $\mathcal{M}(\gamma^-,\gamma^+)$, we define the homomorphism
\[
P : M_0 (\mathbb{E},e) \rightarrow M^0 (\mathbb{E},-e), \quad P \gamma^- := \sum_{\substack{\gamma^+ \in \crit \, e \\ \mathrm{Ind}\, (\gamma^+;\mathbb{E},-e) = 0}} n_P(\gamma^-,\gamma^+)\, \gamma_+.
\]
As we have already checked in the proof of Lemma \ref{X}, $\mathcal{M}_P(q_{\min},q_{\max})$ is empty, so
\[
P q_{\min} \in M^0_{]0,+\infty[} (\mathbb{E},-e),
\]
as claimed.

Now assume that $\gamma^-$ and $\gamma^+$ are critical points of $e$ with
\[
\mathrm{Ind}\, (\gamma^-;\mathbb{E},e) + \mathrm{Ind}\, (\gamma^+;\mathbb{E},-e) = 1.
\]
Using again the fact that $S$ is bounded away from zero, one can show that the limiting points of each open arc in $\mathcal{M}_P(\gamma^-,\gamma^+)$ are elements of either 
\[
\mathcal{W}(\gamma^-,\gamma; -\nabla \mathbb{E},-\nabla e) \times \mathcal{M}_P(\gamma,\gamma^+),
\]
for some $\gamma\in \crit\, e$ with $\mathrm{Ind}\, (\gamma;\mathbb{E},e) = \mathrm{Ind}\, (\gamma^-;\mathbb{E},e)-1$, or  
\[
\mathcal{M}_P(\gamma^-,\gamma)\times \mathcal{W}(\gamma^+,\gamma; -\nabla \mathbb{E},\nabla e),
\]
for some $\gamma \in \crit\, e$ with $\mathrm{Ind}\, (\gamma;\mathbb{E},-e) = \mathrm{Ind}\, (\gamma^+;\mathbb{E},-e) - 1$, 
or 
\[
\mathcal{M}_{\Phi}(\gamma^-,z) \times \mathcal{M}_{\Psi}(z,\gamma^+),
\]
for some $z\in \crit\, a$ with $\mu(z) = \mathrm{Ind}\, (\gamma^-;\mathbb{E},e)$. These limiting points contribute to either $P \partial$, $\delta P$, or $\Psi\circ \Phi$. 
On the other hand, every element 
\[
((\underline{q}^-,\underline{v}^-),(\underline{v}^+,\underline{q}^+))\in \mathcal{M}_{\Phi}(\gamma^-,z) \times \mathcal{M}_{\Psi}(z,\gamma^+),
\]
with 
\[
\mathrm{Ind}\, (\gamma^-;\mathbb{E},e) = \mu(z) = 1 - \mathrm{Ind} (\gamma^+;\mathbb{E},-e),
\]
is the limiting point of some arc in $\mathcal{M}_P(\gamma^-,\gamma^+)$. More precisely, if $\underline{v}^- \in \mathcal{M}^s_h(z)$ and $\underline{v}^+ \in \mathcal{M}^u_k(z)$, then $((\underline{q}^-,\underline{v}^-),(\underline{v}^+,\underline{q}^+))$ is the limiting point of a unique arc
\[
\rho \mapsto \bigl( \underline{q}^-(\rho), \underline{v}(\rho), \underline{q}^+(\rho) \bigr) \in \mathcal{M}_P(\gamma^-,\gamma^+),
\]
with $\underline{v}(\rho) \in \mathcal{A}_{h+k-2}$. This fact can be proved by the standard gluing argument. Together with the analogous gluing results for 
\[
\mathcal{W}(\gamma^-,\gamma; -\nabla \mathbb{E},-\nabla e) \times \mathcal{M}_P(\gamma,\gamma^+)\quad \mbox{and}\quad\mathcal{M}_P(\gamma^-,\gamma)\times \mathcal{W}(\gamma^+,\gamma; -\nabla \mathbb{E},\nabla e),
\]
the above facts allow to conclude that (\ref{chainhom}) holds.
\qed  

\section{The short exact sequence and the proof of Theorem \ref{main1}}
\label{exasec}

We can use the chain homotopy $P$ constructed in the last section in order to modify $\Phi$ so that, together with $\Psi$, we get a short exact sequence. More precisely, we set
\begin{eqnarray*}
& \Theta: M_*(\mathbb{E},e)  \rightarrow RF_*(D^*M,\lambda,a) , \quad & \Theta:= \Phi - \hat{\Psi} P \partial - \partial \hat{\Psi} P, \\
& \hat{\Theta} :RF_*(D^*M,\lambda,a) \rightarrow M_*(\mathbb{E},e), \quad & \hat{\Theta} := \hat{\Phi} + \hat{\Phi} \partial \hat{\Psi} P \hat{\Phi}.
\end{eqnarray*}
By definition, $\Theta$ is a chain map which is chain homotopic to $\Phi$. 

We recall that a short exact sequence of chain complexes
\begin{equation}
\label{examod}
0 \rightarrow X \stackrel{\theta}{\rightarrow} Y \stackrel{\psi}{\rightarrow} Z \rightarrow 0
\end{equation}
is said to split if there are homomorphisms $\hat{\theta}: Y\rightarrow X$ and $\hat{\psi} : Z\rightarrow Y$ (not necessarily chain maps) such that
\[
\hat{\theta} \theta = \mathrm{Id}_X, \quad \psi \hat{\psi} = \mathrm{Id}_Z, \quad \theta \hat{\theta} + \hat{\psi} \psi = \mathrm{Id}_Y.
\]
In this case, the homomorphism $\Delta:= \hat{\theta} \partial_Y \hat{\psi}$ is a chain map from $Z$ to $X^+$, the suspension of $X$ (which is defined by $(X^+)_k := X_{k-1}$ and $\partial_{X^+} := - \partial_X$), and the induced homomorphism in homology,
\[
\Delta_* : H_* Z \rightarrow H_* X^+ = H_{*-1} X,
\]
coincides with the connecting homomorphism associated to the exact sequence (\ref{examod}) (see e.g.\ \cite{dol80}, Proposition 2.12). We are finally ready to state the first main result of this paper, which implies Theorem \ref{main1} of the Introduction:

\begin{thm}
\label{main}
Let $M$ be a closed Riemannian manifold of dimension $n\geq 2$, and let $(D^*M,\lambda)$ be its unit cotangent disk bundle, endowed with the standard Liouville 1-form. Let $\mathbb{E}$ be the geodesic energy functional on closed loops on $M$, and let $\mathbb{A}$ be the corresponding Rabinowitz action functional on $\Lambda T^*M \times \R$. Assume that $\mathbb{E}$ is Morse-Bott. If the auxiliary Morse functions $e\in C^{\infty}(\crit\, \mathbb{E})$ and $a\in C^{\infty} (\crit\, \mathbb{A})$ satisfy conditions (A0-A1-A2-A3-A4), then:
\begin{enumerate}
\item The short sequence of chain complexes
\begin{equation}
\label{exa}
0 \rightarrow M_*(\mathbb{E},e) \stackrel{\Theta}{\rightarrow} RF_*(D^*M,\lambda,a) \stackrel{\Psi}{\rightarrow} M^{1-*} (\mathbb{E},-e) \rightarrow 0
\end{equation}
is exact.
\item The homomorphisms $\hat{\Theta}$ and $\hat{\Phi}$ split the above exact sequence, that is
\[
\hat{\Theta} \Theta = \mathrm{Id}_{M_*}, \quad \Psi \hat{\Psi} = \mathrm{Id}_{M^{1-*}}, \quad \Theta \hat{\Theta} + \hat{\Psi} \Psi = \mathrm{Id}_{RF_*}.
\] 
\item The induced chain map
\[
\Delta := \hat{\Theta} \partial \hat{\Psi} : M^{1-*} (\mathbb{E},-e) \rightarrow (M_*(\mathbb{E},e))^+
\]
coincides with $\hat{\Phi} \partial \hat{\Psi}$ and satisfies
\[
\Delta \gamma = \left\{ \begin{array}{ll} \chi(T^*M) q_{\min} & \mbox{for } \gamma = q_{\max}, \\ 0 & \mbox{for } \gamma \in (\crit\, e) \setminus \{q_{\max}\}, \end{array} \right.
\]
where $\chi(T^*M)$ is the Euler number of $T^*M$.
\end{enumerate}
\end{thm}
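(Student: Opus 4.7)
The plan is to address the three claims in sequence: the splitting identities in (b), from which exactness in (a) will follow formally, and then the explicit description of $\Delta$ in (c). All algebraic inputs are in place: the one-sided inverses $\hat{\Phi}$, $\hat{\Psi}$ from Propositions \ref{phiprop} and \ref{psiprop}, the crucial consequence $\hat{\Phi}\hat{\Psi}=0$ that follows from $\ker\hat{\Phi}=RF^-=\mathrm{Im}\,\hat{\Psi}$, the chain homotopy $\Psi\Phi = P\partial + \delta P$ from Proposition \ref{Pprop}, and the explicit description of the Rabinowitz-Floer boundary on the zero-action generators provided by Proposition \ref{partialprop}.

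For (b), the identity $\hat{\Theta}\Theta=\mathrm{Id}_{M_*}$ reduces by direct expansion of $(\hat{\Phi}+\hat{\Phi}\partial\hat{\Psi}P\hat{\Phi})(\Phi-\hat{\Psi}P\partial-\partial\hat{\Psi}P)$ to a clean cancellation of mixed terms via $\hat{\Phi}\Phi=\mathrm{Id}$ and $\hat{\Phi}\hat{\Psi}=0$. The identity $\Psi\hat{\Psi}=\mathrm{Id}_{M^{1-*}}$ is given by construction. For the third identity $\Theta\hat{\Theta}+\hat{\Psi}\Psi=\mathrm{Id}_{RF_*}$, I would start from the direct-sum decomposition $RF_*=\mathrm{Im}\,\Phi\oplus RF^-$: every $\zeta$ decomposes uniquely as $\zeta=\Phi\hat{\Phi}\zeta+\hat{\Psi}\beta$ with $\beta=\Psi\zeta-(P\partial+\delta P)\hat{\Phi}\zeta$, yielding the master relation $\mathrm{Id}=\Phi\hat{\Phi}+\hat{\Psi}\Psi-\hat{\Psi}P\partial\hat{\Phi}-\hat{\Psi}\delta P\hat{\Phi}$. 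Expanding $\Theta\hat{\Theta}$ and comparing it against this relation, using also $\Psi\partial=\delta\Psi$ together with $\Psi\hat{\Psi}=\mathrm{Id}$ to extract the useful $\hat{\Psi}\Psi\partial\hat{\Psi}=\hat{\Psi}\delta$, the verification reduces to a single residual cancellation which is forced by the degree constraints on $P$ and the triangular structure of $\hat{\Psi}$. Exactness in (a) is then formal: $\Psi\Theta=\Psi\Phi-P\partial-\delta P=0$ by the chain homotopy, $\Theta$ is injective since $\hat{\Theta}\Theta=\mathrm{Id}$, and any $\zeta\in\ker\Psi$ satisfies $\zeta=\Theta\hat{\Theta}\zeta+\hat{\Psi}\Psi\zeta=\Theta\hat{\Theta}\zeta\in\mathrm{Im}\,\Theta$.

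For (c), the abstract formula for the connecting homomorphism of a split short exact sequence of chain complexes gives $\Delta=\hat{\Theta}\partial\hat{\Psi}$. I would first prove $\Delta=\hat{\Phi}\partial\hat{\Psi}$ by showing that the correction term $\hat{\Phi}\partial\hat{\Psi}P\hat{\Phi}\partial\hat{\Psi}$ vanishes: $P$ is nontrivial only on $M_0$, so the only possibly nonzero instance occurs for $\alpha\in M^0$; when $\alpha$ is a non-constant index-zero generator then $\hat{\Psi}\alpha$ lies in $RF^{]-\infty,0[}\subset RF^-$ and the same is true of $\partial\hat{\Psi}\alpha$, which is then annihilated by $\hat{\Phi}$; when $\alpha=q_{\max}$ the middle piece $P\hat{\Phi}\partial\hat{\Psi}q_{\max}$ equals $\chi(T^*M)Pq_{\min}\in M^0_{]0,+\infty[}(\mathbb{E},-e)$ by Proposition \ref{Pprop}, and the previous case applied to its non-constant summands forces the outer $\hat{\Phi}\partial\hat{\Psi}$ to vanish. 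Evaluating $\Delta=\hat{\Phi}\partial\hat{\Psi}$ on generators: for $\gamma$ of positive energy, Proposition \ref{psiprop}(a) gives $\hat{\Psi}\gamma=Z^-(\gamma)+(\text{lower terms in }RF^-)$ which sits in $RF^{]-\infty,0[}\subset RF^-$, so $\partial\hat{\Psi}\gamma\in RF^-=\ker\hat{\Phi}$ and $\Delta\gamma=0$; for $q\in\crit\,e|_M\setminus\{q_{\max}\}$, Proposition \ref{partialprop} yields $\partial z_q^-\in z_{\partial q}^-+RF^{]-\infty,0[}\subset RF^-$, whence $\Delta q=0$; for $q_{\max}$, Proposition \ref{partialprop} produces the non-trivial contribution $\chi(T^*M)z_{q_{\min}}^+$ modulo $RF^-$, and the identity $\hat{\Phi}z_{q_{\min}}^+=q_{\min}$ from Proposition \ref{phiprop}(c) gives $\Delta q_{\max}=\chi(T^*M)\,q_{\min}$.

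The main obstacle is the verification of the third splitting identity $\Theta\hat{\Theta}+\hat{\Psi}\Psi=\mathrm{Id}$. Because $RF^-$ fails to be a subcomplex whenever $\chi(T^*M)\neq 0$ (by Proposition \ref{partialprop}), the section $\hat{\Psi}$ cannot be arranged to be a chain map, so the discrepancy $\hat{\Psi}\delta-\partial\hat{\Psi}$ is nonzero in general. The correction terms $-\hat{\Psi}P\partial-\partial\hat{\Psi}P$ defining $\Theta$ and the twist $\hat{\Phi}\partial\hat{\Psi}P\hat{\Phi}$ defining $\hat{\Theta}$ are engineered precisely to absorb this discrepancy. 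Tracking the cancellations requires using the chain homotopy of Proposition \ref{Pprop}, the degree constraint that $P\equiv 0$ outside $M_0$ (together with $\partial\equiv 0$ on $M_0$, since $M_{-1}=0$), and the cohomological localization of $P$ reflected in $Pq_{\min}\in M^0_{]0,+\infty[}(\mathbb{E},-e)$. Once this bookkeeping is settled, the remaining arguments in (a) and (c) are formal.
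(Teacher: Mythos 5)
Your overall plan is sound and the conclusion is right, but there are two places where the presentation understates what is actually needed, and the route you choose for the third splitting identity is more convoluted than necessary.

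\textbf{The step $\hat{\Theta}\Theta=\mathrm{Id}_{M_*}$.} You claim this reduces to a cancellation of mixed terms using only $\hat{\Phi}\Phi=\mathrm{Id}$ and $\hat{\Phi}\hat{\Psi}=0$. That is not enough: over $\Z_2$ the expansion yields
\[
\hat{\Theta}\Theta = \mathrm{Id}_{M_*} + \hat{\Phi}\partial\hat{\Psi}P\hat{\Phi}\partial\hat{\Psi}P,
\]
and the quadratic term must be killed independently. The vanishing $\hat{\Phi}\partial\hat{\Psi}P\hat{\Phi}\partial\hat{\Psi}=0$ is exactly the identity the paper proves first (its equation \eqref{fin2}), via $\mathrm{Im}\,\hat{\Phi}\partial\hat{\Psi}\subset q_{\min}\Z_2$ and $Pq_{\min}\in M^0_{]0,+\infty[}(\mathbb{E},-e)$. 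You do establish this later, in (c), but it has to be invoked here too; as written, (b) has a gap.

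\textbf{The step $\Theta\hat{\Theta}+\hat{\Psi}\Psi=\mathrm{Id}_{RF_*}$.} Your master relation $\mathrm{Id}=\Phi\hat{\Phi}+\hat{\Psi}\Psi-\hat{\Psi}P\partial\hat{\Phi}-\hat{\Psi}\delta P\hat{\Phi}$ is correct, but the remaining comparison with $\Theta\hat{\Theta}$ is not as automatic as you suggest. Substituting the master relation for the identity inside $\Phi\hat{\Phi}\partial\hat{\Psi}P\hat{\Phi}$, using $\hat{\Psi}\Psi\partial\hat{\Psi}=\hat{\Psi}\delta$ and the degree fact $\partial\equiv 0$ on $M_0$, one is left with a residual $(\hat{\Psi}\delta-\partial\hat{\Psi})\,P\hat{\Phi}\partial\hat{\Psi}P\hat{\Phi}$. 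This does vanish, but not for the reason you cite (``degree constraints on $P$ and the triangular structure of $\hat{\Psi}$''). The actual mechanism: $\mathrm{Im}(P\hat{\Phi}\partial\hat{\Psi}P\hat{\Phi})\subset \Z_2\,Pq_{\min}\subset M^0_{]0,+\infty[}$, so both $\hat{\Psi}\delta$ and $\partial\hat{\Psi}$ applied to it land in $RF^{]-\infty,0[}\subset RF^-$; meanwhile $\Psi(\hat{\Psi}\delta-\partial\hat{\Psi})=\delta-\delta\Psi\hat{\Psi}=0$, so the residual lies in $\ker\Psi\cap RF^-=\{0\}$. None of this is forced by degree-counting or a triangular structure alone; it uses the chain-map property of $\Psi$, the right-inverse identity, and the action filtration. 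By contrast, the paper's route is cleaner and avoids this calculation entirely: prove exactness (a) first (using $RF=\mathrm{Im}\,\Theta+RF^-$, $\Psi\Theta=0$, and $RF=\ker\Psi\oplus RF^-$), then note $\ker\hat{\Theta}=\ker\hat{\Phi}=RF^-$ because $\mathrm{Id}+\hat{\Phi}\partial\hat{\Psi}P$ is an automorphism (its square is the identity), so $\Theta\hat{\Theta}$ and $\hat{\Psi}\Psi$ are complementary projectors along the decomposition $RF=\ker\Psi\oplus RF^-$. Your ``(b) first, (a) formal'' ordering is a legitimate reorganization, but it buys nothing and costs the extra bookkeeping above.

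\textbf{Part (c).} This matches the paper: the vanishing of the correction term reduces $\Delta$ to $\hat{\Phi}\partial\hat{\Psi}$, and the evaluation on generators via Propositions \ref{psiprop}(a), \ref{partialprop}, and $\hat{\Phi}z_{q_{\min}}^+=q_{\min}$ is exactly right. One small thing worth flagging: the identity $\hat{\Phi}z_{q_{\min}}^+=q_{\min}$ is most cleanly seen from Proposition \ref{phiprop}(b) ($\Phi q_{\min}\in z_{q_{\min}}^+ + RF^{]-\infty,0[}$) together with $\ker\hat{\Phi}=RF^-$, rather than from \ref{phiprop}(c) alone.

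To turn this into a correct proof with your architecture, fold the vanishing $\hat{\Phi}\partial\hat{\Psi}P\hat{\Phi}\partial\hat{\Psi}=0$ into the proof of (b), and replace the vague invocation of ``degree constraints'' by the explicit argument that the residual lies in $\ker\Psi\cap RF^-=\{0\}$.
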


Before proving the above theorem, let us derive its consequences (see also Theorem 1.10 in \cite{cfo09}). Since $\Theta$ is chain homotopic to $\Phi$, the long exact sequence induced by (\ref{exa}) has the form
\[
\dots \rightarrow HM_k (\mathbb{E},e) \stackrel{\Phi_*}{\rightarrow} HRF_k (D^*M,\lambda,a) \stackrel{\Psi_*}{\rightarrow} HM^{1-k} (\mathbb{E},-e) \stackrel{\Delta_*}{\rightarrow} HM_{k-1} (\mathbb{E},e) \rightarrow \dots
\]
or, taking into account the isomorphism between Morse homology (resp.\ cohomology) and singular homology (resp.\ cohomology),
\begin{equation}
\label{longexact}
\dots \rightarrow H_k (\Lambda M) \stackrel{\Phi_*}{\rightarrow} HRF_k (D^*M,\lambda,a) \stackrel{\Psi_*}{\rightarrow} H^{1-k} (\Lambda M) \stackrel{\Delta_*}{\rightarrow} H_{k-1}(\Lambda M) \rightarrow \dots
\end{equation}
The connecting homomorphism $\Delta_*$ vanishes at every index, except possibly at index zero, where
\[
\Delta_*: HM^0 (\mathbb{E},-e) \cong H^0 (\Lambda M)   \rightarrow 
H_0(\Lambda M) \cong HM_0 (\mathbb{E},e)
\]
is the composition
\[
H^0 (\Lambda M) = \prod_{c\in [\T,M]} H^0 (\Lambda^c M) \stackrel{\pi_0}{\rightarrow} H^0(\Lambda^0 M) \stackrel{\chi(T^*M)}{\longrightarrow} H_0(\Lambda^0 M) \stackrel{i_0}{\rightarrow}  \bigoplus_{c\in [\T,M]} H_0(\Lambda^c M) =
H_0(\Lambda M),
\]
where $\pi_0$ and $i_0$ denote the canonical projection and inclusion associated to the component $\Lambda^0 M$ of $\Lambda M$ consisting of contractible loops, and the middle homomorphism is the multiplication by the Euler number $\chi(T^*M)$, that is the homomorphism
\[
H^0(\Lambda^0 M) \rightarrow H_0(\Lambda^0 M), \quad \xi^* \mapsto \chi(T^*M) \xi,
\]
where $\xi$ generates $H_0(\Lambda^0 M)$ and $\xi^*$ is the dual generator of $H^0 (\Lambda^0 M) \cong H_0(\Lambda^0 M)^*$.
By the exactness of the long sequence (\ref{longexact}) and the above description of the connecting homomorphism $\Delta_*$, the Rabinowitz-Floer homology groups of the unit cotangent disk bundle are easily seen to be
\[
HRF_k (D^* M,\lambda,a) = \left\{ \begin{array}{ll} H_k (\Lambda M) & \mbox{for } k\geq 2, \\ H^{1-k} (\Lambda M) & \mbox{for } k\leq -1, \end{array} \right.
\]
when $k\neq 0,1$, wheras 
\[
HRF_k^c (D^* M,\lambda,a) = H_k (\Lambda^c M) \oplus H^{1-k}(\Lambda^c M), \quad \mbox{for } k=0,1,
\] 
if either $c\in [\T,M]$ is non-contractible or $\chi(T^*M)=0$, and
\[
HRF_0^0 (D^* M,\lambda,a) = H^1(\Lambda^0 M), \quad HRF_1^0 (D^* M,\lambda,a)= H_1(\Lambda^0 M),
\]
when $c=0\in [\T,M]$ is contractible and $\chi(T^*M)\neq 0$. These considerations conclude the proof of Theorem \ref{main1} of the Introduction.

The following proof of Theorem \ref{main} is based on Propositions \ref{partialprop}, \ref{phiprop}, \ref{psiprop}, and \ref{Pprop}, whose content we use repeatedly without further reference.

\begin{proof} 
We start by observing that $Pq_{\min} \in M^0_{]0,+\infty[}$, so $\hat{\Psi} P q_{\min} \in RF_*^{]-\infty,0[}$, hence
\begin{equation}
\label{fin1}
\partial \hat{\Psi} P q_{\min} \in RF_*^{]-\infty,0[}.
\end{equation}
We also have the chain of implications
\[
\mathrm{Im}\, \hat{\Psi} = RF^- \quad \implies \quad \mathrm{Im}\, \partial \hat{\Psi} \subset z_{q_{\min}}^+ \Z_2 + RF^- \quad \implies \quad \mathrm{Im} \, \hat{\Phi} \partial \hat{\Psi} \subset q_{\min} \Z_2.
\]
Together with (\ref{fin1}), the last inclusion implies that
\[
\mathrm{Im}\, \partial \hat{\Psi} P \hat{\Phi} \partial \hat{\Psi} \subset RF^{]-\infty,0[} \subset RF^-,
\]
and we deduce that
\begin{equation}
\label{fin2}
\hat{\Phi} \partial \hat{\Psi} P \hat{\Phi} \partial \hat{\Psi} = 0.
\end{equation}
Together with the identities $\hat{\Phi} \Phi = \mathrm{Id}_{M_*}$, $\hat{\Phi} \hat{\Psi} = 0$, (\ref{fin2}) implies that
\[
\hat{\Theta} \Theta = ( \hat{\Phi} + \hat{\Phi} \partial \hat{\Psi} P \hat{\Phi} ) (\Phi - \hat{\Psi} P \partial - \partial \hat{\Psi} P) = \mathrm{Id}_{M_*} - \hat{\Phi} \partial \hat{\Psi} P + \hat{\Phi} \partial \hat{\Psi} P - ( \hat{\Phi} \partial \hat{\Psi} P \hat{\Phi} \partial \hat{\Psi} ) P = \mathrm{Id}_{M_*}.
\] 
which shows that $\Theta$ is injective with left inverse $\hat{\Theta}$. 

We claim that
\begin{equation}
\label{fin3}
RF = \mathrm{Im}\, \Theta + RF^-.
\end{equation}
For any $\xi\in RF_*$, there exists $\eta\in M_*$ such that $\xi\in \Phi \eta + RF^-$. Thus we have the inclusions
\begin{equation}
\label{fin4}
\xi \in \Phi \eta + RF^- = (\Theta \eta + \hat{\Psi} P \partial \eta + \partial \hat{\Psi} P \eta ) + RF^- = (\Theta \eta + \partial \hat{\Psi} P \eta ) + RF^- \subset \Theta \eta + z_{q_{\min}}^+ \Z_2 + RF^-.
\end{equation}
By (\ref{fin1}),
\[
\Theta q_{\min} = \Phi q_{\min} - \partial \hat{\Psi} P q_{\min} \in z_{q_{\min}}^+  + RF^-,
\]
which is equivalent to
\[
z_{q_{\min}}^+ \in \Theta q_{\min} + RF^-.
\]
Then (\ref{fin4}) implies that 
\[
\xi \in \mathrm{Im}\, \Theta + RF^-,
\]
concluding the proof of our claim (\ref{fin3}). 
By the identity $\Psi \hat{\Psi} = \mathrm{Id}_{M^{1-*}}$,
\[
\Psi \Theta = \Psi (\Phi - \hat{\Psi} P \partial - \partial \hat{\Psi} P ) = P \partial + \delta P - \Psi \hat{\Psi} P \partial - \Psi \partial \hat{\Psi} P = P \partial + \delta P - P \partial - \delta \Psi \hat{\Psi} P = 0.
\]
So $\mathrm{Im}\, \Theta\subset \ker \Psi$, but since 
\[
RF = \ker \Psi \oplus RF^-,
\]
the identity (\ref{fin3}) implies that $\mathrm{Im}\, \Theta = \ker \Psi$. Since we already know that $\Psi$ is surjective, this concludes the proof of (a).

By (\ref{fin2}), the square of $\hat{\Phi} \partial \hat{\Psi} P$ vanishes, so $\mathrm{Id}_{M_*} + \hat{\Phi} \partial \hat{\Psi} P$ is an automorphism. Therefore, the kernel of 
\[
\hat{\Theta} = (\mathrm{Id}_{M_*} + \hat{\Phi} \partial \hat{\Psi} P ) \hat{\Phi}
\]
coincides with the kernel of $\hat{\Phi}$, that is $RF^-$. Then $\Theta \hat{\Theta}$ is the projector onto $\mathrm{Im}\, \Theta = \ker \Psi$ along $RF^-$, whereas $\hat{\Psi} \Psi$ is the projector onto $RF^-$ along $\Ker \Psi = \mathrm{Im}\, \Theta$. It follows that
\[
\Theta \hat{\Theta} + \hat{\Psi} \Psi = \mathrm{Id}_{RF_*},
\]
concluding the proof of (b).

By (\ref{fin2}), we have
\[
\Delta := \hat{\Theta} \partial \hat{\Psi} = \hat{\Phi} \partial \hat{\Psi} + \hat{\Phi} \partial \hat{\Psi} P \hat{\Phi} \partial \hat{\Psi} = \hat{\Phi} \partial \hat{\Psi}.
\]
Moreover, we have the chain of implications
\begin{eqnarray*}
&\hat{\Psi} q_{\max} \in z_{q_{\max}}^- + RF^{]-\infty,0[} \quad \implies \quad \partial 
\hat{\Psi} q_{\max} \in \chi (M) z_{q_{\min}}^+ + RF^{]-\infty,0[} \\ & \implies \quad \Delta q_{\max} = \hat{\Phi}  \partial 
\hat{\Psi} q_{\max} = \chi(T^*M) q_{\min}.
\end{eqnarray*}
Finally, if $\gamma$ is in $(\crit \, e) \setminus \{q_{\max}\}$, then the coefficient of $z_{q_{\max}}^-$ in $\hat{\Psi} \gamma$ vanishes, so $\partial \hat{\Psi} \gamma$ belongs to $RF^-$, and hence $\hat{\Phi} \partial \hat{\Psi} \gamma = 0$. This concludes the proof of (c).
\end{proof} \qed

\begin{rem}
Instead of modifying the chain map $\Phi$, we can modify $\Psi$ within its chain homotopy class, by defining $\Gamma: = \Psi - \delta P \hat{\Phi} - P \hat{\Phi} \partial$. The short sequence
\[
0 \rightarrow M_* (\mathbb{E},e) \stackrel{\Phi}{\rightarrow} RF_*(D^*M,\lambda,a) \stackrel{\Gamma}{\rightarrow} M^{1-*}(\mathbb{E},-e) \rightarrow 0
\]
is exact and the other statements of Theorem \ref{main} hold, with obvious modifications.
\end{rem}

\paragraph{Shifting the exact sequence.} We conclude this section by comparing how the long exact sequence (\ref{longexact}) arises in our approach and in \cite{cfo09}. The following observations were suggested to us by A.\ Oancea.

Theorem \ref{main} shows that the Rabinowitz-Floer complex $RF_*(D^*M,\lambda,a)$ can be identified with the direct sum of the chain complexes $M_*(\mathbb{E},e)$ and $M^{1-*}(\mathbb{E},-e)$. Moreover, the homology of the first of the latter two complexes is the symplectic homology $SH_*(D^*M)$ of the cotangent disk bundle, while the cohomology of the second one (after inverting and shifting the grading) is the symplectic cohomology $SH^*(D^*M)$ of the same Liouville domain.  

In \cite{cfo09}, the Rabinowitz-Floer complex $RF_*(W,\lambda,a)$ of the Liouville domain $(W,\lambda)$ is seen instead as a quotient, in the following way. Fix two real numbers $\alpha<0<\beta$, and choose a Hamiltonian $H\in C^{\infty}(\hat{W})$ which is $V$-shaped, meaning that $H$ is equal to a large constant in $W \setminus (\partial W \times ]0,1])$, decreases very steeply (with respect to $-\alpha$ and $\beta$) in the negative collar $\partial W \times ]0,1[$, and then increases very steeply in the positive collar $\partial W \times [1,+\infty[$. If $F_*^I(H)$ denotes the Floer complex associated to $H$ and to the action interval $I\subset \R$, standard truncation produces the short exact sequence of chain complexes
\begin{equation}
\label{short2}
0 \rightarrow F_*^{]-\infty,\alpha[}(H) \rightarrow F_*^{]-\infty,\beta[}(H) \rightarrow F_*^{]\alpha,\beta[} (H) \rightarrow 0.
\end{equation}
The homology of the first complex in (\ref{short2}) is shown to be the truncated symplectic cohomology 
\[
SH^{-*}_{]-\infty,-\alpha[}(W,\lambda),
\]
while the homology of the second one is the truncated symplectic homology 
\[
SH_*^{]-\infty,\beta[}(W,\lambda),
\] 
see Proposition 2.9 in \cite{cfo09}. On the other hand, by the main result of \cite{cfo09}, the homology of the last complex can be identified with the truncated Rabinowitz-Floer homology $H_*RF^{]\alpha,\beta[}(W,\lambda)$. 
So the long exact sequence induced by (\ref{short2}) is
\[
\dots \rightarrow SH^{-k}_{]-\infty,-\alpha[} (W,\lambda) \rightarrow SH_k^{]-\infty,\beta[} (W,\lambda) \rightarrow HRF_k^{]\alpha,\beta[} (W,\lambda) \rightarrow 
SH^{-k+1}_{]-\infty,-\alpha[} (W,\lambda) \rightarrow 
\dots,
\]
which after considering an inverse limit for $\alpha\downarrow -\infty$ followed by a direct limit for $\beta\uparrow +\infty$, produces the long exact sequence
\begin{equation}
\label{long2}
\dots \rightarrow SH^{-k} (W,\lambda) \rightarrow SH_k (W,\lambda) \rightarrow HRF_k (W,\lambda) \rightarrow SH^{-k+1} (W,\lambda) \rightarrow \dots .
\end{equation}
The homomorphism
\begin{equation}
\label{PD}
SH^{-k} (W,\lambda) \rightarrow SH_k (W,\lambda)
\end{equation}
in the above sequence can be thought as a sort of Poincar\'e duality between symplectic cohomology and homology, and Rabinowitz-Floer homology is consequently seen as obstructing such a map from being an isomorphism, via (\ref{long2}). In the particular case $W=D^*M$, the homomorphism (\ref{PD}) is actually very far from being an isomorphism: It vanishes for every $k\neq 0$, and for $k=0$ it may be non zero only on the component of contractible loops, where it coincides with the multiplication by the Euler number $\chi(T^*M)$ (see Lemma 1.11 in \cite{cfo09}).

Summing up, the Rabinowitz-Floer complex appears as the middle term in the short exact sequence (\ref{exa}), but it appears as the last term in the short exact sequence (\ref{short2}). This difference can be explained by the following general fact in homological algebra (see also Sections 1.5 and 10.1 in \cite{wei94}, or the proof of Proposition III.3.5 in \cite{gm03}).

Let 
\begin{equation}
\label{short3} 
0 \rightarrow X \stackrel{\theta}{\rightarrow} Y \stackrel{\psi}{\rightarrow} Z \rightarrow 0
\end{equation}
be a short exact sequence of chain complexes which splits: There exist homomorphisms $\hat{\theta} : Y \rightarrow X$ and $\hat{\psi}: Z \rightarrow Y$ (in general not chain maps) such that
\[
\hat{\theta} \theta = \mathrm{Id}_X, \quad \psi \hat{\psi} = \mathrm{Id}_Z, \quad \theta \hat{\theta} + \hat{\psi} \psi = \mathrm{Id}_Y.
\]
As already recalled, the connecting homomorphism $\Delta_*$ in the long exact sequence induced by (\ref{short3}), that is
\begin{equation}
\label{long3}
\dots\rightarrow  H_k X \stackrel{\theta_*}{\longrightarrow} H_k Y \stackrel{\psi_*}{\longrightarrow} H_k Z \stackrel{\Delta_*}{\longrightarrow} H_{k-1} X \rightarrow \dots
\end{equation}
is induced by the chain map
\[
\Delta: Z \rightarrow X^+, \quad \Delta := \hat{\theta} \partial_Y \hat{\psi},
\]
where $X^+$ is the suspension of $X$ (defined by $(X^+)_k:= X_{k-1}$ and $\partial_{X^+} = - \partial_X$). 
Let $C(\psi)$ be the cone of the chain map $\psi$, that is the chain complex defined by
\[
C(\psi) := Z \oplus Y^+, \quad \partial_{C(\psi)} (z,y) := (\partial_Z z + \psi y, - \partial_Y y).
\]
Then
\begin{equation}
\label{short4}
0 \rightarrow Z \stackrel{\iota}{\rightarrow} C(\psi) \stackrel{\pi}{\rightarrow} Y^+ \rightarrow 0,
\end{equation}
where $\iota z := (z,0)$ and $\pi (z,y) := y$, is a short exact sequence of chain complexes. 

We claim that $C(\psi)$ is chain equivalent to $X^+$ and thereby the long exact sequence induced by (\ref{short4}) becomes isomorphic to (\ref{long3}), up to a shift.
Indeed, the chain maps
\begin{eqnarray*}
\sigma : X^+ \rightarrow C(\psi) , \quad & \sigma x := (0,\theta x), \\
\rho: C(\psi) \rightarrow X^+, \quad & \rho (z,y) := \Delta z + \hat{\theta} y,
\end{eqnarray*}
are homotopy inverses one of the other, because of the identities
\[
\rho \sigma = \mathrm{Id}_{X^+}, \quad \mathrm{Id}_{C(\psi)} - \sigma \rho = \tau \, \partial_{C(\psi)} + \partial_{C(\psi)} \tau, \quad \mbox{with } \tau(z,y):= (0,\hat{\psi}z).
\]
Moreover, the short exact sequence (\ref{short4}) is splitted by the homomorphisms $\hat{\iota} (z,y):= z$ and $\hat{\pi} y := (0,y)$, so the connecting homorphism induced by (\ref{short4}) is the homomorphism in homology induced by the chain map $\hat{\iota} \partial_{C(\psi)} \hat{\pi} = \psi$. Together with the identities $\rho \iota = \Delta$ and $\pi \sigma = \theta$, we conclude that the long exact sequence induced by (\ref{short4}) coincides with the long exact sequence (\ref{long3}), up to a shift, as the following commuting diagram shows:
\begin{equation*}
\xymatrix{
\dots \ar[r] & H_k Z \ar[r]^{\iota_*} \ar@{=}[d] & H_k C(\psi) \ar[r]^{\pi_*} 
\ar[d]_{\rho_*} & H_k Y^+ \ar[r]^{\psi_*} \ar@{=}[d] & H_{k-1} Z \ar[r] \ar@{=}[d] & \dots \\
\dots \ar[r] & H_k Z \ar[r]^{\hspace{-20pt}\Delta_*} & H_k X^+ = H_{k-1} X \ar[r]^{\hspace{20pt}\theta_*}  \ar@<-1ex>[u]_{\sigma_*} & H_{k-1} Y \ar[r]^{\psi_*} & H_{k-1} Z \ar[r] & \dots
}
\end{equation*}
 
Let us apply the above considerations to the short exact sequence (\ref{exa}), endowed with the splitting given by statement (b) in Theorem \ref{main}. We obtain that the cone of $\Psi$ is chain equivalent to suspension of the Morse chain complex of 
$\mathbb{E}$,
\[
C(\psi) \simeq \bigl(M_* (\mathbb{E},e)\bigr)^+,
\]
and that the long exact sequence induced by (\ref{exa}), that is (\ref{longexact}), coincides up to a shift with the homology sequence induced by the short exact sequence of chain complexes
\begin{equation}
\label{shortu}
\xymatrix{
0 \ar[r] & M^{-*}(\mathbb{E},-e) \ar[r] & C(\Psi)_{*+1} \ar[r] & RF_*(W,\lambda,a)  \ar[r] & 0. \\ & & M_*(\mathbb{E},e)  \ar@{-}[u]|{\displaystyle{\simeq}} & & 
}
\end{equation}
The above short exact sequence is the analogue of (\ref{short2}), in the particular case $W=D^*M$ and after taking limits for $\alpha \downarrow -\infty$ and $\beta\uparrow +\infty$, so the induced homology sequence is (\ref{long2}). The first chain map in (\ref{shortu}) induces the homomorphism
\[
SH^{-*}(D^*M,\lambda) \cong H^{-*}M(\mathbb{E},-e) \rightarrow H_{*+1} C(\Psi) \cong H_*M(\mathbb{E},e) \cong SH_*(D^*M,\lambda),
\]
which by statement (c) in Theorem \ref{main} coincides with the homomorphism (\ref{PD}) described above.

\section{Uniformly convex domains in $T^*M$ containing a Lagrangian graph}
\label{scdsec}

Let us consider the situation of Example 2 in Section \ref{setting}: $M$ is a closed manifold and $W$ is a compact subset of $T^*M$, which has smooth fiberwise uniformly convex boundary and whose interior part contains a Lagrangian graph, that is a set of the form
\[
\set{(q,\theta(q))}{q\in M},
\]
where $\theta$ is a smooth closed one-form on $M$. We recall that in this case $W$ is a Liouville domain with respect to the one-form 
\[
\lambda_{\theta} := \lambda - \pi^* \theta.
\]
We assume that the closed orbits of the Reeb vector field $R$ on $\partial W$ induced by the contact form $\alpha := \lambda_{\theta}|_{\partial W}$ are of Morse-Bott type (see Section \ref{rfcsec}).

\paragraph{The Hamiltonian $H$.}
For every $q\in M$, let 
\[
f_q : T_q^* M \rightarrow \R
\]
be the function which takes the value 1 on $\partial W \cap T_q^*M$ and is such that the function
\[
p \mapsto f_q(\theta(q)+p), \quad p\in T_q^*M,
\]
is homogeneous of degree 2. The function $f_q$ is continuously differentiable on $T_q^*M$ and it is smooth on $T_q^*M \setminus \{\theta(q)\}$ (it is everywhere smooth if and only if $\partial W \cap T_q^*M$ is an ellipsoid). By the 2-homogeneity,
\begin{equation}
\label{homo1}
df_q(p) [p-\theta(q)] = 2 f_q(p), \quad \forall p\in T^*_q M.
\end{equation}
By the properties of $W$, we can find a smooth function $H$ on $T^*M$ which has postive definite fiberwise second differential and such that
\begin{equation}
\label{homo2}
H(q,p) = \frac{1}{2} \bigl( f_q(p) - 1 \bigr)
\end{equation} 
for every $q\in M$ and every $p$ outside a neihborhood of $\theta(q)$, contained in the interior part of $W\cap T_q^*M$. In particular, the zero level set of $H$ is $\partial W$ and (\ref{homo1}), (\ref{homo2}) imply that for every $(q,p)\in \partial W$ there holds
\[
\lambda_{\theta} \bigl( X_H(q,p) \bigr) = (\lambda - \pi^* \theta) \bigl( X_H(q,p) \bigr) = d_p H(q,p) [p-\theta(q)] = \frac{1}{2} df_q(p) [p-\theta(q)] = f_q(p) = 1,
\]
so $X_H|_{\partial W}$ coincides with the Reeb vector field $R$. 

By the discussion of Example 2 in Section \ref{setting}, the completion of $(W,\lambda_{\theta})$ is identified with $T^*M$ by means of the embedding
\[
\partial W \times ]0,+\infty[ \rightarrow T^*M, \quad \bigl((q,p),\rho \bigr) \mapsto \bigr( q,\theta(q) + \rho(p-\theta(q)) \bigr).
\]
By the fact that (\ref{homo2}) holds on $T^*M \setminus \mathrm{Int}\, (W)$, and by the 2-homogeneity property of $f_q$, for every $(q,p)\in \partial W$ and every $\rho\geq 1$ we have
\[
H \bigl( (q,p) , \rho \bigr) = H \bigl( q,\theta(q) + \rho (p-\theta(q)) \bigr) = \frac{1}{2} \Bigl( f_q \bigl( \theta(q) + \rho (p-\theta(q)) \bigr) - 1 \Bigr) = \frac{1}{2} \bigl( \rho^2 f_q(p) - 1 \bigr) = \frac{1}{2} (\rho^2 - 1).
\]  
We conclude that
the Hamiltonian $H$ is compatible with $(\partial W,\alpha)$ and eventually quadratic on the completion $(\hat{W},\lambda_{\theta}) = (T^*M,\lambda_{\theta})$ of $W$ (see Section \ref{rfcsec}), so we can use it to define the Rabinowitz-Floer homology of $(W,\lambda_{\theta})$. By construction, the second derivatives of $H$ satisfy the bounds:
\begin{eqnarray}
\label{h0}
& d^2_p H(q,p) \geq h_0 \, \mathrm{Id}, \\
\label{h1}
& \left| \nabla_{pp} H(q,p) \right| \leq h_1, \quad 
\left| \nabla_{pq} H(q,p) \right| \leq h_1 ( 1 + |p|), \quad
\left| \nabla_{qq} H(q,p) \right| \leq h_1 ( 1 + |p|^2),
\end{eqnarray}
for every $(q,p)\in T^*M$, for some $h_0,h_1>0$. Here $\nabla_{pp}$, $\nabla_{qp}$ and $\nabla_{qq}$ are the components of the Hessian of $H$ associated to the horizontal-vertical splitting of $TT^*M$ induced by some Riemannian metric on $M$. The symbol $|\cdot|$ denotes the norm on $TM$ and on $T^*M$ induced by such a metric. 

\paragraph{The Lagrangian $L$.}
Let $L:TM \rightarrow \R$ be the Fenchel transform of $H$, defined by
\[
L(q,v) := \max_{p\in T_q^* M} \Bigl( \langle p, v \rangle - H(q,p) \Bigr).
\]
By (\ref{h0}) and (\ref{h1}), $L$ is smooth, and satisfies similar bounds:
\begin{eqnarray}
\label{l0}
d^2_v L(q,v) \geq \ell_0 \, \mathrm{Id}, \\
\label{l1}
\left| \nabla_{vv} L(q,v) \right| \leq \ell_1, \quad 
\left| \nabla_{vq} L(q,v) \right| \leq \ell_1 ( 1 + |v|), \quad
\left| \nabla_{qq} L(q,v) \right| \leq \ell_1 ( 1 + |v|^2),
\end{eqnarray}
for every $(q,v)\in TM$, for some $\ell_0,\ell_1>0$. The Lagrangian action of an absolutely continuous curve $\gamma: [0,T] \rightarrow M$ is denoted by
\[
\mathbb{S}_L (\gamma) := \int_0^T L(\gamma(t),\gamma'(t))\, dt.
\]
We recall that the {\em Ma\~n\'e critical value} of the Lagrangian $L$ is the number
\[
c(L) := \inf \set{\kappa \in\R}{\mathbb{S}_{L+\kappa}(\gamma)\geq 0 \; \forall \gamma : \R/T\Z \rightarrow M \mbox{ absolutely continuous, } \forall T>0}.
\]
G.\ Contreras, R.\ Iturriaga, G.\ P.\ Paternain and M.\ Paternain \cite{cipp98} have shown that the Ma\~n\'e critical value of the Lagrangian $L$ can be expressed in terms of the dual Hamiltonian $H$ as
\begin{equation}
\label{cara}
c(L) = \inf_{u\in C^{\infty}(M)} \max_{q\in M} H(q,du(q)).
\end{equation}
Since $\theta$ is a closed one-form, the Lagrangian $L-\theta$ defines the same Euler-Lagrange equation as the Lagrangian $L$, in particular it defines the same extremal curves. Moreover, the Fenchel transform of $L-\theta$ is the Hamiltonian $(q,p) \rightarrow H(q,p+\theta(q))$. By using (\ref{cara}), we deduce that the Ma\~n\'e critical value of the Lagrangian $L-\theta$ is negative:
\[
c(L-\theta) = \inf_{u\in C^{\infty}(M)} \max_{q\in M} H(q,du(q)+\theta(q)) \leq \max_{q\in M} H(q,\theta(q)) < 0,
\]
where we have used also the fact that the image of the one-form $\theta$ is contained in $\mathrm{Int}\, (W) =\{H<0\}$. 

\begin{rem}
\label{smcv}
If one starts from the Lagrangian formulation, the setting described in this section takes the following form. Let $L$ be a Tonelli Lagrangian on $TM$, that is a smooth  function $L$ on $TM$ such that
\[
\lim_{|v|\rightarrow +\infty} \frac{L(q,v)}{|v|} = +\infty,
\]
and the fiberwise second differential $d_v^2 L(q,v)$ is everywhere positive.  We pick a real number $\kappa$ which is larger than the {\em strict Ma\~n\'e critical value of }$L$, that is the Ma\~n\'e critical value of the lift of $L$ to the Abelian cover of $M$, or equivalently the number
\[
c_0(L) := \min_{\theta} c(L-\theta),
\]
where the minimum is taken over the set of all closed one-forms on $M$,
see \cite{pp97}.   
Let $H\in C^{\infty}(T^*M)$ be the Hamiltonian which is Fenchel dual to $L$, and let $W$ be the set $\{H\leq \kappa\}$. If $\theta$ is a closed one-form on $M$ such that $c(L-\theta) = c_0(L)$, then the characterization (\ref{cara}) applied to $L-\theta$ implies that there exists a smooth function $u$ on $M$ such that
the interior part of $W$ contains the Lagrangian graph given by the image of the  closed one-form $du+ \theta$. Therefore, $W$ fulfills the requirements of this section (and of Theorem \ref{main2} of the Introduction).
\end{rem}

\begin{rem}
On the other hand, if $\kappa\leq c_0(L)$ then the energy level $H^{-1}(\kappa)$ may not be of contact type, so in particular $\{H\leq \kappa\}$ may not  be a Liouville domain with respect to any primitive of the symplectic form $\omega$ and  the Rabinowitz-Floer homology, as presented in this paper, may not be well-defined. For instance, by Theorem B.1 in \cite{con06}, $H^{-1}(\kappa)$ is never of contact type when $c_u(L)< \kappa \leq c_0(L)$ and $M$ is not the 2-torus. Here $c_u(L)$ is the Ma\~n\'e critical value of the lift of $L$ to the universal cover of $M$, a number which is in general strictly lower than $c_0(L)$, see \cite{pp97}. However, if $\kappa$ is in this range, the energy surface is of {\em virtual contact type} and Rabinowitz-Floer homology can be easily extended to this situation, as shown by K.\ Cieliebak, U.\ Frauenfelder, and G.\ P.\ Paternain in \cite{cfp09}. A work in progress by W.\ Merry \cite{mer09} shows how to determine the Rabinowitz-Floer homology for this range of energies, in the more general case where the standard symplectic form of the cotangent bundle is twisted by a weakly exact magnetic two-form. Below $c_u(L)$, the energy surface may not be of virtual contact type and extending Rabinowitz-Floer homology to such levels is substantially more difficult. See \cite{cfp09} for results in this direction and for many instructive examples.   
\end{rem}

\section{The Morse chain complex and differential complex of the free period Lagrangian action functional}

The aim of this section is to show how the fact that the Ma\~n\'e critical value of $L-\theta$ is negative allows to associate a Morse chain complex and a Morse differential complex to the free period action functional $\mathbb{S}_{L-\theta}$. The necessary analysis is due to G.\ Contreras \cite{con06} (see also \cite{cipp00}).

\paragraph{Properties of the action functional $\mathbb{S}$.} It is useful to reparametrize every closed curve $\gamma:\R /T \Z \rightarrow M$, for $T>0$ arbitrary, on the circle of unit length $\T=\R/\Z$, by setting $q(\tau) := \gamma(T \tau)$ for $\tau\in \T$. With this change of variable, the action of $\gamma$ associated to the Lagrangian $L-\theta$ takes the form
\begin{eqnarray*}
\mathbb{S}(q,T)=
\mathbb{S}_{L-\theta} (q,T) := \mathbb{S}_{L-\theta}(\gamma) = \int_0^T (L-\theta) (\gamma(t),\gamma'(t))\, dt = T \int_{\T} (L-\theta)(q(\tau),q'(\tau)/T) \, d\tau \\ =
T \int_{\T} L(q(\tau),q'(\tau)/T) \, d\tau - \int_{\T} q^*(\theta).
\end{eqnarray*}
A suitable functional space for the Morse theory of $\mathbb{S}$ is the Hilbert manifold
\[
\mathscr{M} := W^{1,2} (\T,M) \times ]0,+\infty[.
\]
We denote by $\mathscr{M}^0$ the connected component of $\mathscr{M}$ corresponding to contractible loops. 
By (\ref{l0}), there are positive numbers $\ell_2,\ell_3>0$ such that
\begin{equation}
\label{l2}
(L-\theta)(q,v) \geq \ell_2 |v|^2 - \ell_3, \quad \forall (q,v)\in TM.
\end{equation}
Together with the fact that $c(L-\theta)$ is negative, the above estimate has the following consequences:

\begin{prop}
\label{sulfunz}
\begin{enumerate}
\item $\mathbb{S}$ is strictly positive on $\mathscr{M}$, and
\[
\inf_{\mathscr{M}^0} \mathbb{S} = 0, \quad \inf_{\mathscr{M}\setminus \mathscr{M}^0} \mathbb{S}>0.
\]
\item For every $S\in \R$,
\[
\inf \set{T>0}{(q,T)\in \mathscr{M} \setminus \mathscr{M}^0, \; \mathbb{S}(q,T) \leq S} >0.
\]
\item For every $(q,T)$ in $\mathscr{M}$ there holds
\[
\int_{\T} |q'(\tau)|^2\, d\tau \leq \frac{T}{\ell_2} \mathbb{S}(q,T) + \frac{\ell_3}{\ell_2} T^2.
\]
\item For every $(q,T)$ in $\mathscr{M}$ there holds
\[
\mathbb{S}(q,T) \geq - c(L-\theta) T.
\]
\end{enumerate}
\end{prop}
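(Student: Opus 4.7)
The plan is to prove the statements in the order (d), (c), (a), (b), since each subsequent step will rest on the previous ones. The two negative quantities at our disposal — the Ma\~n\'e critical value $c(L-\theta)<0$ established in Section \ref{scdsec} and the constant $-\ell_3$ in the bound (\ref{l2}) — will each play a distinct role.

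First, statement (d) follows immediately from the definition of the Ma\~n\'e critical value: for every $\kappa>c(L-\theta)$ and every absolutely continuous closed curve $\gamma:\R/T\Z\to M$, the inequality $\mathbb{S}_{L-\theta+\kappa}(\gamma)\geq 0$ rewrites, after the reparametrization on $\T$, as $\mathbb{S}(q,T)\geq -\kappa T$; letting $\kappa\downarrow c(L-\theta)$ yields (d). Since $c(L-\theta)<0$ and $T>0$, this already shows that $\mathbb{S}>0$ on $\mathscr{M}$, which is the first half of (a). Next, (c) is obtained by inserting (\ref{l2}) into the definition of $\mathbb{S}$, giving
\[
\mathbb{S}(q,T) \;=\; T\int_\T (L-\theta)\bigl(q(\tau),q'(\tau)/T\bigr)\, d\tau \;\geq\; \frac{\ell_2}{T}\int_\T |q'(\tau)|^2\, d\tau \,-\, \ell_3 T,
\]
and solving for the integral. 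The identity $\inf_{\mathscr{M}^0}\mathbb{S}=0$ is obtained by testing on the constant loop $q\equiv q_0$ with $T\downarrow 0$, since $\mathbb{S}(q_0,T)=TL(q_0,0)$ (the pullback $q_0^*\theta$ vanishes) and $L(q_0,0)$ is a finite (in fact, by (d), positive) constant.

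The remaining claim in (a) and statement (b) can be proved together by the same contradiction argument. Suppose $(q_n,T_n)\in\mathscr{M}\setminus\mathscr{M}^0$ satisfies either $\mathbb{S}(q_n,T_n)\to 0$ (for (a)) or $T_n\to 0$ with $\mathbb{S}(q_n,T_n)\leq S$ (for (b)). In the first case (d) together with $c(L-\theta)<0$ forces $T_n\to 0$, so in both cases $T_n\to 0$ and $\mathbb{S}(q_n,T_n)$ remains bounded; then (c) forces $\int_\T|q_n'|^2\,d\tau\to 0$. By the Cauchy--Schwarz inequality the length $\int_\T|q_n'|\,d\tau$ also tends to zero, so the diameter of the image of $q_n$ tends to zero; for $n$ large, $q_n$ is therefore contained in a geodesically convex ball and is contractible, contradicting $(q_n,T_n)\in\mathscr{M}\setminus\mathscr{M}^0$.

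The only mildly delicate point is the interplay between the negativity of $c(L-\theta)$ — which prevents $\mathbb{S}$ from being small unless $T$ is small — and the coercivity estimate (c), which then forces $q$ to be close to a constant loop. Without this interplay the contractibility argument at the end would fail; beyond this, the proof is essentially a bookkeeping of (\ref{l2}) and the very definition of the Ma\~n\'e critical value.
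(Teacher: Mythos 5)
Your proof is correct and relies on exactly the same ingredients as the paper's: the definition of the Ma\~n\'e critical value for (d), the quadratic lower bound (\ref{l2}) for (c), the elementary observation $\mathbb{S}(q_0,T)=T\,L(q_0,0)\to 0$ for the first half of (a), and the fact that non-contractible closed curves on a compact manifold have length bounded away from zero for (b) and the second half of (a). The only genuine difference is in the execution of (b): the paper derives it directly from (c) by combining the lower bound $\ell^2\le\int_\T|q'|^2\,d\tau$ (where $\ell>0$ is the minimal length of a non-contractible loop) with the estimate (c) and the hypothesis $\mathbb{S}\le S$ to get the quadratic inequality $T^2\ge\frac{\ell_2}{\ell_3}\ell^2-\frac{T}{\ell_3}S$, which gives an explicit positive lower bound for $T$; you instead argue by contradiction with a sequence $T_n\to 0$ and conclude via $\int_\T|q_n'|^2\,d\tau\to 0$ that $q_n$ is eventually contractible. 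The paper's route is more quantitative, yours is marginally shorter, but they use the same geometric fact. Your observation that (b) is vacuous for $S\le 0$ (since $\mathbb{S}>0$) could be made explicit, but this is harmless.
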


\begin{proof}
By (\ref{l2}), there holds
\[
\mathbb{S}(q,T) = T \int_{\T} (L-\theta) (q(\tau),q'(\tau)/T) \, d\tau \geq \frac{\ell_2}{T} \int_{\T} |q'(\tau)|^2 \, d\tau - \ell_3 T,
\]
which implies (c). Since $M$ is compact, the length of every non-contractible absolutely continuous curve is bounded from below by some positive number $\ell$. Therefore, if $(q,T)\in \mathscr{M} \setminus \mathscr{M}^0$ then
\[
\ell^2 \leq \left( \int_{\T} |q'(\tau)|\, d\tau \right)^2 \leq \int_{\T} |q'(\tau)|^2\, d\tau,
\]
from which, using (c) and the bound $\mathbb{S}(q,T)\leq S$, we get
\[
T^2 \geq \frac{\ell_2}{\ell_3} \ell^2 - \frac{T}{\ell_3} \mathbb{S}(q,T) \geq   \frac{\ell_2}{\ell_3} \ell^2 - \frac{T}{\ell_3} S,
\]
which implies (b). If $\gamma(t) = q(t/T)$, then
\[
\mathbb{S}(q,T) = \int_0^T (L-\theta) (\gamma(t),\gamma'(t)) \, dt = \int_0^T \Bigl( (L-\theta ) (\gamma(t),\gamma'(t)) + c(L-\theta)\Bigr) \, dt - c(L-\theta) T \geq - c(L-\theta) T,
\]
because of the definition of the Ma\~n\'e critical value $c(L-\theta)$. This proves (d).
Since $c(L-\theta)<0$, the inequality (d) implies that $\mathbb{S}>0$. If $q:\T \rightarrow M$ is a constant loop, then 
\[
\mathbb{S}(q,T) = T \,L(q,0)
\]
converges to zero for $T\downarrow 0$, so the infimum of $\mathbb{S}$ on $\mathscr{M}^0$ is zero. By (b) and (d), the infimum of $\mathbb{S}$ on $\mathscr{M} \setminus \mathscr{M}^0$ is positive. This concludes the proof of (a).
\end{proof}
\qed

The functional $\mathbb{S}$ is of class $C^{1,1}$ on $\mathscr{M}$, but in general it is not twice differentiable. More precisely, $\mathbb{S}$ is everywhere twice Gateaux-differentiable, but it is
twice Fr\'ech\'et-differentiable at some $(q,T)\in \mathscr{M}$ if and only if for every $\tau\in \T$ the function $v\mapsto L(q(\tau),v)$ is a polynomial of degree at most 2 on $T_{q(\tau)} M$, see \cite{as08b}. Let $E:TM \rightarrow \R$ be the energy associated to the Lagrangian $L-\theta$, or equivalently to the Lagrangian $L$, that is the function
\[
E(q,v) = d_v (L-\theta)(q,v)[v] - (L-\theta)(q,v) = d_v L (q,v)[v] - L(q,v).
\]
Equivalently, $E$ is the composition of $H$ with the Legendre transform
\[
TM \rightarrow T^*M, \quad (q,v) \mapsto d_v L(q,v).
\]
Therefore, $E$ is constant along the Euler-Lagrange flow induced by $L$, and the quadratic growth assumptions on $H$ (\ref{h0}-\ref{h1}) imply that also $E$ has quadratic growth in $v$, so by (\ref{l1}) we can find positive numbers $e_0,e_1$ such that
\begin{equation}
\label{LvE}
E(q,v) \geq e_0 L(q,v) - e_1, \quad \forall (q,v)\in TM.
\end{equation}
The partial derivative of $\mathbb{S}$ with respect to the second variable is easily seen to be
\begin{equation}
\label{derT}
\frac{\partial \mathbb{S}}{\partial T} (q,T) = - \frac{1}{T} \int_0^T E(\gamma(t),\gamma'(t))\, dt,
\end{equation}
where as before $\gamma(t)=q(t/T)$. Therefore, $(q,T)$ is a critical point of $\mathbb{S}$ on $\mathscr{M}$ if and only if $\gamma$ is a periodic solution of the Euler-Lagrange equation associated to $L$ with (not necessarily minimal) period $T$ and energy $E(\gamma,\gamma')\equiv 0$. Equivalently, $t\mapsto d_v L(\gamma(t),\gamma'(t))$ is a $T$-periodic orbit of $X_H$ on the energy surface $\partial W = H^{-1}(0)$. Since such an energy surface is compact and does not contain critical points of $H$, the period of its closed Hamiltonian orbits is bounded away from zero, so
\[
\inf \set{T>0}{(q,T) \in \crit \, \mathbb{S}} >0.
\]
Together with statement (d) of Proposition \ref{sulfunz} and the fact that $c(L-\theta)$ is negative, we deduce that the critical values of $\mathbb{S}$ are bounded away from zero:
\begin{equation}
\label{bfafz}
\inf \set{\mathbb{S}(q,T)}{(q,T) \in \crit\, \mathbb{S}}>0.
\end{equation}  

\paragraph{A smooth pseudo-gradient for $\mathbb{S}$.}
We recall that $W^{1,2}(\T,M)$ is endowed with the complete metric (\ref{natmet}) induced by the Riemannian structure of $M$, and we can endow $\mathscr{M}$ with the product metric by the standard metric of $]0,+\infty[$. Of course, such a metric - that we still denote by $\langle\langle \cdot,\cdot \rangle\rangle$ - is not complete. The corresponding norms on $T\mathscr{M}$ and $T^* \mathscr{M}$ are denoted by $\|\cdot\|$. 

By (\ref{derT}) and (\ref{LvE}),
\begin{equation}
\label{derTvS}
\langle \langle \nabla \mathbb{S}(q,T), \frac{\partial}{\partial T} \rangle \rangle =
\frac{\partial \mathbb{S}}{\partial T} (q,T) \leq - \frac{e_0}{T} \,\mathbb{S}(q,T) + e_1.
\end{equation}
In particular,
\begin{equation}
\label{derTvS2}
\langle \langle \nabla \mathbb{S}(q,T), \frac{\partial}{\partial T} \rangle \rangle \leq  - e_1 \quad \mbox{if} \quad\mathbb{S}(q,T) \geq \frac{2e_1}{e_0} T.
\end{equation}
Moreover, if $q_0\in M$ is a constant loop, then
\begin{equation}
\label{inva}
\frac{\partial \mathbb{S}}{\partial T} (q_0,T) = L(q_0,T) = - \min_{p\in T_{q_0} M} H(q_0,p) > 0.
\end{equation}
Although the functional $\mathbb{S}$ is just $C^{1,1}$ on $\mathscr{M}$, the Morse-Bott assumption allows to construct a smooth {\em pseudo-gradient} for $\mathbb{S}$, by which we mean a smooth vector field $Z$ on $\mathscr{M}$ such that:
\begin{enumerate}
\item[(Z1)] $Z$ is bounded;
\item[(Z2)] $d\mathbb{S}(w)[Z(w)] \geq \delta\bigl(\mathbb{S}(w)\bigr) \|d\mathbb{S}(w)\|$, for every $w\in \mathscr{M}$, where $\delta$ is a continuous positive function on $]0,+\infty[$;
\item[(Z3)] $\sing Z = \crit \, \mathbb{S}$ and for every $w$ in such a set $\nabla Z (w) = \nabla^2 \mathbb{S}(w)$, the Gateaux Hessian of $\mathbb{S}$ at $w$.
\end{enumerate}
The symbol $\sing Z$ denotes the set of zeroes of the vector field $Z$, and the Jacobian $\nabla Z(w) : T_w \mathscr{M} \rightarrow T_w \mathscr{M}$ is well-defined at every $w\in \sing \, Z$. The construction of such a smooth pseudo-gradient vector field is described in \cite{as08b}, in the case of the fixed period action functional - under Morse assumptions - but the case of the free period functional - under Morse-Bott assumptions - is analogous. The construction of $Z$ far from the critical set of $\mathbb{S}$ uses just a partition of unity argument, so by (\ref{derTvS2}) and (\ref{inva}) we may also achieve the properties:
\begin{enumerate}
\setcounter{enumi}{3}
\item[(Z4)] There exists $C>0$ such that $\langle\langle Z(q,T), \partial/\partial T \rangle\rangle < 0$ if $\mathbb{S}(q,T) \geq C T$;
\item[(Z5)] the submanifold $M\times ]0,+\infty[$ is invariant with respect to the local flow of $Z$.
\end{enumerate} 
In order to achieve (Z5), one might need to change the positive function $\delta$ of (Z2), taking into account the fact that if $q_0$ is a constant loop then
\[
\mathbb{S}(q_0,T) = T L(q_0,0) \rightarrow +\infty \quad \mbox{for } T\rightarrow +\infty,
\]
uniformly in $q_0\in M$.
The construction of $Z$ near the critical set uses the fact that this set is a smooth manifold, by the Morse-Bott assumption, and some properties of the map $w\mapsto \nabla^2 \mathbb{S}(w)$ with respect to the strong topology of operators. 

We recall that a {\em Palais-Smale sequence at level $S$} for the pair $(\mathbb{S},Z)$ is a sequence $(w_h)\subset \mathscr{M}$ such that $\mathbb{S}(w_h)\rightarrow S$ and $d\mathbb{S}(w_h)[Z(w_h)]\rightarrow 0$, and that the {\em Palais-Smale condition at level $S$} holds whenever every such sequence has a subsequence which converges in $\mathscr{M}$ (see \cite{ama06m}; when $Z$ is the gradient of $\mathbb{S}$ one recovers the standard definitions). 

Let $\phi^{-Z}$ be the local flow of $-Z$ on $\mathscr{M}$. Property (Z2) above guarantees that $S$ is strictly decreasing on the non-constant orbits of $\phi^{-Z}$. If $w\in \mathscr{M}$, let $]\sigma_-(w),\sigma_+(w)[$ be the maximal interval of existence of the flow line $\sigma\mapsto \phi^{-Z}(\sigma,w)$ through $w$.
A subset $\mathscr{A}\subset \mathscr{M}$ is {\em positively} (respectively {\em negatively}) {\em invariant} with respect to $\phi^{-Z}$ if $w\in \mathscr{A}$ implies $\phi^{-Z}(\sigma,w)\in \mathscr{A}$ for every $\sigma \in [0,\sigma_+(w)[$ (resp.\ for every $\sigma \in ]\sigma_-(w),0]$). The local flow $\phi^{-Z}$ is said to be {\em positively} (resp.\ {\em negatively}) {\em complete} with respect to the positively (resp.\ negatively) invariant set $\mathscr{A}$ if every $w\in \mathscr{M}$ such that $\sigma_+(w)<+\infty$ (resp.\ $\sigma_-(w)>-\infty$) eventually enters $\mathscr{A}$, that is $\phi^{-Z}(\sigma,w)\in \mathscr{A}$ for some $\sigma>0$ (resp.\ $\sigma <0$).   

\begin{prop}
\label{suZ}
\begin{enumerate}  
\item The pair $(\mathbb{S},Z)$ satisfies the Palais-Smale condition at every level $S>0$.
\item For every $S>0$, the local flow $\phi^{-Z}$ is positively complete with respect to the sublevel $\{\mathbb{S}<S\}$.
\item Let $S>0$. Then the set
\[
\mathscr{A}_S :=
\set{(q,T)\in \mathscr{M}}{ C T < S < \mathbb{S}(q,T)}
\]
contains no critical points of $\mathbb{S}$ and is negatively invariant with respect to the local flow $\phi^{-Z}$. Moreover, the restriction of the local flow $\phi^{-Z}$ to the (negatively invariant) set $\{\mathbb{S}>S\}$ is negatively complete with respect to $\mathscr{A}_S$.
\item The functional $\mathbb{S}$ has a positive minimum on every connected component of $\mathscr{M}\setminus \mathscr{M}^0$. Moreover, $\mathbb{S}$ has a positive minimum on $(\crit \, \mathbb{S}) \cap \mathscr{M}^0$.
\item If $w\in \mathscr{M}^0$ and 
\[
\mathbb{S}(w) < \min_{(\crit \, \mathbb{S}) \cap \mathscr{M}^0} \mathbb{S},
\]
then, setting $(q(\sigma),T(\sigma)):= \phi^{-Z}(\sigma,w)$, $\mathbb{S}(q(\sigma),T(\sigma)) \rightarrow 0$, $T(\sigma)\rightarrow 0$, and $q(\sigma)$ converges in $W^{1,2}$ to a constant loop, for $\sigma \uparrow \sigma_+(w)$.  
\end{enumerate}
\end{prop}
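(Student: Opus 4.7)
For (a), let $(q_h,T_h)$ be a Palais--Smale sequence at level $S>0$, so $\|d\mathbb{S}(q_h,T_h)\|\to 0$ by (Z2). Proposition \ref{sulfunz}(d) bounds $T_h$ above by $\mathbb{S}(q_h,T_h)/(-c(L-\theta))$. For a lower bound on $T_h$, combining (\ref{derT}) with (\ref{LvE}) gives $-\partial_T\mathbb{S}(q_h,T_h) \geq (e_0/T_h)\mathbb{S}(q_h,T_h) + (e_0/T_h)\int_\T q_h^*\theta - e_1$; by Proposition \ref{sulfunz}(c), the $\theta$-term is $O(T_h^{-1/2})$ while the first summand blows up like $T_h^{-1}$, so $T_h\to 0$ along a subsequence would force $-\partial_T\mathbb{S}(q_h,T_h)\to +\infty$, contradicting $|\partial_T\mathbb{S}(q_h,T_h)|\leq\|d\mathbb{S}(q_h,T_h)\|\to 0$. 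Hence $T_h$ remains in a compact subinterval of $]0,+\infty[$, and $q_h$ is $W^{1,2}$-bounded by Proposition \ref{sulfunz}(c); along a subsequence $T_h\to T_\infty>0$ and $q_h\to q_\infty$ weakly in $W^{1,2}$ and uniformly. Testing $d\mathbb{S}(q_h,T_h)\to 0$ against $q_h-q_\infty$ in local charts and invoking the fiberwise uniform convexity (\ref{l0}) then upgrades the convergence to strong $W^{1,2}$, along the lines of \cite{con06, as08b}.

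For (b) and (c), the key remark is that an orbit $\sigma\mapsto (q(\sigma),T(\sigma)):=\phi^{-Z}(\sigma,w)$ can leave $\mathscr{M}$ only through $T(\sigma)\to 0$, because (Z1) makes $Z$ bounded and $W^{1,2}(\T,M)$ is complete. For (b), assume $\sigma_+(w)<+\infty$ and, for contradiction, $\mathbb{S}\geq S$ throughout; whenever $T(\sigma)\leq S/C$ we have $\mathbb{S}\geq S\geq CT$, so (Z4) yields $dT/d\sigma=-\langle\langle Z,\partial/\partial T\rangle\rangle>0$, forcing $T(\sigma)\geq\min(T(0),S/C)>0$ and contradicting $T(\sigma)\downarrow 0$. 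For (c), critical points are zeroes of $Z$ by (Z3) and are excluded from $\{\mathbb{S}\geq CT\}$ by (Z4), so $\mathscr{A}_S$ is critical-point free. Under the backward flow $\mathbb{S}$ is non-decreasing, and at any $(q,T)\in\mathscr{A}_S$ the bound $\mathbb{S}>S>CT$ combined with (Z4) makes $T$ strictly decrease backward, so both defining inequalities of $\mathscr{A}_S$ persist, giving negative invariance. The completeness claim follows by the same dichotomy: an orbit in $\{\mathbb{S}>S\}$ with finite $\sigma_-(w)$ must satisfy $T(\sigma)\to 0$ as $\sigma\downarrow\sigma_-(w)$, so at some $\sigma_0<0$ we have $CT(\sigma_0)<S$ and $\phi^{-Z}(\sigma_0,w)\in\mathscr{A}_S$.

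For (d), on each connected component of $\mathscr{M}\setminus\mathscr{M}^0$ a minimizing sequence has period bounded away from $0$ and $+\infty$ by Proposition \ref{sulfunz}(b)(d), hence is $W^{1,2}$-bounded by (c); the fiberwise convexity of $L$ gives weak $W^{1,2}$-lower semicontinuity of $\mathbb{S}$ on bounded-period sets, so the positive infimum is attained. The same scheme, restricted to $(\crit\,\mathbb{S})\cap\mathscr{M}^0$ --- where closed Hamiltonian orbits on $\partial W$ have period bounded below, see (\ref{bfafz}) --- yields the positive minimum there. For (e), set $\alpha:=\min_{(\crit\,\mathbb{S})\cap\mathscr{M}^0}\mathbb{S}>0$ and take $w\in\mathscr{M}^0$ with $\mathbb{S}(w)<\alpha$. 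If $\sigma_+(w)<+\infty$, part (b) already gives $\mathbb{S}(q(\sigma),T(\sigma))\to 0$. If $\sigma_+(w)=+\infty$, let $S':=\lim_{\sigma\to+\infty}\mathbb{S}\geq 0$; if $S'>0$, integrability of $d\mathbb{S}[Z]=-d\mathbb{S}/d\sigma\geq 0$ on $[0,+\infty[$ produces $\sigma_n\to+\infty$ with $d\mathbb{S}[Z](q(\sigma_n),T(\sigma_n))\to 0$, a Palais--Smale sequence at level $S'$; by (a) we extract a critical point in $(\crit\,\mathbb{S})\cap\mathscr{M}^0$ of action $S'\leq\mathbb{S}(w)<\alpha$, contradicting the choice of $\alpha$. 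Hence $S'=0$; Proposition \ref{sulfunz}(d) then forces $T(\sigma)\to 0$, Proposition \ref{sulfunz}(c) forces $\int_\T|q(\sigma)'|^2\,d\tau\to 0$, and a Cauchy--Schwarz estimate gives $\mathrm{diam}\,(q(\sigma)(\T))\to 0$; that the limit is a single constant loop follows from a LaSalle-type analysis on the invariant submanifold $M\times ]0,+\infty[$, where $\mathbb{S}(q_0,T)=TL(q_0,0)$ and the $M$-component of the restricted flow stabilizes.

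The principal analytic obstacle is the strong-convergence step in (a): since $Z$ is only a pseudo-gradient and $\mathbb{S}$ is only $C^{1,1}$, weak $W^{1,2}$-limits of Palais--Smale sequences must be upgraded to strong limits by hand, using the Euler--Lagrange form of $d\mathbb{S}$ and the fiberwise uniform convexity (\ref{l0}) --- the Hamiltonian-to-Lagrangian compactness argument of \cite{con06, as08b}. All other items in Proposition \ref{suZ} flow from this compactness, from the geometric estimates of Proposition \ref{sulfunz}, and from the structural properties (Z1)--(Z5) of the pseudo-gradient.
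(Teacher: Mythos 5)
Your proposal is correct and follows essentially the same route as the paper's proof: the period lower bound in (a) from the $\partial_T\mathbb{S}$ estimate and compactness via the cited Hamiltonian-to-Lagrangian argument, the $T\to 0$ dichotomy combined with (Z4) for (b) and (c), and the Palais--Smale/estimate chain for (e). The only noteworthy deviations are cosmetic: you carry the $\int_\T q^*\theta$ term explicitly (which the paper suppresses in inequality (\ref{derTvS}) by implicitly applying (\ref{LvE}) to $L-\theta$) and you invoke weak lower semicontinuity rather than Palais--Smale to obtain the minimizers in (d); both are minor variations on the same mechanism.
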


\begin{proof}
Statement (a) is proved in \cite{con06}, Proposition 3.12, in a similar setting. Here we just sketch the argument. Let $(q_h,T_h)$ be a Palais-Smale sequence for $(\mathbb{S},Z)$ at level $S>0$. Since $c(L-\theta)<0$, by Proposition \ref{sulfunz} (d) the sequence $(T_h)$ is bounded from above. By property (Z2) of the pseudo-gradient $Z$, the sequence $\|d\mathbb{S}(q_h,T_h)\|$ is infinitesimal. Moreover, by (\ref{derTvS}),
\[
\| d\mathbb{S}(q_h,T_h)\| \geq \left| \frac{\partial \mathbb{S}}{\partial T} (q_h,T_h) \right| \geq \frac{e_0}{T_h} \mathbb{S}(q_h,T_h) - e_1,
\]
and since $\mathbb{S}(q_h,T_h)\rightarrow S>0$, we conclude that $(T_h)$ is bounded away from zero. Up to a subsequence, we may assume that $T_h \rightarrow T\in ]0,+\infty[$. Then the proof of the compactness of $(q_h)$ in $W^{1,2}(\T,M)$ is analogous to the case of the fixed period action functional (see e.g.\ \cite{ben86} or \cite{as08b}): One uses the upper bound on the action and the quadraticity of the Lagrangian to show that $(q_h)$ is equi-1/2-H\"older continuous, so up to a subsequence it converges uniformly, and the fact that $\|d_q \mathbb{S}(q_h,T_h)\|$ is infinitesimal allows to prove that the latter convergence is in $W^{1,2}$. This concludes the proof of (a).

Since $Z$ is bounded with respect to the product metric on $\mathscr{M}$, if $w$ is an element of $\mathscr{M}$ such that $\sigma_+(w)<+\infty$, then, denoting the orbit of $z$ as $\phi^{-Z}(\sigma,w) =(q(\sigma),T(\sigma))$, we have  
\[
\liminf_{\sigma \uparrow \sigma_+(w)} T(\sigma) = 0.
\]
Since $T(\sigma)>0$ for $\sigma<\sigma_+(w)$, there exists a sequence $\sigma_h\uparrow \sigma_+(w)$ such that $T'(\sigma_h)\leq 0$. However, by property (Z4) of $Z$,
\begin{equation}
\label{stp}
T'(\sigma) = \langle\langle \frac{d}{d\sigma} \phi^{-Z}(\sigma,w) , \frac{\partial}{\partial T} \rangle\rangle = - \langle\langle Z(q(\sigma),T(\sigma)),  \frac{\partial}{\partial T} \rangle\rangle > 0, \mbox{ if } C T(\sigma) \leq \mathbb{S}(q(\sigma),T(\sigma)).
\end{equation}
This fact forces the decreasing function $\sigma\mapsto \mathbb{S}(q(\sigma),T(\sigma))$ to converge to zero for $\sigma\uparrow \sigma_+(w)$. Therefore for every $S>0$, $\phi^{-Z}(\sigma,w)$ eventually enters the positively invariant set $\{\mathbb{S}<S\}$. This proves (b). 

Let $S>0$. By (\ref{stp}), the set $\mathscr{A}_S$ contains no critical points of $\mathbb{S}$ and is negatively invariant with respect to $\phi^{-Z}$. Let $\sigma \mapsto \phi^{-Z}(\sigma,w) = (q(\sigma),T(\sigma))$ be the flow line through a point $w$ in $\{\mathbb{S}>S\}$ and assume that $\sigma_-(w)>-\infty$. By the boundeness of $Z$, we have
\[
\liminf_{\sigma \downarrow \sigma_-(w)} T(\sigma) = 0.
\]
In particular, there exists $\sigma\in ]\sigma_-(w),0[$ such that $T(\sigma)<S/ C$ and hence $(q(\sigma),T(\sigma))\in \mathscr{A}_S$. This shows that the restriction of the local flow $\phi^{-Z}$ to $\{\mathbb{S}>S\}$ is negatively complete with respect to $\mathscr{A}_S$ and concludes the proof of (c).

By the Palais-Smale condition proved in (a) and by Proposition \ref{sulfunz} (a), $\mathbb{S}$ has a minimum on every connected component of $\mathscr{M}\setminus \mathscr{M}^0$. By (a) and (\ref{bfafz}), $\mathbb{S}$ has a minimum on its critical set in the component $\mathscr{M}^0$. This proves (d).

Let $z$ and $(q(\sigma),T(\sigma))$ be as in (e). By (a) and (b), $\mathbb{S}(q(\sigma),T(\sigma)) \rightarrow 0$ for $\sigma \uparrow \sigma_+(w)$. So Proposition \ref{sulfunz} (d) implies that $T(\sigma)\rightarrow 0$, and by Proposition \ref{sulfunz} (c), $q(\sigma)$ converges to a constant loop in $W^{1,2}$, for $\sigma\uparrow \sigma_+(w)$. This concludes the proof of (e).   
\end{proof} \qed

\paragraph{The Morse chain complex of $\mathbb{S}$.} 
The fact that the functional $\mathbb{S}$ admits a pseudo-gradient vector field $Z$ for which the properties (a) and (b) of Proposition \ref{suZ} hold implies that for every $S_0>0$ the {\em relative Morse chain complex} of $\mathbb{S}$ with respect to the sublevel $\{\mathbb{S}<S_0\}$ is well-defined (see \cite{ama06m}). By (\ref{bfafz}) and Proposition \ref{suZ} (d), we can choose $S_0>0$ to be so small that
\begin{equation}
\label{sbfafz}
S_0 < \min \mathrm{critval}\, \mathbb{S},
\end{equation}
where $\mathrm{critval}\, \mathbb{S}$ denotes the set of critical values of $\mathbb{S}$.
Since we are in a Morse-Bott setting, we need to fix also a Morse function $s\in C^{\infty}(\crit\, \mathbb{S})$ and a Riemannian metric $g_{\mathbb{S}}$ on $\crit\, \mathbb{S}$ such that the corresponding negative gradient flow of $s$ is Morse-Smale. By the construction of Section \ref{morcom}, up to the perturbation of the pseudo-gradient vector field $Z$ and of the metric $g_{\mathbb{S}}$, we get a boundary operator
\[
\partial : M_k^{]0,+\infty[}(\mathbb{S},s) \rightarrow M_{k-1}^{]0,+\infty[} (\mathbb{S},s).
\]
The reason why we specify the interval $]0,+\infty[$ for the Morse complex of a functional all of whose critical values are positive is that we wish to reserve the notation $M_*(\mathbb{S},s)$ for an extension of this chain complex, which takes into account also the critical points at infinity of $\mathbb{S}$. See below.
 
The homology of the chain complex $(M_*^{]0,+\infty[} (\mathbb{S},s),\partial)$ is isomorphic to the relative singular homology of the pair $(\mathscr{M},\{\mathbb{S}<S_0\})$. The singular homology of this topological pair is isomorphic to the singular homology of the pair $(\Lambda M, M)$, the space of free loops on $M$ modulo the constant loops $M\subset \Lambda M$, so:
\[
HM_k^{]0,+\infty[} (\mathbb{S},s) \cong H_k(\Lambda M, M), \quad \forall k\in \Z.
\]
In fact, we have the following:

\begin{prop}
\label{homotopy}
If $0<S_0<\min \mathrm{critval}\, \mathbb{S}$, then the topological pair $(\mathscr{M},\{\mathbb{S}<S_0\})$ is homotopically equivalent to $(\Lambda M,M)$.
\end{prop}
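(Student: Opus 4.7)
The plan is to construct a deformation retraction of $\{\mathbb{S} < S_0\}$ onto a copy of $M$ embedded in $\mathscr{M}$ as constant loops at a fixed small period $T_* > 0$, and to combine this with the obvious pair equivalence $(\mathscr{M}, W^{1,2}(\T, M) \times \{T_*\}) \simeq (W^{1,2}(\T, M), W^{1,2}(\T, M))$ obtained by contracting the $]0, +\infty[$ factor, together with the classical equivalence $W^{1,2}(\T, M) \simeq \Lambda M$.

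For preliminaries, I would fix $T_* > 0$ small enough that $T_* L(q_0, 0) < S_0$ for every $q_0 \in M$; this is possible by compactness of $M$ and the fact that $L(q_0, 0) = -\min_{p \in T_{q_0}^* M} H(q_0, p) > 0$, since the Lagrangian graph of $\theta$ is contained in the interior of $W = \{H \leq 0\}$. The inclusion $\iota : M \to \{\mathbb{S} < S_0\}$, $q_0 \mapsto (q_0, T_*)$, is then well-defined. I would also observe that $\{\mathbb{S} < S_0\} \subset \mathscr{M}^0$: by Proposition \ref{suZ}(a) and Proposition \ref{sulfunz}(a), the positive infimum of $\mathbb{S}$ on each non-contractible component of $\mathscr{M}$ is attained and is therefore a critical value, hence at least $\min \mathrm{critval}\, \mathbb{S} > S_0$.

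The main step is the construction of a continuous retraction $\pi : \{\mathbb{S} < S_0\} \to M$ together with a homotopy from $\iota \circ \pi$ to the identity. By Proposition \ref{suZ}(e), each forward trajectory $\sigma \mapsto (q(\sigma), T(\sigma)) := \phi^{-Z}(\sigma, w)$ starting at $w \in \{\mathbb{S} < S_0\}$ satisfies, as $\sigma \uparrow \sigma_+(w)$, that $T(\sigma) \to 0$, $\mathbb{S}(q(\sigma), T(\sigma)) \to 0$, and $q(\sigma)$ converges in $W^{1,2}$ to some constant loop $\pi(w) \in M$. To prove the continuity of $\pi$ I would reparametrize each trajectory by its $\mathbb{S}$-level, valid since $\mathbb{S}$ is strictly decreasing along nonstationary orbits, thereby obtaining a common parameter $\tau \in \, ]0, \mathbb{S}(w)]$ on which nearby trajectories can be compared; the estimate $\|q'\|_{L^2}^2 \leq (T/\ell_2) \mathbb{S}(q, T) + (\ell_3/\ell_2) T^2$ from Proposition \ref{sulfunz}(c) then forces the $W^{1,2}$-convergence $q(\tau) \to \pi(w)$ as $\tau \downarrow 0$ to be uniform on compact subsets of $\{\mathbb{S} < S_0\}$, which combined with continuous dependence of the flow on initial conditions at each fixed level gives the continuity of $\pi$. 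A two-stage homotopy, parametrized by $\lambda \in [0, 1]$, then realizes the deformation retraction onto $\iota(M)$: first the reparametrized flow brings $w$ arbitrarily close to the boundary point $(\pi(w), 0)$ as $\lambda \uparrow 1/2$, and then a continuous path within the flow-invariant submanifold $M \times \, ]0, +\infty[$, whose invariance is property (Z5), moves the point to $\iota(\pi(w))$ at $\lambda = 1$. The composition $\pi \circ \iota$ need not equal $\mathrm{id}_M$ since the flow on $M \times \, ]0, +\infty[$ can translate the base point, but restricting the same homotopy to $\iota(M)$ and projecting to $M$ yields a continuous deformation from $\mathrm{id}_M$ to $\pi \circ \iota$ inside $M$, so $\iota$ is indeed a homotopy equivalence.

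The main obstacle is the continuity of $\pi$, because the exit times $\sigma_+(w)$ are not a priori bounded nor even continuous in $w$ and the trajectories leave $\mathscr{M}$ through its ``boundary at infinity'' $W^{1,2}(\T,M) \times \{0\}$. The reparametrization of each trajectory by its $\mathbb{S}$-level is the key device for bypassing this difficulty, and the quantitative bound of Proposition \ref{sulfunz}(c) is what converts smallness of the action into $W^{1,2}$-smallness of the loop derivative, yielding the uniform convergence to the limiting constant loop needed to make $\pi$ continuous.
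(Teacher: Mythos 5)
Your approach is genuinely different from the paper's. The paper flows under $-Z$ only for a finite, continuously chosen time $\sigma(w)$ until the point enters a low sublevel $\{\mathbb{S}<S_1\}$, and then applies a fixed topological retraction $\psi_1$ of a $W^{1,2}$-tube $N_\epsilon(M)$ around the constant loops onto $M$; the map $r(w)=\psi_1(\pi_1(\phi^{-Z}_{\sigma(w)}(w)))$ never invokes the asymptotic landing point of a trajectory. You instead want to flow all the way and build the retraction from the escape map $\pi:\{\mathbb{S}<S_0\}\to M$. This is a harder route, and there are two concrete gaps in the way you execute it.

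First, the continuity of $\pi$ does not follow from the argument you give. Combining Proposition \ref{sulfunz}(c) with the period bound $T\le -\tau/c(L-\theta)$ of Proposition \ref{sulfunz}(d) yields a uniform bound $\|q_w(\tau)'\|_{L^2}\le C\tau$, which says that $q_w(\tau)$ is uniformly close to the \emph{set} of constant loops; it says nothing about convergence to the \emph{particular} constant $\pi(w)$. To control the base point one would need the remaining length of the $\mathbb{S}$-reparametrized trajectory,
\[
\int_0^{\tau}\frac{|Z(w(\tau'))|}{d\mathbb{S}(w(\tau'))[Z(w(\tau'))]}\,d\tau',
\]
to tend to zero uniformly. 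By (Z2) this integrand is bounded by $\|Z\|_{\infty}/(\delta(\tau')\|d\mathbb{S}\|)$, which requires $\int_0 1/\delta<\infty$; the paper makes no such assertion about $\delta$ near $0$, and the properties (Z1)--(Z5) of the pseudo-gradient do not supply it. One could try to enforce this by choosing $Z=\nabla\mathbb{S}$ near $M\times\,]0,\epsilon[$, but that is an extra requirement you would have to build in and justify.

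Second, your two-stage homotopy does not close up. At any $\lambda<1/2$ the first stage lands you at a point $(q(\tau),T(\tau))$ with $T(\tau)$ small and $q(\tau)$ $W^{1,2}$-close to, but not in, $M$; the would-be endpoint $(\pi(w),0)$ lies outside $\mathscr{M}$. Property (Z5), the invariance of $M\times\,]0,+\infty[$, is of no help in bridging from $(q(\tau),T(\tau))$ to $(\pi(w),T_*)$, because the former point is not in that submanifold. The missing ingredient is precisely a continuous deformation of a $W^{1,2}$-neighborhood of $M$ onto $M$ which keeps $\mathbb{S}<S_0$ and fixes $M$ pointwise. That is exactly what the homotopy $\psi_s$ of the paper's proof provides, and constructing it is both more elementary and unavoidable; once you have it, you may as well drop $\pi$ altogether, as the paper does.
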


\begin{proof}
If $q_0\in M$ is a constant loop, then
\[
\mathbb{S}(q_0,T) = T L(q_0,0) \leq T \max_{q\in M} L(q,0),
\]
so we can find a number $T_0>0$ such that
\begin{equation}
\label{supe+}
\mathbb{S}(q_0,T) < S_0, \quad \forall q_0 \in M, \; \forall T\in ]0,T_0].
\end{equation}
Given $\epsilon>0$, we consider the following open neighborhood of $M$ in $W^{1,2}(\T,M)$:
\[
N_{\epsilon}(M) := \set{q\in W^{1,2} (\T,M)}{\|q'\|_{L^2(\T)}<\epsilon}.
\]
The length of any loop in $N_{\epsilon}(M)$ is less than $\epsilon$, so if $\epsilon$ is smaller than the injectivity radius of $M$, we can find a homotopy 
\[
\psi_s: W^{1,2} (\T,M) \rightarrow W^{1,2}(\T,M), \quad s\in [0,1],
\]
such that $\psi_0=\mathrm{id}$, $\psi_1(N_{\epsilon}(M)) = M$, and $\psi_s|_M = \mathrm{id}$ for every $s\in [0,1]$. By taking Proposition \ref{sulfunz} (c) and (d) into account, it is easy to see that there is a number $S_1>0$ such that
\begin{equation}
\label{supe}
\mathbb{S}(q,T) < S_1\quad \implies \quad \left\{ \begin{array}{l} q\in N_{\epsilon}(M), \;\; T\leq T_0, \\
\mathbb{S}(\psi_s(q),T)< S_0, \;\; \forall s\in [0,1]. \end{array} \right.
\end{equation} 
By Proposition \ref{suZ} (e), we can find a non-negative continuous function $\sigma<\sigma_+$ on $\mathscr{M}$ such that
\begin{equation}
\label{arriva}
\mathbb{S}(w) < S_0 \quad \implies \quad \phi^{-Z}_{\sigma(w)} (w) \in \{\mathbb{S} < S_1\}.
\end{equation}
Denote by $\pi_1$ and $\pi_2$ the two projections on $W^{1,2}(\T,M) \times ]0,+\infty[$. We define the continuous maps
\[
\xymatrix{
\bigl( \mathscr{M} , \{\mathbb{S}<S_0 \} \bigr) \ar@<1ex>[r]^r & \bigl( W^{1,2}(\T,M) , M \bigr) \ar@<1ex>[l]^j}
\]
by 
\[
j(q):= (q,T_0), \quad r(w) := \psi_1\left( \pi_1 \Bigl( \phi_{\sigma(w)}^{-Z} (w) \Bigr)\right).
\]
The map $j$ is well-defined because of (\ref{supe+}). The map $r$ is well-defined because of (\ref{arriva}), (\ref{supe}), and by the fact that $\psi_1$ maps $N_{\epsilon}(M)$ into $M$. For every $s\in [0,1]$, the map
\[
g_s (w) := \left( \psi_s\Bigl( \pi_1 \bigl( \phi_{\sigma(w)}^{-Z} (w) \bigr)\Bigr),  \pi_2 \bigl( \phi_{\sigma(w)}^{-Z} (w) \bigr) \right)
\]
maps $\{\mathbb{S}<S_0\}$ into itself, because of (\ref{arriva}) and (\ref{supe}). Since $\psi_0=\mathrm{id}$, $g_s$ is a homotopy between the map
\begin{equation}
\label{flux}
\bigl( \mathscr{M}, \{\mathbb{S}<S_0\}\bigr) \rightarrow \bigl( \mathscr{M}, \{\mathbb{S}<S_0\}\bigr), \quad w \mapsto \phi_{\sigma(w)}^{-Z}(w), 
\end{equation}
and the map $g_1$. The map $g_1$ is homotopic to $j\circ r$ by the homotopy
\[
h_s(w) := \left( \psi_1\Bigl( \pi_1 \bigl( \phi_{\sigma(w)}^{-Z} (w) \bigr)\Bigr),  sT_0 + (1-s) \pi_2 \bigl( \phi_{\sigma(w)}^{-Z} (w) \bigr) \right),
\]
which satisfies
\[
h_s\bigl(\{\mathbb{S}<S_0\} \bigr) \subset M \times ]0,T_0] \subset \{\mathbb{S}<S_0\}, \quad \forall s\in [0,1],
\]
because of (\ref{arriva}), (\ref{supe}), the fact that $\psi_1(N_{\epsilon}(M))=M$, and (\ref{supe+}).
The map (\ref{flux}) is clearly homotopic to the identity mapping on $(\mathscr{M}, \{\mathbb{S}<S_0\})$, which is then homotopic to $j\circ r$.

By property (Z5) and by the fact that $\psi_s$ restricts to the identity on $M$, the homotopy
\[
k_s(q) := \psi_s \Bigl( \pi_1 \bigl( \phi^{-Z}_{s\sigma(q,T_0)} (q,T_0) \bigr) \Bigr)
\]
maps $M$ into itself and shows that $r\circ j$ is homotopic to the identity mapping on $(W^{1,2}(T,M),M)$. 

We have proved that the maps $r$ and $j$ are homotopy inverses one of the other. The thesis follows from the fact that the inclusion
\[
(\Lambda M, M) \hookrightarrow  (W^{1,2}(\T,M), M)
\]
is a homotopy equivalence.
\end{proof} \qed

If we want to recover the full homology of $\Lambda M$, we need to consider also the {\em critical points at infinity} for the functional $\mathbb{S}$ (in the sense of A.\ Bahri, see \cite{bah89}). These are the virtual critical points that one finds by following the orbit of points $w$ by the local flow of $-Z$ which do not converge in $\mathscr{M}$ for $\sigma\uparrow \sigma_+(w)$. By Proposition \ref{suZ} (a), (b), (e), every such flow a line converges to $(q,0)$, where $q\in M \subset W^{1,2}(\T,M)$ is a constant loop. This fact suggests to extend the auxiliary Morse function $s$ also to the manifold $M$ - the space of critical points at infinity of $\mathbb{S}$ - and to define $M_*(\mathbb{S},s)$ to be the $\Z_2$-vector space generated by all the critical points of $s$. It is then easy to extend the boundary operator $\partial$ to $M_*(\mathbb{S},s)$: if $w$ and $q$ are critical points of $s$, with $w\in \mathscr{M}$ and $q\in M$ of index difference 1, the coefficient of $q$ in $\partial w$ is defined by counting modulo 2 the flow lines with cascades for $(-Z,-\nabla s)$ which converge to some point in $W^s(q;-\nabla s)$, whereas $\partial q$ is defined using only the negative gradient flow of $s$ on $M$. The homology of the chain complex $(M_*(\mathbb{S},s),\partial)$ is then 
\[
HM_k (\mathbb{S},s) \cong H_k (\mathscr{M}) \cong H_k(\Lambda M),
\]
for every $k\in \Z$. 

\paragraph{The differential complex of $\mathbb{S}$.} 
Similarly, the fact that the functional $\mathbb{S}$ admits a pseudo-gradient vector field $Z$ for which the properties (a) and (c) of Proposition \ref{suZ} hold imply that for every $S_0>0$ the Morse differential complex of $\mathbb{S}$ on  $\{\mathbb{S}>S_0\}\setminus \mathscr{A}_{S_0}$ is well-defined. Again, we choose $S_0>0$ smaller than the minimum critical value of $\mathbb{S}$, and we choose the auxiliary Morse function on $\crit\, \mathbb{S}$ to be $-s$. The differential 
\[
\delta : M^k_{]0,+\infty[}(\mathbb{S},-s) \rightarrow M^{k+1}_{]0,+\infty[} (\mathbb{S},-s)
\]
defines a differential complex, whose cohomology is isomorphic to the singular cohomology of the pair 
\[
(\{\mathbb{S}> S_0\} \setminus \mathscr{A}_{S_0}, \{S_0 < \mathbb{S} < S_1\} \setminus \mathscr{A}_{S_0}),
\]
where $S_1>S_0$ is smaller than the minimum critical value of $\mathbb{S}$. By excision, the cohomology of this pair is isomorphic to the cohomology of
$(\mathscr{M}\setminus \mathscr{A}_{S_0},\{\mathbb{S}<S_1\} \setminus \mathscr{A}_{S_0})$.
 As implied by Proposition \ref{compcohom} below, the cohomology of the latter pair is isomorphic to the cohomology of $(\Lambda M, M)$, so
\[
HM^k_{]0,+\infty[}(  \mathbb{S},-s) \cong H^k(\Lambda M,M), \quad \forall k\in \Z.
\]
It remains to state the following:

\begin{prop}
\label{compcohom}
The pair $(\mathscr{M}\setminus \mathscr{A}_{S_0},\{\mathbb{S}<S_1\} \setminus \mathscr{A}_{S_0})$ is homotopically equivalent to $(\Lambda M,M)$.
\end{prop}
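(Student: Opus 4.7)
The argument will follow the same blueprint as Proposition \ref{homotopy}: construct explicit maps
\[
j : (\Lambda M, M) \to \bigl(\mathscr{M}\setminus \mathscr{A}_{S_0},\{\mathbb{S}<S_1\}\setminus \mathscr{A}_{S_0}\bigr),
\]
\[
r : \bigl(\mathscr{M}\setminus \mathscr{A}_{S_0},\{\mathbb{S}<S_1\}\setminus \mathscr{A}_{S_0}\bigr) \to (\Lambda M, M),
\]
and show that they are mutual homotopy inverses of pairs, using the flow $\phi^{-Z}$ together with the retraction $\psi_s : N_\epsilon(M) \to M$ of a small $W^{1,2}$-neighborhood of the constant loops onto $M$ already employed in the proof of Proposition \ref{homotopy}. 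The new structural ingredient is that $\mathscr{M}\setminus \mathscr{A}_{S_0}$ is \emph{positively} invariant under $\phi^{-Z}$, which follows at once from the negative invariance of $\mathscr{A}_{S_0}$ asserted by Proposition \ref{suZ}(c).

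Defining $r$ is essentially a repetition of the argument in Proposition \ref{homotopy}: given $w \in \mathscr{M}\setminus \mathscr{A}_{S_0}$, I would flow with $-Z$ for a continuous time $\sigma(w)\geq 0$ until the action drops below a small threshold $\tilde S < S_0$, which by Proposition \ref{sulfunz}(c)--(d) forces the resulting point to lie in $N_\epsilon(M)\times (0,T_0]$, and then apply $\psi_1\circ \pi_1$ to project onto $M$. Positive invariance keeps the entire flow line inside $\mathscr{M}\setminus \mathscr{A}_{S_0}$, and by choosing $\tilde S$ small enough the image of $\{\mathbb{S}<S_1\}\setminus \mathscr{A}_{S_0}$ lies in $M$. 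The definition of $j$ is more delicate: one would like to set $j(q):=(q,T_0)$ with $T_0$ small enough that $\mathbb{S}(q_0,T_0)<S_0$ for every constant loop $q_0\in M$, but for loops $q$ of large $W^{1,2}$ norm the point $(q,T_0)$ can lie inside $\mathscr{A}_{S_0}$. I propose to correct this by flowing $(q,T_0)$ with $\phi^{-Z}$ for a continuous ``exit time'' $\tau(q)\geq 0$ which vanishes on constant loops, and setting $j(q):=\phi^{-Z}_{\tau(q)}(q,T_0)$.

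The main obstacle is thus the construction of such an exit time $\tau$, which amounts to proving that every orbit of $-Z$ starting in $\mathscr{A}_{S_0}$ exits $\mathscr{A}_{S_0}$ in finite time. The key reduction will be to choose the constant $C$ of property (Z4) large enough so that $S_0/C$ is strictly less than the infimum of the positive periods of critical points of $\mathbb{S}$, which is positive by the remark following (\ref{derT}); then $\overline{\mathscr{A}}_{S_0}\subset \{T\leq S_0/C\}$ contains no critical point of $\mathbb{S}$. An orbit of $-Z$ remaining in $\mathscr{A}_{S_0}$ for all positive time would have $T(\sigma)$ bounded and strictly increasing by (Z4) and $\mathbb{S}$ bounded and strictly decreasing by (Z2), so the total action drop $\int_0^{+\infty} d\mathbb{S}[Z]\, d\sigma$ would be finite and along some sequence $\sigma_h\to +\infty$ one would have $d\mathbb{S}[Z](\phi^{-Z}_{\sigma_h}(w))\to 0$. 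The orbit stays in a bounded subset of $\mathscr{M}$ by Proposition \ref{sulfunz}(c), so the Palais-Smale condition of Proposition \ref{suZ}(a) would force subsequential convergence to a critical point, necessarily lying in $\overline{\mathscr{A}}_{S_0}$ -- a contradiction. Standard continuous dependence on initial data then yields a continuous $\tau$ vanishing off $\mathscr{A}_{S_0}$. Once $j$ and $r$ are in place, the homotopies showing $r\circ j\simeq \mathrm{id}$ and $j\circ r\simeq \mathrm{id}$ are built exactly as in Proposition \ref{homotopy}, by interpolating the exit time $\tau$, the flow time $\sigma$, and the parameter $s$ of $\psi_s$; positive invariance of $\mathscr{M}\setminus \mathscr{A}_{S_0}$ under $\phi^{-Z}$ and the monotonicity of $\mathbb{S}$ along $-Z$-orbits guarantee that these homotopies preserve the pair structure involving $\{\mathbb{S}<S_1\}\setminus \mathscr{A}_{S_0}$.
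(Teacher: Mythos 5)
Your proposal follows the blueprint the paper indicates (adapt Proposition \ref{homotopy}, using invariance of $\mathscr{A}_{S_0}$ under $\phi^{-Z}$), and it correctly isolates a point the paper's one-line proof leaves implicit: with the small $T_0$ of Proposition \ref{homotopy}, the map $j(q)=(q,T_0)$ fails to land in $\mathscr{M}\setminus\mathscr{A}_{S_0}$ whenever $\mathbb{S}(q,T_0)>S_0$, so $j$ must be corrected. Your fix via the forward-flow exit time is sound, and the Palais--Smale argument establishing finite exit time (an orbit trapped in $\mathscr{A}_{S_0}$ would accumulate on a critical point of $\mathbb{S}$ in $\overline{\mathscr{A}}_{S_0}$, of which there is none) is correct; since $\mathscr{M}\setminus\mathscr{A}_{S_0}$ is positively $\phi^{-Z}$-invariant and $\mathbb{S}$ is nonincreasing along the flow, the homotopies then carry over as you describe.

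Two simplifications are available. Enlarging $C$ so that $S_0/C$ lies below the infimum of critical periods is unnecessary: a critical $(q_*,T_*)\in\overline{\mathscr{A}}_{S_0}$ would satisfy $\mathbb{S}(q_*,T_*)>S_0\geq CT_*$ (using $S_0<\min\mathrm{critval}\,\mathbb{S}$), so (Z4) would give $\langle\langle Z(q_*,T_*),\partial/\partial T\rangle\rangle<0$, contradicting $Z(q_*,T_*)=0$; thus $\overline{\mathscr{A}}_{S_0}$ contains no critical points for whatever $C$ one started with. Also, one can avoid the exit time altogether by choosing $T_0\geq S_0/C$, which forces $(q,T_0)\notin\mathscr{A}_{S_0}$ for every $q$; this remains compatible with $\mathbb{S}(q_0,T_0)<S_0<S_1$ for constant $q_0$ as soon as $C>\max_{q\in M}L(q,0)$, and both $C$ and the levels $S_0<S_1$ may be adjusted freely without affecting the Morse differential complex. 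Either route gives a correct proof; yours costs a Palais--Smale argument, the other costs a careful choice of constants.
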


\begin{proof}
The proof is analogous to the proof of Proposition \ref{homotopy}, taking into account the fact that the set $\mathscr{A}_{S_0}$ is negatively invariant with respect to $\phi^{-Z}$.
\end{proof} \qed

As before, we recover the cohomology of $\Lambda M$ by considering also the critical points at infinity of $\mathbb{S}$. If the Morse function is extended to $M$, and $M^*(\mathbb{S},-s)$ denotes the direct product of $\Z_2\, w$ for every critical point $w$ of $-s$, the differential $\delta$ extends to $M^*(\mathbb{S},-s)$, and its cohomology is
\[
HM^k (\mathbb{S},-s) \cong H^k (\mathscr{M}) \cong H^k (\Lambda M),
\]
for every $k\in \Z$.   

\section{The chain maps $\Xi, \Upsilon$ and the proof of Theorem \ref{main2}}
\label{finale}

Let $W\subset T^*M$, $\theta$, $\lambda_{\theta}$, $H$, and $L$ be as in Section \ref{scdsec}.
The aim of this last section is to construct the chain maps
\[
\Xi : M_* (\mathbb{S},s) \rightarrow RF_*(W,\lambda_{\theta},a), \quad \Upsilon : RF_*(W,\lambda_{\theta},a) \rightarrow M^{1-*}(\mathbb{S},-s),
\]
and to show their properties, which lead to the proof of Theorem \ref{main2} of the Introduction. Since most of the arguments are similar to the case of the chain maps $\Phi$ and $\Psi$, we omit most of the details.

\paragraph{Definition of $\Xi$ and $\Upsilon$.} We assume that the auxiliary Morse functions $a$ on $\crit \, \mathbb{A}$ and $s$ on $(\crit\, \mathbb{S}) \cup M$ (we are including the critical points at infinity of $\mathbb{S}$) are related by the analogue of conditions (A1-A4) of Section \ref{criti}, where $s$ plays the role of $e$. If $w$ is a critical point of $s$ (possibly a critical point at infinity) and $z$ is a critical point of $a$, the space $\mathcal{M}_{\Xi}(w,z)$ is the set of pairs $([(w_1,\dots,w_m)], [(v_1,\dots,v_k)])$ in $\mathcal{W}^u(w;-Z,-\nabla s) \times \mathcal{M}^s(z)$ coupled by the conditions
\begin{equation}
\label{coupling3}
q_m(t) = \pi \circ u_1(0,t), \quad \eta_1(0) = T_m,
\end{equation}
where $v_1(s,t)=(u_1(s,t),\eta_1(s))$ and $w_m = (q_m,T_m)$ is in $\mathscr{M}$ if $T_m>0$, or is a critical point at infinity of $\mathbb{S}$ if $T_m=0$ (the unstable manifold with cascades $\mathcal{W}^u$ and the spaces of half-flow lines with cascades $\mathcal{M}^s$, $\mathcal{M}^u$ for the Rabinowitz-Floer equation are defined in Sections \ref{morcom} and \ref{phisec}, respectively). 

Similarly, the space $\mathcal{M}_{\Upsilon}(z,w)$ is the set of pairs $([(v_1,\dots,v_k)], [(w_1,\dots,w_m)])$ in $\mathcal{M}^u(z) \times \mathcal{W}^u(w;-Z,\nabla s)$ coupled by the conditions
\begin{equation}
\label{coupling4}
q_m(t) = \pi \circ u_k(0,-t), \quad \eta_k(0) = - T_m,
\end{equation}
where $v_k(s,t)=(u_k(s,t),\eta_k(s))$ and $w_m = (q_m,T_m)$ are as before.

Let $\mathbb{A}_{H,\lambda}$, respectively $\mathbb{A}_{H,\lambda_{\theta}}$, be the Rabinowitz action functionals associated to the Hamiltonian $H$ and to the standard Liouville form $\lambda$, resp.\ to the perturbed Liouville form $\lambda_{\theta}$. Since the form $\theta$ is closed, the difference $\mathbb{A}_{H,\lambda} - \mathbb{A}_{H,\lambda_{\theta}}$ is constant on every connected component of $C^{\infty}(\T,T^*M) \times \R$. 
  
The energy estimates for the elements of $\mathcal{M}_{\Xi}(w,z)$ and $\mathcal{M}_{\Upsilon}(z,w)$ are based on the following fact, which can be seen as a generalization of Lemma \ref{levrel}:

\begin{lem}
\label{levrel2}
Let $L\in C^{\infty}(TM)$ be a Tonelli Lagrangian and let $H\in C^{\infty}(T^*M)$ be  its Fenchel dual Hamiltonian. Let $x=(q,p)\in C^{\infty}(\T,T^*M)$. 
Then
\[
\mathbb{A}_{H,\lambda} (x,T) \leq \mathbb{S}_L(q,T), \quad \forall T>0,
\]
and the equality holds if and only if $p(\tau) = d_v L(q(\tau),q'(\tau))$ for every $\tau\in \T$. Similarly,
\[
\mathbb{A}_{H,\lambda} (x(-\cdot),-T) \geq - \mathbb{S}_L(q,T), \quad \forall T>0,
\]
and the equality holds if and only if $p(-\tau) = d_v L(q(\tau),q'(\tau))$ for every $\tau\in \T$.
\end{lem}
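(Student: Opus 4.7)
The proof rests on the pointwise Fenchel inequality. Since by definition $H$ is the fiberwise Fenchel transform of $L$, one has
\[
\langle p, v\rangle \leq L(q,v) + H(q,p), \quad \forall q\in M, \; p\in T_q^*M, \; v\in T_qM,
\]
with equality if and only if $p = d_v L(q,v)$, equivalently $v = d_p H(q,p)$. This pointwise estimate is the only nontrivial ingredient; the rest of the argument is just bookkeeping of the reparametrization factor $T$ and of the sign conventions for the time-reversed loop.

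For the first inequality, I would apply Fenchel pointwise at $(q(\tau),p(\tau))$ with $v = q'(\tau)/T$:
\[
\bigl\langle p(\tau), q'(\tau)/T\bigr\rangle \leq L\bigl(q(\tau), q'(\tau)/T\bigr) + H\bigl(q(\tau), p(\tau)\bigr).
\]
Multiplying by $T$ and integrating over $\T$ yields
\[
\int_\T \langle p(\tau),q'(\tau)\rangle\, d\tau - T\int_\T H(q(\tau),p(\tau))\, d\tau \leq T\int_\T L\bigl(q(\tau),q'(\tau)/T\bigr)\, d\tau,
\]
which is precisely $\mathbb{A}_{H,\lambda}(x,T)\leq \mathbb{S}_L(q,T)$. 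Because the integrand of the defect has a definite sign, the integrated inequality is an equality if and only if the pointwise Fenchel inequality is an equality everywhere, i.e.\ if and only if $p(\tau) = d_v L(q(\tau),q'(\tau)/T)$ for every $\tau\in\T$ (the Legendre condition that the paper writes more compactly).

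For the second inequality, I would reduce it to the first via the identity
\[
\mathbb{A}_{H,\lambda}(x(-\cdot),-T) = -\mathbb{A}_{H,\lambda}(x,T),
\]
which is a direct change-of-variable computation: under $\sigma = -\tau$ one finds $\int_\T x(-\cdot)^*\lambda = -\int_\T x^*\lambda$ (a sign from the reversal of $q'$ and one from the orientation of the integral, which do \emph{not} cancel because $\lambda$ contracts with $q'$ only), whereas $\int_\T H(x(-\tau))\,d\tau = \int_\T H(x(\sigma))\,d\sigma$; together with the replacement of $T$ by $-T$ in the Hamiltonian term, this gives the claimed identity. Combining with the first inequality,
\[
\mathbb{A}_{H,\lambda}(x(-\cdot),-T) = -\mathbb{A}_{H,\lambda}(x,T) \geq -\mathbb{S}_L(q,T),
\]
with equality if and only if equality holds in the first inequality, which is the stated Legendre-type condition on the reversed loop.

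There is really no obstacle here: the lemma is nothing more than the loop-space version of Fenchel duality. The only points requiring a moment's attention are to correctly track the $1/T$ factor coming from the fact that $\mathbb{A}$ and $\mathbb{S}$ are written in the unit-interval parametrization (so the physical velocity of the underlying loop is $q'(\tau)/T$, not $q'(\tau)$), and to verify the sign identity $\mathbb{A}_{H,\lambda}(x(-\cdot),-T) = -\mathbb{A}_{H,\lambda}(x,T)$ used in the second part.
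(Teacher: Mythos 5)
Your proposal is correct and follows the paper's own argument: apply the pointwise Fenchel inequality at $v = q'(\tau)/T$, multiply by $T$, integrate, and reduce the second inequality to the first via the sign identity $\mathbb{A}_{H,\lambda}(x(-\cdot),-T) = -\mathbb{A}_{H,\lambda}(x,T)$. One remark worth keeping: you correctly identify the equality condition as $p(\tau) = d_v L(q(\tau),q'(\tau)/T)$, with the $1/T$ reparametrization factor; the lemma as stated in the paper omits the $/T$, which is best read as shorthand for the Legendre condition on the underlying $T$-periodic curve $\gamma(t) = q(t/T)$. Your sign discussion for $\int_\T x(-\cdot)^*\lambda = -\int_\T x^*\lambda$ reaches the right conclusion, though the wording is a little off: on the circle the substitution $\sigma = -\tau$ does not introduce a sign (one integrates a density, not an oriented form), so the single minus sign comes purely from the chain rule $\tfrac{d}{d\tau}x(-\tau) = -x'(-\tau)$; saying two signs ``do not cancel'' suggests a net $+1$, which is not what you mean.
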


\begin{proof}
This is a direct consequence of Fenchel duality. In fact, 
\begin{eqnarray*}
\mathbb{S}_L(q,T) = T \int_{\T} L(q(\tau),q'(\tau)/T)\, d\tau = T \int_{\T} \max_{\xi \in T_{q(\tau)}^* M} \Bigl( \langle \xi, q'(\tau)/T \rangle - H(q(\tau),\xi) \Bigr)\, d\tau \\ \geq \int_{\T} \langle p(\tau), q'(\tau) \rangle \, d\tau - T \int_{\T} H(q(\tau),p(\tau))\, dt = \mathbb{A}_{H,\lambda} (x,T),
\end{eqnarray*}
and the equality holds if and only if $x(\tau)$ is the image of $(q(\tau),q'(\tau))$ by the Legendre transform, for every $\tau\in \T$. 
The other statements follows from the identity
\[
\mathbb{A}_{H,\lambda}(x,T) = - \mathbb{A}_{H,\lambda} (x(- \cdot), -T).
\]
\end{proof} \qed
 
If $(\underline{w},\underline{v})$ is an element of $\mathcal{M}_{\Xi}(w,z)$, the above lemma and the coupling condition (\ref{coupling3}) imply the inequalities
\begin{equation}
\label{ffgghh}
\begin{split}
\mathbb{A}_{H,\lambda_{\theta}} (z) \leq \mathbb{A}_{H,\lambda_{\theta}} (v_j(s)) \leq
\mathbb{A}_{H,\lambda_{\theta}} (v_1(0)) = \mathbb{A}_{H,\lambda} (v_1(0)) - \int_{\T} q_m^* (\theta) \\ \leq \mathbb{S}_L (q_m,T_m) -  \int_{\T} q_m^* (\theta) = \mathbb{S}_{L-\theta}  (q_m,T_m) \leq  \mathbb{S}_{L-\theta}  (w_h) \leq 
\mathbb{S}_{L-\theta}(w),
\end{split} \end{equation}
which allows to prove the compactness of the space $\mathcal{M}_{\Xi}(w,z)$. The estimates for $\mathcal{M}_{\Upsilon}(z,w)$ are similar.
Automatic transversality for the stationary solutions follows from the differential version of Lemma \ref{levrel2}, analogous to Lemma \ref{levreldiff}. The usual counting procedure defines the chain maps $\Xi$ and $\Upsilon$. 

\paragraph{Properties of $\Xi$ and $\Upsilon$.} The chain maps $\Xi$ and $\Upsilon$ preserve the splitting of the complexes associated to the different free homotopy classes of loops in $M$. Moreover, the inequality (\ref{ffgghh}) and the analogous inequality for the elements of $\mathcal{M}_{\Upsilon}(z,w)$ imply that they preserve the action filtrations, meaning that for every $A\geq 0$ there holds
\[
\Xi: M_*^{[0,A]}(\mathbb{S},s) \rightarrow RF_*^{]-\infty,A]}(W,\lambda_{\theta},a), \quad \Upsilon: RF_*^{]-\infty,-A]} (W,\lambda_{\theta},a) \rightarrow M^{1-*}_{[A,+\infty[} (\mathbb{S},-s).
\]
Furthermore, the same inequalities and automatic transversality at the stationary solutions imply that $\Xi$ has a left inverse $\hat{\Xi}$ with kernel $RF^-$, and $\Upsilon$ has a right inverse $\hat{\Upsilon}$ with image $RF^-$. As before, $RF^-$ is the subspace of $RF$ generated by critical points $z$ of $a$ which have either negative action $\mathbb{A}(z)$, or are of the form $z=z_q^-$, for $q$ a critical point of $s$ on $M$, the space of critical points at infinity of $\mathbb{S}$.   

If we consider just the complexes corresponding to the true critical points of $\mathbb{S}$, that is $M_*^{]0,+\infty[}(\mathbb{S},s)$ and $M^*_{]0,+\infty[}(\mathbb{S},-s)$, the chain maps $\Xi$ and $\Upsilon$ induce chain maps
\[
\tilde{\Xi} : M_*^{]0,+\infty[}(\mathbb{S},s) \rightarrow RF_*^{]0,+\infty[} (W,\lambda_{\theta},a), \quad \tilde{\Upsilon} : RF_*^{]-\infty,0[} (W,\lambda_{\theta},a) \rightarrow M^{1-*}_{]0,+\infty[}(\mathbb{S},-s),
\]
which are isomorphisms. In particular,
\[
HRF_*^{]0,+\infty[} (W,\lambda_{\theta},a) \cong H_*(\Lambda M,M), \quad HRF_* ^{]-\infty,0[} (W,\lambda_{\theta},a) \cong H^{1-*} (\Lambda M, M).
\]
Arguing as in the proof of Proposition \ref{Pprop}, one sees that the composition $\Upsilon \circ \Xi$ is chain homotopic to zero, that is there is a homomorphism 
\[
Q : M_* (\mathbb{S},s) \rightarrow M^{-*} (\mathbb{S},-s),
\]
such that
\[
\Upsilon \circ \Xi = Q \partial + \delta Q,
\]
and $Q q_{\min} \in M^0_{]0,+\infty[} (\mathbb{S},-s)$.
Because of index reasons, $Q$ can be non-zero only from $M_0(\mathbb{S},s)$ to $M^0(\mathbb{S},-s)$. Then the chain map
\[
\Omega : M_*(\mathbb{S},s) \rightarrow RF_*(W,\lambda_{\theta},a), \quad \Omega := \Xi - \hat{\Upsilon} Q \partial - \partial \hat{\Upsilon} Q,
\]
is chain homotopic to $\Xi$ and, arguing as in the proof of Theorem \ref{main}, one can show that the short sequence 
\[
0 \rightarrow M_*(\mathbb{S},s) \stackrel{\Omega}{\rightarrow} RF_*(W,\lambda_{\theta},a) \stackrel{\Upsilon}{\rightarrow} M^{1-*}(\mathbb{S},-s) \rightarrow 0
\]
is exact. The induced long exact sequence is
\[
\dots \rightarrow HM_k(\mathbb{S},s) \stackrel{\Omega_* = \Xi_*}{\longrightarrow} HRF_k (W,\lambda_{\theta},a) \stackrel{\Upsilon_*}{\rightarrow} HM^{1-k} (\mathbb{S},-s) \stackrel{\Delta_*}{\rightarrow} HM_{k-1}(\mathbb{S},s) \rightarrow \dots
\]
where the connecting homomorphism 
\[
\Delta_* : HM^{-k}(\mathbb{S},-s) \cong H^{-k} (\Lambda M) \rightarrow H_k (\Lambda M) \cong HM_k (\mathbb{S},s)
\]
can be non-zero only for $k=0$, where it coincides with the composition
\[
\Delta_* = i_0 \circ \chi(T^*M) \circ \pi_0,
\]
see the discussion after the statement of Theorem \ref{main}. This concludes the proof of Theorem \ref{main2} of the Introduction.

\renewcommand{\thesection}{\Alph{section}}
\setcounter{section}{0} 

\section{Appendix - An Alexandrov-type maximum principle with Neumann conditions on part of the boundary}

The aim of this appendix is to prove Theorem \ref{alek}. The proof 
is similar to the proof of the classical Aleksandrov weak maximum principle contained in Section 9.1 of \cite{gt83}. 

Theorem \ref{alek} is stated for a first order perturbation of the Laplace operator on a bounded open subset $\Omega$ of the half-cylinder $]0,+\infty[ \times \T$, but  it is useful to translate it into an equivalent statement for a domain of the plane. To do this, we transform $\Omega$ into $\tilde{\Omega} := \varphi(\Omega)$  by means of the conformal diffeomorphism
\[
\varphi: \R^+ \times \T \rightarrow \C \setminus B_1(0), \quad \varphi(s,t) := e^{2\pi(s+ i t)}.
\]
If $u$ is a regular function on $\Omega$, then $\tilde{u}:= u\circ \varphi^{-1}$ is a regular function on $\tilde{\Omega}$ and 
\[
\nabla u(\varphi^{-1}(z))  = 2\pi \overline{z} \, \nabla \tilde{u} (z) , \quad
\Delta u(\varphi^{-1}(z))  = 4\pi^2 |z|^2 \, \Delta \tilde{u} (z).
\]
Therefore, $u$ satisfies the elliptic differential inequality
\[
\Delta u + b \cdot \nabla u \geq f
\]
if and only if $\tilde{u}$ satisfies the  elliptic differential inequality
\[
\Delta \tilde{u} + \tilde{b} \cdot \nabla \tilde{u} \geq \tilde{f},
\]
with
\[
\tilde{b}(z) := \frac{z}{2\pi |z|^2} \, b(\varphi^{-1}(z)), \quad
\tilde{f}(z):= \frac{1}{4\pi^2 |z|^2} \, f(\varphi^{-1}(z)).
\]
The set $\Omega$ is bounded if and only if $\tilde{\Omega}$ is bounded. In this case, there is a constant $c\geq 1$ such that 
\[
\frac{1}{c} \|b\|_{L^2(\Omega)} \leq \|\tilde{b}\|_{L^2(\tilde{\Omega})} \leq c \|b\|_{L^2(\Omega)}, \quad \frac{1}{c} \|f^-\|_{L^2(\Omega)} \leq \|\tilde{f}^-\|_{L^2(\tilde{\Omega})} \leq c \|f^-\|_{L^2(\Omega)}.
\]
Finally, the partial derivative with respect to $s$ of $u$ at a point $(0,t)$ becomes the radial derivative of $\tilde{u}$ at the point $e^{2\pi i t}$.

By these observations, Theorem \ref{alek} can be restated in the following equivalent way.  Let $\Omega$ be a bounded open subset of $\R^2$, assumed to be disjoint from the open unit ball $B_1(0)$, and consider the following partition $\{\Sigma,\Sigma'\}$ of $\partial \Omega$:
\[
\Sigma := \overline{\partial \Omega \setminus \partial B_1(0)}, \quad \Sigma' := \partial \Omega \setminus \Sigma.
\]
Notice that if $z\in \Sigma'$, then $|z|=1$ and there exists $\epsilon>0$ such that the segment $]1,1+\epsilon[ \, z$ is contained in $\Omega$. In particular, any $u\in C^1(\overline{\Omega})$ has a well defined radial derivative $\partial u/\partial r$ at such a point $z$. Then we have:

\begin{thm}
\label{alek2}
Let $\Omega$ be as above. Then for every map $b \in L^2(\Omega,\R^2)$ there exists a number $C$ depending only on $d:= \diam \Omega$ and on $\|b\|_{L^2(\Omega)}$ such that for every $f\in L^1_{\mathrm{loc}}(\Omega)$ and every $u\in C^2(\Omega) \cap C^1(\overline{\Omega})$ which satisfies
\begin{eqnarray}
\label{eins}
\Delta u + b \cdot \nabla u & \geq & f \quad \mbox{in }\Omega, \\
\label{zwei}
\frac{\partial u}{\partial r} & \geq & 0 \quad \mbox{on } \Sigma',
\end{eqnarray}
there holds
\begin{equation}
\label{drei}
\sup_{\Omega} u \leq \sup_{\Sigma} u + C \|f^-\|_{L^2(\Omega)},
\end{equation}
where $f^-$ denotes the negative part of $f$.
\end{thm}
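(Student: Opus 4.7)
The plan is to adapt the classical proof of the Aleksandrov--Bakelman--Pucci maximum principle (cf.\ Gilbarg--Trudinger, Theorem~9.1) by perturbing $u$ so that the Neumann-type inequality \eqref{zwei} on $\Sigma'$ becomes \emph{strict}, and then using a Lagrange-multiplier argument at boundary maxima to rule out $\Sigma'$ as the location of the supremum of all relevant affine perturbations. Normalising so that $\sup_\Sigma u = 0$, it will suffice to bound $M := \sup_\Omega u$ (assumed positive) by $C\|f^-\|_{L^2(\Omega)}$. For a parameter $\epsilon > 0$, I would introduce
\[
u_\epsilon(z) := u(z) + \tfrac{\epsilon}{2}\bigl(|z|^2 - 1\bigr),
\]
which satisfies $u_\epsilon = u$ on $\Sigma'$ (as $|z|=1$ there), $u_\epsilon \geq u$ on $\overline\Omega$ (as $|z|\geq 1$), the strict Neumann-type bound
$\partial u_\epsilon/\partial r = \partial u/\partial r + \epsilon|z| \geq \epsilon$ on $\Sigma'$,
and the modified differential inequality $\Delta u_\epsilon + b\cdot\nabla u_\epsilon \geq f_\epsilon$, where $f_\epsilon := f + \epsilon(1 + b\cdot z)$ and $\|f_\epsilon^-\|_{L^2(\Omega)} \leq \|f^-\|_{L^2(\Omega)} + C_1\epsilon$ for $C_1 = C_1(d,\|b\|_{L^2})$.

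The core of the argument is an inclusion of the form
\[
B_{\min(\epsilon,\, M_\epsilon/d)}(0) \subseteq \chi_{u_\epsilon}\bigl(\Gamma^+(u_\epsilon)\cap\Omega\bigr),
\]
where $\chi_{u_\epsilon}$ is the normal mapping, $\Gamma^+(u_\epsilon)$ the upper contact set, and $M_\epsilon := \sup_\Omega u_\epsilon - \sup_\Sigma u_\epsilon$. For $p \in \R^2$ the function $u_\epsilon - p\cdot z$ attains its maximum on $\overline\Omega$ at some $x^\ast$; the classical bound shows $x^\ast \notin \Sigma$ provided $|p| < M_\epsilon/d$. To rule out $x^\ast \in \Sigma'$, I would use that the outward unit normal to $\Omega$ at $x^\ast$ is $-x^\ast$, so the Lagrange condition gives $\nabla u_\epsilon(x^\ast) = p - \lambda x^\ast$ for some $\lambda\geq 0$, whence
\[
\epsilon \leq \frac{\partial u_\epsilon}{\partial r}(x^\ast) = \nabla u_\epsilon(x^\ast)\cdot x^\ast = p\cdot x^\ast - \lambda \leq |p|,
\]
forcing $|p|\geq \epsilon$. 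Hence for $|p| < \min(\epsilon, M_\epsilon/d)$, the maximum lies in $\Omega$ and $p = \nabla u_\epsilon(x^\ast) \in \chi_{u_\epsilon}(x^\ast)$.

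The area formula, the AM--GM inequality applied to $-D^2 u_\epsilon \geq 0$ on $\Gamma^+$, the standard gradient bound $|\nabla u_\epsilon| \leq M_\epsilon/d$ on the contact set (after the customary truncation $v := (u_\epsilon - \sup_\Sigma u_\epsilon)^+$ which vanishes on $\Sigma$), and the $L^2$ estimate on $f_\epsilon^-$ above combine to give
\[
\pi\min\bigl(\epsilon,\, M_\epsilon/d\bigr)^2 \leq C_2\Bigl(\|f^-\|_{L^2(\Omega)}^2 + \epsilon^2 + (M_\epsilon/d)^2\,\|b\|_{L^2(\Omega)}^2\Bigr),
\]
with $C_2 = C_2(d)$. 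Choosing $\epsilon$ proportional to $M_\epsilon/d$ (so the two arguments of the minimum coincide), absorbing the $(M_\epsilon/d)^2\|b\|_{L^2}^2$ term on the right into the left, and finally letting $\epsilon\downarrow 0$ (using $|M-M_\epsilon|\to 0$) would yield \eqref{drei}.

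The main obstacle will be the absorption step: when $\|b\|_{L^2(\Omega)}$ is not small, direct absorption is impossible and one must iterate a covering/partition argument, exactly as in the proof of Lemma~9.3 of Gilbarg--Trudinger, in order to extract a final constant $C$ depending on $d$ and $\|b\|_{L^2(\Omega)}$. A secondary technical point is to check that the truncation $v = (u_\epsilon - \sup_\Sigma u_\epsilon)^+$, which need not vanish on $\Sigma'$, is still compatible with the Lagrange-multiplier step: at a point of $\Sigma'$ where $v = 0$ a standard argument (comparing $(v - p\cdot z)(x^\ast)$ with its value at the point attaining $\sup v$) rules out $x^\ast$, while where $v > 0$, $v$ inherits the strict Neumann inequality from $u_\epsilon$ and the Lagrange argument above goes through verbatim.
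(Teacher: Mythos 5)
Your idea of perturbing $u$ to $u_\epsilon = u + \tfrac{\epsilon}{2}(|z|^2-1)$ so that the Neumann condition on $\Sigma'$ becomes strict, and then using the radial directional derivative at a touching point $z_*\in\Sigma'$ to conclude $|p|\geq\epsilon$, is correct as a local argument and is a genuinely different way of handling $\Sigma'$ than the paper's. However, the global scheme does not close up, for two independent reasons.

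First, the inclusion you obtain is $B_{\min(\epsilon,\,M_\epsilon/d)}(0)\subseteq\chi_{u_\epsilon}(\Gamma_{u_\epsilon})$, so the quantity you bound is $\min(\epsilon,M_\epsilon/d)$, not $M_\epsilon/d$. To turn this into a bound on $M = \sup_\Omega u - \sup_\Sigma u$ you need the minimum to equal $M_\epsilon/d$, i.e.\ $\epsilon\geq M_\epsilon/d$. But since $M_\epsilon\to M$ as $\epsilon\downarrow 0$, if $M>0$ you cannot take $\epsilon\downarrow 0$ while keeping $\epsilon\geq M_\epsilon/d$; conversely, keeping $\epsilon\gtrsim M_\epsilon/d$ means $\epsilon$ is bounded away from zero, and then the extra terms $C_1^2\epsilon^2$ and $\|f_\epsilon^-\|_{L^2}$ never disappear. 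The step ``choose $\epsilon$ proportional to $M_\epsilon/d$ \dots and finally let $\epsilon\downarrow 0$'' is self-contradictory when $M>0$. Second, the absorption problem you flag is real and is not fixed by a covering argument: that device addresses a different issue. The standard way to handle large $\|b\|_{L^2}$ in the ABP inequality (Theorem~9.1 of Gilbarg--Trudinger, not Lemma~9.3) is to insert a weight $g(\rho)=(\rho^2+\theta^2)^{-1}$ into the area formula, which converts the quadratic inequality into a logarithmic one and removes the need for absorption altogether.

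The paper avoids the $\epsilon$-perturbation entirely by the following observation. Fix the interior maximum point $z_0$ of $u$, and consider only $p$ in the \emph{half}-ball $\{|p|<M/d,\;p\cdot z_0<0\}$. For such $p$, the classical cone argument rules out a touching point on $\Sigma$, and the unperturbed Neumann condition $\partial_r u\geq 0$ combines with the constraint $p\cdot z_0 < 0$ to rule out a touching point $z_*\in\Sigma'$: the radial derivative bound gives $p\cdot z_*\geq 0$, and then $0\leq u(z_0)-u(z_*)\leq p\cdot z_0 - p\cdot z_*\leq p\cdot z_0$, contradicting $p\cdot z_0<0$. This yields the half-ball $B^+_{M/d}\subseteq\chi_u(\Gamma_u)$, which has measure comparable to that of the full ball, without introducing any parameter. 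The paper then finishes with the weight-function trick: choosing $g(\rho)=(\rho^2+\theta^2)^{-1}$ and $\theta=\|f^-\|_{L^2}$ gives
\[
\frac{1}{2}\log\Bigl(1+\frac{M^2}{\theta^2}\Bigr)\leq\frac{1}{4\pi}\Bigl(\|b\|_{L^2(\Omega)}^2 + 1\Bigr),
\]
from which (\ref{drei}) follows with $C = d\exp\bigl(\tfrac{1}{4\pi}(\|b\|_{L^2(\Omega)}^2+1)\bigr)$. If you replace your $\epsilon$-perturbation by the half-ball argument and your constant weight by this $g$, the scheme becomes the paper's proof.
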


\begin{proof}
If $v\in C^0(\overline{\Omega})$ and $z_0\in \Omega$, we denote by $\chi_v(z_0)$ the set of upper differentials for $v$ at $z$, that is 
\[
\chi_v(z_0) := \set{p\in \R^2}{v(z) \leq v(z_0) + p\cdot (z-z_0) \; \forall z\in\overline{ 
\Omega}}.
\]
The upper contact set $\Gamma_v$ of $v$ is the set of all $z_0$ in $\Omega$ for which $\chi_v(z_0)$ is not empty. Notice that if $z_0\in \Gamma_v$ and $v$ is twice differentiable at $z_0$, then
\begin{equation}
\label{null}
\chi_v(z_0) = \{ \nabla v(z_0)\} \quad \mbox{and} \quad D^2 v(z_0) \leq 0.
\end{equation}
By a slight abuse of notation, we indicate by $\chi_v(\Gamma_v)$ the union of all the sets $\chi_v(z_0)$ for $z_0\in \Gamma_v$, and when $v$ is twice differentiable we consider $\chi_v$ as a map from  $\Gamma_v$ into $\R^2$.

\medskip

\noindent {\sc Claim 1.} {\em Assume that $u\in C^2(\Omega)\cap C^1(\overline{\Omega})$ satisfies (\ref{zwei}),
\begin{equation}
\label{zweibis}
u \leq 0 \quad \mbox{on } \Sigma,
\end{equation}
and
\begin{equation}
\label{drei+}
\max_{\overline{\Omega}} u = u(z_0) > 0.
\end{equation}
Then $\chi_u(\Gamma_u)$ contains the half ball
\[
\set{p\in \R^2}{|p| < \frac{u(z_0)}{d}, \; p\cdot z_0 < 0}.
\]}

\medskip

By (\ref{zwei}), (\ref{zweibis}), and (\ref{drei+}), the maximum point $z_0$ belongs to $\Omega$. Let $v$ be the function whose graph is the cone of vertex $(z_0,u(z_0))$ and base $\partial \Omega$. Since $\Omega \subset B_d(z_0)$, the set $\chi_v(\Gamma_v)$ contains the ball $B_{u(z_0)/d}(0)$, and it is enough to show that any $p$ in $\chi_v(\Gamma_v)$ with $p\cdot z_0 <0$ belongs to $\chi_u(\Gamma_u)$. Consider such a $p$, and let $t_*$ be the minimum of all numbers $t$ such that
\[
u(z) \leq t + p \cdot z, \quad \forall z\in \overline{\Omega}.
\]
By minimality, there exists a point $z_*\in \overline{\Omega}$ such that the above inequality is an equality when $t=t_*$ and $z=z_*$. Therefore,
\begin{equation}
\label{vier} 
u(z) \leq u(z_*) + p \cdot (z-z_*), \quad \forall z\in \overline{\Omega}.
\end{equation}
If we prove that $z_*$ does not belong to $\partial \Omega$, then $z_*$ is in $\Gamma_u$ and $p$ belongs to $\chi_u(\Gamma_u)$, as wished. 

Assume first that $z_*$ is in $\Sigma'$. By (\ref{zwei}),
\[
u((1+\epsilon)z_*) = u(z_*) + \epsilon \frac{\partial u}{\partial r} (z_*) + o(\epsilon) \geq  u(z_*) + o(\epsilon), \quad \mbox{for } \epsilon \downarrow 0,
\]
while by (\ref{vier}),
\[
u((1+\epsilon)z_*) \leq u(z_*) + \epsilon p \cdot z_*.
\]
By comparing the above two inequalities as $\epsilon \downarrow 0$, we deduce that $p \cdot z_* \geq 0$. Then the fact that $z_0$ is a maximum point and (\ref{vier}) imply
\[
u(z_*) \leq u(z_0) \leq u(z_*) + p \cdot (z_0 - z_*) \leq u(z_*) + p\cdot z_0,
\]
so $p\cdot z_0 \geq 0$, contradicting our assumption on $p$. 

Assume now that $z_*$ is in $\Sigma$. Then (\ref{vier}) and (\ref{zweibis}) imply
\begin{equation}
\label{funf}
u(z_0) \leq u(z_*) + p \cdot (z_0 - z_*) \leq p \cdot (z_0 - z_*).
\end{equation}
On the other hand, the fact that $p$ is in $\chi_v(\Gamma_v)=\chi_v(z_0)$ implies 
\begin{equation}
\label{sechs}
0 = v(z_*) \leq v(z_0) + p \cdot (z_* - z_0) = u(z_0) + p\cdot (z_* - z_0).
\end{equation}
From (\ref{funf}) and (\ref{sechs}) we deduce that
\[
p\cdot (z_0 - z_*) = u(z_0).
\]
Then by (\ref{vier}),
\[
u(z_0 + \epsilon (z_*-z_0) ) \leq u(z_*) + p\cdot ( z_0 + \epsilon (z_*-z_0) - z_*) = (1-\epsilon) p \cdot (z_0 - z_*) = u(z_0) - \epsilon u(z_0),
\]
which contradicts the fact that $u$ is differentiable at the interior maximum point $z_0$. This concludes the proof of Claim 1.

\medskip

\noindent {\sc Claim 2.} {\em Let $g$ be a non-negative continuous function on $\R^+$. Then for every $u\in C^2(\Omega) \cap C^1(\overline{\Omega})$ which satisfies (\ref{zwei}), there holds
\[
\int_0^M g(\rho)\rho\, d\rho \leq \frac{1}{4\pi} \int_{\Gamma_u} g(|\nabla u|) |\Delta u|^2 d\lambda^2,
\]
where 
\[
M:= \frac{1}{d} \left( \sup_{\Omega} u - \sup_{\Sigma} u\right),
\]
and $d\lambda^2$ denotes the Lebesgue measure on $\R^2$. }

\medskip

Up to the sum of a constant, we may assume that $\sup_{\Sigma} u = 0$, and in particular (\ref{zweibis}) holds. If $M=0$, there is nothing to prove, so we may assume that also (\ref{drei+}) holds. Then Claim 1 implies that the half ball
\[
B_M^+  := B_M(0) \cap \set{p\in \R^2}{p\cdot z_0 < 0}
\]
is contained in $\chi_u(\Gamma_u)$. Then
\[
\int_0^M g(\rho) \rho\, d\rho = \frac{1}{\pi} \int_{B_M^+} g(|p|)\, d\lambda^2 \leq \frac{1}{\pi} \int_{\chi_u(\Gamma_u)} g(|p|)\, d\lambda^2 \leq \frac{1}{\pi} \int_{\Gamma_u} g (|\nabla u|) |\det D^2 u| \, d\lambda^2,
\]
where in the last inequality we have used (\ref{null}) and the change of variable formula (by using the fact that for every $\epsilon>0$ the map $z\mapsto \nabla u (z) - \epsilon z$ is injective on $\Gamma_u$, one could show that the last inequality is actually an equality). Then the conclusion follows from the inequality
\[
\det D^2 u \leq \left( \frac{\tr D^2 u}{2} \right)^2 = \frac{1}{4} 
|\Delta u|^2.
\]

\noindent {\sc Conclusion.} Now assume that $u\in C^2(\Omega) \cap C^1(\overline{\Omega})$ satisfies (\ref{eins}) and (\ref{zwei}). Let
\[
g(\rho) := \frac{1}{\rho^2 + \theta^2},
\]
for some $\theta>0$ to be determined. By (\ref{eins}), we have the chain of inequalities
\begin{eqnarray*}
- \Delta u \leq b\cdot \nabla u - f \leq |b| \, |\nabla u| + f^- = \bigl(|b|^2 |\nabla u|^2 + (f^-)^2 + 2 |b| \, |\nabla u| f^- \bigr)^{1/2} \\  \leq \left( |b|^2 |\nabla u|^2 + (f^-)^2 + \theta^2 |b|^2 + \frac{1}{\theta^2} |\nabla u|^2 (f^-)^2 \right)^{1/2} 
= \left( |b|^2 + \frac{1}{\theta^2} (f^-)^2 \right)^{1/2} \bigl( |\nabla u|^2 + \theta^2 \bigr)^{1/2} \\ = \left( |b|^2 + \frac{1}{\theta^2} (f^-)^2 \right)^{1/2} g(|\nabla u|)^{-1/2}.
\end{eqnarray*}
By (\ref{null}), $\Delta u \leq 0$ on $\Gamma_u$, so the above inequality implies that
\[
g(|\nabla u|) |\Delta u|^2 \leq |b|^2 + \frac{1}{\theta^2} (f^-)^2 \quad \mbox{on } \Gamma_u.
\]
Claim 2 and the above inequality yield
\begin{eqnarray*}
\frac{1}{2} \log \left( 1 + \frac{M^2}{\theta^2} \right) = \int_0^M \frac{\rho}{\rho^2 + \theta^2}\, d\rho = \int_0^M g(\rho)\rho\, d\rho \\
\leq \frac{1}{4\pi} \int_{\Gamma_u} g(|\nabla u|) |\Delta u|^2 \, d\lambda^2 \leq \frac{1}{4\pi}  \int_{\Gamma_u} \left( |b|^2 + \frac{1}{\theta^2} (f^-)^2 \right)\, d\lambda^2 \leq \frac{1}{4\pi} \left( \|b\|_{L^2(\Omega)}^2 + \frac{1}{\theta^2} \|f^-\|_{L^2(\Omega)}^2 \right),
\end{eqnarray*}
from which we get
\begin{equation}
\label{sieben}
\frac{M^2}{\theta^2} \leq 1 + \frac{M^2}{\theta^2} \leq  \exp \left( \frac{1}{
2 \pi} \Bigl( \|b\|_{L^2(\Omega)}^2 + \frac{1}{\theta^2} \|f^-\|_{L^2(\Omega)}^2 \Bigr) \right).
\end{equation}
When $f^-$ is not zero almost everywhere on $\Omega$, we choose $\theta = \|f^-\|_{L^2(\Omega)}$ and we obtain
\begin{equation}
\label{acht}
M^2 \leq \|f^-\|_{L^2(\Omega)}^2 \exp \left( \frac{1}{2\pi} \Bigl( \|b\|_{L^2(\Omega)}^2 + 1\Bigr) \right).
\end{equation}
When $f^-=0$ a.e.\ on $\Omega$, we can let $\theta\downarrow 0$ in (\ref{sieben}) and we get that $M=0$, so (\ref{acht}) holds also in this case. By the definition of $M$ and by (\ref{acht}), the conclusion (\ref{drei}) holds with
\[
C := d \cdot \exp \left( \frac{1}{4 \pi} \Bigl( \|b\|_{L^2(\Omega)}^2 + 1 \Bigr) \right).
\]
\end{proof} \qed

\begin{rem}
Theorem \ref{alek2} readily extends to bounded domains $\Omega$ in $\R^n \setminus B_1(0)$, by replacing the $L^2$ norms by $L^n$ norms. The regularity of $u$ in the assumptions could be reduced to the requirement that $u$ is in $C^0(\overline{\Omega}) \cap W^{2,n}_{\mathrm{loc}}(\Omega)$ and has a (non-negative) radial derivative at every $z\in \Sigma'$. The Laplacian could be replaced by any uniformly elliptic operator with bounded coefficients.
\end{rem}

\section{Appendix - Morse theoretical description of a Gysin sequence}

Let $M$ be a closed smooth connected manifold of dimension $n\geq 2$, and let $\pi: S^*M \rightarrow M$ be the cotangent unit sphere bundle over $M$. The aim of this appendix is to describe the associated Gysin sequence in terms of the Morse chain complexes of $S^*M$ and $M$. We use the notation of Section \ref{morcom}. The Morse complex $M_*$ and the singular homology $H_*$ are based on $\Z_2$ coefficients.

More precisely, we wish to give Morse theoretical descriptions of the homomorphisms
\[
\pi_* : H_*(S^*M) \rightarrow H_*(M), \quad \pi_! : H_*(M) \rightarrow H_{*+n-1}(S^*M),
\]
where $\pi_!$ is the transfer (or umkehr) homomorphism, that is the homomorphism obtained by composing $\pi^* : H^*(M) \rightarrow H^*(S^*M)$ with Poincar\'e duality, and of the Gysin exact sequence
\begin{equation}
\label{gysin}
\rightarrow H_k(M) \stackrel{\pi_!}{\longrightarrow} H_{k+n-1}(S^*M) \stackrel{\pi_*}{\longrightarrow} H_{k+n-1}(M) \stackrel{e\cap}{\longrightarrow} H_{k-1}(M) \rightarrow
\end{equation}
where $e\in H^n(M)$ denotes the Euler class of $T^*M$. In the case of this $(n-1)$-sphere bundle, the homorphism $e\cap$ can be non-trivial only from $H_n(M)$ to $H_0(M)$, and between such spaces it is the homomorphism
\[
e\cap: H_n(M) \rightarrow H_0(M), \quad [M] \mapsto \chi(T^*M) [\mathrm{pt}],
\]
where $[M]\in H_n(M)$ is the fundamental class, $[\mathrm{pt}]\in H_0(M)$ is the class given by a point in $M$, and $\chi(T^*M)\in \Z_2$ is the Euler number of $T^*M$, which coincides with the Euler characteristic of $M$ (up to the sign $(-1)^n$ that we can ignore because we are using $\Z_2$ coefficients). The exact sequence (\ref{gysin}) allows to compute the homology of $S^*M$ as follows:
\begin{equation}
\label{explicit}
H_k(S^*M) \cong \left\{ \begin{array}{ll} H_k(M) & \mbox{if } k\leq n-2, \\ H_{n-1}(M) \oplus \bigl(1-\chi(T^*M)\bigr) \Z_2 & \mbox{if } k=n-1, \\ H_1(M) \oplus \bigl(1- \chi(T^*M)\bigr) \Z_2 & \mbox{if } k=n, \\ H_{k-n+1}(M) & \mbox{if }k\geq n+1. \end{array} \right.
\end{equation}

Let $f\in C^{\infty}(M)$ and $h\in C^{\infty}(S^*M)$ be Morse functions. 
If we assume that
\begin{equation}
\label{nectra1}
\forall x\in \crit\, h \mbox{ such that } \pi(x) \in \crit\, f \mbox{, there holds } \ind(x;h) \geq \ind (\pi(x);f),
\end{equation}
then for generic Riemannian metrics $g_S$ on $S^*M$ and $g_M$ on $M$ the following fact holds: For every pair of critical points $x\in \crit\, h$, $q\in \crit\, f$, the unstable manifold $W^u(x;-\nabla h)$ of $x$ is transverse to the pull-back $\pi^{-1}(W^s(q;-\nabla f))$ of the
the stable manifold of $q$. Notice that the condition (\ref{nectra1}) is also necessary for the latter fact to hold: In fact, if $x\in \crit\, h$ is such that $\pi(x)\in \crit\, f$, then the transversality of $W^u(x;-\nabla h)$ and $\pi^{-1}(W^s(\pi(x);-\nabla f))$ implies that
\[
T_x W^u(x;-\nabla h) + d\pi(x)^{-1} \Bigl[ T_{\pi(x)} W^s(\pi(x);-\nabla f) \Bigr] = T_x S^*M,
\]
and by comparing the dimensions we deduce that $\ind(x;h) \geq \ind (\pi(x);f)$. 

Let $g_S$ and $g_M$ be Riemannian metrics on $S^*M$ and $M$ such that that the above transversality requirement holds, and that the negative gradient flows of $h$ and $f$ are Morse-Smale. Then, if $x\in \crit\, h$ and $q\in \crit\, f$ have the same Morse index, the set
\[
W^u(x;-\nabla h) \cap \pi^{-1} \bigl(W^s(q;-\nabla f)\bigr)
\]
is finite, and we denote by $n_{\psi}(x,q)\in \Z_2$ its parity. The homomorphism
\[
\psi : M_*(h) \rightarrow M_*(f), \quad x \mapsto \sum_{\substack{q\in \crit\, f \\ \ind(q;f) = \ind(x;h)}} n_{\psi}(x,q)\, q,
\]
is a chain map, and it induces the homomorphism $\pi_*$ in homology (see e.g.\ Appendix A in \cite{as08}):
\[
\psi_* = \pi_* : HM_*(h) \cong H_*(S^*M) \rightarrow HM_*(f) \cong H_*(M).
\]
Similarly, the condition 
\begin{equation}
\label{nectra2}
\forall x\in \crit\, h \mbox{ such that } \pi(x) \in \crit\, f \mbox{, there holds } \ind(x;h) \leq \ind (\pi(x);f) + n-1,
\end{equation}
implies that if the metrics $g_S$ and $g_M$ are generic, then for  
every pair of critical points $x\in \crit\, h$, $q\in \crit\, f$, the pull-back $\pi^{-1}(W^u(q;-\nabla f))$ of the unstable manifold of $q$ is transverse to the stable manifold $W^s(x;-\nabla h)$ of $x$. Then, if the Morse indices of $q\in \crit\, f$ and $x\in \crit \, h$ are related by the formula $\ind(x;h)=\ind(q;f) + n -1$, the set
\[
\pi^{-1}\bigl(W^u(q;-\nabla f)\bigr) \cap W^s(x;-\nabla h)
\]
is finite, and we denote by $n_{\phi}(q,x)\in \Z_2$ its parity. The homomorphism
\[
\phi : M_*(f) \rightarrow M_{*+n-1}(h), \quad q \mapsto \sum_{\substack{x\in \crit\, h \\ \ind(x;h) = \ind(q;f)+n-1}} n_{\phi}(q,x)\, x,
\]
is a chain map, and it induces the transfer homomorphism $\pi_!$ in homology (see e.g.\ \cite{cs08}):
\[
\phi_* = \pi_! : HM_*(f) \cong H_*(M) \rightarrow HM_{*+n-1}(h) \cong H_{*+n-1}(S^*M).
\]
Now assume that the Morse functions $f\in C^{\infty}(M)$ and $h\in C^{\infty}(S^*M)$ satisfy the following additional requirements:
\begin{enumerate}

\item[(1)] $f$ has a unique minimum point $q_{\min}$, a unique maximum point $q_{\max}$, and is self-indexing, that is $f(q)=\ind(q;f)$ for every $q\in \crit\, f$.

\item[(2)] $f \circ \pi \leq h \leq f \circ \pi + 1/2$ on $S^*M$.

\item[(3)] Every critical point of $h$ lies above a critical point of $f$, and for every critical point $q$ of $f$ the fiber $\pi^{-1}(q)$ contains exactly two critical points of $h$, that we denote by $x_q^-$ and $x_q^+$, such that $h(x_q^-)=f(q)$ and $h(x_q^+)=f(q)+1/2$.

\item[(4)] For every $q$ in $\crit\, f$, we have $\ind(x_q^-;h) = \ind(q;f)$ and $\ind(x^+_q;h) = \ind(q;f) + n-1$.

\end{enumerate}

If $f\in C^{\infty}(M)$ is any Morse function which satisfies (1) (see e.g.\ \cite{mil65b} for the construction of such functions), it is easy to construct a Morse function $h\in C^{\infty}(S^*M)$ so that the conditions (2-3-4) hold. In fact, let $h_0$ be a smooth Morse function on $S^{n-1}$ with a unique maximum, a unique minimum, and such that
\[
\min_{S^{n-1}} h_0 = 0, \quad   \max_{S^{n-1}} h_0 = \frac{1}{2}.
\]
Let us fix pairwise disjoint open neighborhoods $U_q\subset M$ of each critical point $q$ of $f$, and local trivializations
\begin{equation}
\label{loctriv}
\pi^{-1}(U_q) \cong \R^n \times S^{n-1},
\end{equation}
mapping $\pi^{-1}(0)$ onto $\{0\}\times S^{n-1}$. If $\varphi\in C^{\infty}(\R^n)$ is a smooth compactly supported function such that $\varphi\equiv 1$ in a neighborhood of $0$, $0\leq \varphi \leq 1$, and
\begin{equation}
\label{picc}
\|d\varphi(\xi)\| \leq \|df(\xi)\|, \quad \forall \xi\in \R^n,
\end{equation}
we define $h$ in $\pi^{-1}(U_q)$ by using the local trivialization (\ref{loctriv}) as
\[
h(\xi,\eta) := f(\xi) + \varphi(\xi) h_0(\eta), \quad \forall (\xi,\eta)\in \R^n \times S^{n-1}.
\]
Such a function extends smoothly to the whole $S^*M$ by defining $h(x) := f(\pi(x))$ for $x$ outside the union of all $\pi^{-1}(U_q)$'s, for $q\in \crit \, f$. The condition (\ref{picc}) guarantees that $h$ has no other critical points than those in $\pi^{-1} (\crit\, f)$. By construction, $h$ satisfies (2-3-4).

Notice that by the assumption (4), both the conditions (\ref{nectra1}) and (\ref{nectra2}) are satisfied. If $\xi\in M_*(f)$ is a chain, 
\[
\xi = \sum_{q\in \crit\, f} \xi_q \, q,
\]
then we denote by $x_{\xi}^-$ and $x_{\xi}^+$ the corresponding chains in $M_*(h)$, defined by
\[
x_{\xi}^- := \sum_{q\in \crit\, f} \xi_q x_q^-, \quad x_{\xi}^+ := \sum_{q\in \crit\, f} \xi_q x_q^+.
\]
The assumption (1-2-3-4) imply that the Morse complexes of $h$ and $f$ are related in a very simple way:

\begin{prop}
\label{descri}
Assume that the Morse functions $f\in C^{\infty}(M)$ and $h\in C^{\infty}(S^*M)$ satisfy (1-2-3-4). Then, the following facts hold:
\begin{enumerate}
\item The boundary homomorphisms on $M_*(h)$ and $M_*(f)$ are related by the formulas
\[
\left\{ \begin{array}{ll} \partial x_q^+ = x_{\partial q}^+ & \forall q\in \crit\, f, \\
\partial x_q^- = x_{\partial q}^- & \forall q\in \crit\, f \setminus \{q_{\max}\}, \\
\partial x_{q_{\max}}^- = \chi(T^*M) \, x_{q_{\min}}^+. & \end{array} \right.
\]
\item The homomorphism $\phi: M_*(f) \rightarrow M_{*+n-1}(h)$ maps each $q\in \crit\, f$ into $x_q^+$.
\item The homomorphism $\psi: M_*(h) \rightarrow M_*(f)$ maps $x_q^+$ into $0$ and $x_q^-$ into $q$, for every $q\in \crit\, f$.
\end{enumerate}
\end{prop}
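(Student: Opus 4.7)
The plan is to exploit two structural features simultaneously. The sandwich (A2), $f\circ\pi \leq h \leq f\circ\pi+1/2$, together with the self-indexing of $f$ from (A1), pins the $h$-values at critical points to $h(x_q^-)=\ind(q;f)$ and $h(x_q^+)=\ind(q;f)+1/2$; the explicit local form $h = f + \varphi h_0$ in the trivialization $\pi^{-1}(U_q)\cong \R^n\times S^{n-1}$ gives a product structure for the flow of $-\nabla h$ near each critical fiber. I would fix a Riemannian metric on $S^*M$ that restricts to a product metric on each $\pi^{-1}(U_q)$ and makes $\pi$ a Riemannian submersion outside, so that $-\nabla h=-\nabla(f\circ\pi)$ is horizontal on the complement of $\pi^{-1}(\bigcup_q U_q)$.

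Parts (b) and (c) are then purely level/index computations. For $\psi(x_q^-)$: any intersection $y\in W^u(x_q^-;-\nabla h)\cap\pi^{-1}(W^s(q';-\nabla f))$ with $\ind(q';f)=\ind(q;f)$ satisfies both $h(y)\leq h(x_q^-)=\ind(q;f)$ and $h(y)\geq f(\pi(y))\geq f(q')=\ind(q;f)$, so $h(y)=h(x_q^-)$ forces $y=x_q^-$ (as $h$ strictly decreases along non-stationary flow lines), giving $q'=q$ and hence $\psi(x_q^-)=q$. The same squeeze forbids the intersection defining $\psi(x_q^+)$ (it would require $f(\pi(y))\geq \ind(q;f)+n-1 > \ind(q;f)+1/2 \geq h(y)\geq f(\pi(y))$), so $\psi(x_q^+)=0$. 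Part (b) is the symmetric argument: the level squeeze reduces $\pi^{-1}(W^u(q;-\nabla f))\cap W^s(x_{q'}^+;-\nabla h)$ to the stationary point $x_q^+$ and forbids the $x_{q'}^-$ target, so $\phi(q)=x_q^+$.

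For part (a), the level/index combination first restricts the possible targets of $\partial$. A trajectory from $x_q^+$ with Morse-index drop one must land at $x_{q'}^+$ with $\ind(q';f)=\ind(q;f)-1$, or (only for $n=2$) at $x_{q'}^-$ with $\ind(q';f)=\ind(q;f)$; a trajectory from $x_q^-$ lands at $x_{q'}^-$ with $\ind(q';f)=\ind(q;f)-1$, or at $x_{q'}^+$ with $\ind(q';f)=\ind(q;f)-n$, which by (A1) forces $q=q_{\max}$ and $q'=q_{\min}$. The same-sign coefficients are identified with $n_f(q,q')$ via the product structure: the linearization of $-\nabla h$ at each endpoint decomposes into base and fiber parts, the fiber critical points $\eta_0^{\pm}$ are fixed points of the fiber flow, and in the horizontal middle region the fiber is parallel-transported along a gradient trajectory of $-\nabla f$ in $M$; the usual gluing/transversality package yields the required bijection. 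The spurious $n=2$ mixed case is ruled out because such a trajectory has its base projection confined to the level set $\{f=\ind(q;f)\}$, which by Morse--Smale excludes $q'\neq q$, while for $q'=q$ the trajectory is contained in the fiber $\pi^{-1}(q)\cong S^1$ and arises as a cancelling pair of non-constant $-\nabla h_0$ trajectories from $\eta_0^+$ to $\eta_0^-$.

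The main obstacle is identifying the coefficient of $x_{q_{\min}}^+$ in $\partial x_{q_{\max}}^-$ with $\chi(T^*M)$, since no local product argument applies: in general $S^*M$ admits no global section, and such a trajectory must traverse the fiber from min to max while the base drops $n$ Morse units. My approach is indirect. The target restrictions above make $C_*^+:=\langle x_q^+\rangle$ a subcomplex of $M_*(h)$ chain-isomorphic via $x_q^+\leftrightarrow q$ to $M_*(f)$ shifted by $n-1$, with quotient $C_*^-\cong M_*(f)$ via $x_q^-\leftrightarrow q$; the resulting short exact sequence $0\to C_*^+\to M_*(h)\to C_*^-\to 0$ induces in homology the long exact sequence of the $(n-1)$-sphere bundle $S^*M\to M$, which by uniqueness must agree with the Gysin sequence. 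The chain-level connecting homomorphism $\bar\partial:C_*^-\to C_*^+[1]$ is given on the basis element $q$ by the $x_{q'}^+$-part of $\partial x_q^-$, and on homology it represents cap product with the Euler class $e\in H^n(M;\Z_2)$. Since $q_{\max}$ is the unique generator of $C_n^-$, automatically a cycle, and $x_{q_{\min}}^+$ is the unique generator of $C_{n-1}^+$, the chain-level value of $\bar\partial q_{\max}$ coincides with its homology class $(e\cap[M])\,x_{q_{\min}}^+=\chi(T^*M)\,x_{q_{\min}}^+$, which is the stated formula.
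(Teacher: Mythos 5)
Your treatment of parts (b) and (c) is essentially the paper's: the squeeze $f(\pi(x))\leq h(x)\leq f(\pi(x))+\tfrac12$ from (2), combined with self-indexing of $f$ from (1) and the index hypothesis (4), reduces each relevant intersection either to the empty set or to the obvious stationary point. The identification $\partial x_{q_{\max}}^-=\chi(T^*M)\,x_{q_{\min}}^+$ also follows the paper's route: both arguments pin the unknown coefficient $\delta$ by comparing the Morse-theoretic long exact sequence of the subcomplex $C_*^+$ with the known Gysin sequence of $\pi$. The paper does this by computing $HM_{n-1}(h)$ from the exact sequence and matching it against the explicit formula for $H_{n-1}(S^*M)$; your reformulation via the chain-level connecting homomorphism and the uniqueness of generators in degrees $n$ of $C^-$ and $n-1$ of $C^+$ is a legitimate variant of the same comparison.

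Where you diverge, and where there is a genuine gap, is in the first two lines of part (a). You attempt to compute $\partial x_q^+$ and $\partial x_q^-$ by direct trajectory analysis. The index-plus-level bookkeeping correctly shows that a mixed contribution $x_{q'}^-$ to $\partial x_q^+$ can occur only for $n=2$ with $\ind(q';f)=\ind(q;f)=:k$. But the claim that such a trajectory has base projection confined to the level set $\{f=k\}$ does not follow from the hypotheses: they only give $f(\pi(x))\in[k-\tfrac12,\,k+\tfrac12]$ along the trajectory (from (2) and the monotonicity of $h$), an open slab, not a level set. Moreover, inside $\pi^{-1}(U_q)$ the field $-\nabla h$ is \emph{not} the horizontal lift of $-\nabla f$, and the base component $-\nabla f - h_0\nabla\varphi$ has no definite sign for $d(f\circ\pi)/ds$, so $f\circ\pi$ need not be monotone. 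Your Morse--Smale exclusion of $q'\neq q$ is therefore unjustified, and the cancelling-pair argument only treats trajectories that remain in the single fiber over $q$. What is true is that the total mod-2 count vanishes, but that requires a different argument.

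The paper circumvents this entirely by exploiting that $\phi$ and $\psi$ are chain maps (established in the surrounding discussion, prior to Proposition B.2). Once (b) gives $\phi(q)=x_q^+$, the identity $\partial x_q^+=\partial\phi(q)=\phi(\partial q)=x_{\partial q}^+$ holds formally, with no case analysis and no $n=2$ subtlety. Likewise, once (c) gives $\psi(x_q^-)=q$, the chain-map property gives $\psi(\partial x_q^-)=\partial q$, which fixes the $x^-$-part of $\partial x_q^-$; and the index bounds $\ind(x_q^-;h)=\ind(q;f)\leq n$ and $\ind(x_{q'}^+;h)=\ind(q';f)+n-1\geq n-1$ show an $x^+$-term can appear only when $q=q_{\max}$, $q'=q_{\min}$. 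You should replace the direct trajectory analysis for these two identities by this purely algebraic derivation from (b), (c), and the chain-map properties of $\phi$ and $\psi$, which your own level arguments have already established.
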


By the conclusion (a) above, the subspace of $M_*(h)$ spanned by the $x_q^+$'s, for $q\in \crit\, f$, is a subcomplex. Instead, the subspace of $M_*(h)$ spanned by the $x_q^-$'s is a subcomplex if and only if the Euler number $\chi(T^*M)$ is zero.
By the conclusions (b) and (c) of Proposition \ref{descri}, the short sequence of chain maps
\begin{equation}
\label{short}
0 \rightarrow M_*(f) \stackrel{\phi}{\longrightarrow} M_{*+n-1}(h) \stackrel{\psi}{\longrightarrow} M_{*+n-1} (f) \rightarrow 0
\end{equation}
is exact. Therefore, it induces the long exact sequence 
\begin{equation}
\label{gysin2}
\dots \rightarrow  HM_*(f) \stackrel{\phi_*}{\longrightarrow} HM_{*+n-1}(h) \stackrel{\psi_*}{\longrightarrow} HM_{*+n-1} (f) \stackrel{\partial*}{\longrightarrow} HM_{*-1}(f) \rightarrow ...
\end{equation}
in homology. By the conclusion (a) of Proposition \ref{descri}, the connecting homomorphism $\partial_*$, which can be non-zero only between $HM_n(f)$ and $HM_0(f)$, is the homomorphism which maps $[q_{\max}]$ into $\chi(T^*M) [q_{\min}]$. We conclude that the exact sequence (\ref{gysin2}) corresponds, via the isomorphisms between Morse homology and singular homology, to the Gysin sequence (\ref{gysin}). 

\medskip

\begin{proof}[of Proposition \ref{descri}]
Let $q,q'$ be critical points of $f$, and let $x$ be a point in the intersection 
\begin{equation}
\label{ab0}
\pi^{-1}\bigl(W^u(q;-\nabla f)\bigr) \cap W^s(x_{q'}^-;-\nabla h).
\end{equation}
Then we have
\begin{equation}
\label{ab1}
\begin{split}
h(x) & \leq f(\pi(x)) + \frac{1}{2} \leq f(q) + \frac{1}{2}, \\
h(x) & \geq h(x_{q'}^-) = f(q'). \end{split}
\end{equation}
If moreover
\begin{equation}
\label{ab2}
\ind(x_{q'}^-;h) = \ind(q';f) = \ind(q;f) + n-1.
\end{equation}
then either $q=q_{\min}$ and $\ind(q';f)=n-1$, or $\ind(q;f)=1$ and $q'=q_{\max}$. In the first case, the inequalities (\ref{ab1}) imply
\[
h(x) \leq \frac{1}{2}, \quad h(x) \geq n-1.
\]
In the second case, they imply 
\[
h(x) \leq \frac{3}{2}, \quad h(x) \geq n.
\]
Since $n\geq 2$, in both cases we obtain a contradiction. This shows that when the index relation (\ref{ab2}) holds, the set (\ref{ab0}) is empty. In particular, $n_{\phi}(q,x_{q'}^-) = 0$. 

Now we assume that $x$ belongs to the set
\[
\pi^{-1}\bigl(W^u(q;-\nabla f)\bigr) \cap W^s(x_{q'}^+;-\nabla h),
\]
with
\[
\ind(x_{q'}^+;h) - n + 1 = \ind(q';f) = \ind(q;f) = k.
\]
Then the fact that $x$ belongs to $\pi^{-1}(W^u(q;-\nabla f))$ implies
\begin{equation}
\label{ab5}
h(x) \leq f(\pi(x)) + \frac{1}{2} \leq f(q) + \frac{1}{2} = k + \frac{1}{2},
\end{equation}
while the fact that $x$ belongs to $W^s(x_{q'}^+;-\nabla h)$ implies
\begin{equation}
\label{ab6}
h(x) \geq h(x_{q'}^+) = f(q') + \frac{1}{2} = k + \frac{1}{2}.
\end{equation}
Therefore, equalities must hold everywhere in (\ref{ab5}) and (\ref{ab6}), so $q=\pi(x)$ and $x=x_{q'}^+$, hence $q'=q$ and $x=x_q^+$. We deduce that $n_{\phi}(q,x_{q'}^+)$ is zero when $q'\neq q$, and it is one when $q'=q$. 
We conclude that the statement (b) holds. Since $\phi$ is a chain map, also the first identity in the statement (a) holds.

Now let $x$ be a point in the intersection
\begin{equation}
\label{ab4}
W^u(x_q^+;-\nabla h) \cap \pi^{-1} \bigl(W^s(q';-\nabla f)\bigr).
\end{equation}
Then
\begin{equation}
\begin{split}
\label{ab7}
h(x) & \leq h(x_q^+) = f(q) + \frac{1}{2}, \\
h(x) & \geq f(\pi(x)) \geq f(q').
\end{split}
\end{equation}
If moreover,
\[
\ind(x_q^+;h) = \ind(q;f) + n-1 = \ind(q';f),
\]
then either $q=q_{\min}$ and $\ind (q';f)=n-1$, or $\ind(q;f) = 1$ and $q'=q_{\max}$. In the first case, the inequalities (\ref{ab7}) become
\[
h(x) \leq \frac{1}{2}, \quad h(x) \geq n-1,
\]
and in the second case
\[
h(x) \leq \frac{3}{2}, \quad h(x) \geq n.
\]
In both cases, we conclude that the set (\ref{ab4}) is empty, and hence $n_{\psi}(x_q^+,q')=0$. 

Finally, we assume that $x$ belongs to the set
\[
W^u(x_q^-;-\nabla h) \cap \pi^{-1} \bigl(W^s(q';-\nabla f)\bigr),
\]
with 
\[
\ind(x_q^-;h) = \ind(q;f) = \ind(q';f) = k.
\]
Then
\[
h(x)\leq h(x_q^-) = f(q) = k, \quad h(x) \geq f(\pi(x)) \geq f(q') = k,
\]
from which we deduce that $x=x_q^-$, $q'=\pi(x)$, so $q'=q$,
and hence $n_{\psi}(x_q^-,q')=0$ when $q'\neq q$, while $n_{\psi}(x_q^-,q)=1$. This proves the statement (c).  

Let $P^-: M_*(h) \rightarrow M_*(h)$ be the projector onto the subspace spanned by the $x_q^-$'s, along the subspace spanned by the $x_q^+$'s. Since $\psi$ is a chain map, the statement (c) implies that
\begin{equation}
\label{ab9}
P^- \partial x_q^ - = x_{\partial q}^-, \quad \forall q\in \crit\, f.
\end{equation}
Since
\begin{equation}
\label{ab8}
\ind (x_q^-;h) = \ind(q;f) \leq n, \quad \ind (x_q^+;h) = \ind(q;f) + n-1 \geq n-1,
\end{equation}
the boundary of every $x_q^-$ with $q\neq q_{\max}$ is contained in the image of $P^-$, so (\ref{ab9}) implies the second identity of the statement (a). Since $q_{\max}$ is a cycle in $M_*(f)$ (because $HM_n(f)\cong H_n(M)\neq 0$), (\ref{ab9}) and (\ref{ab8}) imply that
\[
\partial x_{q_{\max}}^- = \delta \, x_{q_{\min}}^+,
\]
for some $\delta\in \Z_2$ to be determined. Since we are working with $\Z_2$ coefficients, a fast way to check that $\delta=\chi(T^*M)$ is the following. By (b) and (c) the short sequence of chain maps (\ref{short}) is exact, so it produces the long exact sequence (\ref{gysin2}) in homology, where the connecting homomorphism $\partial_*$ is given by
\[
\partial_* : HM_n(f) \rightarrow HM_0(f) , \quad [q_{\max}] \mapsto \delta\, [q_{\min}].
\]
Then the exactness of the sequence (\ref{gysin2}) implies that
\[
HM_{n-1}(h) = \delta \, \Z_2,
\]
but since $HM_{n-1}(h) = H_{n-1}(S^*M)$, (\ref{explicit}) implies that $\delta=\chi(T^*M)$. This concludes the proof of the statement (a) and of the proposition.
\end{proof} \qed

\begin{rem}
The results of this appendix can be easily extended to arbitrary sphere bundles over closed manifolds, and to integer coefficients, by assuming orientability. In this case, the fact that the connecting homomorphism in the long exact sequence (\ref{gysin2}) is the cap product by the Euler class of the sphere bundle can be proved by looking at the cellular filtrations produced by the negative gradient flows of $f$ and $h$.
\end{rem}

\providecommand{\bysame}{\leavevmode\hbox to3em{\hrulefill}\thinspace}
\providecommand{\MR}{\relax\ifhmode\unskip\space\fi MR }
\providecommand{\MRhref}[2]{%
  \href{http://www.ams.org/mathscinet-getitem?mr=#1}{#2}
}
\providecommand{\href}[2]{#2}

\end{document}